\numberwithin{equation}{section}%
\newcommand*\mycirc[1]{%
\begin{tikzpicture}[baseline=(C.base)]
\node[draw,circle,inner sep=1pt](C){#1};
\end{tikzpicture}}
\newcommand{\Z}{\mathbb{Z}}
\renewcommand{\C}{\mathbb{C}}
\newcommand{\R}{\mathbb{R}}
\newcommand{\al}{\alpha}
\newcommand{\si}{\sigma}
\newcommand{\la}{\lambda}
\newcommand{\La}{\Lambda}
\newcommand{\be}{\beta}
\newcommand{\de}{\mathrm{e}}
\newcommand{\ga}{\gamma}
\newcommand{\ka}{\varkappa}
\newcommand{\Sym}{\mathrm{Sym}}
\newcommand{\GT}{\mathbb{GT}}
\newcommand{\M}{\mathbf{M}}
\newcommand{\s}{\mathcal{S}}
\newcommand{\bs}{\boldsymbol{\mathcal{S}}}
\newcommand{\bp}{{\mathbf{P}}}
\newcommand{\uni}{\mathrm{P}}
\newcommand{\bq}{{\mathbf{Q}}}
\newcommand{\bm}{{\mathbf{m}}}
\newcommand{\nf}{\xi}
\newcommand{\co}{\theta}
\newcommand{\PBD}{\mathsf{PB}}
\newcommand{\RSD}{\mathsf{RSK}}
\newcommand{\RD}{\mathsf{R}}
\newcommand{\LD}{\mathsf{L}}
\newcommand{\ii}{{\kappa}}
\newcommand{\II}{\mathscr{F}}
\DeclareMathOperator{\Dim}{\mathrm{Dim}}
\DeclareMathOperator{\prob}{\mathrm{Prob}}
\newcommand{\Ptab}{\mathscr{P}}
\newcommand{\Qtab}{\mathscr{Q}}
\newcommand{\ins}{\mathcal{I}}
\newcommand{\sm}{\mathsf{f}}
\newcommand{\alim}{\mathsf{a}}
\newcommand{\tlim}{\mbox{\sf{}\texttau}}
\newcommand{\blim}{\mathsf{b}}
\newcommand{\lalim}{\mathscr{G}}
\newcommand{\lablim}{\mathbf{G}}
\newcommand{\flim}{\mathsf{R}}
\newcommand{\elim}{\mathsf{L}}
\newcommand{\Zf}{\mathsf{Z}}
\newtheorem{proposition}{Proposition}[section]
\newtheorem{theorem}[proposition]{Theorem}
\theoremstyle{definition}
\newtheorem{definition}[proposition]{Definition}
\newtheorem{remark}[proposition]{Remark}
\newtheorem{comment}[proposition]{Comment}
\newtheorem{condition}[proposition]{Condition}
\newtheoremstyle{dynrule}
{}
{}
{\slshape}
{}
{\bf}
{.}
{.5em}
{}
\theoremstyle{dynrule}
\newtheorem*{sprule}{{Short-range Pushing Rule}}
\newtheorem*{drule}{{Donation Rule}}
\newtheorem{dynamics}{Dynamics}
\begin{document}
\title{Nearest neighbor Markov dynamics on Macdonald processes}
\author{Alexei Borodin\thanks{Department of Mathematics, 
Massachusetts Institute of Technology,
77 Massachusetts ave.,
Cambridge, MA 02139, USA.} \thanks{Institute for Information Transmission Problems, Bolshoy Karetny per. 19, Moscow, 127994, Russia.} \footnote{e-mail: \texttt{borodin@math.mit.edu}}
\qquad Leonid Petrov\thanks{Department of Mathematics, Northeastern University, 360 Huntington ave., Boston, MA 02115, USA.} $^{\dagger}$\footnote{e-mail: \texttt{l.petrov@neu.edu}}
}
\date{\vspace{20pt}\hfill\emph{To the memory of Andrei Zelevinsky}}

\maketitle

\begin{abstract}
	Macdonald processes are 
	certain probability measures on 
	two-dimensional
	arrays 
	of interlacing particles
	introduced by Borodin and Corwin in \cite{BorodinCorwin2011Macdonald}. 
	They are defined in terms of 
	nonnegative specializations
	of the Macdonald symmetric 
	functions and depend on two parameters $q,t\in [0; 1)$.
	Our main result is a classification of continuous time, nearest neighbor Markov dynamics on the space of interlacing arrays that act nicely on Macdonald processes.

	The classification unites known 
	examples of such dynamics and also yields many new ones.

	When $t = 0$, one dynamics leads to a new integrable interacting particle system on the one-dimensional lattice, which is a $q$-deformation of the PushTASEP (= long-range TASEP).

	When $q = t$, the Macdonald processes become the Schur processes
	of Okounkov and Reshetikhin \cite{okounkov2003correlation}. 
	In this degeneration, we discover new
	Robinson--Schensted-type 
	correspondences between words and pairs of Young tableaux
	that govern some of our dynamics.
\end{abstract}

\setcounter{tocdepth}{1}
\tableofcontents
\setcounter{tocdepth}{2}

\section{Introduction} 
\label{sec:introduction} 

Since the end of 1990's there has been a 
signficant progress in understanding 
the long time nonequilibrium behavior of certain \emph{integrable} 
(1+1)-dimensional interacting 
particle systems and random growth models in the 
KPZ universality class. 
The miracle of integrability in most cases (with the 
notable exception of the 
partially asymmetric simple exclusion process) can be 
traced to an extension of the Markovian evolution to a 
suitable (2+1)-dimensional 
random growth model whose remarkable properties 
yield the solvability. 

So far there have been 
two sources of such extensions. The first one originated 
from a classical 
combinatorial bijection known as the 
Robinson--Schensted--Knuth 
correspondence (RSK, for short). 
The RSK was first applied in this context by 
Johansson \cite{johansson2000shape} 
and Baik--Deift--Johansson \cite{baik1999distribution}, and the 
dynamical perspective has been 
substantially developed by 
O'Connell \cite{OConnell2003Trans}, \cite{OConnell2003}, \cite{Oconnell2009_Toda}, 
Biane--Bougerol--O'Connell \cite{BBO2004}
(see also Chhaibi \cite{Chhaibi2013}), 
Corwin--O'Connell--Sep\-p\"a\-l\"ainen--Zygou\-ras 
\cite{COSZ2011}, O'Connell--Pei
\cite{OConnellPei2012},
see also 
O'Connell--Sepp\"al\"ainen--Zygo\-uras \cite{OSZ2012}. 

The second approach was 
introduced by Borodin--Ferrari \cite{BorFerr2008DF}, 
and it was 
based on an idea of 
Diaconis--Fill \cite{DiaconisFill1990} of extending 
interwined 
``univariate'' Markov chains to a ``bivariate'' Markov chains that 
projects to 
either of the initial ones. This 
approach was further developed in Borodin--Gorin 
\cite{BorodinGorin2008}, 
Borodin--Gorin--Rains \cite{borodin-gr2009q}, 
Borodin \cite{Borodin2010Schur}, Betea 
\cite{betea2011elliptically}, and Borodin--Corwin 
\cite{BorodinCorwin2011Macdonald}. 
In what follows we use the term 
\emph{push-block dynamics} 
for the Markov chains constructed in this fashion 
(the reason for such a term will become clear later).  

While the two resulting (2+1)-dimensional 
Markov processes that extend the same 
(1+1)-dimensional one share many properties --- same fixed time 
marginals, same 
projections to many (1+1)-dimensional sections --- the relation 
between them 
have 
so far remained poorly understood. The original goal of the 
project whose 
results are presented in this paper was to bridge this gap.  

We start out in a fairly general setting of the ascending 
Macdonald processes 
introduced in Borodin--Corwin \cite{BorodinCorwin2011Macdonald}, 
which can be thought of as a fixed 
time snapshot of a (2+1)-dimensional 
random growth model. Our initial aim was 
to find all possible continuous time 
Markov chains that have the same fixed 
time 
marginals and same trajectory 
measures on certain (1+1)-dimensional 
sections as the push-block dynamics 
of \cite{BorodinCorwin2011Macdonald} 
(these marginals and 
trajectory measures are fairly natural 
in their own right, and they are, in a 
way, more basic than the push-block dynamics). 
Let us note that in the 
Macdonald setting no analog of the RSK was known, 
so we really only had 
one dynamics to start with. 

It quickly became obvious that 
these assumptions are not restrictive enough to 
lead to a meaningful answer, 
and we imposed an additional one --- the 
interaction between the particles 
has to be only via nearest neighbors 
(understood in a certain precise sense described below). Another 
obstacle that 
we faced was that the problem is essentially algebraic, and 
imposing positivity 
on transition probabilities is in a way unnatural; it is much 
easier to deal 
with formal Markov chains (formality in the 
sense of absence of the positivity 
assumption) and \emph{a posteriori} filter out 
those that are not positive. To 
indicate the omission of this 
assumption we write probabilistic terms in 
quotation marks below. 

The main result of the present paper 
is a complete classification of the 
continuous time nearest neighbor 
`Markov dynamics' that have prescribed fixed 
time marginals and prescribed 
evolution along certain one-dimensional sections. 

To our surprize, in the resulting classification 
we find finitely many 
`dynamics' of RSK-type, two of which turn into 
those coming from the RSK in the 
specialization that turns the Macdonald processes 
into the Schur processes 
(this 
corresponds to taking $q=t$, where 
$q$ and $t$ are two parameters of the 
Macdonald polynomials and processes). If we 
denote by $N$ the depth of the 
ascending Macdonald process (this means 
that two-dimensional particle arrays 
live in strip of height $N$), then 
we observe $N!-2$ new dynamics of
RSK-type which in the Schur case give 
rise to the same number of combinatorial 
bijections that are quite similar to the RSK. 
They appear to be new, and their 
investigation is a promising new direction. 
Moreover, in the Schur case it turns out 
that all the dynamics from 
our classification are positive, and thus define 
honest Markov processes. 

In the so-called $q$-Whittaker specialization, 
when we set the Macdonald 
parameter $t$ to 0, one of the RSK-type dynamics 
gives rise to a new integrable 
(1+1)-dimensional interacting particle system 
in the KPZ universality class 
that we call \emph{$q$-PushTASEP}. 
As $q\to0$, it degenerates to the one-sided 
version of the PushASEP of 
Borodin--Ferrari \cite{BorFerr08push}. A detailed analysis 
of this new particle system is a 
subject of a forthcoming publication of 
Corwin--Petrov \cite{CorwinPetrov2013}. 
Let us also note that another $q$-deformation 
of the RSK dynamics was previously found by 
O'Connell--Pei \cite{OConnellPei2012}, 
and we explain below how it relates to our work 
(that dynamics does not have nearest neighbor interactions but its slight modification does).

In a certain $q\to 1$ limit, 
cf. Borodin--Corwin \cite{BorodinCorwin2011Macdonald}, 
the $q$-Whittaker 
processes turn into the 
so-called Whittaker processes that are 
closely related to random directed 
polymers in random enviroment, 
see O'Connell--Yor \cite{OConnellYor2001}, O'Connell 
\cite{Oconnell2009_Toda}, 
Corwin--O'Connell--Sepp\"al\"ainen--Zygouras \cite{COSZ2011}. 
We observe that 
in 
this limit two of our RSK-type dynamics degenerate to those of
\cite{Oconnell2009_Toda} and (a continuous time limit of) 
\cite{COSZ2011}, the push-block dynamics 
turns into the 
so-called symmetric dynamics of \cite{Oconnell2009_Toda}, 
and (many) 
remaining dynamics from our 
classification are positive and new. We hope to 
return to them in a future work. 

Let us now describe the content of the paper in more detail. 

\paragraph{Ascending Macdonald processes.}

The ascending Macdonald processes introduced
in \cite{BorodinCorwin2011Macdonald} are certain probability
measures on triangular arrays of nonnegative integers
$\boldsymbol\la=
\{\la^{(k)}_{j}\}_{1\le j\le k\le N}$ (of depth $N$)
which satisfy \emph{interlacing constraints} 
$\la^{(k)}_{j+1}\le\la^{(k-1)}_{j}\le\la^{(k)}_{j}$ 
(for all meaningful $k$ and $j$), 
see Fig.~\ref{fig:GT_scheme}.\begin{figure}[htbp]
	\begin{center}
		\includegraphics[width=155pt]{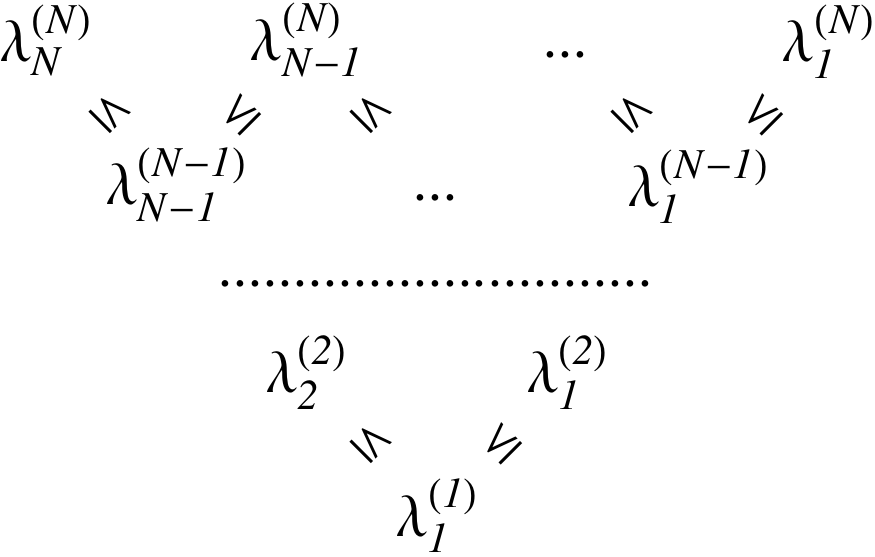}
	\end{center}  
  	\caption{An interlacing integer array of depth $N$.}
  	\label{fig:GT_scheme}
\end{figure} 
We will represent such arrays as 
particle configurations, see Fig.~\ref{fig:la_interlacing}.
These arrays are in bijection with 
semistandard Young tableaux and also with 
certain stepped surfaces in three dimensions
(see \S \ref{sub:semistandard_young_tableaux} below
for the former and, e.g., 
\cite{BorFerr2008DF} for the latter).

The probability weight assigned to each array $\boldsymbol\la$
by the Macdonald process has the form
\begin{align}\label{M_asc_intro}
	\prob(\boldsymbol\la)=
	\frac{P_{\la^{(1)}}(a_1)
	P_{\la^{(2)}/\la^{(1)}}(a_2)\cdots
	P_{\la^{(N)}/\la^{(N-1)}}(a_N)
	Q_{\la^{(N)}}(\rho)}{\Pi(a_1,\dots,a_N;\rho)}
	=:
	\M_{asc}(a_1,\dots,a_N;\rho)
	(\boldsymbol\la), 
\end{align}
where $\la^{(k)}=(\la^{(k)}_{1}\ge
\ldots\ge\la^{(k)}_{k})$, $k=1,\ldots,N$, are rows of the array
(they can be identified with Young diagrams with $\le k$ rows), 
$\Pi(a_1,\dots,a_N;\rho)$ is the normalizing constant, 
and
$P_{\bullet}$ and $Q_{\bullet}$ 
are the Macdonald symmetric functions.
The above probability measure depends on 
arbitrary positive parameters $a_1,\ldots,a_N$ 
and on a positive specialization $\rho$
of the algebra of symmetric functions. 
Moreover, all constructions implicitly
depend on two parameters $q,t\in[0,1)$. 
We review remarkable 
properties of the Macdonald symmetric functions and 
Macdonald processes in Appendix~\ref{sec:macdonald_processes}.

In the present paper we use the 
so-called \emph{Plancherel specializations}
$\rho_\tau$ indexed by one nonnegative parameter $\tau$
which plays the role of \emph{time}. 
These specializations are completely 
determined by the generating series for the 
symmetric functions $Q_{(n)}$ indexed by 
the one-row Young diagrams:
\begin{align*}
	\sum_{n\ge0}Q_{(n)}(\rho_\tau)\cdot u^n=e^{\tau u}.
\end{align*}
When $\tau=0$, 
the corresponding measure $\M_{asc}(a_1,\ldots,a_N;\rho_0)$ 
is concentrated 
on the zero configuration $\la^{(k)}_j=0$, $1\le j\le k\le N$.

\begin{figure}[htbp]
	\begin{center}
		\begin{tabular}{cc}
			\includegraphics[width=191pt]{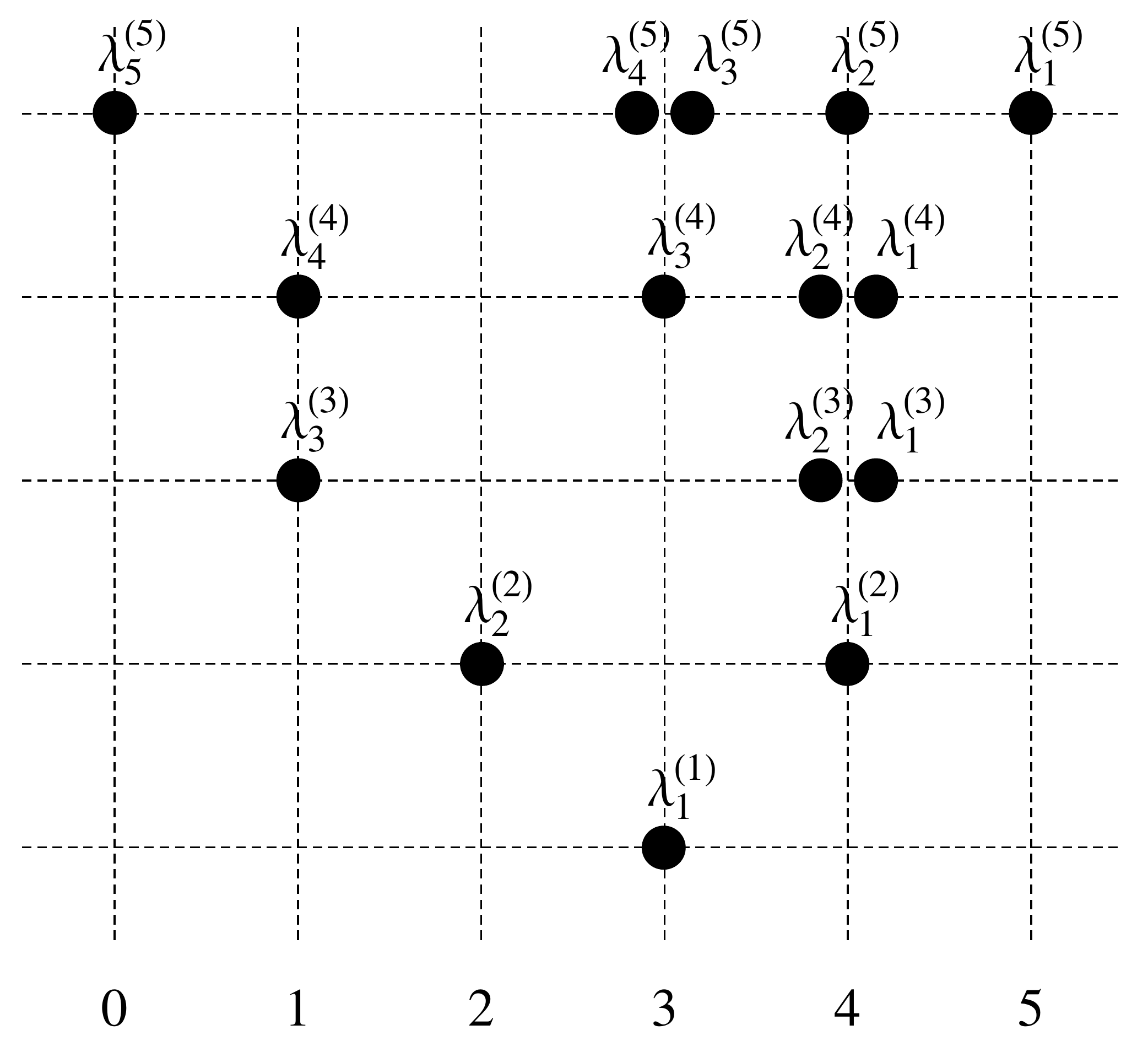}
			&
			\includegraphics[width=191pt]{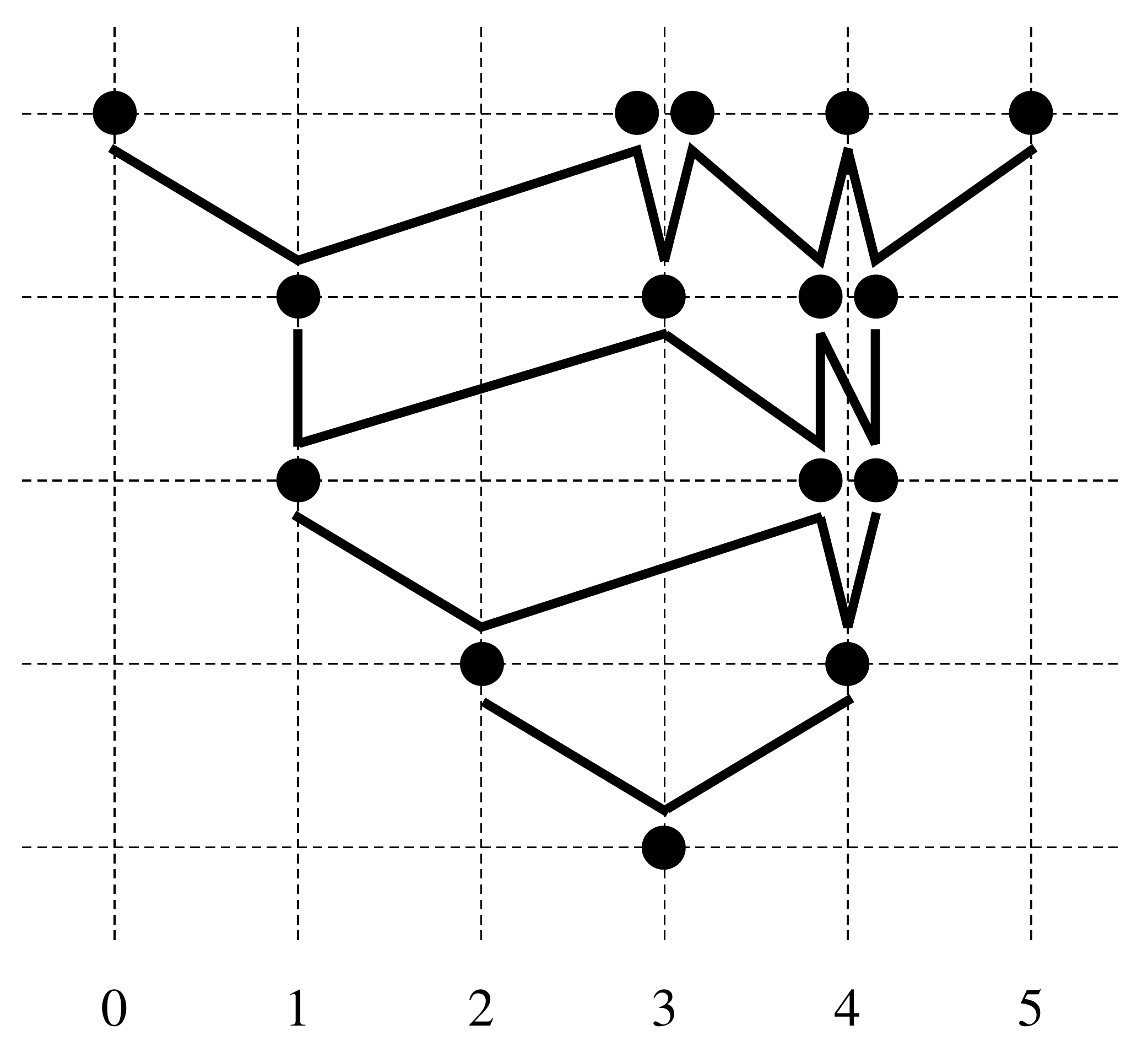}
		\end{tabular}
	\end{center}
  	\caption{Particle configuration $\boldsymbol\la$
  	and a visualization of the interlacing property.}
  	\label{fig:la_interlacing}
\end{figure}

When $q=t$,
random interlacing arrays $\boldsymbol\la$
(with distribution 
\eqref{M_asc_intro}
corresponding
to specialization $\rho_\tau$) may be interpreted as 
images of random words 
with letters appended according to independent
Poisson processes of rates $\{a_j\}$,
under the Robinson--Schensted--Knuth
correspondence (RSK, for short), 
see \cite{OConnell2003Trans}, \cite{OConnell2003}.
Moreover, for $q=t$, the
distribution of each row $\la^{(k)}$ of the array
is deeply related to 
the classical Schur--Weyl duality
\cite{biane2001approximate}, 
\cite{Meliot20091/2},
\cite[\S2.3]{BorodinBufetov2013}.
In general, for $q=t$ the
Macdonald processes 
become the Schur processes 
introduced in \cite{okounkov2003correlation}.

For $t=0$, in a scaling limit as $q\nearrow 1$, the distribution
of the array $\boldsymbol\la$ converges to the image of 
the semi-discrete Brownian polymer 
under the geometric (tropical)
RSK correspondence
\cite{Oconnell2009_Toda}, \cite{BorodinCorwin2011Macdonald}.
In another scaling limit, namely, as $t=q^{\theta}\to1$, 
Macdonald processes lead to
multilevel general $\beta$ Jacobi 
ensemble of random matrix theory
\cite{BorodinGorin2013beta}.

\paragraph{Univariate dynamics.}

For each $k$, the distribution of
the $k$th row $\la^{(k)}=
(\la^{(k)}_{1}\ge \ldots\ge\la^{(k)}_{k})$
of the ascending Macdonald process 
\eqref{M_asc_intro}
is given by the Macdonald measure 
\begin{align*}
	\prob(\la^{(k)})= 
	\frac{P_{\lambda^{(k)}}(a_1,\ldots,a_k) Q_{\lambda^{(k)}}
	(\rho_\tau)} {\Pi(a_1,\ldots,a_k;\rho)}
	=:\M\M(a_1,\ldots,a_k;\rho_\tau)(\la^{(k)}).
\end{align*}

There exists a distinguished 
continuous-time Markov dynamics 
which provides a coupling of 
the measures $\M\M(a_1,\ldots,a_k;\rho_\tau)$
for all $\tau\ge0$. It is defined 
in terms of jump rates as follows:
\begin{align}\label{univariate_intro}
	&
	\mbox{rate}\,
	(\la^{(k)}\to\la^{(k)}+\de_j)=
	\frac{P_{\la^{(k)}+\de_j}
	(a_1,\ldots,a_k)}
	{P_{\la^{(k)}}
	(a_1,\ldots,a_k)}
	\psi'_{\la^{(k)}+\de_j/\la^{(k)}}
	d\tau,
\end{align}
where 
$\de_j=(0,\ldots,0,1,0,\ldots,0)$
(with ``$1$'' at the $j$th place). 
Here $\psi'_{\lambda+e_j/\lambda}$ are 
the ``Pieri coefficients'': 
$(x_1+x_2+...)P_\lambda(x)=
\sum_j \psi'_{\lambda+\de_j/\lambda} P_{\lambda+\de_j}(x)$. 

Let us denote 
by $\uni_k(\tau;\la^{(k)},\mu^{(k)})$
the transition probability
from $\la^{(k)}$
to $\mu^{(k)}$ during time $\tau$
corresponding to the jump rates
\eqref{univariate_intro}.
The coupling mentioned above
is given by
\begin{align}\label{MM_coupling}
	\sum\nolimits _{\la^{(k)}}
	\M\M(a_1,\ldots,a_k;\rho_\si)(\la^{(k)})
	\cdot \uni_k(\tau;\la^{(k)},\mu^{(k)})=
	\M\M(a_1,\ldots,a_k;\rho_{\si+\tau})(\mu^{(k)}),
\end{align}
where $\si\ge0$. In matrix form, 
$\M\M(a_1,\ldots,a_k;\rho_\si)\uni_k(\tau)=
\M\M(a_1,\ldots,a_k;\rho_{\si+\tau})$.

The dynamics $\uni_k$ can be viewed as a
$(q,t)$-analogue of the $k$-particle Dyson Brownian
motion \cite{dyson1962brownian}.
Moreover, it is possible to recover the 
latter process from $\uni_k$
by setting $q=t$ and taking 
a diffusion limit as $\tau\to+\infty$ (in the same limit, 
the Macdonald processes \eqref{M_asc_intro} 
turn into the GUE eigenvalue corners
distributions).
In another scaling regime, namely, for $t=0$ and as
$q\nearrow 1$, the dynamics $\uni_k$ 
becomes closely related to the quantum Toda lattice,
see
\cite{Oconnell2009_Toda}.

\paragraph{Push-block multivariate dynamics.}

We will refer to the above dynamics $\uni_k$
as to the \emph{univariate dynamics}. Each $\uni_k$
lives on the $k$th floor of the interlacing array
$\boldsymbol\la$ (cf. Fig.~\ref{fig:la_interlacing}).
We want to \emph{stitch} the
$\uni_k$'s into a \emph{multivariate} continuous-time 
Markov dynamics
living on interlacing arrays $\boldsymbol\la$.
One such construction
(inspired by an idea of Diaconis--Fill \cite{DiaconisFill1990})
was introduced in \cite[\S2.3.3]{BorodinCorwin2011Macdonald}.
In the present paper 
we call that multivariate dynamics 
the \emph{push-block dynamics}, and 
denote by $\bp^{(N)}_{\PBD}(\tau;\boldsymbol\la,\boldsymbol\nu)$
the corresponding transition probabilities.

The evolution $\bp^{(N)}_{\PBD}$
is 
fairly simple and can be
described as follows. Each particle
$\la^{(k)}_{j}$, $1\le j\le k\le N$,
has an independent exponential clock 
with certain rate $a_k\cdot S_j(\la^{(k-1)},\la^{(k)})$
depending on the configuration of particles
at levels $k-1$ and $k$ (see \eqref{S_j} for 
an explicit formula for $S_j$; for $q=t$, one simply has $S_j\equiv1$). 
When the 
clock of $\la^{(k)}_{j}$ rings, this particle
jumps to the right by one if it is not \emph{blocked} 
by a lower particle (i.e., if $\la^{(k)}_{j}<\la^{(k-1)}_{j-1}$).
Otherwise, the jump does not happen. 
Next, if a jump of $\la^{(k)}_{j}$
violates the interlacing with particles
above 
(i.e., if $\la^{(k)}_j=\la^{(k+1)}_j$), 
then the jumping particle $\la^{(k)}_{j}$
\emph{pushes} $\la^{(k+1)}_j$ 
(as well as all other particles
with $\la^{(m)}_{j}=\la^{(k)}_j$, 
$m\ge k+2$)
to the right by one.

By the very construction, we see that 
during an infinitesimally small time interval, 
in $\bp^{(N)}_{\PBD}$
the transition $\la^{(k)}\to\nu^{(k)}$
at each level $k$ depends only on the
previous and new states ($\la^{(k-1)}$
and $\nu^{(k-1)}$, respectively) at level $k-1$.
We will refer to this as to the 
\emph{sequential update} property. 

Before explaining how $\bp^{(N)}_{\PBD}$
acts on Macdonald processes 
$\M_{asc}$ \eqref{M_asc_intro}, we 
need the fact that $\M_{asc}$ possess a certain
\emph{Gibbs property}. 
It means that for each $k$, given fixed $\la^{(k)}$, the 
conditional distribution 
\begin{align}\label{Gibbs_property_intro}
	\prob(\la^{(1)},\ldots,\la^{(k-1)}\mid\la^{(k)})
	=
	\frac{P_{\la^{(1)}}(a_1)
	P_{\la^{(2)}/\la^{(1)}}(a_2)
	\ldots P_{\la^{(k)}/\la^{(k-1)}}(a_{k})}
	{P_{\la^{(k)}}(a_1,\ldots,a_k)}
\end{align}
is completely determined by $\la^{(k)}$ and does not
depend on the specialization $\rho$. 
(This conditional distribution also does not
depend on $\la^{(k+1)},\ldots,\la^{(N)}$, 
which is a manifestation of the sequential structure of 
Macdonald processes.) Thus, Macdonald processes
are included in a larger class of 
\emph{Gibbs measures} on interlacing arrays,
which, by definition, satisfy \eqref{Gibbs_property_intro}.

\begin{proposition}[\cite{BorodinCorwin2011Macdonald}]
\label{prop:pb_intro}
	{\bf1.\/} The push-block dynamics $\bp^{(N)}_{\PBD}$
	preserves the class of Gibbs measures on interlacing arrays.
	In particular, 
	the evolution
	of each sub-array $\la^{(1)},\ldots,\la^{(k)}$
	is completely determined by the dynamics
	of $\la^{(k)}$. 

	{\bf2.\/} For a Gibbs initial condition, 
	the evolution of $\la^{(k)}$
	under $\bp^{(N)}_{\PBD}$
	coincides with the univariate dynamics 
	$\uni_k$.
\end{proposition}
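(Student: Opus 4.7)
The plan is to proceed by induction on $k$, leveraging two structural features of the construction: (i) the \emph{sequential update} property of the push-block rules, which guarantees that during an infinitesimal time step the transition on level $k$ is a function only of the current and new states at levels $k-1$ and $k$; and (ii) the fact that the Gibbs conditional \eqref{Gibbs_property_intro} telescopes as a product of skew Pieri kernels $P_{\la^{(j)}/\la^{(j-1)}}(a_j)$. Together, (i) and (ii) reduce matters to a two-floor analysis at each inductive step, since the joint evolution of any upward-closed sub-array $(\la^{(1)},\ldots,\la^{(k)})$ is autonomous and a Gibbs measure is pinned down by its top-level marginal together with the universal conditional kernel. The base case $k=1$ is immediate: nothing lies below, nothing can block or push, and the level-$1$ rate $a_1 S_1(\varnothing,\la^{(1)})$ of the push-block construction is set equal to the univariate rate of $\uni_1$.

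For the inductive step, assume that on the sub-array $(\la^{(1)},\ldots,\la^{(k-1)})$ the Gibbs property is preserved and the $\la^{(k-1)}$-marginal evolves by $\uni_{k-1}$. Introduce the two-level Gibbs link
\begin{align*}
\La^{k-1}_k\bigl(\la^{(k)},\la^{(k-1)}\bigr)
=\frac{P_{\la^{(k-1)}}(a_1,\ldots,a_{k-1})\,P_{\la^{(k)}/\la^{(k-1)}}(a_k)}
{P_{\la^{(k)}}(a_1,\ldots,a_k)},
\end{align*}
and let $A_{k-1,k}$ denote the generator of the push-block dynamics restricted to the two floors $k-1$ and $k$. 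This generator decomposes into three kinds of elementary transitions: a spontaneous unblocked jump $\la^{(k)}_j\mapsto\la^{(k)}_j+1$ at rate $a_k S_j(\la^{(k-1)},\la^{(k)})$; a spontaneous level-$(k-1)$ jump $\la^{(k-1)}_j\mapsto\la^{(k-1)}_j+1$ that does \emph{not} push (case $\la^{(k-1)}_j<\la^{(k)}_j$); and a level-$(k-1)$ jump that simultaneously pushes $\la^{(k)}_j$ one step to the right (case $\la^{(k-1)}_j=\la^{(k)}_j$). The key identity to establish is the Diaconis--Fill intertwining
\begin{align*}
A_{k-1,k}\,\La^{k-1}_k \;=\; \La^{k-1}_k\,A_k,
\end{align*}
where $A_k$ is the generator of $\uni_k$. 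Granting this, the forward Kolmogorov equation shows both (a) that the Gibbs factor $\La^{k-1}_k$ is preserved at each instant, giving the Gibbs-preservation claim of part~1, and (b) upon projecting out $\la^{(k-1)}$, that the $\la^{(k)}$-marginal evolves under $A_k=\uni_k$, which is part~2.

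The main obstacle is the algebraic verification of the intertwining. After cancelling the common normalization $P_{\la^{(k)}}(a_1,\ldots,a_k)$, the identity reduces to a branching/Pieri relation for the skew Macdonald polynomials $P_{\mu/\nu}(a_k)$: one must match term by term, as $\la^{(k-1)}$ varies over the interlacing configurations below $\la^{(k)}$, the two contributions to the infinitesimal rate $\la^{(k)}\to\la^{(k)}+\de_j$ coming from $A_{k-1,k}\La^{k-1}_k$ (namely spontaneous level-$k$ jumps plus pushes originating below) against the right-hand side $\La^{k-1}_k A_k$, whose rate involves a single Pieri coefficient $\psi'_{\la^{(k)}+\de_j/\la^{(k)}}$ together with the skew branching relation $P_{\la}(a_1,\ldots,a_k)=\sum_\mu P_{\mu}(a_1,\ldots,a_{k-1})P_{\la/\mu}(a_k)$. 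The explicit blocking factor $S_j(\la^{(k-1)},\la^{(k)})$ in \eqref{S_j} is engineered so that this matching holds; this is the essential Macdonald-theoretic content of the proposition. Once it is in place, iterating the intertwining from $k=1$ up to $k=N$ gives both parts simultaneously, and the rest is bookkeeping.
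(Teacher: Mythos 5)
Your plan follows the paper's own route: reduce to a per-slice Diaconis--Fill intertwining of the two-level push-block generator with the link $\La^{k}_{k-1}$, which is exactly the paper's identity \eqref{P2'} specialized to $V_k(\la,\nu\,|\,\bar\la,\bar\nu)=1_{\la=\nu}$ away from the obligatory short-range pushes, and then induct over levels and exponentiate the generator identity. The one step you assert rather than verify --- that the blocking factor $S_j$ makes the spontaneous-jump and pushing contributions match the Pieri coefficient $\psi'_{\la+\de_j/\la}$ --- is precisely where the paper does its only real computation, namely extracting the coefficient of $\varepsilon$ from the skew Cauchy identity with one usual and one dual variable to get \eqref{inf_skew_cauchy} and Proposition \ref{proposition:nu_not_prec_la}; with that identity supplied, your argument is complete and coincides with the paper's.
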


Together with 
\eqref{MM_coupling},
this implies that the push-block 
dynamics provides the following coupling
of the Macdonald processes 
\eqref{M_asc_intro} corresponding to 
Plancherel specializations $\rho_\tau$:
\begin{align}\label{action_on_Masc_intro}
	\M_{asc}(a_1,\ldots,a_N;\rho_\si)
	\bp^{(N)}_{\PBD}(\tau)=
	\M_{asc}(a_1,\ldots,a_N;\rho_{\si+\tau}).	
\end{align}

For special Macdonald parameters, 
there are other known sequential update 
dynamics $\bp^{(N)}$ 
on interlacing arrays
satisfying the conditions of 
Proposition \ref{prop:pb_intro}
(we will refer to them as to 
\emph{multivariate dynamics}, for short).
Namely, for $q=t$, 
one can get another multivariate dynamics
(different from the $q=t$ degeneration of 
$\bp^{(N)}_{\PBD}$) by
applying the classical RSK insertion to 
growing
random words, 
see \cite{OConnell2003Trans}, \cite{OConnell2003},
and also \cite{BorodinGorinSPB12}. 
In the case $t=0$, 
in a recent paper by O'Connell--Pei \cite{OConnellPei2012},
a certain $q$-deformation of the RSK-driven dynamics
was considered, which required a randomization of the
insertion algorithm. 
Applying this random insertion to the same random
input as for $q=t$, one gets a multivariate
dynamics on interlacing arrays.

The present paper is devoted to a 
systematic classification of
multivariate Markov dynamics.

\paragraph{Nearest neighbor dynamics.}

Let $\bp^{(N)}$ be a multivariate
dynamics. It evolves as a Markov jump process: 
during an infinitesimally small time interval,
several particles in the array 
move to the right by one (at most one particle at 
each level can move at most by one 
because of the nature of the univariate
dynamics). 
Due to the sequential structure of
$\bp^{(N)}$, these moves can be described 
inductively (level by level) in the following way:
\begin{enumerate}[$\bullet$]
	\item Each particle $\la^{(m)}_{i}$,
	$1\le i\le m\le N$, has an
	independent exponential clock with 
	(possibly zero) rate depending on 
	$\la^{(m-1)}$ and $\la^{(m)}$. When the 
	clock rings, $\la^{(m)}_{i}$ jumps
	to the right by one.
	\item When any particle $\la^{(k-1)}_{j}$ moves
	to the right by one (independently or 
	due to a triggered move), it has a chance 
	(with some probabilities depending
	on $\la^{(k-1)}$ and $\la^{(k)}$) to force
	one of the particles 
	$\la^{(k)}_1,\ldots,\la^{(k)}_{k}$
	to instantaneously move to the right by one as well.
	It is also possible that 
	(with some probability)
	the 
	move of 
	$\la^{(k-1)}_{j}$ does not propagate to higher levels.
\end{enumerate}

To obtain a meaningful classification, we 
restrict our 
attention to the following 
subclass of multivariate dynamics:
\begin{figure}[htbp]
	\begin{center}
		\includegraphics[width=345pt]{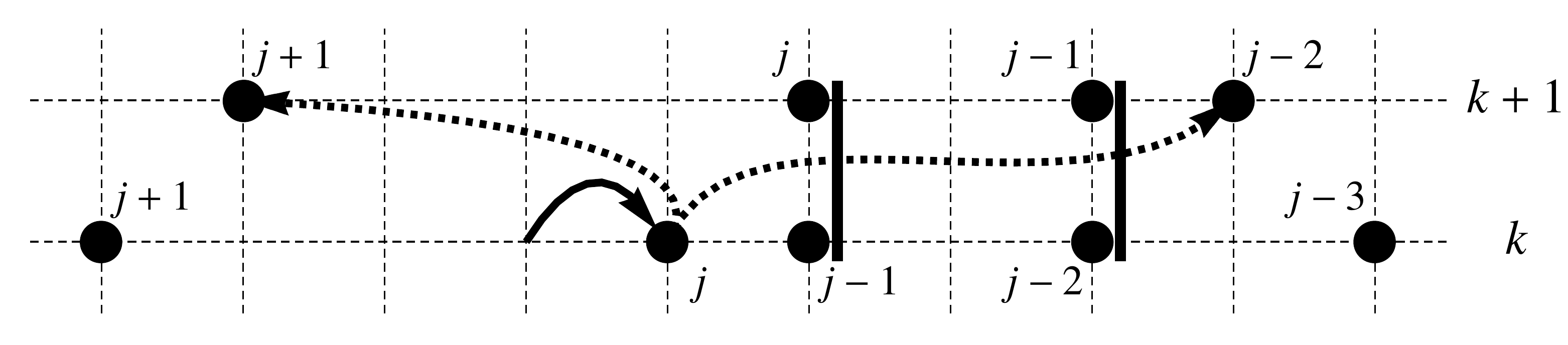}
	\end{center}  
  	\caption{Nearest neighbor interactions: 
  	a moved particle (solid arrow) 
  	can long-range \emph{pull} 
  	its immediate left neighbor,
  	or long-range \emph{push} 
  	its leftmost free right neighbor 
  	(dashed arrows).}
  	\label{fig:NN_intro}
\end{figure} 
\begin{definition}\label{def:NN_intro}
	A multivariate dynamics
	is called 
	\emph{nearest neighbor}
	if the move of any particle $\la^{(k-1)}_j$
	can affect
	only its closest upper
	neighbors in the triangular array. 
	By the \emph{upper right neighbor}
	of $\la^{(k-1)}_j$
	we mean the first particle
	$\la^{(k)}_{i}$
	at level 
	$k$ strictly
	to the right of $\la^{(k)}_{j+1}$
	which is not blocked, i.e., 
	for which $\la^{(k)}_{i}<\la^{(k-1)}_{i-1}$.
	The notion of the \emph{upper left neighbor} 
	is more straightforward,
	this is always the particle
	$\la^{(k)}_{j+1}$ (the moved particle 
	$\la^{(k-1)}_j$ cannot block
	$\la^{(k)}_{j+1}$).
\end{definition}
Note that these interactions 
(see Fig.~\ref{fig:NN_intro})
are
\emph{long-range}, i.e., they 
may happen regardless of the distance
between particles (but the probability
of pushing or pulling can 
depend on this distance, as well as 
on positions of other particles). 
These long-range interactions are to be compared with the
\emph{short-range pushing} 
of the push-block dynamics, 
which happens when interlacing is violated. 
The short-range pushing mechanism must in fact 
be present in any multivariate dynamics.

The push-block dynamics is nearest
neighbor, as well as the RSK-driven
dynamics for $q=t$. O'Connell--Pei's randomized 
($q$-weighted) column
insertion algorithm
\cite{OConnellPei2012}
is not nearest neighbor
(but 
can be modified to become one, see 
\S \ref{ssub:remark_a_nearest_neighbor_markov_process_inspired_by_dynamics_dyn:oconnell} below).
Note also that in the latter two dynamics,
any move propagates to all higher levels with
probability one. We call dynamics with 
such obligatory move 
propagation the
\emph{RSK-type dynamics}. 

We write down (Proposition \ref{prop:rlw_system}) 
a system of linear equations for the 
rates of independent jumps and 
probabilities of triggered moves
in a nearest neighbor dynamics
which is equivalent to the conditions of 
Proposition \ref{prop:pb_intro}.
In this way, the problem of classification
of nearest neighbor dynamics
becomes essentially algebraic. 
It is thus convenient
not to impose positivity 
on transition probabilities,
and consider formal Markov `dynamics' 
(quotation marks indicate the absence 
of this positivity
assumption).

The operation of taking linear combinations
of solutions of the linear system
translates into a certain
\emph{mixing} of nearest-neighbor `dynamics'.
Let us explain how this procedure works
when applied to two `dynamics' $\bp^{(N)}$
and $\bp'^{(N)}$ yielding new `dynamics'
$\tilde\bp^{(N)}$. The result depends on
functions $\co^{(k)}(\la^{(k-1)},\la^{(k)})$, 
$k=2,\ldots,N$, defined on consecutive ``slices''
of our triangular array~$\boldsymbol\la$. 
The mixed `dynamics' $\tilde\bp^{(N)}$ is described in 
the following way:
\begin{enumerate}[$\bullet$]
	\item 
	The rate of independent jump 
	of each particle $\la^{(m)}_{i}$
	is the linear combination 
	of the corresponding rates 
	in the `dynamics'
	$\bp^{(N)}$
	and $\bp'^{(N)}$ with coefficients
	$\co^{(m)}(\la^{(m-1)},\la^{(m)})$
	and $1-\co^{(m)}(\la^{(m-1)},\la^{(m)})$,
	respectively.
	\item Any moving particle $\la^{(k-1)}_{j}$ 
	with some probabilities 
	affects one of its upper neighbors.
	For the `dynamics', $\tilde\bp^{(N)}$,
	these probabilities are obtained 
	as linear combinations of the 
	corresponding probabilities from 
	$\bp^{(N)}$
	and $\bp'^{(N)}$ with coefficients
	$\co^{(k)}(\la^{(k-1)}+\de_j,\la^{(k)})$
	and $1-\co^{(k)}(\la^{(k-1)}+\de_j,\la^{(k)})$,
	respectively.
\end{enumerate}
Mixing of any finite number 
of `dynamics' can be defined in a similar
manner.

There is a lot of freedom 
in choosing arbitrary
coefficients $\co^{(k)}$; 
hence one should expect that 
any nearest neighbor `dynamics' 
can be represented as the mixing of 
a finite number of certain \emph{fundamental} 
nearest neighbor
`dynamics'; we describe them next.
The choice of 
the fundamental `dynamics'
is not canonical, 
and we use the ones that seem the most 
natural to us.

\paragraph{Fundamental `dynamics' and main result.}

There are $N!+2(N-1)!+1$ pairwise distinct
fundamental `dynamics'. They are
divided into three families,
plus the push-block dynamics already described above:
\begin{enumerate}[$\bullet$]
	\item \emph{RSK-type} fundamental `dynamics'  
	are completely characterized
	by the requirement that at each level $k$, $1\le k\le N$, 
	only one
	particle can independently jump
	at rate $a_k$. There are $N!$ such `dynamics'.
	\item \emph{Right-pushing} fundamental `dynamics'
	are characterized by the 
	requirement that at each level $k\le N-1$, only one particle 
	long-range
	pushes (with probability one)
	its upper right neighbor, and there are no other 
	long-range interactions.
	There are $(N-1)!$ such `dynamics'.
	\item In \emph{left-pulling} fundamental `dynamics', 
	at each level $k\le N-1$, only one particle pulls
	its upper left neighbor (with probability one), 
	and there are no 
	other long-range interactions. The number of 
	left-pulling `dynamics' is also $(N-1)!$.
\end{enumerate}
\begin{theorem}\label{thm:main_intro}
	Any nearest neighbor `dynamics' can be obtained
	as a mixing of the fundamental nearest neighbor `dynamics'.
\end{theorem}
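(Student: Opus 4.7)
The plan is to exploit Proposition~\ref{prop:rlw_system}, which reformulates the Markov conditions of Proposition~\ref{prop:pb_intro} as a finite system of \emph{linear} equations in the independent jump rates and the long-range propagation probabilities of a nearest neighbor `dynamics'. Since the mixing operation defined above produces precisely affine combinations of solutions---with functional coefficients $\co^{(k)}$ that depend only on the slice $(\la^{(k-1)},\la^{(k)})$---the statement reduces to showing that at every fixed slice, the local restrictions of the fundamental `dynamics' affinely span the full local solution space.

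The first step is to exploit the level-by-level decoupling. The nearest neighbor hypothesis together with the sequential update structure guarantees that the linear equations split by slice: at each level $k$ the unknowns (rates $r_i^{(k)}$ at level $k$ and the push-right/pull-left probabilities attached to each moving particle at level $k-1$) are constrained only by equations depending on $\la^{(k-1)}$ and $\la^{(k)}$. This reduces Theorem~\ref{thm:main_intro} to a finite-dimensional affine linear algebra problem at each slice, which can be handled independently.

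The second step is to describe the local affine solution space explicitly. Fixing a slice, I would parametrize the unknowns, write down the constraints coming from (i) the prescribed total jump rate $a_k S_j(\la^{(k-1)},\la^{(k)})$ at level $k$ forced by the univariate dynamics $\uni_k$, and (ii) the Gibbs-preservation identities from Proposition~\ref{prop:rlw_system}, and then compute the dimension of the affine space of local solutions. I would then exhibit the local restrictions of the fundamental `dynamics' as explicit vectors in this affine space: $k$ RSK-type restrictions (one per choice of independently jumping particle at level $k$), $k-1$ right-pushing restrictions (one per choice of which particle at level $k-1$ pushes its upper right neighbor), $k-1$ left-pulling restrictions, and the single push-block restriction. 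Checking that these vectors affinely span the local solution space is a direct but careful linear algebra computation, relying on explicit formulas for the Pieri coefficients $\psi'_{\la+\de_j/\la}$.

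Once the local spanning is established, globalization is immediate: because each coefficient $\co^{(k)}$ is a free function on slices, the affine combinations produced by the mixing operation can be chosen independently at each slice, and stitching together the local representations yields a global mixing of the fundamental `dynamics' reproducing the given nearest neighbor `dynamics'. The main obstacle will be the explicit spanning argument in Step~2: it requires careful bookkeeping of the Pieri identities and of several degenerate geometric configurations (coinciding particles causing short-range blocking, absence of an upper right neighbor at the right end of a row, and the transition between pushing and blocking when $\la^{(k)}_i=\la^{(k-1)}_{i-1}$), and one must verify that the $k+2(k-1)+1$ candidate local basis vectors meet the dimension count dictated by the linear system without redundancy.
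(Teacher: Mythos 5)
Your proposal is correct and follows essentially the same route as the paper: Proposition~\ref{prop:rlw_system} reduces everything to the per-slice linear system \eqref{rlw_system_of_equations}, whose $(2\ii-2)$-dimensional affine solution space is shown (Propositions~\ref{prop:RS_R}, \ref{prop:RS_L}, \ref{prop:R_L}) to be affinely spanned by the local restrictions of the fundamental `dynamics', and the slice-wise freedom in the coefficients $\co^{(k)}$ then globalizes this to the mixing statement of Theorem~\ref{thm:characterization_NN}. The only caveat worth noting is that the full collection of $3\ii-1$ fundamental solutions is overcomplete, so one should not expect a redundancy-free basis; the paper handles this by working with two of the three families at a time (adding the push-block solution to resolve the degenerate case $k=2$ of Remark~\ref{rmk:k=2_bad}).
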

We prove this theorem 
in \S\S \ref{sec:multivariate_continuous_time_dynamics_on_interlacing_arrays_}--\ref{sec:nearest_neighbor_multivariate_continuous_time_dynamics_on_interlacing_arrays} 
(in particular, see Theorem \ref{thm:characterization_NN}).
We also discuss possibilities of making the mixing representation
unique in \S \ref{sub:characterization_of_nearest_neighbor_dynamics_} below.

\paragraph{RSK-type dynamics in the Schur case.}
In the case $q=t$, when Macdonald polynomials turn into the Schur polynomials,
all our fundamental nearest neighbor `dynamics' 
have nonnegative transition probabilities, and so become 
honest Markov processes. 

Of particular combinatorial interest
are the $N!$ RSK-type fundamental dynamics 
$\bp^{(N)}_{\RSD[\boldsymbol h]}$
indexed by 
$N$-tuples $\boldsymbol h=(h^{(1)},\ldots,h^{(N)})\in
\{1\}\times \{1,2\}\times \ldots\times\{1,2,\ldots,N\}$.
When $q=t$, the evolution of $\bp^{(N)}_{\RSD[\boldsymbol h]}$
can be interpreted as application of a \emph{deterministic} 
insertion algorithm (we call it \emph{$\boldsymbol h$-insertion})
to growing random words. 

In terms of interlacing arrays
(they are in a classical bijection with  
semistandard Young tableaux, see \S \ref{sub:semistandard_young_tableaux}
and also \S \ref{sub:from_interlacing_arrays_to_semistandard_tableaux}
for a ``dictionary'' between two ``languages''),
$\boldsymbol h$-insertion means that several particles in the array 
move to the right by one. These particles are selected
in a certain deterministic way 
which depends on
$\boldsymbol h$, the current state of the array, and 
the new letter which appears in the growing random word.
If this letter is $k$, then exactly one particle at each of the levels
$k,k+1,\ldots,N$ moves.
See \S \ref{sub:rs_type_fundamental_dynamics_in_the_schur_case_and_deterministic_insertion_algorithms} for a full description of 
$\boldsymbol h$-insertions.

In particular cases, when $\boldsymbol h=(1,1,\ldots,1)$ or 
$\boldsymbol h=(1,2,\ldots,N)$, the $\boldsymbol h$-insertion
becomes the classical RSK row or column insertion, 
respectively. The other $N!-2$ RSK-type 
insertion algorithms seem to be new.

\paragraph{$q$-Whittaker processes and $q$-PushTASEP.}

When $t=0$ and $0<q<1$, 
Macdonald processes turn into
$q$-Whittaker processes,
see \cite[\S3.1]{BorodinCorwin2011Macdonald} 
for properties specific to 
this case.
In contrast with the $q=t$ case, 
for $t=0$
almost none of the fundamental
nearest neighbor `dynamics' 
have nonnegative transition probabilities
(see Proposition \ref{prop:q_honest_Markov}
below for a detailed statement).
A notable exception (along with the push-block dynamics)
is the RSK-type fundamental dynamics in which only the 
rightmost particles $\la^{(1)}_{1},\ldots,\la^{(N)}_{1}$
can jump independently (with rates $a_1,\ldots,a_N$).
See Dynamics \ref{dyn:q_row} 
in \S \ref{sub:multivariate_dynamics_in_the_q_whittaker_case}
for its complete description.

This new dynamics 
(denote it by $\bp^{(N)}_{\text{\textit{q-row}}}$)
may be regarded as a 
$q$-deformation of the Schur case dynamics
driven by the classical row insertion RSK algorithm, 
in the same way as
O'Connell--Pei's insertion algorithm
\cite{OConnellPei2012} 
(and it nearest neighbor modification)
serves
as a $q$-deformation of 
the column insertion RSK algorithm.

The $q$-row insertion dynamics
has a remarkable property 
that the evolution of
the rightmost particles 
$\la^{(1)}_{1}\le\ldots\le\la^{(N)}_{1}$
of the interlacing array
under 
$\bp^{(N)}_{\text{\textit{q-row}}}$ is \emph{Markovian}:
each particle $\la^{(m)}_{1}$
jumps to the right independently of others 
at rate $a_m$; and any moved particle $\la^{(k-1)}_{1}$
long-range pushes $\la^{(k)}_{1}$ with probability 
$q^{\la^{(k)}_{1}-\la^{(k-1)}_{1}}$ 
(where $\la^{(k-1)}_{1}$
is the coordinate of the $(k-1)$th particle before the move). 
We call this 
(1+1)-dimensional interacting particle system
the \mbox{\emph{$q$-PushTASEP}}: when $q=0$, it becomes the 
PushTASEP considered in 
\cite{BorFerr08push}, \cite{BorFerr2008DF}.

Markov evolution of the rightmost
particles that we observe complements
the similar phenomenon for the leftmost
particles known earlier. 
Namely, 
under the push-block dynamics, as
well as under O'Connell--Pei's insertion algorithm, 
the leftmost particles 
of the interlacing array
evolve according to 
$q$-TASEP 
\cite{BorodinCorwin2011Macdonald}, 
\cite{BorodinCorwinSasamoto2012},
\cite{OConnellPei2012}.
The $q$-TASEP and $q$-PushTASEP
seem to be the only Markovian evolutions
which can arise
as restrictions of
$q$-Whittaker
nearest neighbor
dynamics to leftmost (resp. rightmost)
particles of the interlacing array; see
Propositions \ref{prop:qTASEP} and 
\ref{prop:qpushTASEP} in \S 
\ref{sub:taseps}.

\paragraph{Whittaker limit and directed polymers.}

In a suitable scaling limit 
as $q\nearrow1$, see
\eqref{q_to_1_scaling} below,
the $q$-Whittaker processes 
turn into the Whittaker processes.
The latter are certain probability measures
on $\R^{\frac{N(N+1)}{2}}$
(the interlacing constraints disappear in the limit).
See \cite[\S4]{BorodinCorwin2011Macdonald}.
Each of our fundamental nearest neighbor
`dynamics' 
suggests
a system of stochastic differential equations;
the corresponding diffusion in
$\R^{\frac{N(N+1)}{2}}$ should couple
the Whittaker processes in a way similar to 
\eqref{action_on_Masc_intro}. 
Two of such systems of SDEs appeared in 
\cite{Oconnell2009_Toda} (see also
\cite[\S4.1 and \S5.2]{BorodinCorwin2011Macdonald})
in connection with the O'Connell--Yor semi-discrete
directed polymer \cite{OConnellYor2001}.

For each $k=1,\ldots,N$, 
consider the partition function 
of the semi-discrete directed polymer \cite{OConnellYor2001},
\cite{Oconnell2009_Toda}
\begin{align*}
	\Zf^{(k)}(\tlim):=
	\int_{0<s_1<\ldots<s_{k-1}<\tlim} 
	e^{B_1(s_1)+
	\big(B_2(s_2)-B_2(s_1)\big)
	+\ldots+
	\big(B_k(\tlim)-B_k(s_{k-1})\big)}ds_1 \ldots ds_{k-1}.
\end{align*}
Here $B_1,\ldots,B_N$ are independent 
one-dimensional Brownian motions (possibly with drifts)
which start from zero.
The $\Zf^{(k)}$ satisfy the following system of 
SDEs:
\begin{align*}
	d\Zf^{(k)}=Z^{(k-1)}d\tlim + \Zf^{(k)}dB_k,\qquad
	k=1,\ldots,N
\end{align*}
(by agreement, $\Zf^{(0)}\equiv0$); and the free energies
$\mathsf{F}^{(k)}(\tlim):=\log(\Zf^{(k)}(\tlim))$ satisfy
\begin{align}\label{F_equation_intro}
	d\mathsf{F}^{(k)}=
	dB_k+e^{\mathsf{F}^{(k-1)}-\mathsf{F}^{(k)}}d\tlim,
	\qquad k=1,\ldots,N
\end{align}
(with $\mathsf{F}^{(0)}\equiv-\infty$).
In \S \ref{ssub:informal_argument_for_convergence}
below we present an empiric argument
why the position of the $k$-th
particle under the $q$-PushTASEP converges 
(as $q\nearrow1$ under the scaling \eqref{q_to_1_scaling})
to the free energy $\mathsf{F}^{(k)}(\tlim)$.
The Brownian part in the right-hand side of \eqref{F_equation_intro}
corresponds to independent jumps of particles in the 
$q$-PushTASEP, and the coefficient of $d\tlim$
represents the pushing. 
It is worth noting that under the scaling \eqref{q_to_1_scaling}, 
the evolution of $q$-TASEP is described by essentially the same
system of SDEs, see \S \ref{ssub:symmetry} below.

\paragraph{Acknowledgments.}

The authors would like to thank 
Ivan Corwin, Vadim Gorin, and Sergey Fomin
for helpful discussions. We are also grateful to 
Neil O'Connell for useful remarks.
AB~was partially supported by the NSF
grant DMS-1056390.
LP~was partially supported by 
the RFBR-CNRS grants 10-01-93114 and 11-01-93105.


\section
{Markov dynamics preserving Gibbs measures. General formalism} 
\label{sec:markov_dynamics_preserving_gibbs_measures_general_formalism}

\subsection{Gibbs measures} 
\label{sub:gibbs_measures}

Let $\s_1,\ldots,\s_N$ be discrete countable sets, and assume that we have \emph{stochastic links} $\La^{N}_{N-1},\ldots,\La^{2}_{1}$ between them:
\begin{align*}
	\La^{k}_{k-1}\colon\s_k\times\s_{k-1}\to[0,1],
	\qquad
	\sum_{x_{k-1}\in\s_{k-1}}
	\La^{k}_{k-1}(x_k,x_{k-1})=1
\end{align*}
for any $x_k\in\s_k$, where $k=2,\ldots,N$.

Define the \emph{state space}
\begin{align}\label{state_space}
	\bs^{(N)}:=
	\bigg\{
	\mathbf{X}_N=(x_1,\ldots,x_N)\in
	\s_1\times \ldots\times\s_N\colon
	\prod_{k=2}^{N}\La^{k}_{k-1}(x_k,x_{k-1})\ne0
	\bigg\}.
\end{align}
We will also use the notation $\mathbf{X}_k:=(x_1,\ldots,x_k)$, and, more generally, $\mathbf{X}_{a;b}:=(x_a,\ldots,x_b)$ for any $1\le a\le b\le N$.

\begin{definition}\label{def:Gibbs}
	We say that a (nonnegative) probability measure $\bm^{(N)}$ on $\bs^{(N)}$ is \emph{Gibbs} if it can be written in the form
	\begin{align}
		\bm^{(N)}(\mathbf{X}_N)
		=
		m_N(x_N)
		\La^{N}_{N-1}(x_N,x_{N-1})
		\ldots
		\La^{2}_{1}(x_2,x_1),\quad 
		\mathbf{X}_N\in\bs^{(N)},
		\label{m_Gibbs}
	\end{align}
	where $m_N$ is some probability measure on the last set $\s_N$. 
\end{definition}

We emphasize that the definition of the Gibbs property relies on the stochastic links $\La^{k}_{k-1}$. A Gibbs measure is completely determined by its projection $m_N$ onto the last space $\s_N$: 
\begin{align}\label{sum_over_x1_N-1}
	\sum_{\mathbf{X}_{N-1}\in\bs^{(N-1)}}\bm^{(N)}(\mathbf{X}_N)=m_N(x_N).
\end{align}
That is, according to \eqref{m_Gibbs}, to obtain the measure $\bm^{(N)}$, one considers the stochastic evolution the measure $m_N$ on $\s_N$ under the sequence of stochastic links $\La^{N}_{N-1},\ldots,\La^{2}_{1}$.

\begin{remark}\label{rmk:zero_outside_state_space}
	For convenience, we will always assume that $\bm^{(N)}(\mathbf{X}_N)=0$ if $\mathbf{X}_N=(x_1,\ldots,x_N)\in\s_1\times \ldots\times\s_N\setminus\bs^{(N)}$.
\end{remark}

The conditional distribution of $x_1,\ldots,x_k$ given that $x_{k+1},\ldots,x_{N}$ are fixed, is readily seen to be
\begin{align}&
	\bm^{(N)}
	(\mathbf{X}_k\,|\, \mathbf{X}_{k+1;N})
	\nonumber
	=\frac{\bm^{(N)}(\mathbf{X}_N)}
	{\sum_{\mathbf{Y}_{k}\in\bs^{(k)}}
	\bm^{(N)}(\mathbf{Y}_{k},
	\mathbf{X}_{k+1;N})}\\&
	\label{conditional_distribution_Gibbs}
	\hspace{90pt}=
	\frac{m_N(x_N)
	\La^{N}_{N-1}(x_N,x_{N-1})
	\ldots
	\La^{2}_{1}(x_2,x_1)}
	{m_N(x_N)
	\La^{N}_{N-1}(x_N,x_{N-1})
	\ldots
	\La^{k+2}_{k+1}(x_{k+2},x_{k+1})}
	\\&
	\hspace{90pt}=
	\La^{k+1}_{k}(x_{k+1},x_k)
	\ldots 
	\La^{2}_{1}(x_{2},x_1).
	\nonumber
\end{align}
In particular, the conditional distribution of $x_1,\ldots,x_{N-1}$ given that $x_N$ is fixed, is simply $\La^{N}_{N-1}(x_{N},x_{N-1})\ldots \La^{2}_{1}(x_{2},x_1)$. That is, under any Gibbs measure $\bm^{(N)}$, the conditional distribution of several first components of $\mathbf{X}_N\in\bs^{(N)}$ given that the remaining components are fixed, \emph{does not depend} on the measure $\bm^{(N)}$. 

In a certain group-theoretic context, Gibbs measures have
also been 
called \emph{central}, cf. \cite[\S8]{BorodinOlshanski2010GTs}.


\subsection{Sequential update dynamics in discrete time} 
\label{sub:sequential_update_dynamics_in_discrete_time}

Our aim now is to define a certain class of (discrete-time) Markov chains on the state space $\bs^{(N)}$ which act `naturally' on Gibbs measures on this space. 

Let $\bp^{(N)}$ be a $\bs^{(N)}\times\bs^{(N)}$ stochastic matrix:
\begin{align}\label{Markov_negative_boldP}
	\bp^{(N)}\colon\bs^{(N)}\times\bs^{(N)}\to[0,1],\qquad 
	\sum_{\mathbf{Y}_{N}\in\bs^{(N)}}\bp^{(N)}(\mathbf{X}_N,\mathbf{Y}_{N})=1 
\end{align}
for any $\mathbf{X}_N\in\bs^{(N)}$. We will regard $\bp^{(N)}$ as a one-step transition matrix for a discrete-time Markov chain on $\bs^{(N)}$. That is, $\bp^{(N)}(\mathbf{X}_N,\mathbf{Y}_N)$ is the probability that the next state of the chain is $\mathbf{Y}_N$ if its current state is $\mathbf{X}_N$.

\begin{remark}\label{rmk:zero_outside_state_space_P}
	It is convenient to set $\bp^{(N)}(\mathbf{X}_N,\mathbf{Y}_{N})=0$ if either $\mathbf{X}_N$ or $\mathbf{Y}_N$ belongs to $\s_1\times \ldots\s_N\setminus \bs^{(N)}$ (cf. Remark \ref{rmk:zero_outside_state_space}).
\end{remark}
Let us give the main definition of the present subsection:

\begin{definition}\label{def:coherent_discrete}
	A Markov chain $\bp^{(N)}$ is called a \emph{sequential update dynamics} (in discrete time) if it satisfies the following conditions:

	\textbf{1.} 
	$\bp^{(N)}(\mathbf{X}_N,\mathbf{Y}_N)$ can be factorized in the following way:\footnote{Here and below, by agreement, $x_0$ and $y_0$ will be empty arguments, i.e., we will sometimes write $U_1(x_1,y_1\,|\,x_0,y_0)=U_1(x_1,y_1)$. We will also assume $\La^{1}_{0}(x_1,x_0):=1$.}
	\begin{align}\label{sequential_update}
		\bp^{(N)}(\mathbf{X}_N,
		\mathbf{Y}_N)=
		U_{1}(x_1,y_1)
		U_{2}(x_2,y_2\,|\, x_1,y_1)
		\ldots
		U_{N}(x_N,y_N\,|\, x_{N-1},y_{N-1}),
	\end{align}
	for any $\mathbf{X}_N, \mathbf{Y}_N\in\bs^{(N)}$. The functions $U_k$ are assumed to be nonnegative and satisfy
	\begin{align}\label{U_k_sum_to_one}
		\sum_{y_k\in\s_k} 
		U_k(x_k,y_k\,|\, x_{k-1},y_{k-1})=1,
	\end{align}
	for all $x_k\in \s_k$, $x_{k-1},y_{k-1}\in\s_{k-1}$, where $k=1,\ldots,N$.
	
	\textbf{2.} Define for every $k=1,\ldots,N$ the following $\s_k\times\s_k$ matrix:
	\begin{align}\label{Projection_P_k}
		P_k(x_k,y_k):=\sum_{\mathbf{X}_{k-1},\mathbf{Y}_{k-1}\in\bs^{(k-1)}}\bigg(
		\prod_{i=1}^{k}
		U_{i}(x_i,y_i\,|\, x_{i-1},y_{i-1})
		\La^{i}_{i-1}(x_i,x_{i-1})\bigg).
	\end{align}
	It can be readily verified that $P_k$ is a stochastic matrix, and thus defines a Markov chain on $\s_k$. We will refer to $P_k$ as to the (\emph{$k$th}) \emph{projection} of $\bp^{(N)}$: it shows how dynamics $\bp^{(N)}$ looks in restriction to $\s_k$ 
	(when acting on Gibbs measures).

	We require that these projections are compatible with the stochastic links $\La^{k}_{k-1}$ in the following sense:
	\begin{align}&
		\sum_{x_{k-1}\in\s_{k-1}}
		U_{k}(x_{k},y_{k}\,|\,x_{k-1},y_{k-1})
		\La^{k}_{k-1}(x_{k},x_{k-1})
		P_{k-1}(x_{k-1},y_{k-1})
		=
		P_k(x_k,y_k)\La^{k}_{k-1}(y_k,y_{k-1})
		\label{coherency_dynamics}
	\end{align}
	for every $k=1,\ldots,N$, $y_{k-1}\in\s_{k-1}$, 
	and $x_k,y_k\in\s_k$ subject 
	to the condition
	$\La^{k}_{k-1}(y_k,y_{k-1})\ne0$.
\end{definition}

This definition of course relies on our stochastic links $\La^{k}_{k-1}$, $k=2,\ldots,N$ (which are assumed to be fixed). Identity \eqref{coherency_dynamics} is in fact a refinement of a natural commutation relation between the projections $P_k$ and the stochastic links, see \eqref{commutation_relations_P_k} below.

\begin{comment}\label{comm:sequential_update} 
	Property 1 in Definition \ref{def:coherent_discrete} can be interpreted in the following way. Starting from $\mathbf{X}_N=(x_1,\ldots,x_N)$, the chain $\bp^{(N)}$ first chooses $y_1\in\s_1$ at random according to the distribution $U_1(x_1,y_1)$. Then, having determined $x_1$ and $y_1$, it samples random $y_2\in\s_2$ according to the distribution $U_2(x_2,y_2\,|\, x_1,y_1)$ (note that the choice of $y_2$ depends on both the previous state $x_1$ and the new state $y_1$ on the first level $\s_1$). And so on, up to the new $N$th state $y_N\in\s_N$. The new state of the chain $\bp^{(N)}$ at the next discrete-time moment is $\mathbf{Y}_N=(y_1,\ldots,y_N)$.

	We see that the function $U_k(x_k,y_k\,|\, x_{k-1},y_{k-1})$ may be interpreted as the conditional probability distribution of the new state $y_k\in\s_k$ given the old state $x_k$ and conditioned on the event that on $\s_{k-1}$ the chain $\bp^{(N)}$ took $x_{k-1}$ to $y_{k-1}$. Hence the name ``sequential update'' for $\bp^{(N)}$.
\end{comment}

We see that under a sequential update dynamics $\bp^{(N)}$, the evolution at every level $\s_k$ is independent of what happens at higher levels $\s_{k+1},\ldots,\s_N$. Thus, it is possible to define \emph{truncations} $\bp^{(k)}$ on $\bs^{(k)}\times\bs^{(k)}$ by
\begin{align*}
	\bp^{(k)}(\mathbf{X}_k;
	\mathbf{Y}_{k})
	:=
	\prod_{i=1}^{k}
	U_i(x_i,y_i\,|\,x_{i-1},y_{i-1}),
\end{align*}
where $k=1,\ldots,N$. Each $\bp^{(k)}$ is itself a sequential update Markov dynamics on the truncated space $\bs^{(k)}$.

\begin{proposition}\label{prop:sequential_properties}
	Let $\bp^{(N)}$ be a sequential update dynamics.
	
	{\rm{}\bf{}1.\/} Projections $P_k$ of $\bp^{(N)}$ commute with the stochastic links in the following sense (written in matrix product notation):
	\begin{align}\label{commutation_relations_P_k}
		\La^{k}_{k-1}P_{k-1}=P_k\La^{k}_{k-1},\qquad
		k=2,\ldots,N.
	\end{align}

	{\rm{}\bf{}2.\/} Each truncation $\bp^{(k)}$, $k=1,\ldots,N$, preserves the class of Gibbs measures on $\bs^{(k)}$. In more detail, let $m_k$ be any probability measure on $\s_k$, and $\bm^{(k)}$ be the corresponding Gibbs measure on $\bs^{(k)}$ (see Definition~\ref{def:Gibbs}). Let $m_k'=m_kP_k$ be the evolution of $m_k$ under one step of the $k$th projection of~$\bp^{(N)}$:
	\begin{align*}
		m_k'(y_k):=\sum_{x_k\in\s_k}m_k(x_k)P_k(x_k,y_k),\qquad
		y_k\in\s_k.
	\end{align*}
	Let $\bm'^{(k)}$ be the Gibbs measure on $\bs^{(k)}$ corresponding to $m_k'$. Then $\bm'^{(k)}$ coincides with the evolution $\bm^{(k)}\bp^{(k)}$ of $\bm^{(k)}$ under one step of the chain $\bp^{(k)}$.

	{\rm{}\bf{}3.\/} Let $\bp^{(N)}$ be
	of the form \eqref{sequential_update}.
	Then, modulo \eqref{commutation_relations_P_k}, 
	the above condition \eqref{coherency_dynamics}
	built into Definition \ref{def:coherent_discrete}
	is equivalent to the second claim.
\end{proposition}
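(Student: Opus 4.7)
My plan is to treat the three claims in order, since each reduces by direct manipulation to the factorization \eqref{sequential_update} and the coherency identity \eqref{coherency_dynamics}. Throughout I would tacitly use the convention that $U_k(x_k,y_k\mid x_{k-1},y_{k-1})=0$ whenever $\La^{k}_{k-1}(y_k,y_{k-1})=0$, which is automatic from Remark \ref{rmk:zero_outside_state_space_P} applied to the factorization \eqref{sequential_update}.

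For claim \textbf{1}, I would simply sum both sides of \eqref{coherency_dynamics} over $y_k\in\s_k$. On the left side, the normalization \eqref{U_k_sum_to_one} eliminates the $U_k$ factor, leaving $(\La^{k}_{k-1}P_{k-1})(x_k,y_{k-1})$. On the right side one reads off the matrix entry $(P_k\La^{k}_{k-1})(x_k,y_{k-1})$, and this yields \eqref{commutation_relations_P_k}.

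The main work is in claim \textbf{2}, which I would prove by induction on $k$, the case $k=1$ being tautological since $P_1=U_1$ and $\bm^{(1)}=m_1$. For the inductive step, I expand $(\bm^{(k)}\bp^{(k)})(\mathbf{Y}_k)$ using \eqref{sequential_update} and \eqref{m_Gibbs}, and isolate the factors involving $x_1,\dots,x_{k-2}$. Applying the inductive hypothesis to the point mass $m_{k-1}=\delta_{x_{k-1}}$ on $\s_{k-1}$ produces the key identity
\begin{align*}
	\sum_{x_1,\dots,x_{k-2}}\prod_{i=2}^{k-1}\La^{i}_{i-1}(x_i,x_{i-1})\prod_{i=1}^{k-1}U_i(x_i,y_i\mid x_{i-1},y_{i-1})=P_{k-1}(x_{k-1},y_{k-1})\prod_{i=2}^{k-1}\La^{i}_{i-1}(y_i,y_{i-1}).
\end{align*}
Substituting this partial sum into $(\bm^{(k)}\bp^{(k)})(\mathbf{Y}_k)$ and then invoking \eqref{coherency_dynamics} to collapse the remaining sum over $x_{k-1}$ reduces the expression directly to $m_k'(y_k)\prod_{i=2}^{k}\La^{i}_{i-1}(y_i,y_{i-1})$, which is exactly $\bm'^{(k)}(\mathbf{Y}_k)$.

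Claim \textbf{3} then follows by observing that the computation just described is reversible. Granting Gibbs preservation at depth $k-1$ as an inductive assumption, requiring $(\bm^{(k)}\bp^{(k)})(\mathbf{Y}_k)=\bm'^{(k)}(\mathbf{Y}_k)$ for every top-level measure $m_k$ and every $\mathbf{Y}_k\in\bs^{(k)}$ translates, after the reduction just described (in particular, after taking $m_k=\delta_{x_k}$ to strip the outer sum), into precisely the equality \eqref{coherency_dynamics} at level $k$. Running this equivalence upward from $k=1$ shows that \eqref{coherency_dynamics} is equivalent to claim \textbf{2}, while \eqref{commutation_relations_P_k} is a consequence of either formulation and hence may be freely assumed as background. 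The only subtle point -- and the reason \eqref{coherency_dynamics} carries the qualifier $\La^{k}_{k-1}(y_k,y_{k-1})\ne0$ -- is that cases where $\La^{k}_{k-1}(y_k,y_{k-1})=0$ are absorbed by the vanishing convention on $U_k$ flagged at the outset, so no additional content is present on those inputs.
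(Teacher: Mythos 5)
Your proposal is correct and follows essentially the same route as the paper: claim 1 by summing \eqref{coherency_dynamics} over $y_k$ via \eqref{U_k_sum_to_one}, claim 2 by induction on $k$ reducing to delta measures and using the level-$(k-1)$ statement to sum over $x_1,\ldots,x_{k-2}$ so that the residual identity is exactly \eqref{coherency_dynamics}, and claim 3 by noting the computation reverses. Your explicit handling of the $\La^{k}_{k-1}(y_k,y_{k-1})=0$ convention is a harmless elaboration of a point the paper leaves implicit.
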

\begin{proof}
	1. Sum \eqref{coherency_dynamics} over $y_k\in\s_k$ using \eqref{U_k_sum_to_one}.

	2. We argue by induction on $k$. It suffices to let $m_k$ be the delta-measure on $\s_k$ supported by some point $x_k\in\s_k$; the corresponding Gibbs measure on $\bs^{(k)}$ is given by the product
	$\La^{k}_{k-1}(x_{k},x_{k-1})\ldots \La^{2}_{1}(x_{2},x_{1})$. We then need to show that
	\begin{align*}
		&
		\sum_{\mathbf{X}_{k-1}\in\bs^{(k-1)}}
		\bigg(\prod_{i=1}^{k}\La^{i}_{i-1}(x_i,x_{i-1})
		U_i(x_i,y_i\,|\,x_{i-1},y_{i-1})\bigg)
		=P_{k}(x_k,y_k)
		\La^{k}_{k-1}(y_{k},y_{k-1})\ldots \La^{2}_{1}(y_{2},y_{1}).
	\end{align*}
	By using the same property for $k-1$, we can sum over $x_1,\ldots,x_{k-2}$ in the left-hand side. The fact that the result will be equal to the right-hand side is equivalent to condition
	\eqref{coherency_dynamics}.

	3. The third claim readily follows from the previous
	computation.
\end{proof}

It is often convenient to specify projections $P_k$, $k=1,\ldots,N$, first (we will sometimes call them \emph{univariate dynamics}), and then construct \emph{multivariate dynamics} $\bp^{(N)}$ with these given projections. Proposition \ref{prop:sequential_properties}.1 shows that these pre-specified projections $P_k$ must commute with the stochastic links $\La^{k}_{k-1}$ in the sense of \eqref{commutation_relations_P_k} in order for the problem of constructing $\bp^{(N)}$ to have a solution.

Next, recall that \eqref{coherency_dynamics} may be viewed as a refinement of commutation relations \eqref{commutation_relations_P_k} satisfied by these fixed projections $P_k$. We note that different refinements of \eqref{commutation_relations_P_k} correspond to different dynamics $\bp^{(N)}$. All such matrices $\bp^{(N)}$ act on Gibbs measures on $\bs^{(N)}$ in the same way. In the next subsection we consider a particular example of a sequential update dynamics which can be defined on our abstract level.



\subsection{Example: Diaconis--Fill type dynamics} 
\label{sub:example_diaconis_fill_dynamics}

Let Markov chains $P_1,\ldots,P_N$ on $\s_1,\ldots,\s_N$, respectively, be given. Suppose that commutation relations \eqref{commutation_relations_P_k} hold. The simplest example of a sequential update dynamics $\bp^{(N)}$ having projections $P_1,\ldots,P_N$ can be constructed by requiring that the conditional distributions $U_{k}(x_{k},y_{k}\,|\,x_{k-1},y_{k-1})$ do not depend on the previous states $x_{k-1}$. 
Then \eqref{coherency_dynamics} immediately implies that 
\begin{align}
	U_{k}(x_{k},y_{k}\,|\,y_{k-1})&=
	\frac{P_k(x_k,y_k)\La^{k}_{k-1}(y_k,y_{k-1})}
	{\sum_{x_{k-1}\in\s_{k-1}}
	\La^{k}_{k-1}(x_{k},x_{k-1})
	P_{k-1}(x_{k-1},y_{k-1})}
	\label{DiaconisFill_dynamics}
	\\&=
	\frac{P_k(x_k,y_k)\La^{k}_{k-1}(y_k,y_{k-1})}
	{\sum_{y'_{k}\in\s_{k}}
	P_{k}(x_{k},y'_{k})
	\La^{k}_{k-1}(y'_{k},y_{k-1})},
	\nonumber
\end{align}
if the denominator $\sum_{x_{k-1}}
\La^{k}_{k-1}(x_{k},x_{k-1})P_{k-1}(x_{k-1},y_{k-1})$ is nonzero, and~0 otherwise (the last equality is due to \eqref{commutation_relations_P_k}).

The definition and a few examples for $N=2$ were given 
by Diaconis and Fill \cite{DiaconisFill1990}. For general $N$ such chains in various settings were studied in, e.g., 
\cite{BorFerr2008DF}
(in particular, see \S 2), 
\cite{Borodin2010Schur}, and
\cite{BorodinCorwin2011Macdonald}.


\subsection{Sequential update dynamics in continuous time} 
\label{sub:sequential_update_dynamics_in_continuous_time}

Let us present continuous-time analogues of the previous constructions. 

We will consider Markov semigroups $(\bp^{(N)}(\tau))_{\tau\ge0}$ made of stochastic matrices $\bp^{(N)}(\tau)$ satisfying \eqref{Markov_negative_boldP} (with conventions of Remark \ref{rmk:zero_outside_state_space_P}) for every fixed $\tau$.\footnote{To avoid confusion with the Macdonald parameter $t$, throughout the paper we will denote time variable by $\tau$.} The term ``semigroup'' means that these matrices satisfy the Chap\-man-Kolmogorov equations $\bp^{(N)}(\tau_1+\tau_2)=\bp^{(N)}(\tau_1)\bp^{(N)}(\tau_2)$. Each matrix element $\bp^{(N)}(\tau;\mathbf{X}_N,\mathbf{Y}_N)$ represents the probability that the Markov process is at state $\mathbf{Y}_N$ after time $\tau$ if its current state is $\mathbf{X}_N$.

Assume that there exists a matrix of 
\emph{jump rates}\footnote{In
other words, a \emph{Markov generator} of 
the semigroup $(\bp^{(N)}(\tau))_{\tau\ge0}$.}
$\bq^{(N)}$, so that for any 
$\mathbf{X}_N,\mathbf{Y}_N\in\bs^{(N)}$ one 
has\footnote{Here and below $1_{\{\cdot\cdot\cdot\}}$ 
denotes the indicator function of a set.}
\begin{align}\label{bp_bq_relation}
	\bp^{(N)}(\tau;\mathbf{X}_N,\mathbf{Y}_N)
	=1_{\mathbf{X}_N=\mathbf{Y}_N}+
	\bq^{(N)}(\mathbf{X}_N,\mathbf{Y}_N)\cdot \tau+o(\tau), \quad \tau\to0.
\end{align}
We will, moreover, assume that the diagonal elements of $\bq^{(N)}$ satisfy
\begin{align}\label{bq_diagonal_elements}
	\bq^{(N)}(\mathbf{X}_N,
	\mathbf{X}_N)=
	-\sum_{\mathbf{Y}_N\ne \mathbf{X}_N}
	\bq^{(N)}(\mathbf{X}_N,
	\mathbf{Y}_N), \qquad \mathbf{X}_N\in\bs^{(N)}.
\end{align}
Since $\bp^{(N)}(\tau)$ is a Markov semigroup, the off-diagonal elements of $\bq^{(N)}$ are nonnegative.

\begin{condition}\label{cond:finiteness_bq}
	We suppose that for every $\mathbf{X}_N$, only finitely many of the numbers $\bq^{(N)}(\mathbf{X}_N,\mathbf{Y}_N)$ are nonzero, and that they are all uniformly bounded over $\bs^{(N)}$. This implies that the operator $\bq^{(N)}$ is bounded in the Banach space of bounded functions on $\bs^{(N)}$ equipped with the supremum norm. Thus, the matrix of jump rates $\bq^{(N)}$ uniquely defines a Feller Markov process $(\bp^{(N)}(\tau))_{\tau\ge0}$ on $\bs^{(N)}$ that has $\bq^{(N)}$ as its generator, cf. \cite{Liggett2010}: $\bp^{(N)}(\tau)=\exp(\tau \cdot \bq^{(N)})$
    (convergence in operator norm).
\end{condition}

Now we are able to give the definition of a sequential update dynamics in the continuous-time setting (cf. Definition \ref{def:coherent_discrete}):

\begin{definition}
	\label{def:coherent_continuous}
	A Markov semigroup $(\bp^{(N)}(\tau))_{\tau\ge0}$ as above is called a \emph{conti\-nuous-time sequential update dynamics} if it satisfies:

	\textbf{1.} 
	The off-diagonal matrix elements of $\bq^{(N)}$ have the form
	(for some functions $V_k$ and $W_k$, $k=1,\ldots,N$):
	\begin{align}\label{bq_N_product_form}
		\bq^{(N)}(\mathbf{X}_N,\mathbf{Y}_N)=
		W_k(x_k,y_k\,|\,x_{k-1})
		\prod_{i=k+1}^{N}V_{i}(x_i,y_i\,|\,x_{i-1},y_{i-1})
	\end{align}
	if $x_j=y_j$ for all $j\le k-1$, and $x_k\ne y_k$ (the diagonal elements are given by \eqref{bq_diagonal_elements}). The functions $V_k$ and $W_k$ are assumed to satisfy (for all $k$) the following:
	\begin{eqnarray}\label{V_conditions_P1'_1}
		\sum_{y_{k}\in\s_k}V_k(x_k,y_k\,|\,x_{k-1},y_{k-1})&=&1,
		\\
		\label{V_conditions_P1'_2}
		V_k(x_k,y_k\,|\,x_{k-1},x_{k-1})
		&=&
		1_{x_k=y_k},
		\\
		\label{W_conditions_P1'}
		\sum_{y_k\ne x_k}W_k(x_k,y_k\,|\, x_{k-1})
		&=&-W_k(x_k,x_k\,|\, x_{k-1}).
	\end{eqnarray}
	We assume that $V_k(x_k,y_k\,|\,x_{k-1},y_{k-1})$ and $W_k(x_k,y_k\,|\, x_{k-1})$ are uniformly boun\-ded, and that in both sums in \eqref{V_conditions_P1'_1} and \eqref{W_conditions_P1'} only finitely many summands are nonzero. This ensures that $\bq^{(N)}$ satisfies the finiteness Condition~\ref{cond:finiteness_bq}. Moreover, all $V_k$'s and off-diagonal $W_k$'s (i.e., $W_k(x_k,y_k\,|\, x_{k-1})$ with $x_k\ne y_k$) must be nonnegative.

	\textbf{2.} 
	Define for every $m=1,\ldots,N$ the \emph{truncations} (we list only the off-diagonal elements)
	\begin{align}\label{Truncation_Q_k}
		\bq^{(m)}(\mathbf{X}_{m};
		\mathbf{Y}_{m}):=
		W_k(x_k,y_k\,|\,x_{k-1})
		\prod_{i=k+1}^{m}V_{i}(x_i,y_i\,|\,x_{i-1},y_{i-1})
	\end{align}
	if $x_j=y_j$ for all $j\le k-1$, and $x_k\ne y_k$, and the \emph{projections} to $\s_m$ (when acting on Gibbs measures):
	\begin{align}\label{Projection_Q_k}
		Q_m(x_m,y_m):=\sum_{\mathbf{X}_{m-1},
		\mathbf{Y}_{m-1}\in\bs^{(m-1)}}
		\bq^{(m)}(\mathbf{X}_{m};
		\mathbf{Y}_{m})
		\prod_{i=1}^{m}
		\La^{i}_{i-1}(x_i,x_{i-1}).
	\end{align}
	We require that for any $k=1,\ldots,N$, $x_{k},y_{k}\in\s_k$, 
	and $y_{k-1}\in\s_{k-1}$ subject to the condition
	$\La^{k}_{k-1}(y_k,y_{k-1})\ne0$, 
	the following identity must hold:
	\begin{align}
		&
		\sum_{x_{k-1}\in\s_{k-1}}
		V_k(x_k,y_k\,|\, x_{k-1},y_{k-1})
		\La^{k}_{k-1}(x_k,x_{k-1})
		Q_{k-1}(x_{k-1},y_{k-1})
		\label{P2'}
		\\&
		\hspace{85pt}
		+
		W_k(x_k,y_k\,|\,y_{k-1})\La^{k}_{k-1}(x_k,y_{k-1})
		=
		Q_k(x_k,y_k)\La^{k}_{k-1}(y_k,y_{k-1}).
		\nonumber
	\end{align}
\end{definition}
Similarly to \eqref{coherency_dynamics}, identity \eqref{P2'} is also a refinement of certain natural commutation relations. Namely, in our continuous-time case, summing \eqref{P2'} over $y_k$, one gets commutation relations between projections $Q_k$ \eqref{Projection_Q_k} and our stochastic links, see \eqref{commutation_relations_Q_k} below.

We will call $W_k$'s the \emph{rates of independent jumps}, 
and $V_k$'s will be referred to as the 
\emph{probabilities of triggered moves}.
We will refer to independent jumps simply as to \emph{jumps}.
By a \emph{move} we will mean either a jump, or a triggered move.

Let us now discuss properties of projections
$Q_k$ and truncations $\bq^{(k)}$.

\begin{condition}\label{cond:finiteness}
	We suppose that for every $x_k$, only finitely many of the numbers $Q_k(x_k,y_k)$ are nonzero, and that they are all uniformly bounded over $\s_k$ (see also Remark~\ref{rmk:finiteness_Q_k} below).
\end{condition}
Condition \ref{cond:finiteness} implies that the matrix of jump rates $Q_k$ uniquely defines a Feller Markov process $(P_k(\tau))_{\tau\ge0}$ on $\s_k$ that has $Q_k$ as its generator \cite{Liggett2010}: $P_k(\tau)=\exp(\tau \cdot Q_k)$.
\begin{remark}\label{rmk:finiteness_Q_k}
	If for every $x_k\in\s_k$ the number of tuples $(x_1,\ldots,x_{k-1})$ such that $(x_1,\ldots,x_k)\in\bs^{(k)}$ is finite, then Condition \ref{cond:finiteness} follows from Condition \ref{cond:finiteness_bq} for $\bq^{(N)}$.
\end{remark}

\begin{proposition}
	\label{prop:sequential_properties_continuous}
	Let $(\bp^{(N)}(\tau))_{\tau\ge0}$ be a sequential update continuous-time dynamics.
	
	{\rm{}\bf{}1.\/} Projections $Q_k$ commute with the stochastic links:
	\begin{align}\label{commutation_relations_Q_k}
		\La^{k}_{k-1}Q_{k-1}=Q_k\La^{k}_{k-1},\qquad
		k=2,\ldots,N.
	\end{align}
	For every $\tau\ge0$, the transition matrices $P_k(\tau)$ commute with the links as well, i.e., they satisfy \eqref{commutation_relations_P_k}.
	
	{\rm{}\bf{}2.\/} Consider the $k$th truncation $\bp^{(k)}(\tau)=\exp(\tau\cdot \bq^{(k)})$, $k=1,\ldots,N$.\footnote{Here $\bq^{(k)}$ is defined in \eqref{Truncation_Q_k}. Its exponent $\bp^{(k)}(\tau)$ exists due to Condition \ref{cond:finiteness_bq}.} For any $\tau\ge0$, $\bp^{(k)}(\tau)$ preserves the class of (nonnegative) Gibbs measures on $\bs^{(k)}$ in the same sense as in Proposition \ref{prop:sequential_properties} (part 2).
\end{proposition}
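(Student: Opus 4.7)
The plan is to derive \eqref{commutation_relations_Q_k} by summing \eqref{P2'} over $y_k\in\s_k$. On the left-hand side, the first summand simplifies by \eqref{V_conditions_P1'_1} (which gives $\sum_{y_k}V_k=1$), leaving $\sum_{x_{k-1}}\La^k_{k-1}(x_k,x_{k-1})Q_{k-1}(x_{k-1},y_{k-1})$; the second summand vanishes by \eqref{W_conditions_P1'}. The right-hand side becomes $\sum_{y_k}Q_k(x_k,y_k)\La^k_{k-1}(y_k,y_{k-1})$. This is precisely $\La^k_{k-1}Q_{k-1}=Q_k\La^k_{k-1}$ in matrix form. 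Commutation of $P_k(\tau)=\exp(\tau Q_k)$ with the links then follows by iterating to $\La^k_{k-1}Q_{k-1}^n=Q_k^n\La^k_{k-1}$ and summing the exponential series, with convergence in operator norm from Condition \ref{cond:finiteness}.

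\textbf{Part 2, key recursion.} I would argue by induction on $k$. The base case $k=1$ is immediate since $\bs^{(1)}=\s_1$ and every probability measure is trivially Gibbs. The algebraic input driving the induction is the recursion
\begin{equation*}
\bq^{(k)}(\mathbf{X}_k,\mathbf{Y}_k)=\bq^{(k-1)}(\mathbf{X}_{k-1},\mathbf{Y}_{k-1})\,V_k(x_k,y_k\,|\,x_{k-1},y_{k-1})+1_{\mathbf{X}_{k-1}=\mathbf{Y}_{k-1}}\,W_k(x_k,y_k\,|\,x_{k-1}),
\end{equation*}
which I would verify case by case from \eqref{Truncation_Q_k} and \eqref{V_conditions_P1'_2}: if the first index of disagreement between $\mathbf{X}_k$ and $\mathbf{Y}_k$ is some $j<k$, the indicator term vanishes and the first $k-1$ factors of \eqref{Truncation_Q_k} recombine into $\bq^{(k-1)}$; if the only disagreement is at level $k$, then $V_k(x_k,y_k\,|\,x_{k-1},x_{k-1})=0$ and only $W_k$ survives; the diagonal case collapses via $V_k(x_k,x_k\,|\,x_{k-1},x_{k-1})=1$.

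\textbf{Part 2, induction and exponentiation.} Plugging the recursion into $\sum_{\mathbf{X}_k}\bm^{(k)}(\mathbf{X}_k)\bq^{(k)}(\mathbf{X}_k,\mathbf{Y}_k)$ and factoring $\bm^{(k)}(\mathbf{X}_k)=m_k(x_k)\La^k_{k-1}(x_k,x_{k-1})\prod_{i=2}^{k-1}\La^i_{i-1}(x_i,x_{i-1})$, the $V_k$-contribution becomes a sum over $x_k$ and $x_{k-1}$ in which the inner summation over $\mathbf{X}_{k-2}$ is handled by the inductive hypothesis applied to the point mass $\delta_{x_{k-1}}$, producing the factor $Q_{k-1}(x_{k-1},y_{k-1})\prod_{i=2}^{k-1}\La^i_{i-1}(y_i,y_{i-1})$; the $W_k$-contribution pins $\mathbf{X}_{k-1}=\mathbf{Y}_{k-1}$ and contributes $m_k(x_k)\La^k_{k-1}(x_k,y_{k-1})W_k(x_k,y_k\,|\,y_{k-1})$ times the same Gibbs tail. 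Summing over $x_{k-1}$ inside the $V_k$ term and comparing with \eqref{P2'} (pointwise in $x_k$) collapses the bracket to $Q_k(x_k,y_k)\La^k_{k-1}(y_k,y_{k-1})$, so that $(\bm^{(k)}\bq^{(k)})(\mathbf{Y}_k)=(m_kQ_k)(y_k)\prod_{i=2}^{k}\La^i_{i-1}(y_i,y_{i-1})$, which is the Gibbs extension of $m_kQ_k$. Thus (signed) Gibbs measures form a $\bq^{(k)}$-invariant affine subspace on which $\bq^{(k)}$ acts as $Q_k$ on the top-level marginal; exponentiating (convergence ensured by Condition \ref{cond:finiteness_bq}) gives that $\bp^{(k)}(\tau)$ preserves Gibbs measures and acts there as $P_k(\tau)$.

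The main obstacle is the bookkeeping in the inductive step: decomposing $\bq^{(k)}$ so that the hypothesis applies cleanly to $\delta_{x_{k-1}}$, and then producing the left-hand side of \eqref{P2'} in exactly the needed shape. The convention that $V_k$, $W_k$, and hence $\bq^{(k)}$ vanish as soon as they would push the state out of $\bs^{(k)}$ must be used throughout, so that \eqref{P2'}, only stated for $\La^k_{k-1}(y_k,y_{k-1})\ne 0$, extends by zero to the remaining $y_k$.
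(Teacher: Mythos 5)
Your proposal is correct and follows essentially the same route as the paper: part 1 by summing \eqref{P2'} over $y_k$ using \eqref{V_conditions_P1'_1} and \eqref{W_conditions_P1'} and then exponentiating, and part 2 by the recursion \eqref{Q_k_through_Q_k-1} for $\bq^{(k)}$ in terms of $\bq^{(k-1)}$, induction on $k$ reducing the infinitesimal Gibbs-preservation identity to \eqref{P2'}, and passing to powers and the exponential series. The only cosmetic difference is that the paper verifies the identity for each power $(\bq^{(k)})^m$ explicitly, where you phrase the same iteration as an intertwining/invariant-subspace statement.
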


\begin{proof}
	1. One can get \eqref{commutation_relations_Q_k} by summing \eqref{P2'} over $y_k\in\s_k$. Hence, $\La^{k}_{k-1}(Q_{k-1})^{m}=(Q_k)^{m}\La^{k}_{k-1}$ for every $m=0,1,2,\ldots$. Due to Condition~\ref{cond:finiteness}, we have $P_k(\tau)=\exp(\tau\cdot Q_k)=1+\tau Q_k+\tau^{2}Q_k^{2}/2+\ldots$, and the series converges in the operator norm corresponding to the Banach space of bounded functions on $\s_k$. This implies \eqref{commutation_relations_P_k} for $P_k(\tau)$'s.

	2. We argue similarly to the proof of Proposition \ref{prop:sequential_properties}. We need to establish
	\begin{align}\label{P_big_sum_prop_Q_proof}
		&
		\sum_{\mathbf{X}_{k-1}}
		\La^{k}_{k-1}(x_{k},x_{k-1})\ldots \La^{2}_{1}(x_{2},x_{1})
		\bp^{(k)}(\tau;\mathbf{X}_k;\mathbf{Y}_{k})
		=P_{k}(\tau;x_k,y_k)
		\La^{k}_{k-1}(y_{k},y_{k-1})\ldots \La^{2}_{1}(y_{2},y_{1}).
	\end{align}

	Let us first show (by induction on $k$) that an infinitesimal version of the above identity holds:
	\begin{align}
		&\label{Q_big_sum_prop_Q_proof}
		\sum_{\mathbf{X}_{k-1}}
		\La^{k}_{k-1}(x_{k},x_{k-1})\ldots \La^{2}_{1}(x_{2},x_{1})
		\bq^{(k)}(\mathbf{X}_k;\mathbf{Y}_{k})
		=Q_{k}(x_k,y_k)
		\La^{k}_{k-1}(y_{k},y_{k-1})\ldots \La^{2}_{1}(y_{2},y_{1}).
	\end{align} 
	Using the definition of $\bq^{(k)}$ \eqref{Truncation_Q_k}, we can express it through $\bq^{(k-1)}$ as
	\begin{align}&
		\label{Q_k_through_Q_k-1}
		\bq^{(k)}(\mathbf{X}_k;\mathbf{Y}_{k})
		=
		W_k(x_k,y_k\,|\,y_{k-1})
		1_{\mathbf{X}_{k-1}=\mathbf{Y}_{k-1}}
		+
		\bq^{(k-1)}(\mathbf{X}_{k-1};
		\mathbf{Y}_{k-1})
		V_k(x_k,y_k\,|\,x_{k-1},y_{k-1}).
	\end{align}
	Plugging this into \eqref{Q_big_sum_prop_Q_proof} and summing over $x_{1},\ldots,x_{k-2}$ (using the statement for $k-1$), one can see that the resulting identity is equivalent to \eqref{P2'}.

	Now let us show that \eqref{Q_big_sum_prop_Q_proof} holds also if we replace $\bq^{(k)}$ and $Q_k$ by their powers, $(\bq^{(k)})^{m}$ and $(Q_k)^{m}$, respectively, where $m=0,1,2,\ldots$. 

	Consider the case $m=2$. Let $\mathbf{Z}_{k}\in\bs^{(k)}$, and let us multiply both sides of \eqref{Q_big_sum_prop_Q_proof} by $Q_k(z_k,x_k)$ and sum over $x_k\in\s_k$. In the right-hand side we would have the desired $(Q_k)^{2}$, and in the left-hand side, using \eqref{Q_big_sum_prop_Q_proof}, we obtain:
	\begin{align*}
		&
		\sum_{\mathbf{X}_k\in\bs^{(k)}}
		Q_k(z_k,x_k)
		\prod_{i=1}^{k}\La^{i}_{i-1}(x_i,x_{i-1})
		\bq^{(k)}(\mathbf{X}_k;\mathbf{Y}_{k})
		\\&\hspace{80pt}=
		\sum_{\mathbf{Z}_{k-1}\in\bs^{(k-1)}}\bigg(
		\prod_{i=1}^{k}\La^{i}_{i-1}(z_i,z_{i-1})
		\sum_{\mathbf{X}_k\in\bs^{(k)}}
		\bq^{(k)}(\mathbf{Z}_{k};\mathbf{X}_k)
		\bq^{(k)}(\mathbf{X}_k;
		\mathbf{Y}_{k})\bigg),
	\end{align*}
	which is the left-hand side of \eqref{Q_big_sum_prop_Q_proof} with $(\bq^{(k)})^{2}$. Similarly one shows that \eqref{Q_big_sum_prop_Q_proof} holds for every power $m=0,1,2,\ldots$. Now, using the fact that the exponential series for $\bp^{(k)}(\tau)$ and $P_k(\tau)$ converge, we may organize these identities for all powers $m$ into appropriate exponential series, and get \eqref{P_big_sum_prop_Q_proof}. This concludes the proof.
\end{proof}
\begin{remark}
	Identity \eqref{P_big_sum_prop_Q_proof} above can be alternatively proved in the following way. Using the backward Kolmogorov equation $dP_k(\tau)/dt=Q_k P_k(\tau)$, and similarly for $\bp^{(k)}$, with the help of \eqref{Q_big_sum_prop_Q_proof} it is possible to show that both sides of \eqref{P_big_sum_prop_Q_proof} solve the same Cauchy problem of the form 
	\begin{align*}
		\frac{d}{d\tau}F(\tau)=AF(\tau), \quad \tau>0,
		\qquad F(0){}={}\mbox{fixed vector}.
	\end{align*}
	Then it can be shown that such a Cauchy problem has a unique solution (e.g., see \cite[IX.1.3]{Kato1980}). For such an argument in a similar (but more involved) situation see \cite[\S6.3 and proof of Prop.~8.3]{BorodinOlshanski2010GTs}.
\end{remark}

Let us now emphasize several aspects of the continuous-time formalism just presented. We will employ this formalism by specifying univariate Markov jump processes $Q_k$ on $\s_k$ which commute with the stochastic links in the sense of \eqref{commutation_relations_Q_k}; then we will consider the problem of constructing multivariate dynamics $\bq^{(N)}$ on the state space $\bs^{(N)}$. Various multivariate dynamics correspond to various refinements of commutation relations \eqref{commutation_relations_Q_k}.

\begin{remark}\label{rmk:condition_xk_ne_yk}
	Assume that projections $Q_k$ commuting with the stochastic links in the sense of \eqref{commutation_relations_Q_k} are given. If condition \eqref{P2'} holds for every $y_{k-1}$ and $x_k\ne y_k$, then it also holds for $x_k=y_k$. Indeed, this follows from the fact that summing \eqref{P2'} over $y_k$ gives \eqref{commutation_relations_Q_k}. On the other hand, due to \eqref{V_conditions_P1'_2}, in identity \eqref{P2'} for $x_k\ne y_k$ the summation over $x_{k-1}$ may be carried over $x_{k-1}\ne y_{k-1}$, because the summand corresponding to $x_{k-1}=y_{k-1}$ vanishes. Thus, under \eqref{commutation_relations_Q_k} it is possible to rewrite \eqref{P2'} involving only the off-diagonal elements of $Q_{k}, Q_{k-1}$, as well as of $W_k(\cdot,\cdot\,|\, y_{k-1})$.
\end{remark}

\begin{remark}\label{rmk:Diaconis_Fill_later}
	Starting from given projections $Q_k$,
	it is also possible
	to introduce distinguished
	continuous-time dynamics which is similar
	to the discrete-time Diaconis--Fill type process 
	of \S \ref{sub:example_diaconis_fill_dynamics}.
	For a general construction, which is more involved than in 
	discrete time, we 
	refer to \cite[\S 8]{BorodinOlshanski2010GTs}. We will 
	consider 
	such dynamics in a 
	more concrete situation (when $\bs^{(N)}$ 
	consists of interlacing integer arrays) 
	later in \S 
	\ref{sub:example_push_block_process_on_interlacing_arrays}.
\end{remark}

In fact, all our considerations of \S \ref{sub:sequential_update_dynamics_in_discrete_time} and \S \ref{sub:sequential_update_dynamics_in_continuous_time} work in a slightly more general algebraic setting:

\begin{remark}\label{rmk:algebraic_sense}
	Observe that all definitions involving discrete-time objects $\bp^{(k)}$, $U_k$,  $P_k$ (\S \ref{sub:sequential_update_dynamics_in_discrete_time}), as well as their continuous-time counterparts $\bp^{(k)}(\tau)$, $\bq^{(k)}$, $V_k$, $W_k$, $P_k(\tau)$, $Q_k$ (\S \ref{sub:sequential_update_dynamics_in_continuous_time}) have purely linear-algebraic nature. Thus, for example, we may relax the condition that the matrix elements of $\bp^{(N)}$ \eqref{Markov_negative_boldP} are nonnegative, and formally speak about discrete-time multivariate `dynamics', meaning simply the corresponding matrix $\bp^{(N)}$. This is true in the continuous-time setting as well, as all properties and results discussed in \S \ref{sub:sequential_update_dynamics_in_continuous_time} do not require nonnegativity of the corresponding matrix elements.
	However, one does need to assume that the 
	denominators in the above definitions do not vanish.

	Though all univariate dynamics considered in the present paper are honest probabilistic objects, we will deal with more general multivariate `dynamics' which do not satisfy the 
	positivity assumption.
\end{remark}


\subsection{From discrete to continuous time} 
\label{sub:from_discrete_to_continuous_time}

Our Definition \ref{def:coherent_continuous} of a continuous-time sequential update dynamics may be read off the discrete-time case (Definition \ref{def:coherent_discrete}). This is similar to the classical fact that simple random walks in discrete time converge (under a suitable scaling) to Markov jump processes.

Let $\bp^{(N)}_{\varepsilon}$ be a sequential update discrete-time Markov dynamics as in Definition \ref{def:coherent_discrete}, and assume that it depends on a small parameter $\varepsilon$ in the following way (here $\mathbf{X}_N,\mathbf{Y}_N\in\bs^{(N)}$):
\begin{align}\label{P_N_eps}
	\bp^{(N)}_{\varepsilon}(\mathbf{X}_N, \mathbf{Y}_N)=
	1_{\mathbf{X}_N=\mathbf{Y}_N}+\varepsilon
	\cdot\bq^{(N)}(\mathbf{X}_N,\mathbf{Y}_N)+o(\varepsilon), \quad \varepsilon\to0.
\end{align}
Here $\bq^{(N)}$ is a jump rate matrix; we assume that is satisfies \eqref{bq_diagonal_elements} and Condition \ref{cond:finiteness_bq}. Then it is possible to define Markov transition matrices corresponding to the continuous-time setting:
\begin{align*}
	\bp^{(N)}(\tau):=\lim_{\varepsilon\to0}
	\left(\bp^{(N)}_{\varepsilon}\right)^{[\tau/\varepsilon]}.
\end{align*}
Clearly, $\bp^{(N)}(\tau)=\exp(\tau\cdot \bq^{(N)})=\mathbf{1}+\tau\cdot \bq^{(N)}+o(\tau)$, where $\mathbf{1}$ means the identity operator.

Moreover, assume that each $U_i=U_i^{(\varepsilon)}$ in \eqref{sequential_update} is differentiable  in $\varepsilon$:
\begin{align}\label{U_i_eps}
	U_i^{(\varepsilon)}
	(x_i,y_i\,|\,x_{i-1},y_{i-1})=
	V_i
	(x_i,y_i\,|\,x_{i-1},y_{i-1})+
	\varepsilon\cdot
	W_i
	(x_i,y_i\,|\,x_{i-1},y_{i-1})+o(\varepsilon),
\end{align}
as $\varepsilon\to0$.

\begin{proposition}
	The object $(\bp^{(N)}(\tau))_{\tau\ge0}$ defines a sequential update conti\-nuous-time Markov dynamics (Definition \ref{def:coherent_continuous}) corresponding to the functions $V_k$ and $W_k$ as in \eqref{U_i_eps}.
\end{proposition}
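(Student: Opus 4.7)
The plan is to Taylor-expand everything in sight in $\varepsilon$ and read off the three parts of Definition \ref{def:coherent_continuous} from the first-order terms. First, substitute \eqref{U_i_eps} into the product formula \eqref{sequential_update}:
\begin{align*}
	\bp^{(N)}_{\varepsilon}(\mathbf{X}_N,\mathbf{Y}_N)
	=\prod_{i=1}^{N}V_{i}(x_i,y_i\,|\,x_{i-1},y_{i-1})
	+\varepsilon\sum_{j=1}^{N}W_{j}(x_j,y_j\,|\,x_{j-1},y_{j-1})
	\prod_{i\ne j}V_{i}(x_i,y_i\,|\,x_{i-1},y_{i-1})+o(\varepsilon).
\end{align*}
Matching the $\varepsilon^{0}$ term with $1_{\mathbf{X}_N=\mathbf{Y}_N}$ from \eqref{P_N_eps} (and using that each sum $\sum_{y_i}V_i=1$, which is the $\varepsilon^{0}$ part of \eqref{U_k_sum_to_one}) forces $\prod_{i=1}^{N}V_i(x_i,y_i\,|\,x_{i-1},y_{i-1})=1_{\mathbf{X}_N=\mathbf{Y}_N}$. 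Specialising to $\mathbf{Y}_N=\mathbf{X}_N$ and recalling that each $V_i$ is nonnegative and bounded by~$1$, we conclude $V_{i}(x_i,y_i\,|\,x_{i-1},x_{i-1})=1_{x_i=y_i}$, which is \eqref{V_conditions_P1'_2}; \eqref{V_conditions_P1'_1} is the $\varepsilon^{0}$ part of \eqref{U_k_sum_to_one}.

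The key structural observation, extracted from the $\varepsilon^{1}$ term, is the following. Let $k$ be the smallest index with $x_k\ne y_k$. For any $j\ne k$, the product $\prod_{i\ne j}V_{i}$ contains the factor $V_{k}(x_k,y_k\,|\,x_{k-1},x_{k-1})=1_{x_k=y_k}=0$, so this contribution vanishes. For $j=k$ the factors with $i<k$ equal $V_{i}(x_i,x_i\,|\,x_{i-1},x_{i-1})=1$. Hence, for $\mathbf{X}_N\ne\mathbf{Y}_N$,
\begin{align*}
	\bq^{(N)}(\mathbf{X}_N,\mathbf{Y}_N)
	=W_{k}(x_k,y_k\,|\,x_{k-1},x_{k-1})\prod_{i>k}V_{i}(x_i,y_i\,|\,x_{i-1},y_{i-1}),
\end{align*}
which is exactly \eqref{bq_N_product_form} once we identify $W_k(x_k,y_k\,|\,x_{k-1})$ with $W_k(x_k,y_k\,|\,x_{k-1},x_{k-1})$. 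Condition \eqref{W_conditions_P1'} is then obtained by expanding \eqref{U_k_sum_to_one} to first order: the $\varepsilon^{1}$ part reads $\sum_{y_k}W_{k}(x_k,y_k\,|\,x_{k-1},y_{k-1})=0$. Uniform boundedness and finiteness of $V_k,W_k$ are immediate from the corresponding hypotheses on $U_k^{(\varepsilon)}$, which also yield Condition~\ref{cond:finiteness_bq}; the nonnegativity of off-diagonal $W_k$'s follows from that of $\bp^{(N)}_{\varepsilon}\ge0$ via \eqref{P_N_eps}. This completes Part~1 of Definition \ref{def:coherent_continuous}.

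For Part~2 (identity \eqref{P2'}) the plan is to expand the discrete-time coherency condition \eqref{coherency_dynamics} to first order. Applying the expansion of the previous paragraph to the truncation $\bp^{(k)}_{\varepsilon}=\prod_{i=1}^{k}U_i^{(\varepsilon)}$ (which is itself a sequential update dynamics, as noted after Definition~\ref{def:coherent_discrete}) gives $\bp^{(k)}_{\varepsilon}=\mathbf{1}+\varepsilon\,\bq^{(k)}+o(\varepsilon)$ with $\bq^{(k)}$ as in \eqref{Truncation_Q_k}, and therefore $P_{k}^{(\varepsilon)}=\mathbf{1}+\varepsilon\,Q_{k}+o(\varepsilon)$ with $Q_{k}$ as in \eqref{Projection_Q_k} (by summing \eqref{Projection_P_k} against the links). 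Substituting $U_k=V_k+\varepsilon W_k+o(\varepsilon)$ and $P_{k-1}^{(\varepsilon)}=\mathbf{1}+\varepsilon Q_{k-1}+o(\varepsilon)$, $P_{k}^{(\varepsilon)}=\mathbf{1}+\varepsilon Q_{k}+o(\varepsilon)$ into \eqref{coherency_dynamics}, the $\varepsilon^{0}$ terms on both sides reduce to $1_{x_k=y_k}\La^{k}_{k-1}(x_k,y_{k-1})$ (using \eqref{V_conditions_P1'_2}), while the $\varepsilon^{1}$ terms give precisely
\begin{align*}
	\sum_{x_{k-1}}V_k(x_k,y_k\,|\,x_{k-1},y_{k-1})\La^{k}_{k-1}(x_k,x_{k-1})Q_{k-1}(x_{k-1},y_{k-1})
	+W_k(x_k,y_k\,|\,y_{k-1})\La^{k}_{k-1}(x_k,y_{k-1})
	=Q_k(x_k,y_k)\La^{k}_{k-1}(y_k,y_{k-1}),
\end{align*}
which is \eqref{P2'}. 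The only step requiring real care is the combinatorial bookkeeping that isolates the smallest-index contribution to $\bq^{(N)}$; the rest is routine first-order Taylor expansion justified by the uniform boundedness assumptions.
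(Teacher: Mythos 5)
Your proof is correct and follows essentially the same route as the paper: expand everything to first order in $\varepsilon$ and identify \eqref{V_conditions_P1'_1}--\eqref{V_conditions_P1'_2}, \eqref{W_conditions_P1'}, and \eqref{P2'} with the $\varepsilon^{0}$ and $\varepsilon^{1}$ coefficients of \eqref{U_k_sum_to_one} and \eqref{coherency_dynamics}. The only (harmless) deviations are that you extract \eqref{V_conditions_P1'_2} from positivity of the zeroth-order product rather than from \eqref{coherency_dynamics} at $\varepsilon=0$, and you spell out the smallest-index bookkeeping behind \eqref{bq_N_product_form} that the paper leaves implicit.
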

\begin{proof}
	Let us check the requirements of Definition \ref{def:coherent_continuous}.

	Denote $W_{k}(x_k,y_k\,|\,x_{k-1}):=W_{k}(x_k,y_k\,|\,x_{k-1},x_{k-1})$. From \eqref{U_i_eps} we see that the off-diagonal elements of $\bq^{(N)}$ look as \eqref{bq_N_product_form}, and the diagonal elements are given by \eqref{bq_diagonal_elements}. Indeed, identities \eqref{V_conditions_P1'_1} and \eqref{V_conditions_P1'_2} are obtained from \eqref{U_k_sum_to_one} and \eqref{coherency_dynamics}, respectively, by setting $\varepsilon=0$. Identity \eqref{W_conditions_P1'} may be obtained by induction on $k$ from \eqref{bq_diagonal_elements} and \eqref{bq_N_product_form} by using \eqref{Q_k_through_Q_k-1}. Equivalently, \eqref{W_conditions_P1'} follows by considering the coefficient by $\varepsilon$ in \eqref{U_k_sum_to_one}.

	Finally, \eqref{P2'} is obtained by taking the coefficient by $\varepsilon$ in \eqref{coherency_dynamics}. This concludes the proof.

	Note that \eqref{V_conditions_P1'_2} implies that the functions $W_k(x_k,y_k\,|\,x_{k-1},y_{k-1})$, 
	where $x_{k-1}\ne y_{k-1}$ do not affect the continuous-time dynamics $\bp^{(N)}(\tau)$ constructed from $\bp^{(N)}_{\varepsilon}$.
\end{proof}



\section{Combinatorics of interlacing arrays and related objects} 
\label{sec:combinatorics_of_interlacing_arrays_and_related_objects}

Let us start specializing the general constructions 
of 
\S \ref{sec:markov_dynamics_preserving_gibbs_measures_general_formalism} 
to our concrete situation. 
In this section we briefly describe the 
state space of interlacing arrays, 
and give the necessary 
combinatorial background.

\subsection{Signatures, Young diagrams, and interlacing arrays} 
\label{sub:signatures_young_diagrams_and_interlacing_arrays}

By a \emph{signature} of length $N$ we will mean a nonincreasing $N$-tuple of integers $\la=(\la_1\ge \ldots\ge\la_N)\in\Z^{N}$. Let $\GT_N$ denote the set of all signatures of length $N$ (`$\GT$' stands for `Gelfand--Tsetlin', see \eqref{GT_Scheme}).\footnote{The set $\GT_N$ parametrizes irreducible representations of the unitary group $U(N)$ \cite{Weyl1946}, and in the literature signatures are also referred to as \emph{highest weights}.} By agreement, $\GT_0$ consists of a single empty signature $\varnothing$. We denote $|\la|:=\la_1+\la_2+\ldots+\la_N$.

By $\GT_N^{+}\subset \GT_N$ we mean the subset of \emph{nonnegative signatures}, i.e., signatures for which $\la_N\ge0$ (we assume that $\GT_0^+=\GT_0=\{\varnothing\}$). Nonnegative signatures are also called \emph{partitions} and are identified with Young diagrams \cite[I.1]{Macdonald1995}. 

Every nonnegative signature $\la=(\la_1,\ldots,\la_N)\in\GT_N^{+}$ may be also viewed as an element $(\la_1,\ldots,\la_N,0)\in\GT_{N+1}^{+}$. Thus, one may speak about nonnegative signatures without referring explicitly to their length, and appending them by zeroes if necessary. Let $\GT^+:=\bigcup_{N\ge0}\GT_N^{+}$ be the set of all nonnegative signatures (= partitions). By $\ell(\la)$ denote the number of strictly positive parts in $\la\in\GT^+$.

Let $\mu\in\GT_{N-1}$ and $\la\in\GT_N$. By $\mu\prec\la$ we mean that $\mu$ and $\la$ \emph{interlace}:
\begin{align}\label{interlacing_two_signatures}
  \la_N\le\mu_{N-1}\le\la_{N-1}\le \ldots\le \la_2\le\mu_1\le\la_1.
\end{align}

The main combinatorial objects in the present paper are sequences of interlacing signatures:
\begin{align}\label{GT_Scheme}
	\boldsymbol\la=(\varnothing\prec\la^{(1)}\prec\la^{(2)}\prec 
	\ldots\prec\la^{(N-1)}\prec\la^{(N)}),
	\qquad\la^{(k)}\in\GT_k.
\end{align}
Such sequences are called \emph{Gelfand--Tsetlin schemes}, they are 
conveniently visualized as arrays of interlacing integers, 
see Fig.~\ref{fig:GT_scheme} in the Introduction. 
We call $N$ the \emph{depth} of a Gelfand--Tsetlin scheme. We will mostly consider Gelfand--Tsetlin schemes 
of fixed finite depth $N$. 
Let $\GT(N)$ denote the set of 
Gelfand--Tsetlin schemes of depth $N$, and $\GT^{+}(N)$ be the set of 
Gelfand--Tsetlin schemes made of nonnegative signatures: 
in \eqref{GT_Scheme}, $\la^{(k)}\in\GT_k^{+}$, $k=1,\ldots,N$.


\subsection{Semistandard Young tableaux} 
\label{sub:semistandard_young_tableaux}

There is another classical point of 
view on Gelfand--Tsetlin schemes 
$\boldsymbol\la\in\GT^{+}(N)$ made of 
nonnegative signatures. Namely, such Gelfand--Tsetlin schemes
correspond to semistandard Young tableaux.

\begin{definition}\label{def:SSYT}
	Let $\la$ be a Young diagram. A \emph{semistandard} (\emph{Young}) 
	\emph{tableau} of shape $\la$ over the alphabet $\{1,\ldots,N\}$ is a 
	filling of boxes of the diagram $\la$ by letters from this alphabet 
	(each letter may be used several times)
	such 
	that letters in a tableau \emph{weakly} increase along rows and \emph{strictly} increase 
	down columns.
\end{definition}
Clearly, to consider semistandard 
tableaux of some shape $\la$ over the 
alphabet $\{1,\ldots,N\}$, the number of rows in $\la$ 
(of positive length) must be~$\le N$; that is, $\la$ must belong
to $\GT^{+}_N$. Below 
is an example of a 
semistandard Young tableau of shape $\la=(4,3,1)$ over the 
alphabet $\{1,2,3,4,5\}$:
\begin{equation}\label{SSYT example}
	\left.
	\begin{array}{|c|c|c|c|c|c|c|}
	\hline
	1&1&1&2&5\\
	\hline
	2&2&3&3\\
	\cline{1-4}
	3&4&4\\
	\cline{1-3}
	4&5&5\\
	\cline{1-3}
	\end{array}
	\right.
\end{equation}

\begin{proposition}\label{prop:SSYT}
	Semistandard Young tableaux of shape 
	$\la\in\GT^+_N$ over the alphabet 
	$\{1,\ldots,N\}$ are in one-to-one correspondence 
	with Gelfand--Tsetlin sche\-mes 
	$\boldsymbol\la\in\GT^+(N)$ 
	of depth $N$ with top row $\la^{(N)}=\la$. 
\end{proposition}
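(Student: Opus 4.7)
The plan is to construct the bijection explicitly and verify it in both directions, using the standard fact that interlacing of two nonnegative signatures $\mu \prec \la$ (where $\mu \in \GT^+_{N-1}$, $\la \in \GT^+_N$, $\mu$ extended by a zero) is equivalent to $\mu \subset \la$ with $\la/\mu$ a \emph{horizontal strip} (at most one box per column).

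Going from tableaux to schemes: given a semistandard tableau $T$ of shape $\la$ over $\{1,\dots,N\}$, for each $k=0,1,\dots,N$ let $\la^{(k)}$ be the set of boxes of $T$ whose entry is $\le k$. First I would check that $\la^{(k)}$ is a Young diagram: if box $(i,j)$ has entry $\le k$, then weak increase along row $i$ gives entry $\le k$ in $(i,j-1)$, and strict increase down column $j$ gives entry $\le k$ in $(i-1,j)$. Next I would show $\la^{(k)} \in \GT^+_k$, i.e., at most $k$ nonempty rows: entries in column $1$ strictly increase, so the entry in row $k{+}1$ is $\ge k{+}1$, excluding it from $\la^{(k)}$. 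Finally, the skew shape $\la^{(k)}/\la^{(k-1)}$ is the set of boxes with entry exactly $k$, and strict increase down columns forbids two such boxes in the same column; this is precisely the horizontal-strip condition, i.e., $\la^{(k-1)} \prec \la^{(k)}$. By construction $\la^{(N)} = \la$, so we obtain a scheme in $\GT^+(N)$ with the correct top row.

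Going from schemes to tableaux: given $\boldsymbol\la = (\varnothing \prec \la^{(1)} \prec \dots \prec \la^{(N)}) \in \GT^+(N)$ with top row $\la$, fill every box of the skew shape $\la^{(k)}/\la^{(k-1)}$ with the letter $k$. Rows are weakly increasing since $\la^{(k-1)} \subset \la^{(k)}$ forces the letter $k$ to appear only to the right of letters $<k$ within each row. Columns are strictly increasing since the horizontal-strip condition, which is guaranteed by interlacing, prohibits two boxes in the same column from carrying the same letter $k$. Thus the resulting filling is a semistandard tableau of shape $\la^{(N)} = \la$.

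The two constructions are manifestly inverse to each other: starting with $T$, the reconstructed tableau assigns letter $k$ to each box of $\la^{(k)}/\la^{(k-1)}$, which is by definition the set of boxes of $T$ with entry equal to $k$; starting with $\boldsymbol\la$, extracting the sub-diagram of entries $\le k$ returns $\la^{(k)}$. The only nontrivial ingredient is the horizontal-strip reformulation of interlacing, so that is the single step I would state explicitly (as a short lemma or parenthetical remark) before assembling the bijection; everything else is a direct unpacking of definitions.
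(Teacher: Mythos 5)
Your proof is correct and follows exactly the same construction as the paper: identifying $\la^{(k)}$ with the set of boxes of the tableau containing letters $\le k$, with interlacing corresponding to the horizontal-strip condition. The paper states this identification in one sentence and illustrates it with an example, whereas you carefully verify each step (Young-diagram property, row-count bound, mutual inverseness); the substance is identical.
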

\begin{proof}
	Indeed, each row $\la^{(k)}$, $k=1,\ldots,N$, 
	in a Gelfand--Tsetlin scheme is identified 
	with the shape formed by boxes 
	of a semistandard 
	Young tableau consisting letters $1,\ldots,k$. 
	For example, semistandard tableau \eqref{SSYT example} 
	corresponds to the Gelfand--Tsetlin scheme
	\begin{align}\label{SSYT_GT_scheme_example}
		\varnothing\prec(3)\prec(4,2)\prec(4,4,1)\prec(4,4,3,1)
		\prec(5,4,3,3,0),
	\end{align}
	which can be drawn as the following 
	interlacing array (see also Fig.~\ref{fig:GT_scheme}):
	\begin{align}\label{SSYT_GT_scheme_array_example}
		\begin{matrix}
			0&&3&&3&&4&&5\\
			&1&&3&&4&&4\\
			&&1&&4&&4\\
			&&&2&&4\\
			&&&&3
		\end{matrix}
	\end{align}
	This concludes the proof.
\end{proof}

Let us denote by $\Dim_N\la$
the number of semistandard Young
tableaux of shape $\la$ over the alphabet
$\{1,\ldots,N\}$. In fact, $\Dim_N\la$
equals the dimension of an irreducible
representation of the unitary group $U(N)$
corresponding to~$\la$ 
(e.g., see \cite{Weyl1946}).

\begin{definition}
[cf. Definition \ref{def:SSYT}]
\label{def:SYT}
	Let $\la$ be a Young diagram. 
	A \emph{standard} (\emph{Young}) 
	\emph{tableau} of shape $\la$ 
	is a filling
	of boxes of the diagram $\la$ by letters 
	$1,2,\ldots,|\la|$
	(each letter is used only once)
	such 
	that letters in a tableau \emph{strictly} increase 
	both along rows and down columns.
\end{definition}

Let $\dim\la$ denote the number of standard
tableaux of shape $\la$. In fact, $\dim\la$
equals the dimension of an irreducible
representation of the symmetric group
$\mathfrak{S}(|\la|)$ corresponding to $\la$
(e.g., see \cite{sagan2001symmetric}).

\smallskip

We employ the Young tableaux perspective 
in \S \ref{sec:schur_degeneration_and_robinson_schensted_correspondences} 
where we will discuss Robinson--Schensted correspondences.


\subsection{Visualizing interlacing arrays by particle configurations} 
\label{sub:visualizing_interlacing_arrays}

The main topic of the present paper is 
stochastic dynamics on Gelfand--Tsetlin schemes. 
The latter can be represented by 
interlacing arrays of integers as on Fig.~\ref{fig:GT_scheme} in the Introduction. 
An elementary move in such a dynamics consists of 
increasing several coordinates $\la^{(k)}_{j}$ in 
the Gelfand--Tsetlin scheme by one. 

It thus would be very convenient for us to employ 
the intuition of \emph{particle configurations}.
For a Gelfand--Tsetlin scheme 
$\boldsymbol\la\in\GT(N)$, place $N(N+1)/2$ particles at 
points\footnote{Since the coordinates of each signature 
$\la^{(k)}$ are weakly decreasing, 
at some positions there could be more than one particle.} 
\begin{align*}
	\Big\{(\la^{(k)}_{j}, k)\colon k=1,\ldots,N,\ 
	j=1,\ldots,k\Big\}\subset\Z^{2},
\end{align*}
see Fig.~\ref{fig:la_interlacing} in the Introduction.
Then an elementary move in dynamics means 
that several particles in a configuration jump to the right by one.






\section{Ascending Macdonald processes and univariate dynamics on signatures} 
\label{sec:ascending_macdonald_processes}

Here we briefly discuss a special class of measures
on interlacing arrays,
namely, (ascending) Macdonald processes, 
introduced in \cite{BorodinCorwin2011Macdonald}. 
Detailed definitions and properties 
related to Macdonald processes are given 
in Appendix \ref{sec:macdonald_processes}. 
We also describe univariate dynamics preserving 
Macdonald measures (which are marginal 
distributions of Macdonald processes, see \S \ref{sub:ascending_macdonald_processes}).

\subsection{Stochastic links with Macdonald parameters} 
\label{sub:stochastic_links_with_macdonald_parameters}

Let $q,t\in[0,1)$ be the Macdonald parameters
(see \S \ref{sub:macdonald_symmetric_functions}), 
and $a_1,\ldots,a_N$ be some positive variables 
(we assume that they are fixed throughout the paper). 
Let $\GT_k$ play the role of the set 
$\s_k$ in \S \ref{sub:gibbs_measures}, 
and let us define stochastic links $\La^{k}_{k-1}\colon 
\GT_k\times\GT_{k-1}\to[0,1]$ 
as follows (cf. \eqref{Links_Macdonald_appendix})
\begin{align}\label{Links_Macdonald}
	\La^{k}_{k-1}(\la^{(k)},\la^{(k-1)})
	:=\frac
	{P_{\la^{(k-1)}}(a_1,\ldots,a_{k-1})}
	{P_{\la^{(k)}}(a_1,\ldots,a_{k})}
	P_{\la^{(k)}/\la^{(k-1)}}(a_k)
\end{align}
if $\la^{(k-1)}\prec\la^{(k)}$, 
and $0$ otherwise 
($\la^{(j)}\in\GT_j$). 
Here $P_\la=P_\la(\cdot;q,t)$ and 
$P_{\la/\mu}=P_{\la/\mu}(\cdot;q,t)$ are the ordinary 
and skew Macdonald polynomials, 
respectively (see \S \ref{sub:macdonald_symmetric_functions} 
and \S \ref{sub:skew_functions}).

\begin{remark}\label{rmk:translation_invariance_links}
	To define $P_{\la^{(k)}}$, $P_{\la^{(k-1)}}$, 
	and $P_{\la^{(k)}/\la^{(k-1)}}$ 
	for not necessarily nonnegative signatures, 
	we use Remarks \ref{rmk:Macdonald_negative_sign} 
	and \ref{rmk:negative_skew_functions}, 
	and, in particular, formula \eqref{P_one_variable} 
	for the skew (possibly Laurent) 
	polynomial $P_{\la^{(k)}/\la^{(k-1)}}(a_k)$. 

	One can readily deduce the 
	translation invariance property of the links: 
	\begin{align*}
		\La^{k}_{k-1}(\la^{(k)},\la^{(k-1)})=
		\La^{k}_{k-1}(\la^{(k)}+1,\la^{(k-1)}+1) 	
	\end{align*}
	(in the right-hand side we 
	add $1$ to every part of each signature).
\end{remark}

The space $\GT(N)$ of interlacing arrays 
$\boldsymbol \la$ (see 
\S \ref{sec:combinatorics_of_interlacing_arrays_and_related_objects}) is 
readily identified with the state 
space $\bs^{(N)}$ as 
in \eqref{state_space}. 
Thus, in this setting we can consider 
Gibbs measures on $\GT(N)$; 
one can refer to them as 
\emph{Macdonald--Gibbs measures}. 
In the next subsection we will discuss a 
useful subclass of Macdonald--Gibbs measures, 
namely, the (ascending) Macdonald processes.


\subsection{Ascending Macdonald processes} 
\label{sub:ascending_macdonald_processes}

Let $a_1,\ldots,a_N$ be fixed positive parameters, and $\rho$ be a Macdonald-nonnega\-tive specialization of the algebra of symmetric functions $\Sym$ (\S \ref{sub:nonnegative_specializations}) corresponding to parameters $(\al,\be;\ga)$ as in \eqref{Pi_nonneg_spec}. We will always assume that $a_i\alpha_j<1$ for all possible $i,j$ to ensure finiteness of the normalizing constant in \eqref{M_asc} below.

\begin{definition}
	The \emph{ascending Macdonald process} $\M_{asc}(a_1,\dots,a_N;\rho)$ is a probability measure on the set $\GT(N)$ of interlacing arrays $\boldsymbol\la$ \eqref{GT_Scheme} of depth~$N$ (supported on the subset $\GT^{+}(N)\subset\GT(N)$ of nonnegative arrays) defined~as
	\begin{align}\label{M_asc}
		\M_{asc}(a_1,\dots,a_N;\rho)
		(\boldsymbol\la)=
		\frac{P_{\la^{(1)}}(a_1)
		P_{\la^{(2)}/\la^{(1)}}(a_2)\cdots
		P_{\la^{(N)}/\la^{(N-1)}}(a_N)
		Q_{\la^{(N)}}(\rho)}{\Pi(a_1,\dots,a_N;\rho)},
	\end{align}
	where $\la^{(j)}\in\GT_j^{+}$. Here $Q_{\la^{(N)}}$ is the Macdonald symmetric function (\S \ref{sub:macdonald_symmetric_functions}); it is a certain scalar multiple of $P_{\la^{(N)}}$. The normalizing constant $\Pi(a_1,\dots,a_N;\rho)$ is defined in \S \ref{ssub:cauchy_identity}, it is finite due to our assumptions on $a_i$ and $\al_j$.

	In fact, there exist more general Macdonald processes, see \cite[\S2.2.2]{BorodinCorwin2011Macdonald}, 
	\cite{BCGS2013}.
\end{definition}

Projections of Macdonald processes to every fixed row $\la^{(k)}$ 
of a Gelfand--Tsetlin scheme
(in other words, their marginal distributions)
have an explicit form. 
Namely, under the 
Macdonald process $\M_{asc}(a_1,\ldots,a_N;\rho)$ \eqref{M_asc}, 
the distribution of the row $\la^{(k)}\in\GT_k$ 
(cf. Fig.~\ref{fig:GT_scheme}) is given by 
the \emph{Macdonald measure} (cf. \eqref{MMeasure_appendix})
\begin{align}\label{MMeasure}
	\M\M(a_1,\ldots,a_k;\rho)(\la^{(k)})= \frac{P_{\lambda^{(k)}}(a_1,\ldots,a_k) Q_{\lambda^{(k)}}(\rho)} {\Pi(a_1,\ldots,a_k;\rho)}.
\end{align}
This fact can be readily deduced from identities of \S \ref{sub:identities}. Note that this Macdonald measure is supported by partitions $\la^{(k)}\in\GT_k^+$, as it should be.

\begin{proposition}
	Ascending Macdonald processes belong 
	to the class of Mac\-donald--Gibbs 
	measures on $\GT(N)$ 
	(see \S \ref{sub:stochastic_links_with_macdonald_parameters} 
	for the definition).
\end{proposition}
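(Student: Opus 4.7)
The plan is a direct verification: I will exhibit the ascending Macdonald process as a product of its top-row marginal with a chain of stochastic links, matching precisely the form \eqref{m_Gibbs} of a Gibbs measure.

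First, I would record that the stochastic links $\La^{k}_{k-1}$ defined in \eqref{Links_Macdonald} are indeed stochastic. This reduces, after multiplying through by $P_{\la^{(k)}}(a_1,\ldots,a_k)$, to the skew Pieri/branching identity
\begin{equation*}
	P_{\la^{(k)}}(a_1,\ldots,a_k)
	=\sum_{\la^{(k-1)}\prec\la^{(k)}}
	P_{\la^{(k-1)}}(a_1,\ldots,a_{k-1})\,P_{\la^{(k)}/\la^{(k-1)}}(a_k),
\end{equation*}
which is a standard property of (skew) Macdonald polynomials reviewed in Appendix~\ref{sec:macdonald_processes}. Normalization thus holds and the links are bona fide stochastic matrices $\GT_k\times\GT_{k-1}\to[0,1]$ in the nonnegative setting.

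Next, I would compute the telescoping product of links along a chain $\boldsymbol\la\in\GT^+(N)$. The ratio factors collapse:
\begin{equation*}
	\prod_{k=2}^{N}\La^{k}_{k-1}(\la^{(k)},\la^{(k-1)})
	=\frac{P_{\la^{(1)}}(a_1)}{P_{\la^{(N)}}(a_1,\ldots,a_N)}\prod_{k=2}^{N}P_{\la^{(k)}/\la^{(k-1)}}(a_k),
\end{equation*}
since successive factors $P_{\la^{(k-1)}}(a_1,\ldots,a_{k-1})$ cancel. This is the whole computational content of the claim.

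Now let $m_N$ be the $N$-variable Macdonald measure $\M\M(a_1,\ldots,a_N;\rho)$ defined in \eqref{MMeasure} on $\GT_N^+$. Multiplying the displayed product by $m_N(\la^{(N)})$ and canceling $P_{\la^{(N)}}(a_1,\ldots,a_N)$ yields
\begin{equation*}
	m_N(\la^{(N)})\prod_{k=2}^{N}\La^{k}_{k-1}(\la^{(k)},\la^{(k-1)})
	=\frac{P_{\la^{(1)}}(a_1)\,P_{\la^{(2)}/\la^{(1)}}(a_2)\cdots P_{\la^{(N)}/\la^{(N-1)}}(a_N)\,Q_{\la^{(N)}}(\rho)}{\Pi(a_1,\ldots,a_N;\rho)},
\end{equation*}
which is exactly $\M_{asc}(a_1,\ldots,a_N;\rho)(\boldsymbol\la)$ from \eqref{M_asc}. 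This is the form \eqref{m_Gibbs}, so $\M_{asc}$ is Macdonald--Gibbs with top marginal $m_N$. There is no real obstacle here; the only subtlety worth noting is that both sides are supported on $\GT^+(N)$, which is consistent with the fact that if $\la^{(N)}\in\GT_N^+$ then interlacing forces each $\la^{(k)}\in\GT_k^+$. The identification $m_N=\M\M(a_1,\ldots,a_N;\rho)$ also provides an independent verification of \eqref{MMeasure} via the summation identity \eqref{sum_over_x1_N-1} applied to the Gibbs decomposition.
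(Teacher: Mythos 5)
Your proposal is correct and follows essentially the same route as the paper: the paper's proof simply writes $\M_{asc}(a_1,\dots,a_N;\rho)(\boldsymbol\la)=\M\M(a_1,\ldots,a_N;\rho)(\la^{(N)})\prod_{i}\La^{i}_{i-1}(\la^{(i)},\la^{(i-1)})$ and identifies this with Definition~\ref{def:Gibbs}, which is exactly your telescoping computation. Your extra verifications (stochasticity of the links via the branching rule, support considerations) are correct but not part of the paper's one-line argument.
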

\begin{proof}
	Clearly, one can write
	\begin{align*}
		\M_{asc}(a_1,\dots,a_N;\rho)(\boldsymbol \la)=
		\M\M(a_1,\ldots,a_N;\rho)(\la^{(N)})\cdot
		\prod_{i=1}^{N}\La^{i}_{i-1}
		(\la^{(i)},\la^{(i-1)}),
	\end{align*}
	with $\La^{i}_{i-1}$ given by \eqref{Links_Macdonald}. This is exactly a specialization of the general Definition \ref{def:Gibbs}.
\end{proof}
Note also that if one applies the stochastic link $\La^{k}_{k-1}$ to a Macdonald measure on $\GT_k$, one will get a corresponding Macdonald measure on $\GT_{k-1}$, cf. \eqref{MM_commutes_La}.


\subsection[Univariate continuous-time dynamics]
{Univariate continuous-time dynamics preserving the class of Macdonald measures} 
\label{sub:univariate_continuous_time_dynamics_preserving_the_class_of_macdonald_measures}

Let us now discuss univariate continuous-time dynamics $Q_k$ living 
on each $k$th row, $k=1,\ldots,N$, of a Gelfand--Tsetlin scheme.
One of the main goals of the present paper is to describe
multivariate dynamics $\bq^{(N)}$ 
on Gelfand--Tsetlin schemes having 
these given univariate 
projections (cf. \S \ref{sub:sequential_update_dynamics_in_continuous_time}). 

The univariate dynamics $Q_k$ we are about to 
describe was introduced in 
\cite[\S2.3.1]{BorodinCorwin2011Macdonald}. 
It is defined in terms of jump rates as follows. 
For $\la,\nu\in\GT_k$, $\la\ne\nu$, set 
\begin{align}\label{univariate_Q_k_Macdonald}
	Q_k(\la,\nu):=
	\begin{cases}
		\dfrac{P_\nu(a_1,\ldots,a_k)}
		{P_\la(a_1,\ldots,a_k)}
		\psi'_{\nu/\la},&\mbox{if $\nu=\la+\de_j$ for some $j=1,\ldots,k$};\\
		0,&\mbox{otherwise}.
	\end{cases}
\end{align}
Here the notation $\nu=\la+\de_j$ is explained in \eqref{add_one_box}, and the quantity $\psi'_{\nu/\la}$ is given by \eqref{psi_prime_one_box}. The diagonal elements of $Q_k$ are defined as 
\begin{align}\label{univariate_Q_k_Macdonald_diagonal}
	Q_k(\la,\la):=-\sum_{\nu\in\GT_k\colon\nu\ne\la}Q_k(\la,\nu)=-(a_1+\ldots+a_k).
\end{align}
The last equality follows from \eqref{skew_Cauchy_particular}: 
one should take $\rho_2=\hat\varepsilon$ to be the 
specialization into one dual variable $\varepsilon$ 
(cf. \S \ref{sub:nonnegative_specializations}), 
and then consider the coefficient 
by $\varepsilon$ in \eqref{skew_Cauchy_particular}.

\begin{remark}\label{rmk:particle_jumping}
	Representing signatures $\la\in\GT_k$ as 
	particle configurations $\la_1\ge \ldots\ge\la_k$ on 
	$\Z$ according to 
	\S \ref{sub:visualizing_interlacing_arrays} (see especially 
	Fig.~\ref{fig:la_interlacing}), 
	we see that the jump $\la\to\nu=\la+\de_j$, $j=1,\ldots,k$, 
	of the univariate dynamics 
	\eqref{univariate_Q_k_Macdonald} means that the particle 
	$\la_j$ jumps to the right by one. 
	If this jump is not possible (i.e., if $\la_{j}=\la_{j-1}$),
	then we say that the $j$th 
	particle $\la_j$ is \emph{blocked} by $\la_{j-1}$.
\end{remark}

\begin{proposition}
	\label{prop:univariate_Q_k_Macdonald_finiteness}
	Jump rates $Q_k$ \eqref{univariate_Q_k_Macdonald}, 
	\eqref{univariate_Q_k_Macdonald_diagonal} satisfy 
	finiteness Condition \ref{cond:finiteness}, and thus 
	define a Feller Markov jump process with 
	semigroup $(P_k(\tau))_{\tau\ge0}$, 
	where $P_k(\tau)=\exp(\tau\cdot Q_k)$.
\end{proposition}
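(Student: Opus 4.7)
The plan is to verify that the generator $Q_k$ satisfies Condition \ref{cond:finiteness} and then invoke the general Feller-semigroup fact recorded in the comment after Condition \ref{cond:finiteness_bq}. Concretely, I would break the verification into three short steps: (i) only finitely many off-diagonal entries per row, (ii) nonnegativity of those entries, (iii) a uniform bound on all entries.

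For (i), the definition \eqref{univariate_Q_k_Macdonald} already gives $Q_k(\la,\nu)=0$ unless $\nu=\la$ or $\nu=\la+\de_j$ for some $j\in\{1,\dots,k\}$, so at most $k+1$ entries per row are nonzero. For (ii), I would argue that $\psi'_{\nu/\la}\ge 0$ for a single-box addition (use the explicit product formula \eqref{psi_prime_one_box}, whose factors lie in $(0,1]$ since $q,t\in[0,1)$), and that $P_\la(a_1,\dots,a_k)>0$ for positive specializations because $P_\la$ has nonnegative monomial coefficients (another standard consequence of $q,t\in[0,1)$, reviewed in Appendix \ref{sec:macdonald_processes}). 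Hence every off-diagonal $Q_k(\la,\nu)$ is nonnegative.

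For (iii), I would invoke the specialization of the skew-Cauchy identity \eqref{skew_Cauchy_particular} at $\rho_2=\hat\varepsilon$ and extract the coefficient of $\varepsilon$, exactly as sketched in the paper after \eqref{univariate_Q_k_Macdonald_diagonal}. This yields
\begin{equation*}
\sum_{j=1}^{k}\frac{P_{\la+\de_j}(a_1,\ldots,a_k)}{P_\la(a_1,\ldots,a_k)}\,\psi'_{\la+\de_j/\la}=a_1+\ldots+a_k,
\end{equation*}
independent of $\la$. By nonnegativity from step (ii), each summand is at most $a_1+\ldots+a_k$, so every off-diagonal entry is bounded by this constant. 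The diagonal entries equal $-(a_1+\ldots+a_k)$ by definition \eqref{univariate_Q_k_Macdonald_diagonal}, which is already uniform in $\la$. Together, (i) and (iii) give Condition \ref{cond:finiteness}.

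Having verified the condition, the conclusion is immediate: the discussion right after Condition \ref{cond:finiteness_bq} (applied one level down, to the univariate setting) says that any such bounded jump-rate matrix defines a bounded operator on the supremum-norm Banach space of bounded functions on $\GT_k$, and hence its exponential $P_k(\tau)=\exp(\tau\cdot Q_k)$ converges in operator norm and forms a Feller semigroup with generator $Q_k$. The only delicate point is step (ii)--(iii), and specifically checking that the skew-Cauchy identity does specialize as claimed after setting $\rho_2=\hat\varepsilon$; this is a routine manipulation inside the Macdonald framework of the appendix and is the principal place where something could be mis-stated, but no genuine obstacle is expected.
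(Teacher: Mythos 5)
Your proof is correct and follows the same overall structure as the paper's (finitely many nonzero entries per row, nonnegativity, uniform bound via a skew-Cauchy-type identity, then exponentiate). The one place you diverge is the uniform bound: you use the exact row-sum identity $\sum_{j}\frac{P_{\la+\de_j}}{P_\la}\psi'_{\la+\de_j/\la}=a_1+\cdots+a_k$, i.e.\ the coefficient of $\varepsilon$ in \eqref{skew_Cauchy_particular} at $\rho_2=\hat\varepsilon$ --- which is precisely the identity the paper invokes to justify the diagonal entries \eqref{univariate_Q_k_Macdonald_diagonal} --- whereas the paper instead enlarges the sum to run over all $\nu$ with $\nu/\la$ a vertical strip and applies \eqref{skew_Cauchy_particular} with the specialization $\hat 1$, obtaining the weaker (but sufficient) bound $\Pi(a_1,\ldots,a_k;\hat1)=(1+a_1)\cdots(1+a_k)$; the paper also explicitly invokes translation invariance of the jump rates to reduce to $\la\in\GT_k^+$ before applying the identity, a point you should make as well since the Cauchy identities in the appendix are stated for partitions. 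Your bound is sharper and arguably more economical since the identity is already on record, but both arguments are valid and rest on the same two ingredients (nonnegativity of the summands plus a Cauchy-identity evaluation of a dominating sum).
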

\begin{proof}
	Clearly, for every fixed $\la\in\GT_k$, only finitely many of the numbers $Q_k(\la,\nu)$, $\nu\in\GT_k$, are nonzero: they correspond either to $\nu=\la$, or to $\nu=\la+\de_j$, $j=1,\ldots,k$.

	All off-diagonal elements of $Q_k$ are nonnegative. To show that they are uniformly bounded, note first that the jump rates $Q_k$ are translation-invariant, i.e., $Q_k(\la,\nu)=Q_k(\la+1,\nu+1)$ (see Remarks \ref{rmk:Macdonald_negative_sign} and \ref{rmk:negative_skew_functions}). Thus, it suffices to assume that $\la\in\GT_k^+$. Consider the sum
	\begin{align*}
		\sum_{\nu\in\GT_k\colon\nu\ne\la}
		Q_k(\la,\nu)
		=\sum_{\nu\in\GT_k\colon 
		\text{$\nu=\la+\de_j$ for some $j$}}
		\dfrac{P_\nu(a_1,\ldots,a_k)}
		{P_\la(a_1,\ldots,a_k)}
		\psi'_{\nu/\la}.
	\end{align*}
	Let us add more (nonnegative) summands to the above sum: 
	namely, the one with $\nu=\la$ (we have $\psi'_{\la/\la}=1$), 
	and also all other summands for which $\psi'_{\nu/\la}\ne0$. 
	The latter requirement implies that $\nu/\la$ is a vertical 
	strip, which in particular means that $\nu\in\GT^+$ 
	(see \S \ref{sub:skew_shapes_and_semistandard_tableaux} 
	and \eqref{Q_one_dual_variable}). 
	Thus, we see that the above sum is not greater than
	\begin{align*}
		\sum_{\nu\in\GT^{+}}
		\dfrac{P_\nu(a_1,\ldots,a_k)}
		{P_\la(a_1,\ldots,a_k)}
		\psi'_{\nu/\la}
		&=
		\frac{1}{P_\la(a_1,\ldots,a_k)}
		\sum_{\nu\in\GT^{+}}
		P_\nu(a_1,\ldots,a_k)
		Q_{\nu/\la}(\hat 1)
		\\&\hspace{110pt}
		=\Pi(a_1,\ldots,a_k;\hat 1)
		=
		(1+a_1)\ldots (1+a_k).
	\end{align*}
	Here $\hat 1$ means the specialization into one dual variable $\be_1=1$ (\S\ref{sub:nonnegative_specializations}), and we have also used identity \eqref{skew_Cauchy_particular}. Thus, we get the desired uniform bound. This concludes the proof.
\end{proof}

Thus, we can start the Markov jump process with generator $Q_k$ 
from any point and any probability distribution on $\GT_k$. 
A particularly nice class of initial conditions 
is formed by the Macdonald measures \eqref{MMeasure} 
which are supported on $\GT_k^+$:
\begin{proposition}
	\label{prop:univariate_Q_k_Macdonald_MMeasures}
	Let $\rho$ be a Macdonald-nonnegative specialization 
	(see \S \ref{sub:nonnegative_specializations}), $\M\M(a_1,\ldots,a_k;\rho)$ be the corresponding Macdonald measure on $\GT_k^+$ \eqref{MMeasure}, and $P_k(\tau)=\exp(\tau\cdot Q_k)$ be the $\GT_k\times\GT_k$ transition matrix (during time interval $\tau\ge0$) of the univariate dynamics, see \eqref{univariate_Q_k_Macdonald}. Then
	\begin{align*}
		\M\M(a_1,\ldots,a_k;\rho)P_k(\tau)=
		\M\M(a_1,\ldots,a_k;\rho,\rho_{\tau}),
	\end{align*}
	where $\rho_{\tau}$ is the Plancherel specialization with $\ga=\tau\ge0$ (\S \ref{sub:nonnegative_specializations}), and $(\rho,\rho_{\tau})$ means the union of specializations (\S \ref{sub:symmetric_functions}).
\end{proposition}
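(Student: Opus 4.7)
The plan is to show that both sides of the claimed identity, viewed as $\GT_k$-valued functions of $\tau$, satisfy the same Kolmogorov forward equation with the same initial datum, and then invoke uniqueness. Set
\begin{align*}
F(\tau)(\mu) := \bigl[\M\M(a_1,\ldots,a_k;\rho)\, P_k(\tau)\bigr](\mu), \qquad G(\tau)(\mu) := \M\M(a_1,\ldots,a_k;\rho,\rho_\tau)(\mu).
\end{align*}
Since $\rho_0$ is the trivial specialization, $F(0) = G(0) = \M\M(a_1,\ldots,a_k;\rho)$. By construction $F'(\tau) = F(\tau)\,Q_k$ (the Kolmogorov forward equation holds since $Q_k$ is bounded by Proposition \ref{prop:univariate_Q_k_Macdonald_finiteness}). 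It therefore suffices to check $G'(\tau) = G(\tau)\,Q_k$. Using the semigroup property of Plancherel specializations, $\rho_{\tau_0+t} = \rho_{\tau_0}\cup\rho_t$ (union in the sense of \S\ref{sub:symmetric_functions}), this reduces to verifying $G'(0) = \M\M(a;\rho)\,Q_k$ with $\rho$ allowed to be an arbitrary Macdonald-nonnegative specialization.

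To compute $G'(0)(\mu)$, apply the branching rule $Q_\mu(\rho,\rho_\tau) = \sum_\nu Q_\nu(\rho)\,Q_{\mu/\nu}(\rho_\tau)$ together with the factorization $\Pi(a_1,\ldots,a_k;\rho,\rho_\tau) = \Pi(a_1,\ldots,a_k;\rho)\cdot e^{\tau(a_1+\ldots+a_k)}$, which follows from the defining generating function $\sum_n Q_{(n)}(\rho_\tau)\,u^n = e^{\tau u}$. Writing $C(\mu,\nu) := \frac{d}{d\tau}\bigl|_{\tau=0}\,Q_{\mu/\nu}(\rho_\tau)$ and using $Q_{\mu/\nu}(\rho_0) = \delta_{\mu,\nu}$, the product rule gives
\begin{align*}
G'(0)(\mu) = \frac{P_\mu(a)}{\Pi(a;\rho)}\Bigl[\,\sum_\nu Q_\nu(\rho)\,C(\mu,\nu) \;-\; Q_\mu(\rho)\,(a_1+\ldots+a_k)\Bigr].
\end{align*}

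The crucial Macdonald-theoretic input is the identification of $C(\mu,\nu)$ with a Pieri coefficient. For this, apply the skew Cauchy identity with the second argument set to $\rho_\tau$, yielding $\sum_\mu P_\mu(a)\,Q_{\mu/\nu}(\rho_\tau) = e^{\tau(a_1+\ldots+a_k)} P_\nu(a)$, and differentiate at $\tau=0$:
\begin{align*}
\sum_\mu P_\mu(a)\,C(\mu,\nu) = (a_1+\ldots+a_k)\,P_\nu(a).
\end{align*}
Comparing with the Pieri rule $(a_1+\ldots+a_k)\,P_\nu(a) = \sum_j \psi'_{\nu+\de_j/\nu}\,P_{\nu+\de_j}(a)$ and invoking linear independence of the $P_\mu(a_1,\ldots,a_k)$ in sufficiently generic $a_i$, one concludes $C(\mu,\nu) = \psi'_{\mu/\nu}$ when $\mu = \nu+\de_j$ and $C(\mu,\nu) = 0$ otherwise. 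Substituting this back, the bracket above matches term by term the explicit expression for $[\M\M(a;\rho)\,Q_k](\mu)$ obtained from \eqref{univariate_Q_k_Macdonald}--\eqref{univariate_Q_k_Macdonald_diagonal}, with the independent-jump sum furnishing the $\sum_j Q_{\mu-\de_j}(\rho)\,\psi'_{\mu/\mu-\de_j}$ contribution and the exponential $e^{\tau(a_1+\ldots+a_k)}$ in the denominator producing the diagonal rate $-(a_1+\ldots+a_k)$.

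The main obstacle is the identification $C(\mu,\nu) = \psi'_{\mu/\nu}$; everything else is bookkeeping with branching, Cauchy, and the Plancherel generating function. Once $G'(0) = F'(0)$ is established (for arbitrary $\rho$), the semigroup-property reduction promotes this to $G'(\tau) = G(\tau)\,Q_k$ for all $\tau\ge 0$, and uniqueness for the bounded-generator forward equation closes the proof.
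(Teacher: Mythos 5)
Your argument is correct, but it is not the route the paper takes. The paper's proof is a discrete-to-continuous approximation: it observes that $Q_k(\la,\nu)$ is the coefficient of $\varepsilon$ in the Markov matrix $p^{\uparrow}_{\la\nu}(a_1,\ldots,a_k;\hat\varepsilon)$ built from a single dual variable $\varepsilon$, so that $P_k(\tau)=\lim_{\varepsilon\to0}\big(p^{\uparrow}(a;\hat\varepsilon)\big)^{[\tau/\varepsilon]}$; it then applies the already-established action of $p^{\uparrow}$ on Macdonald measures (Proposition \ref{prop:commuting_Markov}.2) $[\tau/\varepsilon]$ times and uses the fact that $[\tau/\varepsilon]$ dual variables each equal to $\varepsilon$ converge to the Plancherel specialization $\rho_\tau$. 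You instead stay in continuous time: you show both sides solve the forward equation $G'=GQ_k$ with the same initial datum, reduce to $\tau=0$ via the semigroup property $(\rho_{\tau_0},\rho_t)=\rho_{\tau_0+t}$, and identify $\frac{d}{d\tau}\big|_{\tau=0}Q_{\mu/\nu}(\rho_\tau)=\psi'_{\mu/\nu}1_{\mu=\nu+\de_j}$ via skew Cauchy plus the Pieri rule and linear independence of the $P_\mu$. That identification is sound (and could be reached even more directly: since $p_1(\rho_\gamma)=\gamma\,p_1(\hat\be_1)/\be_1$ and $p_n(\rho_\gamma)=0$ for $n\ge2$, the first-order term of $Q_{\mu/\nu}(\rho_\tau)$ coincides with that of $Q_{\mu/\nu}(\hat\varepsilon)$, which is $\psi'_{\mu/\nu}\varepsilon$ by \eqref{Q_one_dual_variable} — this is exactly the observation underlying the paper's discretization). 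What the paper's route buys is that no uniqueness statement for the forward ODE is needed, since everything is inherited from the algebraic identity \eqref{skew_Cauchy_particular} at the level of stochastic matrices; what yours buys is a self-contained generator-level computation that makes transparent why the Pieri coefficients appear in the jump rates \eqref{univariate_Q_k_Macdonald} and why the diagonal entry is $-(a_1+\cdots+a_k)$. Your appeal to uniqueness for the bounded-generator Cauchy problem is legitimate and is the same device the paper itself invokes in the remark following Proposition \ref{prop:sequential_properties_continuous}.
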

In other words, if the dynamics 
$P_k(\tau)=\exp(\tau \cdot Q_k)$ starts from a 
Macdonald measure, then it is possible to 
explicitly write down the distribution of the 
Gelfand--Tsetlin scheme at any given moment $\tau\ge0$.
\begin{proof}
	The argument is similar to \cite[Prop. 2.3.6]{BorodinCorwin2011Macdonald}, and is in the spirit of~\S \ref{sub:from_discrete_to_continuous_time}. From \eqref{Q_one_dual_variable} we see that every off-diagonal matrix element $Q_k(\la,\nu)$, $\la\ne\nu$, coincides with the coefficient by $\varepsilon$ in $p^{\uparrow}_{\la\nu}(a_1,\ldots,a_k;\hat\varepsilon)$, where $p^{\uparrow}$ is defined in \eqref{p_Markov_Macdonald_operator} and $\hat\varepsilon$ is the specialization into one dual variable $\varepsilon$. Thus, 
	\begin{align*}
		p^{\uparrow}_{\la\nu}(a_1,\ldots,a_k;\hat\varepsilon)=
		1_{\la=\nu}+\varepsilon\cdot Q_k(\la,\nu)+o(\varepsilon),\qquad
		\varepsilon\to0.
	\end{align*}
	It follows that $\lim_{\varepsilon\to0}\big(p^{\uparrow}(a_1,\ldots,a_k;\hat\varepsilon)\big)^{[\tau/\varepsilon]}=P_k(\tau)$. Moreover, from Proposition \ref{prop:commuting_Markov}.2 we know that 
	\begin{align*}
		\M\M(a_1,\ldots,a_k;\rho)
		\big(p^{\uparrow}(a_1,\ldots,a_k;\hat\varepsilon)\big)^{[\tau/\varepsilon]}
		=
		\M\M(a_1,\ldots,a_k;\rho,\si_{\tau,\varepsilon}),
	\end{align*}
	where $\si_{\tau,\varepsilon}$ is the specialization into $[\tau/\varepsilon]$ dual variables equal to $\varepsilon$. From \S \ref{sub:nonnegative_specializations} we see that for any symmetric function $f\in\Sym$, $\lim\limits_{\varepsilon\to0}f(\si_{\tau,\varepsilon})=f(\rho_\tau)$, and also $\lim\limits_{\varepsilon\to0}\Pi(a_1,\ldots,a_k;\rho,\si_{\tau,\varepsilon})=\Pi(a_1,\ldots,a_k;\rho,\rho_\tau)$. Thus, taking the limit $\varepsilon\to0$ in the above identity for the action of a power of $p^{\uparrow}$ on Macdonald measures, we arrive at the claim of the proposition.
\end{proof}

\begin{proposition}\label{prop:jump_rate_Q_k_Macdonald_commute}
	Jump rate matrices \eqref{univariate_Q_k_Macdonald} commute with the stochastic links \eqref{Links_Macdonald} in the sense of \eqref{commutation_relations_Q_k}, i.e., $\La^{k}_{k-1}Q_{k-1}=Q_k\La^{k}_{k-1}$.
\end{proposition}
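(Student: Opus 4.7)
The plan is to derive the infinitesimal commutation \eqref{commutation_relations_Q_k} by expanding in $\varepsilon$ a discrete-time intertwining relation between the Macdonald Markov operators $p^{\uparrow}(a_1,\ldots,a_k;\hat\varepsilon)$ from Appendix \ref{sec:macdonald_processes} and the stochastic links $\La^{k}_{k-1}$.

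First, as noted in the proof of Proposition \ref{prop:univariate_Q_k_Macdonald_MMeasures}, the entries of $Q_k$ are exactly the first-order coefficients in
\begin{align*}
	p^{\uparrow}(a_1,\ldots,a_k;\hat\varepsilon)=\mathbf{1}+\varepsilon\cdot Q_k+o(\varepsilon),\qquad \varepsilon\to 0,
\end{align*}
where $\hat\varepsilon$ denotes the specialization into one dual variable $\varepsilon$; an analogous expansion holds at level $k-1$ with generator $Q_{k-1}$. Next, I would invoke from the appendix (Proposition \ref{prop:commuting_Markov}) the discrete-time intertwining
\begin{align*}
	\La^{k}_{k-1}\, p^{\uparrow}(a_1,\ldots,a_{k-1};\rho)= p^{\uparrow}(a_1,\ldots,a_k;\rho)\, \La^{k}_{k-1},
\end{align*}
valid for any Macdonald-nonnegative specialization $\rho$. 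Specializing $\rho=\hat\varepsilon$, substituting the above expansions, and matching the coefficients of $\varepsilon^{1}$ on both sides produces the desired identity $\La^{k}_{k-1}Q_{k-1}=Q_k\La^{k}_{k-1}$. The $\varepsilon^{0}$ terms match trivially since $\La^k_{k-1}\,\mathbf{1}=\mathbf{1}\,\La^k_{k-1}$.

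The real content sits in the discrete-time intertwining in the second step, which rests on the branching rule for Macdonald polynomials together with the skew Cauchy identity; once granted, the infinitesimal statement is immediate. If one wishes to avoid any appeal to the appendix, the identity admits a direct check from the explicit formulas \eqref{Links_Macdonald} and \eqref{univariate_Q_k_Macdonald}: after cancelling the common factor $P_{\mu^{(k-1)}}(a_1,\ldots,a_{k-1})/P_{\la^{(k)}}(a_1,\ldots,a_k)$, it reduces to the single-box Pieri-type identity
\begin{align*}
	\sum_{\la^{(k-1)}} P_{\la^{(k)}/\la^{(k-1)}}(a_k)\,\psi'_{\mu^{(k-1)}/\la^{(k-1)}}
	=
	\sum_{\mu^{(k)}} \psi'_{\mu^{(k)}/\la^{(k)}}\, P_{\mu^{(k)}/\mu^{(k-1)}}(a_k),
\end{align*}
where the left sum runs over $\la^{(k-1)}\prec\la^{(k)}$ with $\mu^{(k-1)}=\la^{(k-1)}+\de_j$ and the right sum over $\mu^{(k)}=\la^{(k)}+\de_i$ with $\mu^{(k-1)}\prec\mu^{(k)}$. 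This is a standard consequence of the single-box Pieri rule for skew Macdonald polynomials, and is where the bulk of the combinatorial work would be concentrated in the self-contained version of the argument.
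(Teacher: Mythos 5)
Your proof is correct and follows essentially the same route as the paper: the paper's own argument is precisely to take the coefficient of $\varepsilon$ in the intertwining relation \eqref{p_up_commutes_links} for $p^{\uparrow}(\cdots;\hat\varepsilon)$ from Proposition \ref{prop:commuting_Markov}.4, using \eqref{univariate_Q_k_Macdonald_diagonal} to handle the diagonal entries. Your closing reduction to the single-box Pieri-type identity is also consistent with the paper, which records that identity as \eqref{inf_skew_cauchy} and notes its equivalence to the commutation relation.
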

\begin{proof}
	Matrices $p^{\uparrow}_{\la\nu}(a_1,\ldots,a_k;\hat\varepsilon)$
	commute with the stochastic links 
	(Proposition \ref{prop:commuting_Markov}.4); considering the 
	coefficient by $\varepsilon$ in the latter commutation 
	relation \eqref{p_up_commutes_links} and using 
	\eqref{univariate_Q_k_Macdonald_diagonal}, 
	we arrive at the desired relation 
	\eqref{commutation_relations_Q_k} for~$Q_k$.
\end{proof}

Let us emphasize once again that the jump rates $Q_k$ 
\eqref{univariate_Q_k_Macdonald} define \emph{univariate} 
dynamics, i.e., processes 
on each row $k$ of the particle 
array (see Fig.~\ref{fig:la_interlacing}). 
The process $Q_k$ does not see what is happening on
other rows of the array. 
Our aim in the next section is to 
stitch all univariate processes $Q_k$ 
into a multivariate continuous-time process on 
interlacing particle arrays (as on Fig.~\ref{fig:la_interlacing}).
Such stitching is possible because 
the generators $Q_k$ commute with the stochastic links 
(Proposition \ref{prop:jump_rate_Q_k_Macdonald_commute}); 
but it is not unique, 
cf. \S \ref{sub:sequential_update_dynamics_in_continuous_time}.



\section{Multivariate continuous-time dynamics on interlacing arrays} 
\label{sec:multivariate_continuous_time_dynamics_on_interlacing_arrays_}

\subsection{Definition} 
\label{sub:definition}

Let us specialize general definitions 
of \S \ref{sub:sequential_update_dynamics_in_continuous_time}
to our concrete situation involving interlacing particle 
arrays. In particular, the notation
will be $\mathbf{X}_N=\boldsymbol\la=(\la^{(1)}\prec
\ldots\prec\la^{(N)})$, and $x_k=\la^{(k)}$. Moreover, 
$\La^{k}_{k-1}$ will denote stochastic
links with Macdonald parameters defined in \S \ref{sub:stochastic_links_with_macdonald_parameters}.

\begin{definition}\label{def:multivariate_GT}
	Let $(\bp^{(N)}(\tau))_{\tau\ge0}$ be a 
	Markov jump process on the space $\GT(N)$ 
	of Gelfand--Tsetlin schemes
	of depth $N$ (equivalently, on the space
	of interlacing particle arrays of 
	depth $N$ as on Fig.~\ref{fig:la_interlacing}). 
	We will call $\bp^{(N)}$ a \emph{multivariate 
	dynamics on interlacing arrays} 
	(\emph{with Macdonald parameters}) if:
	\begin{enumerate}[\bf1.]
		\item $\bp^{(N)}$ satisfies Definition 
		\ref{def:coherent_continuous} 
		(of sequential update continuous-time
		dynamics)
		and all conventions of 
		\S \ref{sub:sequential_update_dynamics_in_continuous_time}. 
		\item Projections $Q_k$ of the dynamics $\bp^{(N)}$ defined by 
		\eqref{Projection_Q_k} coincide with
		the univariate dynamics of \S \ref{sub:univariate_continuous_time_dynamics_preserving_the_class_of_macdonald_measures}.
	\end{enumerate}
\end{definition}
\begin{remark}\label{rmk:multivariate_action_on_Gibbs}
	We emphasize that a multivariate dynamics is not 
	uniquely determined by Definition \ref{def:multivariate_GT}. 
	However, the action
	of \emph{any} multivariate dynamics $\bp^{(N)}$
	on Macdonald processes is the same:
	\begin{align}\label{action_on_Masc}
		\M_{asc}(a_1,\dots,a_N;\rho)
		\bp^{(N)}(\tau)
		=\M_{asc}(a_1,\dots,a_N;\rho,\rho_\tau),
	\end{align}
	where $\rho_{\tau}$ is the Plancherel 
	specialization corresponding to $\tau\ge0$ 
	(cf. Proposition \ref{prop:univariate_Q_k_Macdonald_MMeasures}).
\end{remark}
Observe that 
Definition \ref{def:multivariate_GT} implies a natural restriction
on possible jumps of a multivariate dynamics.
Namely, at each level $k=1,\ldots,N$, no more than one particle 
can jump (to the right by one) at any given moment.
Indeed, this follows
(with the help of \eqref{Projection_Q_k}) from the
corresponding property
of 
the univariate
dynamics $Q_k$ (\S \ref{sub:univariate_continuous_time_dynamics_preserving_the_class_of_macdonald_measures}): a jump $\la^{(k)}\to\nu^{(k)}$, 
$\la^{(k)},\nu^{(k)}\in\GT_k$, can occur under $Q_k$ only if 
$\nu^{(k)}=\la^{(k)}+\de_j$ for some $j=1,\ldots,k$
(see \eqref{add_one_box} for this notation).


\subsection{Specialization of formulas from \S \ref{sub:sequential_update_dynamics_in_continuous_time}} 
\label{sub:specialization_of_formulas_from_s_sub:sequential_update_dynamics_in_continuous_time}

As follows from Definition \ref{def:multivariate_GT},
a multivariate dynamics $\bp^{(N)}(\tau)$ is completely 
determined (via its jump rates expressed
as \eqref{bq_N_product_form}) 
by the rates of independent jumps 
$W_k(\la^{(k)},\nu^{(k)}\,|\,\la^{(k-1)})$ 
together with the
probabilities of triggered moves 
$V_k(\la^{(k)},\nu^{(k)}\,|\,\la^{(k-1)},\nu^{(k-1)})$,
which must satisfy \eqref{V_conditions_P1'_1}, 
\eqref{V_conditions_P1'_2}, and \eqref{W_conditions_P1'}. 
In \S \S \ref{sub:specialization_of_formulas_from_s_sub:sequential_update_dynamics_in_continuous_time},
\ref{sub:when_a_jumping_particle_has_to_push_its_immediate_upper_right_neighbor}, and 
\ref{sub:characterization_of_multivariate_dynamics}
we will write down certain \emph{necessary and 
sufficient} conditions on the $W_k$'s and the $V_k$'s
under which they give rise to a multivariate dynamics
with Macdonald parameters.

Observe that the main identity
\eqref{P2'} that we need to 
specialize to our situation 
involves only two consecutive
levels, $k-1$ and $k$,
of the interlacing array
$\boldsymbol\la$ (see Fig.~\ref{fig:GT_scheme}
and \ref{fig:la_interlacing}). Thus, 
let us fix $k=1,\ldots,N$, 
and restrict our
attention to
$\GT_{k-1}$ and $\GT_{k}$.
Moreover, to shorten formulas below, 
we will introduce some additional notation. 
We will denote signatures $\la^{(k)},\nu^{(k)},\ldots$ from
$\GT_k$ simply by $\la,\nu,\ldots$; 
and signatures $\la^{(k-1)},\nu^{(k-1)},\ldots\in\GT_{k-1}$
will be denoted with with a bar: $\bar\la,\bar\nu,\ldots$. 
Also, on $\GT_{k-1}$ we will use the notation 
$\bar\nu=\bar\la+\bar\de_i$ (equivalently, 
$\bar\la=\bar\nu-\bar\de_i$)
if $\bar\nu$ is obtained from $\bar\la$
by adding one to the coordinate
$\bar\la_i$.
At level $k$, we will 
write as before, $\nu=\la+\de_j$,
for a similar relation.

Now, assume that functions 
$W_k(\la,\nu\,|\,\bar\la)$ and $V_k(\la,\nu\,|\,\bar\la,\bar\nu)$ 
satisfy 
\begin{eqnarray}\label{spec_V_conditions_P1'}
	\sum_{\nu\in\GT_k}V_k(\la,\nu\,|\,\bar\la,\bar\nu)=1,
	\qquad&&\qquad
	V_k(\la,\nu\,|\,\bar\la,\bar\la)
	=
	1_{\la=\nu},
	\\
	\label{spec_W_conditions_P1'}
	\sum_{\nu\in\GT_k\colon\nu\ne\la}
	W_k(\la,\nu\,|\,\bar\la)
	&=&-
	W_k(\la,\la\,|\,\bar\la).
\end{eqnarray}
These identities are just specializations of the general conditions 
\eqref{V_conditions_P1'_1}, 
\eqref{V_conditions_P1'_2}, and \eqref{W_conditions_P1'}.

\begin{proposition}
\label{prop:main_general_identity}
	For a fixed $k=1,\ldots,N$, consider functions 
	$W_k(\la,\nu\,|\,\bar\la)$ and $V_k(\la,\nu\,|\,\bar\la,\bar\nu)$.
	Under 
	\eqref{spec_V_conditions_P1'}--\eqref{spec_W_conditions_P1'}, 
	the general condition \eqref{P2'} on $W_k,V_k$ is equivalent to 
	the following family of identities
	(quantities $\psi$ and $\psi'$ are 
	given in \S \ref{sub:skew_functions} and
	\S \ref{sub:endomorphism_q,t_}):
	\begin{align}\label{P2'_psi}
		&\sum_{i=1}^{k-1}
		V_k(\la,\la+\de_j\,|\,
		\bar\nu-\bar\de_i,\bar\nu)
		\psi_{\la/\bar\nu-\bar\de_i}
		\psi'_{\bar\nu/\bar\nu-\bar\de_i}
		+
		a_k^{-1}
		W_k(\la,\la+\de_j\,|\,\bar\nu)
		\psi_{\la/\bar\nu}
		=\psi'_{\la+\de_j/\la}\psi_{\la+\de_j/\bar\nu}.
	\end{align}
	These identities are written out 
	for all $\la\in\GT_k$, all $j=1,\ldots,k$ 
	such that $\la_{j}<\la_{j-1}$ (so that $\la+\de_j$ 
	is also a signature $\in\GT_k$), and 
	all $\bar\nu\in\GT_{k-1}$. We also 
	need to impose the 
	condition that $\bar\nu\prec\la+\de_j$.

	In the summation over $i$, we agree that if 
	$\bar\nu-\bar\de_i$ is not a signature
	(i.e., if $\bar\nu_i=\bar\nu_{i+1}$), 
	then $\psi'_{\bar\nu/\bar\nu-\bar\de_i}=0$.
\end{proposition}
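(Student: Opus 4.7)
The plan is to carry out a direct substitution: plug the explicit formulas \eqref{Links_Macdonald} for the stochastic links and \eqref{univariate_Q_k_Macdonald} for the univariate jump rates into the abstract identity \eqref{P2'}, and simplify. First, by Remark \ref{rmk:condition_xk_ne_yk} it suffices to verify \eqref{P2'} when $x_k\ne y_k$; since the off-diagonal support of $Q_k$ consists only of transitions $\la\to\la+\de_j$, the only nontrivial choice is $x_k=\la$, $y_k=\la+\de_j$ for some admissible $j$, with $y_{k-1}=\bar\nu$ subject to $\bar\nu\prec\la+\de_j$. This matches exactly the index set over which \eqref{P2'_psi} is claimed.

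Next I will collapse the sum over $x_{k-1}\in\GT_{k-1}$ appearing in \eqref{P2'}. Since $Q_{k-1}(x_{k-1},\bar\nu)$ vanishes unless $x_{k-1}=\bar\nu-\bar\de_i$ for some $i\in\{1,\ldots,k-1\}$, the sum collapses to a sum over $i$. The stated convention that $\psi'_{\bar\nu/\bar\nu-\bar\de_i}=0$ whenever $\bar\nu-\bar\de_i$ fails to be a signature is precisely what ensures that the corresponding summands drop out, so no separate boundary analysis is required.

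Substituting the explicit formulas, the key observation is that the multivariate Macdonald polynomial factors telescope cleanly. Each of the three contributions (the $V_k$-sum, the $W_k$-term, and the right-hand side) carries the same prefactor $P_{\bar\nu}(a_1,\ldots,a_{k-1})/P_\la(a_1,\ldots,a_k)$, which can be divided out. What remains involves only \emph{one-variable} skew Macdonald polynomials $P_{\sigma/\tau}(a_k)$ with $\sigma\in\{\la,\la+\de_j\}$ and $\tau\in\{\bar\nu,\bar\nu-\bar\de_i\}$. Applying the one-variable skew formula \eqref{P_one_variable}, namely $P_{\sigma/\tau}(a_k)=\psi_{\sigma/\tau}\,a_k^{|\sigma|-|\tau|}$ (which already encodes the horizontal-strip constraint by vanishing of $\psi_{\sigma/\tau}$ otherwise), converts these to products of $\psi$-coefficients and powers of $a_k$. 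Every term ends up carrying $a_k^{|\la|+1-|\bar\nu|}$ except the $W_k$-term, which carries one fewer power of $a_k$; dividing by the common factor $a_k^{|\la|+1-|\bar\nu|}$ produces exactly \eqref{P2'_psi}, with the extra $a_k^{-1}$ in front of $W_k$ accounting for the discrepancy.

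There is no real obstacle: the proof is essentially bookkeeping and cancellation of Macdonald polynomial factors. The only subtle points are matching the two conventions about admissibility of $\bar\nu-\bar\de_i$ (the sum in \eqref{P2'} only runs over signatures, while \eqref{P2'_psi} extends all $i$ and absorbs the vanishing into $\psi'$) and checking that the horizontal-strip condition in \eqref{P_one_variable} is consistent with the interlacing constraint $\bar\nu\prec\la+\de_j$ so that no relevant summand is accidentally set to zero.
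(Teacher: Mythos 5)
Your proposal is correct and follows essentially the same route as the paper's proof: restrict to $x_k\ne y_k$ via Remark \ref{rmk:condition_xk_ne_yk}, collapse the sum over $x_{k-1}$ using the support of $Q_{k-1}$, substitute \eqref{Links_Macdonald} and \eqref{univariate_Q_k_Macdonald}, and cancel the common factor $P_{\bar\nu}(a_1,\ldots,a_{k-1})/P_\la(a_1,\ldots,a_k)$ together with the power $a_k^{|\la|+1-|\bar\nu|}$, which produces the $a_k^{-1}$ in front of $W_k$. The bookkeeping of powers of $a_k$ and the handling of non-signature $\bar\nu-\bar\de_i$ via the vanishing convention both match the paper.
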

The summation in \eqref{P2'_psi} can be informally
understood as follows. Having any move
$\la\to\nu=\la+\de_j$ at the upper level $\GT_k$, and a fixed new
state $\bar\nu\in\GT_{k-1}$ at the lower level, 
the sum is taken over 
all possible ``histories'' $\bar\la=\bar\nu-\bar\de_i$, 
i.e., over all moves $\bar\la\to\bar\nu$ 
which could have 
happened\footnote{In our continuous-time 
jump dynamics, only one independent jump 
(cf. Comment~\ref{comm:sequential_update_continuous}) 
can happen during an infinitesimally 
small time interval.} at $\GT_{k-1}$. 
Of course, it must be $\bar\la\prec\la$ and $\bar\nu\prec\nu$.
Note that we explicitly impose the latter condition in 
the formulation, while the former condition is 
ensured by the presence
of the coefficient $\psi_{\la/\bar\nu-\bar\de_i}$
which vanishes unless $\bar\la=\bar\nu-\bar\de_i\prec\la$.

\bigskip
\noindent
\emph{Proof of Proposition \ref{prop:main_general_identity}.}
Fix $\la,\nu\in\GT_k$ and $\bar\nu\in\GT_{k-1}$, 
and write identity \eqref{P2'} 
for $x_k=\la$, $y_k=\nu$, and $y_{k-1}=\bar\nu$.
Due to Remark \ref{rmk:condition_xk_ne_yk} and Proposition 
\ref{prop:jump_rate_Q_k_Macdonald_commute},
we may assume that $\nu\ne\la$; and we also may 
run the summation over
$\bar\la\ne\bar\nu$. We obtain:
\begin{align}
	\label{P2'_la_nu_proof}
	&
	\sum_{\bar\la\in\GT_{k-1}
	\colon\bar\la\ne\bar\nu}
	V_k(\la,\nu\,|\,\bar\la,\bar\nu)
	\La^{k}_{k-1}(\la,\bar\la)
	Q_{k-1}(\bar\la,\bar\nu)
	+
	W_k(\la,\nu\,|\,\bar\nu)
	\La^{k}_{k-1}(\la,\bar\nu)
	=
	Q_k(\la,\nu)\La^{k}_{k-1}(\nu,\bar\nu).
\end{align}
Then,
plugging in the definitions of 
stochastic links 
(\S \ref{sub:stochastic_links_with_macdonald_parameters}; 
we also use \eqref{P_one_variable}) 
and univariate dynamics 
(\S \ref{sub:univariate_continuous_time_dynamics_preserving_the_class_of_macdonald_measures}) 
and crossing out (nonzero) common factors involving 
Macdonald polynomials,
we obtain
\begin{align*}
	&
	\sum_{\bar\la\in\GT_{k-1}
	\colon\bar\la\ne\bar\nu}
	V_k(\la,\nu\,|\,\bar\la,\bar\nu)
	\cdot
	\psi_{\la/\bar\la}a_k^{|\la|-|\bar\la|}\cdot
	\psi'_{\bar\nu/\bar\la}
	1_{\text{$\bar\nu=\bar\la+\bar\de_i$ for some $i$}}
	\\&
	\hspace{125pt}
	+
	W_k(\la,\nu\,|\,\bar\nu)
	\psi_{\la/\bar\nu}a_k^{|\la|-|\bar\nu|}
	=
	\psi'_{\nu/\la}
	1_{\text{$\nu=\la+\de_j$ for some $j$}}
	\cdot
	\psi_{\nu/\bar\nu}a_k^{|\nu|-|\bar\nu|}.
\end{align*}
This clearly coincides with the desired claim of the proposition.
\qed
\bigskip

In fact, for $k=1$ identity \eqref{P2'_psi} (which then does not 
contain the summation over $i$) means that $W_1=a_1 Q_1$, where $Q_1$
is the univariate jump rate matrix on the first level $\GT_1\cong\Z$ 
(see \S \ref{sub:univariate_continuous_time_dynamics_preserving_the_class_of_macdonald_measures}).
That is, the bottommost particle in the interlacing array 
(as on Fig.~\ref{fig:la_interlacing}) performs the univariate 
dynamics with speed scaled by $a_1$. 

\begin{remark}\label{rmk:la=nu}
	As pointed out in Remark \ref{rmk:condition_xk_ne_yk},
	identity similar to \eqref{P2'_la_nu_proof} but with
	$\nu=\la$ follows automatically from 
	\eqref{P2'_psi} and commutation relations 
	(Proposition \ref{prop:jump_rate_Q_k_Macdonald_commute}). Using \eqref{univariate_Q_k_Macdonald_diagonal}, we see that this identity has the form:
	\begin{align}\label{P2'_la=nu}
		\sum_{i=1}^{k-1}
		V_k(\la,\la\,|\,
		\bar\nu-\bar\de_i,\bar\nu)
		\psi_{\la/\bar\nu-\bar\de_i}
		\psi'_{\bar\nu/\bar\nu-\bar\de_i}
		+
		(a_k^{-1}
		W_k(\la,\la\,|\,\bar\nu)+1)
		\psi_{\la/\bar\nu}
		=0.
	\end{align}
\end{remark}
Let us make one more natural remark
about the functions $V_k,W_k$:
\begin{remark}\label{rmk:zero_outside_GT}
	It will be sometimes convenient to 
	extend the definition of 
	$V_k(\la,\nu\,|\,\bar\la,\bar\nu)$ and 
	$W_k(\la,\nu\,|\,\bar\nu)$ beyond our usual 
	assumptions 
	by setting them equal to zero
	if $\bar\la\not\prec\la$ or $\bar\nu\not\prec\nu$
	(cf. Remarks \ref{rmk:zero_outside_state_space} and
	\ref{rmk:zero_outside_state_space_P}). 
	Probabilistically this means that, for example, 
	a jump at level $k$ 
	from $\la$ to $\nu$ with 
	$\bar\nu\not\prec\nu$
	is impossible.
\end{remark}


\subsection[When a moving particle has to push its upper right neighbor]
{When a moving particle has to short-range 
push its immediate upper right neighbor} 
\label{sub:when_a_jumping_particle_has_to_push_its_immediate_upper_right_neighbor}

It is convenient now to separate a special case of 
identities \eqref{P2'_psi}, namely, when $\bar\nu\not\prec\la$. 
Since $\bar\nu$ is the new state of the dynamics 
at level $\GT_{k-1}$, 
it must differ as $\bar\nu=\bar\la+\bar\de_i$ 
from a previous state $\bar\la$ for 
which $\bar\la\prec\la$. 
This condition defines the signature $\bar\la$
uniquely.
Next, the interlacing 
constraints (cf.~Fig.~\ref{fig:la_interlacing}) 
clearly imply that there exists a unique $\nu=\la+\de_j$ such
that $\bar\nu\prec\nu$, and, moreover, $i=j$ (see also 
Fig.~\ref{fig:nu_not_prec_la} below).
Thus, we see that identity
\eqref{P2'_psi} for $\bar\nu\not\prec\la$
takes the form:
\begin{align}\label{Vpsipsi=psipsi}
	V_k(\la,\la+\de_j\,|\,
	\bar\nu-\bar\de_j,\bar\nu)
	\psi_{\la/\bar\nu-\bar\de_j}
	\psi'_{\bar\nu/\bar\nu-\bar\de_j}=
	\psi'_{\la+\de_j/\la}
	\psi_{\la+\de_j/\bar\nu}.
\end{align}
Therefore, the value of 
$V_k(\la,\la+\de_j\,|\,\bar\nu-\bar\de_j,\bar\nu)$
is completely determined, and we in fact 
can compute it:
\begin{proposition}\label{proposition:nu_not_prec_la}
	For $\bar\nu\not\prec\la$,
	$\bar\nu\prec\la+\de_j$, 
	and $\bar\nu-\bar\de_j\prec\la$,
	one has
	\begin{align}\label{psipsi=psipsi}
		\psi_{\la/\bar\nu-\bar\de_j}
		\psi'_{\bar\nu/\bar\nu-\bar\de_j}=
		\psi'_{\la+\de_j/\la}
		\psi_{\la+\de_j/\bar\nu},
	\end{align}
	and so in this case
	\begin{align}\label{V_k_blocking}
		V_k(\la,\la+\de_j\,|\,\bar\nu-\bar\de_j,\bar\nu)
		=1.
	\end{align}
\end{proposition}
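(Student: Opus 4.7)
The plan is to derive the identity \eqref{psipsi=psipsi} directly from the commutation relation between the univariate dynamics and the stochastic links (Proposition \ref{prop:jump_rate_Q_k_Macdonald_commute}), by choosing the matrix entry in such a way that each side collapses to a single nonzero term.

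First I would unpack the combinatorial situation. The hypotheses $\bar\nu-\bar\de_j\prec\la$, $\bar\nu\prec\la+\de_j$, but $\bar\nu\not\prec\la$ force the ``collision'' $\bar\nu_j=\la_j+1$ (the lower-level particle has just moved up to violate interlacing with its upper-right neighbor). Consequently: on level $k$, the only signature of the form $\la+\de_i$ satisfying $\bar\nu\prec\la+\de_i$ is $\la+\de_j$; and on level $k-1$, the only signature of the form $\bar\nu-\bar\de_i$ satisfying $\bar\nu-\bar\de_i\prec\la$ is $\bar\nu-\bar\de_j$ itself. Both uniqueness statements are straightforward from the interlacing inequalities and the fact that only row $j$ is ``out of place.''

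Next, I apply the commutation relation $\La^{k}_{k-1}Q_{k-1}=Q_k\La^{k}_{k-1}$ at the entry $(\la,\bar\nu)$. Writing out the matrix product,
\begin{align*}
\sum_{\bar\la}\La^{k}_{k-1}(\la,\bar\la)\,Q_{k-1}(\bar\la,\bar\nu)
=\sum_{\nu}Q_k(\la,\nu)\,\La^{k}_{k-1}(\nu,\bar\nu).
\end{align*}
On the left, $Q_{k-1}(\bar\la,\bar\nu)$ is nonzero only for $\bar\la=\bar\nu$ or $\bar\la=\bar\nu-\bar\de_i$, and the link $\La^{k}_{k-1}(\la,\bar\la)$ vanishes unless $\bar\la\prec\la$. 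The diagonal term $\bar\la=\bar\nu$ drops because $\bar\nu\not\prec\la$, and by Step~1 the only surviving off-diagonal term is $\bar\la=\bar\nu-\bar\de_j$. Symmetrically, on the right only $\nu=\la+\de_j$ survives (the diagonal $\nu=\la$ drops for the same reason). I am left with
\begin{align*}
\La^{k}_{k-1}(\la,\bar\nu-\bar\de_j)\,Q_{k-1}(\bar\nu-\bar\de_j,\bar\nu)
=Q_k(\la,\la+\de_j)\,\La^{k}_{k-1}(\la+\de_j,\bar\nu).
\end{align*}

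Finally, I substitute the explicit formulas from \S \ref{sub:stochastic_links_with_macdonald_parameters} and \eqref{univariate_Q_k_Macdonald}, using \eqref{P_one_variable} to write $P_{\la/\bar\la}(a_k)=\psi_{\la/\bar\la}\,a_k^{|\la|-|\bar\la|}$. Common factors $P_{\bar\nu-\bar\de_j}(a_1,\ldots,a_{k-1})$, $P_{\bar\nu}(a_1,\ldots,a_{k-1})$, $P_\la(a_1,\ldots,a_k)$, $P_{\la+\de_j}(a_1,\ldots,a_k)$ and matching powers of $a_k$ (both exponents equal $|\la|-|\bar\nu|+1$) cancel, and what remains is precisely \eqref{psipsi=psipsi}. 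Plugging this back into \eqref{Vpsipsi=psipsi} and dividing by the (nonzero) common factor $\psi_{\la/\bar\nu-\bar\de_j}\psi'_{\bar\nu/\bar\nu-\bar\de_j}$ yields $V_k(\la,\la+\de_j\,|\,\bar\nu-\bar\de_j,\bar\nu)=1$.

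There is essentially no obstacle here: the proof is combinatorial bookkeeping plus one invocation of the already-established commutation relation. The only subtlety is verifying the uniqueness of $j$ in Step~1 (so that both sums really do collapse to single terms), but this is immediate from the hypothesis $\bar\nu\not\prec\la$ pinpointing the row of the violation.
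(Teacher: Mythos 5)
Your proof is correct and is essentially the paper's argument in different packaging: the paper extracts the coefficient of $\varepsilon$ from the skew Cauchy identity with one usual and one dual variable to obtain \eqref{inf_skew_cauchy} and then collapses it under the hypotheses, whereas you evaluate the commutation relation $\La^{k}_{k-1}Q_{k-1}=Q_k\La^{k}_{k-1}$ (Proposition \ref{prop:jump_rate_Q_k_Macdonald_commute}) at the entry $(\la,\bar\nu)$ --- and, as the paper itself remarks immediately after its proof, that commutation relation is equivalent to \eqref{inf_skew_cauchy}, so after cancelling the Macdonald-polynomial prefactors and the matching powers of $a_k$ the two computations coincide. Since Proposition \ref{prop:jump_rate_Q_k_Macdonald_commute} is proved independently of the present proposition, there is no circularity, and your collapsing steps (forcing $\bar\nu_j=\la_j+1$, uniqueness of the surviving index $j$ on both levels, and vanishing of the diagonal terms because $\La^{k}_{k-1}(\la,\bar\nu)=0$) are all sound.
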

\begin{proof}
	This can be checked directly using explicit formulas 
	for $\psi,\psi'$ (\S\ref{sub:skew_functions}, 
	\S\ref{sub:endomorphism_q,t_}). 
	A more structured way to see this is to consider the skew Cauchy
	identity \eqref{skew_Cauchy} 
	with $\rho_1=a$ (one usual variable), 
	and $\rho_2=\hat\varepsilon$ (one dual variable), and 
	extract from
	this identity the coefficient 
	by the first power of~$\varepsilon$. 
	
	This skew Cauchy identity reads (we rewrite it in 
	terms of $\psi,\psi'$):
	\begin{align*}
		(1+a \varepsilon)
		\sum_{\mu\in\GT^+}
		\varepsilon^{|\bar\nu|-|\mu|}
		\psi'_{\bar\nu/\mu}
		a^{|\la|-|\mu|}\psi_{\la/\mu}
		=
		\sum_{\ka\in\GT^+}a^{|\ka|-|\bar\nu|}
		\psi_{\ka/\bar\nu}
		\varepsilon^{|\ka|-|\la|}
		\psi'_{\ka/\la},
	\end{align*}
	and considering the coefficient by $\varepsilon$ 
	leads to
	\begin{align}\label{inf_skew_cauchy}
		\sum_{i=1}^{k-1}
		\psi'_{\bar\nu/\bar\nu-\bar\de_i}
		\psi_{\la/\bar\nu-\bar\de_i}
		+\psi_{\la/\bar\nu}=
		\sum_{j=1}^{k}
		\psi_{\la+\de_j/\bar\nu}
		\psi'_{\la+\de_j/\la}.
	\end{align}
	Now one can readily see that under the 
	assumptions of the proposition, 
	identity \eqref{psipsi=psipsi} holds. 
	To obtain the second claim (from \eqref{Vpsipsi=psipsi}), 
	we note that under
	our assumptions both sides of \eqref{psipsi=psipsi} are 
	nonzero.
\end{proof}

\begin{remark}
	In fact, identity \eqref{inf_skew_cauchy}
	is equivalent to the 
	commutation relations between 
	the jump rate matrices $Q_k$'s 
	and the stochastic links 
	(see Proposition \ref{prop:jump_rate_Q_k_Macdonald_commute}).
	
	Note also that if 
	we sum \eqref{P2'_psi} over all possible $j=1,\ldots,k$, 
	and add \eqref{P2'_la=nu}, we get \eqref{inf_skew_cauchy}.
	In other words, various 
	multivariate dynamics correspond to various
	refinements of 
	\eqref{inf_skew_cauchy} 
	(cf. discussions 
	after \eqref{coherency_dynamics}~and~\eqref{P2'}). 
\end{remark}

\begin{remark}\label{rmk:W_k_nu_always_prec_la}
	Note that the values $W_k(\la,\nu\,|\,\bar\nu)$ with 
	$\bar\nu\not\prec\la$ do not formally enter
	identities \eqref{P2'_psi}, see \eqref{Vpsipsi=psipsi},
	and thus cannot be determined by them.
	However, 
	these values are also
	not employed in the definition of 
	multivariate dynamics (see \eqref{bq_N_product_form}
	or \eqref{spec_bq_N_product_form} below).
	Thus, when speaking
	about $W_k(\la,\nu\,|\,\bar\nu)$,
	we will always assume that $\bar\nu\prec\la$. 
\end{remark}

Proposition \ref{proposition:nu_not_prec_la} 
is equivalent to 
the following rule which
holds for \emph{any} 
multivariate continuous-time dynamics
on interlacing particle 
arrays (see Fig.~\ref{fig:la_interlacing}):
\begin{sprule}
	\label{rule:block-pushing}
	If any particle $\la_{j}^{(k-1)}$ moves to
	the right by one
	and $\la^{(k)}_j=\la^{(k)}_{j-1}$, then the particle 
	$\la^{(k)}_j$ 
	is forced to
	instantly move to the right by one,
	and there are
	no other triggered
	moves at the $k$th row of the array.

	In other
	words, the triggered move of the 
	upper particle $\la^{(k)}_{j}$ is necessary
	to immediately restore the interlacing
	condition $\la_{j}^{(k)}\ge\la^{(k-1)}_{j}$
	that was broken by the move of $\la^{(k-1)}_{j}$.
\end{sprule}
Here and below 
by a \emph{jump}
we mean
an independent jump, and 
\emph{move} means an independent jump
or a triggered move 
(cf. Comment \ref{comm:sequential_update_continuous}).

The above rule
in fact coincides with one of the 
rules for the ``push-block'' dynamics introduced in 
\cite{BorFerr2008DF} (in the Schur
case), namely, with the ``push'' rule.
See \S 
\ref{sub:example_push_block_process_on_interlacing_arrays}
for more discussion.

\begin{figure}[htbp]
	\begin{center}
		\includegraphics[width=300pt]{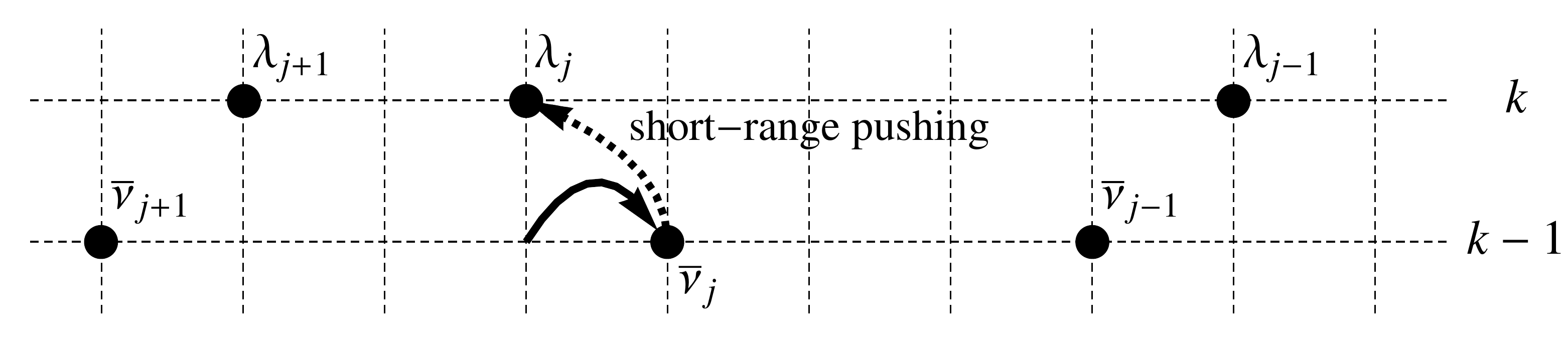}
	\end{center}  
  	\caption{The moved particle $\bar\nu_j$
  	violates the interlacing constraints and must 
  	instantly
  	short-range push $\la_j$. Dashed arrow 
  	represents this pushing interaction.}
  	\label{fig:nu_not_prec_la}
\end{figure}

Observe that 
the interaction 
between particles
described by the above rule
happens only at \emph{short distance} 
(namely, when the particle $\la^{(k-1)}_{j}$ is 
right under $\la^{(k)}_{j}$, cf.~Fig.~\ref{fig:la_interlacing}).
We will call this interaction
the \emph{short-range pushing}.


\subsection{Characterization of multivariate dynamics} 
\label{sub:characterization_of_multivariate_dynamics}

In the general situation, i.e., 
when $\bar\nu\prec\la$ (in contrast
with the development of 
\S \ref{sub:when_a_jumping_particle_has_to_push_its_immediate_upper_right_neighbor}), 
it turns out to be convenient to divide \eqref{P2'_psi} by 
$\psi_{\la/\bar\nu}$, which is now nonzero. 
Indeed, then 
the resulting quantities in the left-hand 
side of \eqref{P2'_psi} are given explicitly by
\begin{align}
	\nonumber
	&
	T_i(\bar\nu,\la):=\frac{\psi_{\la/\bar\nu-\bar\de_i}}
	{\psi_{\la/\bar\nu}}
	\psi'_{\bar\nu/\bar\nu-\bar\de_i}=
	\frac{(1-q^{\la_i - \bar\nu_i}t)(1-q^{\bar\nu_i-\la_{i+1}})}
	{(1-q^{\la_i - \bar\nu_i+1})
	(1-q^{\bar\nu_i-1-\la_{i+1}}t)}
	\\&
	\label{T_i}\hspace{100pt}\times
	\prod_{r=1}^{i-1}
	\frac{
	(1-q^{\la_r - \bar \nu_i} t^{i - r+1})(1-q^{\bar\nu_r-\bar\nu_i+1}t^{i-r-1})}
	{
	(1-q^{\la_r - \bar \nu_i + 1} t^{i - r})
	(1-q^{\bar\nu_r-\bar\nu_i}t^{i-r})}
	\\&
	\nonumber\hspace{100pt}\times
	\prod_{s=i+1}^{k-1}
	\frac{(1-q^{\bar\nu_i - \bar\nu_s-1} t^{s - i+1})
	(1-q^{\bar\nu_i - \la_{s + 1}} t^{s -i})}
	{(1-q^{\bar\nu_i - \bar\nu_s} t^{s - i})
	(1-q^{\bar\nu_i - \la_{s + 1}-1} t^{s -i+1})},
\end{align}
and in the right-hand side we get
\begin{align}&
	S_j(\bar\nu,\la):=\nonumber
	\frac{\psi_{\la+\de_j/\bar\nu}}
	{\psi_{\la/\bar\nu}}
	\psi'_{\la+\de_j/\la}=
	\prod_{r=1}^{j-1}
	\frac{(1-q^{\bar\nu_r-\la_j}t^{j-r-1})
	(1-q^{\la_r-\la_j-1}t^{j-r+1})
	}
	{(1-q^{\bar\nu_r-\la_j-1}t^{j-r})
	(1-q^{\la_r-\la_j}t^{j-r})
	}
	\\&
	\label{S_j}
	\hspace{110pt}\times
	\prod_{s=j}^{k-1}
	\frac{(1-q^{\la_j-\la_{s+1}+1}t^{s-j})
	(1-q^{\la_j-\bar\nu_s}t^{s-j+1})
	}
	{(1-q^{\la_j-\la_{s+1}}t^{s-j+1})
	(1-q^{\la_j-\bar\nu_s+1}t^{s-j})}.
\end{align}
The above explicit formulas for $T_i$ and $S_j$ follow
from the formulas
for the quantities $\psi,\psi'$, see 
\S\ref{sub:skew_functions} and \S\ref{sub:endomorphism_q,t_}.
The quantities
$T_i$ and $S_j$ of course depend on $k$, 
as they are defined for $\bar\nu\in\GT_{k-1}$ 
and $\la\in\GT_k$. However, we will not indicate this
dependence on $k$ explicitly.

Now we are in a position to summarize the 
development of \S\S \ref{sub:definition},
\ref{sub:specialization_of_formulas_from_s_sub:sequential_update_dynamics_in_continuous_time}, and 
\ref{sub:when_a_jumping_particle_has_to_push_its_immediate_upper_right_neighbor} by giving a 
general characterization of 
multivariate continuous-time dynamics
(with Macdonald parameters)
living on the space of interlacing particle
arrays (as on Fig.~\ref{fig:la_interlacing}). 

Assume that we are given 
functions 
\begin{align*}&
	W_k(\cdot,\cdot\,|\,\cdot)
	\colon\GT_k\times \GT_k\times\GT_{k-1}\to\R
	,& k=1,\ldots,N;
	\\&
	V_k(\cdot,\cdot\,|\,\cdot,\cdot)
	\colon
	\GT_k\times\GT_k\times\GT_{k-1}\times\GT_{k-1}
	\to[0,1],& k=2,\ldots,N.
\end{align*}
Set
\begin{align}\label{spec_bq_N_product_form}
	\bq^{(N)}(\boldsymbol\la,
	\boldsymbol\nu):=
	W_k(\la^{(k)},\nu^{(k)}\,|\,
	\la^{(k-1)})
	\prod_{i=k+1}^{N}
	V_i(\la^{(i)},\nu^{(i)}\,|\,
	\la^{(i-1)},\nu^{(i-1)}),
\end{align}
for each pair $\boldsymbol\la\ne\boldsymbol\nu$
of Gelfand--Tsetlin schemes of depth $N$ (see \S \ref{sub:signatures_young_diagrams_and_interlacing_arrays}) such that $\la^{(j)}=\nu^{(j)}$
for all $j\le k-1$, and $\la^{(k)}\ne\nu^{(k)}$. 
Define the diagonal elements of $\bq^{(N)}$ by 
$\bq^{(N)}(\boldsymbol\la,\boldsymbol\la):=
-\sum_{\boldsymbol\nu\in\GT(N)\colon\boldsymbol\nu\ne\boldsymbol\la}
\bq^{(N)}(\boldsymbol\la,\boldsymbol\nu)$. 

We further assume that the functions $W_k,V_k$, $k=1,\ldots,N$, 
satisfy \eqref{spec_V_conditions_P1'}--\eqref{spec_W_conditions_P1'}, 
and that $W_k(\la^{(k)},\nu^{(k)}\,|\,\la^{(k-1)})\ge0$ 
for $\la^{(k)}\ne\nu^{(k)}$.

\begin{theorem}\label{thm:general_multivariate}
	The functions $W_k,V_k$, $k=1,\ldots,N$, described
	right before the theorem define a multivariate
	continuous-time Markov dynamics
	on interlacing arrays 
	(cf. Definition \ref{def:multivariate_GT})
	with jump
	rates (= Markov generator) 
	\eqref{spec_bq_N_product_form} 
	if and only~if: 
	\begin{enumerate}[\bf1.]
		\item For any $k=2,\ldots,N$, any $\la\in\GT_k$,
		$\bar\nu\in\GT_{k-1}$
		with $\bar\nu\not\prec\la$, for which
		there
		exists (in fact, unique) $j=1,\ldots,k-1$ such that 
		$\bar\nu\prec\la+\de_j$ and $\bar\nu-\bar\de_j\prec\la$,
		we have 
		$V_k(\la,\la+\de_j
		\,|\,\bar\nu-\bar\de_j,\bar\nu)=1$.
		\item For any $k=1,\ldots,N$, any
		$\la\in\GT_k$, $\bar\nu\in\GT_{k-1}$
		with $\bar\nu\prec\la$, and any $j=1,\ldots,k$ such that
		$\la_{j}<\bar\nu_{j-1}$,\footnote{That is, 
		$\la+\de_j$ is a signature, and 
		$\bar\nu\prec\la+\de_j$. In other words, this means that 
		the particle $\la_j$ is not blocked and can move to the 
		right.} we have
		\begin{align}\label{P2'_T_S}
			\sum_{i=1}^{k-1}
			V_k(\la,\la+\de_j\,|\,
			\bar\nu-\bar\de_i,\bar\nu)
			T_i(\bar\nu,\la)+
			a_k^{-1}
			W_k(\la,\la+\de_j\,|\,\bar\nu)
			=S_j(\bar\nu,\la),
		\end{align}
		where $T_i$ and $S_j$ are given by \eqref{T_i} and 
		\eqref{S_j}, respectively. By agreement, if
		in the summation $\bar\nu-\bar\de_i$ is not a signature, 
		we set $T_i(\bar\nu,\la)=0$.
	\end{enumerate}
\end{theorem}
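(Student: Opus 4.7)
My plan is to derive the theorem as a direct case-by-case reformulation of Proposition \ref{prop:main_general_identity}, combined with the commutation relations already established in Proposition \ref{prop:jump_rate_Q_k_Macdonald_commute} and the finiteness condition of Section \ref{sub:sequential_update_dynamics_in_continuous_time}. The starting observation is that the definition of a multivariate dynamics (Definition \ref{def:multivariate_GT}) unpacks into exactly three requirements: the product form \eqref{spec_bq_N_product_form} for off-diagonal generators, the normalization identities \eqref{spec_V_conditions_P1'}--\eqref{spec_W_conditions_P1'}, and the single master identity \eqref{P2'} relating $V_k,W_k$ to the pre-specified univariate jump rates $Q_k$. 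The first two are assumed outright; the last, by Proposition \ref{prop:main_general_identity}, is equivalent to \eqref{P2'_psi} for all admissible $(\la,j,\bar\nu)$.

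The core of the proof is then a dichotomy on whether $\bar\nu\prec\la$ or not, which is exactly how conditions \textbf{1.} and \textbf{2.} of the theorem are organized. In the case $\bar\nu\not\prec\la$, the argument of \S \ref{sub:when_a_jumping_particle_has_to_push_its_immediate_upper_right_neighbor} is reused verbatim: interlacing forces the existence of a unique index $j$ with $\bar\nu-\bar\de_j\prec\la$ and $\bar\nu\prec\la+\de_j$, the $W_k$ term drops out because by Remark \ref{rmk:W_k_nu_always_prec_la} we only retain $W_k$ for $\bar\nu\prec\la$, and the sum over $i$ collapses to a single term. Equation \eqref{P2'_psi} then reduces to \eqref{Vpsipsi=psipsi}, and since Proposition \ref{proposition:nu_not_prec_la} verifies the identity \eqref{psipsi=psipsi} between the $\psi,\psi'$ factors, we obtain precisely condition \textbf{1}. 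In the complementary case $\bar\nu\prec\la$ we divide \eqref{P2'_psi} by the nonzero factor $\psi_{\la/\bar\nu}$; the resulting left-hand side coefficients are by definition the quantities $T_i(\bar\nu,\la)$ of \eqref{T_i} and the right-hand side becomes $S_j(\bar\nu,\la)$ of \eqref{S_j}, yielding condition \textbf{2} in the form \eqref{P2'_T_S}. Conversely, given \textbf{1}--\textbf{2}, we can multiply back by $\psi_{\la/\bar\nu}$ (or reinstate \eqref{Vpsipsi=psipsi}) to recover \eqref{P2'_psi}, hence \eqref{P2'}.

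Two small technical points remain. First, for $k=1$ the sum over $i$ in \eqref{P2'_T_S} is empty, so the condition reduces to $W_1(\la,\la+\de_1)=a_1\,S_1(\varnothing,\la)$; one checks by comparison with \eqref{univariate_Q_k_Macdonald} that this is exactly $W_1=a_1 Q_1$, consistent with the remark after Proposition \ref{prop:main_general_identity}. Second, I need to verify that the generator $\bq^{(N)}$ defined by \eqref{spec_bq_N_product_form} satisfies the finiteness Condition \ref{cond:finiteness_bq}; this follows because at each level only finitely many $\nu^{(k)}=\la^{(k)}+\de_j$ give nonzero $W_k$, the $V_i$ factors are bounded in $[0,1]$, and (as in the proof of Proposition \ref{prop:univariate_Q_k_Macdonald_finiteness}) the rates $W_k$ are uniformly bounded by a translation-invariance argument together with the Cauchy identity. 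The projections $Q_k$ then coincide with the univariate dynamics automatically, because summing \eqref{P2'_psi} over $j$ (equivalently, over $y_k\in\s_k$) produces the commutation relation $\La^{k}_{k-1}Q_{k-1}=Q_k\La^{k}_{k-1}$ already guaranteed by Proposition \ref{prop:jump_rate_Q_k_Macdonald_commute}, and the case $\la=\nu$ of \eqref{P2'} is automatic by Remark \ref{rmk:condition_xk_ne_yk}.

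The main conceptual obstacle is not any single computation but rather keeping the bookkeeping straight between the ``short-range pushing'' regime $\bar\nu\not\prec\la$ and the ``generic'' regime $\bar\nu\prec\la$; in particular one must be careful that the excluded values $W_k(\la,\nu\,|\,\bar\nu)$ with $\bar\nu\not\prec\la$ are genuinely absent from the defining formula \eqref{spec_bq_N_product_form}, as noted in Remark \ref{rmk:W_k_nu_always_prec_la}. Once the dichotomy is set up, the theorem is a direct corollary of Propositions \ref{prop:main_general_identity} and \ref{proposition:nu_not_prec_la}.
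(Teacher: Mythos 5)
Your proposal is correct and follows essentially the same route as the paper: the paper's proof simply invokes Proposition \ref{proposition:nu_not_prec_la} for condition \textbf{1} (the case $\bar\nu\not\prec\la$) and Proposition \ref{prop:main_general_identity}, after division by $\psi_{\la/\bar\nu}$, for condition \textbf{2}. Your additional remarks on the $k=1$ case and the finiteness condition are consistent with the surrounding discussion in the paper.
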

\begin{proof}
	The first claim is due to 
	Proposition \ref{proposition:nu_not_prec_la} 
	(see also the short-range pushing rule in \S
	\ref{sub:when_a_jumping_particle_has_to_push_its_immediate_upper_right_neighbor}), 
	and the 
	second claim follows from 
	Proposition \ref{prop:main_general_identity}.
\end{proof}

\begin{remark}\label{rmk:stacking}
	As we mentioned earlier, 
	it is possible to solve 
	our equations on the functions $W_m,V_m$, $m=1,\ldots,N$ 
	(of Theorem \ref{thm:general_multivariate}) 
	consecutively level by level.
	Then, ``stacking'' these solutions as in 
	\eqref{spec_bq_N_product_form} for $m=1,\ldots,N$, 
	we obtain a multivariate dynamics
	on the whole space $\GT(N)$.

	Therefore, it is possible
	(and often convenient)
	to restrict attention to a \emph{slice}
	formed by rows $k-1$ and $k$
	of the 
	interlacing array 
	(see Fig.~\ref{fig:la_interlacing}),
	where $k=2,\ldots,N$ is fixed. 
	Let us denote this slice by 
	$\GT_{(k-1;k)}:=
	\{(\bar\nu,\la)\in\GT_{k-1}\times\GT_{k}
	\colon\bar\nu\prec\la\}$.
	Let also, by agreement, 
	$\GT_{(0;1)}:=\GT_1$.
\end{remark}

On each fixed slice $\GT_{(k-1;k)}$
one can perform the following operation.
Assume we are given a
multivariate dynamics
defined by $\{(W_m,V_m)\}_{m=1}^{N}$. 
Let also
$(W_k',V_k')$ for some fixed $k$ be 
any other solution
of equations 
of Theorem \ref{thm:general_multivariate}
(this solution must also satisfy
the assumptions before the 
theorem).\footnote{It must be $k=2,\ldots,N$
because $V_1$ makes no sense, and
$W_1=a_1Q_1$ must be the same
for any multivariate dynamics.}
Then it is possible to 
\emph{mix}
the old quantities $(W_k,V_k)$ 
with the solution $(W_k',V_k')$,
i.e., replace $(W_k,V_k)$ 
by the following convex combination:
\begin{align}\label{mixing_WV}
	\begin{array}{rcl}
		\tilde W_k(\la,\nu\,|\,\bar\nu)&:=&
		\co(\bar\nu,\la)W_k(\la,\nu\,|\,\bar\nu)
		+\big(1-\co(\bar\nu,\la)\big)
		W_k'(\la,\nu\,|\,\bar\nu),\\
		\tilde V_k(\la,\nu\,|\,\bar\la,\bar\nu)&:=&
		\co(\bar\nu,\la)V_k(\la,\nu\,|\,\bar\la,\bar\nu)
		+\big(1-\co(\bar\nu,\la)\big)
		V_k'(\la,\nu\,|\,\bar\la,\bar\nu),
	\end{array}
\end{align}
where, as usual, $\la,\nu\in\GT_k$, 
$\bar\la,\bar\nu\in\GT_{k-1}$. Here $\co(\bar\nu,\la)$
is any function on $\GT_{(k-1;k)}$
(indeed, due to claim 1 of Theorem \ref{thm:general_multivariate},
it suffices to define it for $\bar\nu\prec\la$)
such that $0\le \co(\bar\nu,\la)\le1$ for all $\bar\nu,\la$.
In particular, for $\co(\bar\nu,\la)\equiv 1$
one simply replaces $(W_k,V_k)$
by $(W_k',V_k')$
in the definition of multivariate dynamics. 
All other quantities $(W_m,V_m)$, $m\ne k$, remain unchanged.

One can take these 
rather general coefficients
(i.e., depending on $\bar\nu$ and~$\la$ 
in an arbitrary way) because
equations \eqref{P2'_T_S} are written down for each 
fixed pair $\bar\nu,\la$, and so $(\tilde W_k,\tilde V_k)$
again satisfies Theorem \ref{thm:general_multivariate}.
One can also readily check that 
conditions 
\eqref{spec_V_conditions_P1'}--\eqref{spec_W_conditions_P1'}
hold for
$(\tilde W_k,\tilde V_k)$ 
as well.
Therefore, the result of mixing \eqref{mixing_WV}
is again a multivariate dynamics.

\begin{remark}
	One can go even further and make the coefficient
	$\co(\bar\nu,\la)$ 
	in \eqref{mixing_WV}
	depend also 
	on $\nu=\la+\de_j$ ($j=1,\ldots,k$), 
	because for each
	$\bar\nu,\la$, and each $j$
	we have a separate equation
	\eqref{P2'_T_S}.
	Taking such coefficients
	$\co(\bar\nu,\la,\la+\de_j)$
	in \eqref{mixing_WV} also
	produces multivariate dynamics.
	However, in our future 
	treatment of 
	nearest neighbor multivariate
	dynamics (\S \ref{sec:nearest_neighbor_multivariate_continuous_time_dynamics_on_interlacing_arrays})
	we do not need this generality
	to establish the characterization
	(Theorem \ref{thm:characterization_NN}). 
	This happens because of our parametrization
	and linear equations 
	for this type of dynamics
	(see 
	\S \ref{sub:parametrization_and_linear_equations}).
\end{remark}


\subsection{Example: push-block dynamics on interlacing arrays} 
\label{sub:example_push_block_process_on_interlacing_arrays}

In this and the next subsection
we discuss two previously known
examples of multivariate
dynamics on interlacing arrays
which fall under the setting of our Theorem
\ref{thm:general_multivariate}. The construction
of our first example 
works for general Macdonald parameters $(q,t)$ and was introduced
in
\cite[\S2.3.3]{BorodinCorwin2011Macdonald}.
The second model 
(\S \ref{sub:o_connell_pei_s_process_on_interlacing_arrays}) was constructed
in \cite{OConnellPei2012} in the $q$-Whittaker (i.e., $t=0$)
case.

A general construction of a 
continuous-time 
Diaconis--Fill type
dynamics 
is explained in \cite[\S8]{BorodinOlshanski2010GTs} 
(see \cite[\S2]{BorFerr2008DF} and also \S \ref{sub:example_diaconis_fill_dynamics} for a 
discrete-time version). 

We will call the distinguished
Diaconis--Fill type dynamics in continuous time 
the \emph{push-block dynamics}.
In the setting with general Macdonald parameters, the 
push-block dynamics was considered 
in \cite[\S2.3.3]{BorodinCorwin2011Macdonald}. 
Let us present an independent characterization of this 
dynamics using our formalism:
\begin{definition}\label{def:DF_continuous}
	Let us call a multivariate dynamics 
	on interlacing arrays (Definition \ref{def:multivariate_GT}) 
	\emph{push-block dynamics} if 
	$V_k(\la,\nu\,|\,\bar\la,\bar\nu)=
	1_{\la=\nu}$
	for any $k=2,\ldots,N$, $\la,\nu\in\GT_k$ 
	and $\bar\la,\bar\nu\in\GT_{k-1}$ such that 
	$\bar\la\prec\la$, $\bar\nu\prec\nu$,
	and, moreover, $\bar\nu\prec\la$.
\end{definition}
In other words, we distinguish the push-block dynamics by requiring 
that it has minimal possible triggered moves $V_k$
(cf. Comment \ref{comm:sequential_update_continuous}). 
Namely, if the move $\bar\la\to\bar\nu$ 
at $\GT_{k-1}$
does not break the interlacing between 
rows $k-1$ and $k$ of the array, 
then this move does not propagate to higher levels $k,k+1,\ldots$. 
However, one cannot completely eliminate the triggered moves,
cf. the obligatory short-range pushing rule in \S
\ref{sub:when_a_jumping_particle_has_to_push_its_immediate_upper_right_neighbor}.

One easily checks that Definition \ref{def:DF_continuous}
\emph{uniquely defines} a multivariate dynamics.
In particular, this definition implies that 
the 
nonzero off-diagonal
rates of independent jumps $W_k$, $k=1,\ldots,N$, 
are (here $\bar\nu\prec\la$, 
cf. Remark \ref{rmk:W_k_nu_always_prec_la}):
\begin{align*}
	W_k(\la,\la+\de_j\,|\,\bar\nu)=
	a_k\cdot S_j(\bar\nu,\la),
\end{align*}
where $S_j$ is given by \eqref{S_j}. 
(The diagonal values of $W_k$ are of course
defined by \eqref{spec_W_conditions_P1'}.) 

Moreover, one can also check that 
dynamics of Definition \ref{def:DF_continuous}
\emph{coincides} with the continuous-time 
dynamics on 
interlacing arrays introduced in 
\cite[\S2.3.3]{BorodinCorwin2011Macdonald}.
In probabilistic terms, the process on interlacing arrays
is described as follows:
\begin{dynamics}[push-block dynamics]
\label{dyn:DF_Mac}
	{\ }\begin{enumerate}[(1)]
		\item (independent jumps)
		Each particle $\la^{(k)}_{j}$ at each level $k=1,\ldots,N$ 
		has its own independent exponential clock with rate
		$a_k S_j(\la^{(k-1)},\la^{(k)})$. 
		When the $\la^{(k)}_j$th clock rings, 
		the particle $\la^{(k)}_j$ 
		jumps to the right by one
		if it is not blocked by a lower particle, 
		i.e., if $\la^{(k)}_j<\la^{(k-1)}_{j-1}$.
		If the particle $\la^{(k)}_{j}$ 
		is blocked, no jump occurs 
		because then
		$S_j(\la^{(k-1)},\la^{(k)})=0$.
		\item (triggered moves)
		When any particle 
		$\la_{j}^{(k-1)}$, ($k=2,\ldots,N$, $j=1,\ldots,k$), 
		moves to
		the right by one,
		it short-range pushes $\la^{(k)}_{j}$ according to the
		rule of \S
		\ref{sub:when_a_jumping_particle_has_to_push_its_immediate_upper_right_neighbor}. 
		That is,
		if $\la^{(k-1)}_{j}=\la^{(k)}_{j}$,
		then $\la^{(k)}_{j}$ also jumps to the right by one.
		No other triggered moves occur.
	\end{enumerate}
\end{dynamics}

One of the features of this dynamics is that
in the $q$-Whittaker case (i.e.,~when $t=0$), 
the stochastic evolution
of the leftmost particles 
$\la^{(k)}_{k}$, $k=1,\ldots,N$,
of the interlacing array
(cf. Fig.~\ref{fig:la_interlacing})
is Markovian.
This dynamics is called $q$-TASEP.
We briefly discuss it in \S \ref{sub:taseps} below.

In the Schur degeneration ($q=t$), one has 
$S_j(\bar\nu,\la)=
1_{\bar\nu\prec\la+\de_j}1_{\la\prec\la+\de_j}$, so each 
particle on level $k$ has an independent exponential 
clock with rate $a_k$ if it is not blocked. 
In this case the dynamics
of the rightmost particles 
$\la^{(k)}_{1}$, $k=1,\ldots,N$,
is also 
Markovian, it may be 
called PushTASEP (or long-range TASEP). See \S\ref{sub:taseps}
below and also \cite{BorFerr08push}, \cite{BorFerr2008DF}.


\subsection{Example: O'Connell--Pei's randomized insertion algorithm} 
\label{sub:o_connell_pei_s_process_on_interlacing_arrays}

Another multivariate dynamics 
on interlacing arrays
was introduced recently in \cite{OConnellPei2012}
in the $q$-Whittaker ($t=0$) case (we also
discuss its modification in \S \ref{ssub:remark_a_nearest_neighbor_markov_process_inspired_by_dynamics_dyn:oconnell} below). 
Let us rewrite the original definition 
(given in the language of semistandard
Young tableaux, cf. \S \ref{sub:semistandard_young_tableaux})
in terms of interlacing arrays. 
See also 
\S \ref{sub:from_interlacing_arrays_to_semistandard_tableaux}
below
for a ``dictionary''
between the two ``languages''.

Denote for
all $\bar\nu\in\GT_{k-1}$ and
$\la\in\GT_{k}$
\begin{align}\label{F_big}
	F_i(\bar\nu,\la):=&\begin{cases}
		0,&i=1;\\
		q^{\bar\nu_{i-1}-\la_i}
		,&2\le i\le k,
	\end{cases}
	\\
	\label{f_small}
	f_i(\bar\nu,\la)
	:=&
	\begin{cases}
		1,&i=1\\
		({1-q^{\bar\nu_{i-1}-\la_i}})/
		({1-q^{\bar\nu_{i-1}-\bar\nu_i+1}}),
		&
		2\le i\le k-1,\\
		1-q^{\bar\nu_{k-1}-\la_k}
		,&i=k.
	\end{cases}
\end{align}

Next, for all 
$k=1,\ldots,N$ 
and all 
$(\bar\nu,\la)\in\GT_{(k-1;k)}$, 
let the 
rates of independent jumps be
($j=1,\ldots,k$)
\begin{align}\label{W_k_OConnell}
	W_k(\la,\la+\de_j\,|\,\bar\nu):=
	a_k
	\big(1-F_j(\bar\nu,\la)\big)
	\prod_{r=j+1}^{k}
	F_r(\bar\nu,\la),
\end{align}
and
the probabilities
of triggered moves be
($j=1,\ldots,k$, $i=1,\ldots,k-1$):
\begin{align}&
	\label{V_k_OConnell}
	V_k(\la,\la+\de_j\,|\,\bar\nu-\bar\de_i,\bar\nu)
	:=\begin{cases}
		f_i(\bar\nu,\la),& j=i;\\
		\big(1-F_j(\bar\nu,\la)\big)
		\big(1-f_i(\bar\nu,\la)\big)
		\prod_{r=j+1}^{i-1}
		F_r(\bar\nu,\la)
		,& j<i;\\
		0,& j>i,
	\end{cases}
\end{align}
Using
the first relation in 
\eqref{spec_V_conditions_P1'}, 
one can readily see
from \eqref{V_k_OConnell}
that 
the probability
that a move does not propagate 
to a higher level is zero, i.e.,
$V_k(\la,\la\,|\,
\bar\nu-\bar\de_i,\bar\nu)=0$.
We also impose the second relation
in \eqref{spec_V_conditions_P1'}
on the~$V_k$'s.
Moreover, we define
the diagonal 
jump rates $W_k(\la,\la\,|\,\bar\nu)$ by 
\eqref{spec_W_conditions_P1'} (in fact, 
they are equal to $(-a_k)$). 
Finally, observe that 
if $\bar\nu\not\prec\la$, formula
\eqref{V_k_OConnell} is equivalent
to 
\eqref{V_k_blocking} 
because in this case $\bar\nu_i=\la_i+1$ for a 
suitable $i$, and it must be $j=i$ (cf. 
\S \ref{sub:when_a_jumping_particle_has_to_push_its_immediate_upper_right_neighbor}).

\begin{proposition}\label{prop:OConnell}
	Jump rates \eqref{W_k_OConnell} and probabilities
	of triggered moves \eqref{V_k_OConnell} define a 
	multivariate continuous-time dynamics
	on interlacing arrays in the sense of 
	Definition \ref{def:multivariate_GT}.
\end{proposition}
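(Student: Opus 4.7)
The plan is to verify the hypotheses of Theorem \ref{thm:general_multivariate} for the $W_k, V_k$ defined by \eqref{W_k_OConnell}--\eqref{V_k_OConnell} in the $q$-Whittaker case $t = 0$, together with the conditions \eqref{spec_V_conditions_P1'}--\eqref{spec_W_conditions_P1'} and nonnegativity of off-diagonal entries. Most preliminary checks will be routine: combining $0 \le q < 1$ with the interlacing $\bar\nu \prec \la$ (which gives $\bar\nu_{i-1} \ge \la_i \ge \bar\nu_i$) yields $0 \le F_i, f_i \le 1$ and hence nonnegativity of $V_k$ and off-diagonal $W_k$. The diagonal values of $W_k$ and the second condition of \eqref{spec_V_conditions_P1'} are imposed by definition, so it remains to check $\sum_\nu V_k(\la, \nu \,|\, \bar\nu - \bar\de_i, \bar\nu) = 1$; I will deduce this from the telescope $\sum_{j=1}^{i-1}(1 - F_j)\prod_{r=j+1}^{i-1} F_r = 1 - \prod_{r=1}^{i-1} F_r = 1$ (using $F_1 = 0$), giving $\sum_j V_k = (1-f_i)\cdot 1 + f_i = 1$. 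The first (short-range pushing) claim of Theorem \ref{thm:general_multivariate} follows as noted after \eqref{V_k_OConnell}: when $\bar\nu \not\prec \la$, the violating index $i$ satisfies $\bar\nu_i = \la_i + 1$, whence $\bar\nu_{i-1} - \la_i = \bar\nu_{i-1} - \bar\nu_i + 1$ forces $f_i = 1$, i.e., $V_k(\la, \la + \de_i \,|\, \bar\nu - \bar\de_i, \bar\nu) = 1$.

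The substantive part will be the verification of identity \eqref{P2'_T_S} when $\bar\nu \prec \la$. The first step is to take the $t \to 0$ limit in \eqref{T_i} and \eqref{S_j}; in each large product, only those factors whose exponent of $t$ vanishes survive (as $t^0 = 1$) while the rest collapse to $1$, leaving for $2 \le i \le k-1$
\begin{align*}
T_i\big|_{t=0} = \frac{(1 - q^{\bar\nu_i - \la_{i+1}})(1 - q^{\bar\nu_{i-1} - \bar\nu_i + 1})}{1 - q^{\la_i - \bar\nu_i + 1}},
\qquad
S_j\big|_{t=0} = (1 - F_j)\cdot\frac{1 - q^{\la_j - \la_{j+1} + 1}}{1 - q^{\la_j - \bar\nu_j + 1}},
\end{align*}
with the natural adaptations for $i=1$ and $j \in \{1,k\}$. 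The crucial pointwise identity I will establish is
\begin{align*}
\big(1 - f_i(\bar\nu, \la)\big)\, T_i(\bar\nu, \la) \;=\; F_i(\bar\nu, \la)\big(1 - F_{i+1}(\bar\nu, \la)\big),
\end{align*}
obtained by writing out $1 - f_i$ and cancelling the common factor $1 - q^{\bar\nu_{i-1} - \bar\nu_i + 1}$ with $T_i$, followed by cancellation of the factor $1 - q^{\la_i - \bar\nu_i + 1}$.

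With this identity in hand, the verification of \eqref{P2'_T_S} reduces to telescoping. Since $V_k$ vanishes for $j > i$, the LHS splits into the $i = j$ term $f_j T_j$, a sum over $j < i \le k-1$, and the $W_k$-contribution $(1 - F_j)\prod_{r=j+1}^{k} F_r$. Substituting the key identity, each $i > j$ term becomes $(1 - F_j)(1 - F_{i+1})\prod_{r=j+1}^{i} F_r$, and the sum telescopes to $(1 - F_j)\big(F_{j+1} - \prod_{r=j+1}^{k} F_r\big)$, cancelling the second part of the $W_k$-contribution and leaving $(1 - F_j) F_{j+1}$. Hence LHS $= f_j T_j + (1 - F_j) F_{j+1}$, and a one-line manipulation using $F_{j+1}\, q^{\la_j - \bar\nu_j + 1} = q^{\la_j - \la_{j+1} + 1}$ matches $S_j\big|_{t=0}$. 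The boundary cases $j = 1$ (where $F_1 = 0$ and $f_1 = 1$) and $j = k$ (where the $V_k$-sum is empty and $S_k = 1 - F_k$) follow from the same telescope.

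The main obstacle I anticipate is the bookkeeping in the $t \to 0$ limit of \eqref{T_i} and \eqref{S_j}: one must carefully identify which factors in the $r$- and $s$-products survive the specialization. Once the simplified formulas and the pointwise identity $(1 - f_i) T_i = F_i(1 - F_{i+1})$ are in place, the remaining telescoping argument is essentially mechanical.
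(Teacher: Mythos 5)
Your proposal is correct and follows essentially the same route as the paper's proof: reduce everything to checking \eqref{P2'_T_S} with the $t=0$ forms of $T_i$ and $S_j$, establish the pointwise identity $(1-f_i)T_i=F_i(1-F_{i+1})$, telescope the sum down to $f_jT_j+(1-F_j)F_{j+1}=S_j$, and verify that last identity directly (cf.\ \eqref{Oconnell_514_identity}--\eqref{OConnell_proof_collapsing}). The only difference is that you spell out the preliminary nonnegativity, normalization, and short-range-pushing checks that the paper dispatches by reference to the discussion following \eqref{W_k_OConnell}--\eqref{V_k_OConnell}.
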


\begin{proof}
	This proposition was proved (in another form) 
	in \cite{OConnellPei2012}. 
	However, 
	let us redo the 
	necessary computations
	here in order to
	demonstrate 
	the application of 
	our general Theorem \ref{thm:general_multivariate}.

	In the $t=0$ case, the quantities
	$T_i$ and $S_j$ are given by 
	\eqref{T_S_Whittaker}--\eqref{T_1S_1} below.
	It suffices to check
	\eqref{P2'_T_S}.
	The fact that all
	other identities 
	\eqref{spec_V_conditions_P1'}, 
	\eqref{spec_W_conditions_P1'},
	and \eqref{V_k_blocking}
	hold follows from the discussion after 
	\eqref{W_k_OConnell}--\eqref{V_k_OConnell}.
	
	Fix any $(\bar\nu,\la)\in\GT_{(k-1;k)}$ 
	any $j=1,\ldots,k$.
	Identity \eqref{P2'_T_S} for $j=k$ reads
	\begin{align*}
		a_k^{-1}W_k(\la,\la+\de_k\,|\,\bar\nu)
		=S_k(\bar\nu,\la),
	\end{align*}
	which is obvious due to \eqref{W_k_OConnell} 
	and \eqref{T_1S_1}. For $1\le j\le k-1$, 
	we must show that (omitting the notation
	$(\bar\nu,\la)$):
	\begin{align}\label{Oconnell_514_identity}
		f_jT_j+
		(1-F_j)F_{j+1}\ldots F_{k}
		+
		\sum_{i=j+1}^{k-1}
		(1-F_j)F_{j+1}\ldots F_{i-1}(1-f_i)T_i
		=S_j.
	\end{align}
	It is straightforward to check that 
	$(1-f_{m-1})T_{m-1}=F_{m-1}(1-F_{m})$ for $m=2,\ldots,k-1$.
	This means in particular that the last summand
	in the above sum (corresponding to $i=k-1$)
	turns into 
	$(1-F_j)F_{j+1}\ldots F_{k-1}(1-F_k)$, and we get
	\begin{align*}
		(1-F_j)F_{j+1}\ldots F_{k-1}(1-F_k)+
		(1-F_j)F_{j+1}\ldots F_{k}=
		(1-F_j)F_{j+1}\ldots F_{k-1}.
	\end{align*}
	Using a similar argument for summands
	corresponding to $i=k-2,\ldots,j+1$, we 
	reduce
	the desired identity to
	\begin{align}\label{OConnell_proof_collapsing}
		f_jT_j+(1-F_{j})F_{j+1}=S_j.
	\end{align}
	To check that this holds for $j=1,\ldots,k-1$ 
	is also straightforward.
\end{proof}

The stochastic evolution 
introduced in \cite{OConnellPei2012} (see \S5 of that paper)
is defined in discrete time, 
but at every discrete time moment only one independent 
jump is allowed. 
Equivalently, 
we may define
this
model
in continuous time
by requiring that
time moments of jumps 
follow a constant-rate
Poisson process.
In this interpretation
one can check that the 
dynamics of \cite{OConnellPei2012}
coincides with the multivariate
dynamics of
Proposition \ref{prop:OConnell}.
Probabilistically this 
multivariate dynamics is described as follows:
\begin{dynamics}[Dynamics driven by O'Connell--Pei's insertion algorithm]
	\label{dyn:OConnell}
	{\ }\begin{enumerate}[(1)]
		\item (independent jumps)
		Each particle $\la^{(k)}_j$ has an independent exponential clock with rate \eqref{W_k_OConnell}. When this clock rings, the particle jumps to the right by one if $\la^{(k)}_{j}<\la^{(k)}_{j-1}$. Note that the blocking by $\la^{(k-1)}_{j-1}$ is implicitly present in \eqref{W_k_OConnell}, i.e., the jump rate is zero if $\la^{(k)}_{j}=\la^{(k-1)}_{j-1}$.
		\item (triggered moves)
		When any particle 
		$\la_{j}^{(k-1)}$, ($k=2,\ldots,N$, $j=1,\ldots,k$), 
		moves to
		the right by one,
		it instantly forces exactly 
		one of its upper right neighbors
		$\la^{(k)}_{r}$, $r\le j$, to 
		move to the right as well.
		Which of these neighbors
		jumps is determined by probabilities \eqref{V_k_OConnell},
		where $\la=\la^{(k)}$ and 
		$\bar\nu-\bar\de_j=\la^{(k-1)}$.

		Note that when $\la^{(k)}_{r}=\la^{(k-1)}_{r-1}$ (that is, 
		when the pushed particle $\la^{(k)}_{r}$ is blocked and cannot move 
		to the right), the corresponding probability $V_k$ 
		vanishes, as it should be. Note also that the 
		short-range pushing
		mechanism 
		(\S \ref{sub:when_a_jumping_particle_has_to_push_its_immediate_upper_right_neighbor})
		is
		built into the definition of $V_k$ 
		(see the discussion after 
		\eqref{V_k_OConnell}).
	\end{enumerate}
\end{dynamics}

Let us point out a few
differences between 
the two examples above,
Dynamics \ref{dyn:DF_Mac} and 
\ref{dyn:OConnell}.

In Dynamics \ref{dyn:OConnell} 
(in contrast with Dynamics \ref{dyn:DF_Mac}) 
a moving
particle $\la^{(k)}_j$ \emph{always} pushes 
some particle at the level $k$.
Thus, an independent jump 
happening at any level $k=1,\ldots,N$
in the interlacing
array (see Fig.~\ref{fig:la_interlacing}) 
always propagates to all higher levels $k+1,\ldots,N$.

As a consequence, 
the 
pushing mechanism in Dynamics \ref{dyn:OConnell}
works at long distances (in contrast with
the short-range pushing of Dynamics \ref{dyn:DF_Mac}). 
That is, in Dynamics \ref{dyn:OConnell}
a moving
particle $\la^{(k-1)}_{j}$
can push some particle 
$\la^{(k)}_{r}$ on the upper level
regardless of the distance $|\la^{(k)}_{r}-\la^{(k-1)}_{j}|$
between them. We will call this type of pushing
the \emph{long-range pushing}.



\section{Nearest neighbor dynamics} 
\label{sec:nearest_neighbor_multivariate_continuous_time_dynamics_on_interlacing_arrays}

From now on we will focus on 
a subclass of multivariate dynamics 
which we call \emph{nearest neighbor dynamics}.
Generally speaking, in such dynamics
a particle $\la^{(k-1)}_{j}$
moving by one to the right 
can affect
only its \emph{immediate} left or right neighbor
at level $k$.

In \S\S 
\ref{sub:notation_and_definition}--\ref{sub:fundamental_solutions} we fix $k=2,\ldots,N$, and restrict our attention
to the slice $\GT_{(k-1;k)}$ formed by rows $k-1$ and $k$
of the interlacing array 
(see~Fig.~\ref{fig:la_interlacing}), 
cf.~Remark~\ref{rmk:stacking}. 
We will discuss rates of
independent jumps $W_k$
and probabilities of triggered moves $V_k$ 
corresponding to 
the $k$-th slice
of a
nearest neighbor dynamics. 
Then in \S \ref{sub:fundamental_dynamics_}
and especially in 
\S \ref{sub:characterization_of_nearest_neighbor_dynamics_}
we will ``stack'' the functions $(W_n,V_n)$
for different $n$, i.e., consider nearest neighbor 
multivariate dynamics on interlacing arrays
whose Markov generators are expressed through the
$(W_n,V_n)$'s by 
\eqref{spec_bq_N_product_form}.

\subsection{Notation and definition} 
\label{sub:notation_and_definition}

We will use our usual notation explained
in the beginning of 
\S \ref{sub:specialization_of_formulas_from_s_sub:sequential_update_dynamics_in_continuous_time}. 
In particular,
$\bar\nu\in\GT_{k-1}$ will always denote the new state 
at level $k-1$, and $\la\in\GT_{k}$
will mean the old state at level $k$.

\begin{figure}[htbp]
	\begin{center}
		\includegraphics[width=300pt]{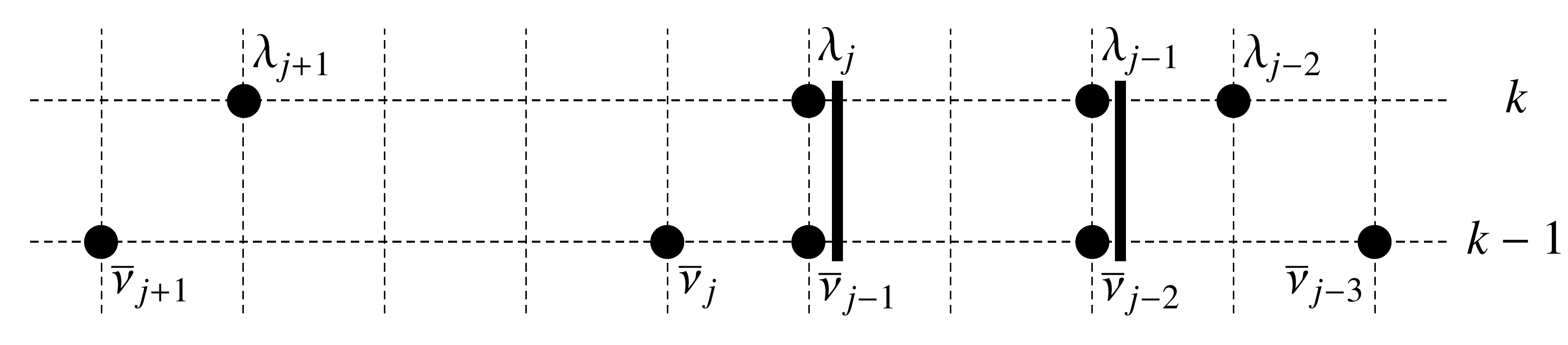}
	\end{center}  
  	\caption{Particles $\la_{j}$ and $\la_{j-1}$
  	are blocked and cannot move to the right.}
  	\label{fig:i_notation}
\end{figure}

Consider a set of indices 
\begin{align}\label{II}
	\II=\II(\bar\nu,\la):=
	\{j\colon \mbox{$1\le j\le k$, and 
	$\la_{j}<\bar\nu_{j-1}$}\}.
\end{align}
In words, $\II(\bar\nu,\la)$
represents indices of particles
at level $k$ that are \emph{free} (not blocked by 
a particle at level $k-1$)
and can move to the right by one.
For example, on Fig.~\ref{fig:i_notation} 
we have $j,j-1\notin\II$, and $j+1,j-2\in\II$. 
Equivalently, $\bar\nu\prec\la+\de_j
\Leftrightarrow \bar\nu-\bar\de_{j-1}\prec\la$ 
iff $j\in\II$.
Let us also denote by $\ii=\ii(\bar\nu,\la)\le k$
the cardinality of $\II(\bar\nu,\la)$.

For every $i=1,\ldots,k$, let 
\begin{align}\label{nf_next_free_particle}
	\nf(i):=\max\{j\colon 
	\mbox{$j\le i$ and $j\in\II(\bar\nu,\la)$}\}
\end{align}
be the index of the 
first right neighbor of $\la_i$ (including $\la_i$)
at level $k$ that is free.
On Fig.~\ref{fig:i_notation}  we have 
$\nf(j+1)=j+1$, and $\nf(j)=\nf(j-1)=\nf(j-2)=j-2$.

\begin{figure}[htbp]
	\begin{center}
		\includegraphics[width=300pt]
		{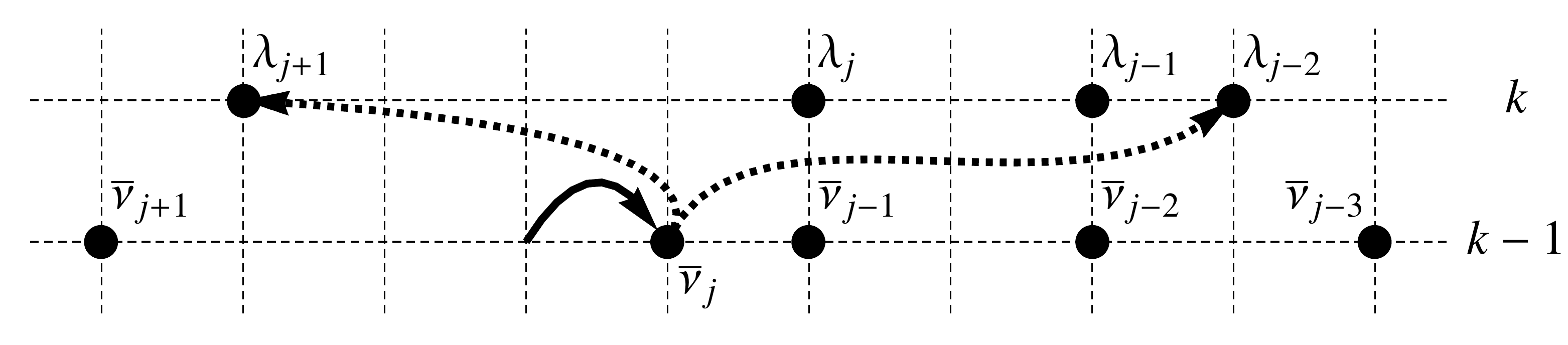}
	\end{center}  
  	\caption{The moved particle $\bar\nu_{j}$
  	can (long-range) \emph{pull} its 
  	immediate left neighbor $\la_{j+1}$
  	or long-range \emph{push} its first free right neighbor $\la_{\nf(j)}$.}
  	\label{fig:nn_dynamics_def}
\end{figure}

\begin{definition}\label{def:nn_dynamics}
	A multivariate dynamics with Markov generator
	$\bq^{(N)}$ which is
	determined 
	(as in \S \ref{sec:multivariate_continuous_time_dynamics_on_interlacing_arrays_}) 
	by functions 
	$(W_n,V_n)$, $n=1,\ldots,N$,
	will be called \emph{nearest neighbor} if 
	for
	all $k=2,\ldots,N$ and 
	any fixed
	$\bar\nu\in\GT_{k-1}$ and $\la\in\GT_{k}$
	with $\bar\nu\prec\la$
	the following condition holds:
	\begin{quote}
		For every $j=1,\ldots,k-1$ such that
		$\la_{j+1}<\bar\nu_{j}$
		(equivalently,
		$j+1\in\II(\bar\nu,\la)$), 
		and for all $m=1,\ldots,k$, 
		we have
		$V_k(\la,\la+\de_m\,|\,
		\bar\nu-\bar\de_j,\bar\nu)
		=0$
		unless $m=j+1$ or 
		$m=\nf(j)$.
	\end{quote}
	In words (see also
	Fig.~\ref{fig:nn_dynamics_def}), 
	let us assume that $\bar\nu_j$
	represents the particle that 
	has just moved\footnote{We need 
	the condition $\la_{j+1}<\bar\nu_{j}$
	in Definition \ref{def:nn_dynamics} to ensure
	that $\bar\nu_j$ indeed can be the coordinate
	of a particle that has just moved.} at level $k-1$
	(independently or due to a triggered move).
	Then this particle has a possibility 
	(i.e., with some probabilities)
	to either (long-range) \emph{pull} its immediate left neighbor
	$\la_{j+1}$ or, alternatively, (long-range) \emph{push} its first
	right neighbor $\la_{\nf(j)}$
	that is not blocked.
	The pulled or pushed particle 
	will instantly move to the right by one.

	Typically, when all particles at levels 
	$k-1$ and $k$ are ``apart'' (i.e., when 
	$\ii(\bar\nu,\la)=k$), this 
	means that a moved
	particle can affect 
	only its immediate left or right neighbor, and this interaction
	may happen at long range.
\end{definition}


\subsection{Parametrization and linear equations} 
\label{sub:parametrization_and_linear_equations}
According to Definition \ref{def:nn_dynamics},
let us denote 
\begin{align}\label{l_r_V}
	\begin{array}{rclcl}
		l_j&=&l_j(\bar\nu,\la)&:=&
		V_k(\la,\la+\de_{j+1}\,|\,\bar\nu-\bar\de_j,\bar\nu),
		\\
		r_j&=&r_j(\bar\nu,\la)&:=&
		V_k(\la,\la+\de_{\nf(j)}\,|\,\bar\nu-\bar\de_j,\bar\nu)
	\end{array}
\end{align}
for all $j=1,\ldots,k-1$ such that  
$j+1\in\II(\bar\nu,\la)$.

That is, 
$r_j$ and $l_j$ are the probabilities
that a moved particle $\bar\nu_j$
will long-range push (resp. pull) its right (resp. left) neighbor. 
We also leave open the possibility
that (with probability $1-r_j-l_j$)
this move at level $k-1$
does not propagate to the next level $k$.
We will call 
\begin{align}\label{c_propagation}
	c_j=c_j(\bar\nu,\la):=r_j(\bar\nu,\la)+l_j(\bar\nu,\la),
	\qquad j+1\in\II,
\end{align}
the \emph{probability of propagation} of the 
corresponding move 
$\bar\nu-\bar\de_j\to\bar\nu$
on level $k-1$
to the next level $k$.

Let us also introduce a shorthand for the rates 
of independent jumps (at level $k$)
of the particles that actually can jump:
\begin{align}\label{w_m_shorthand}
	w_m=w_m(\bar\nu,\la):=a_k^{-1}W_k(\la,\la+\de_m\,|\,\bar\nu),
	\qquad
	m\in\II(\bar\nu,\la).
\end{align}

With the help of 
the short-range pushing rule
of \S \ref{sub:when_a_jumping_particle_has_to_push_its_immediate_upper_right_neighbor}
(see also Remark \ref{rmk:W_k_nu_always_prec_la})
which dictates the values 
of $V_k(\la,\cdot\,|\,\cdot,\bar\nu)$
for $\bar\nu\not\prec\la$, 
we see that a multivariate nearest neighbor dynamics
is completely determined 
(on the slice $\GT_{(k-1;k)}$)
by the following parameters:
\begin{align}
	\label{rlw_parameters}
	\big\{r_j(\bar\nu,\la),c_j(\bar\nu,\la),
	w_m(\bar\nu,\la)\colon
	\mbox{$(\bar\nu,\la)\in\GT_{(k-1;k)}$ and
	$m,j+1\in\II(\bar\nu,\la)$}
	\big\}.
\end{align}
For future convenience,
let us enumerate 
the set $\II(\bar\nu,\la)$ as follows:
\begin{align}\label{i_enumeration}
	\II(\bar\nu,\la)=:
	\{j_1+1<j_2+1<\ldots<j_{\ii(\bar\nu,\la)}+1\}.
\end{align}
Note that always $j_1=0$ because the first particle at 
level $k$
cannot be blocked.

One can write down a system of linear equations
for parameters \eqref{rlw_parameters}:

\begin{proposition}\label{prop:rlw_system}
	Parameters \eqref{rlw_parameters}
	on the slice $\GT_{(k-1;k)}$
	correspond to a multivariate dynamics
	if and only if 
	for any 
	$(\bar\nu,\la)\in\GT_{(k-1;k)}$
	they satisfy the 
	following system of linear
	equations 
	(we 
	use enumeration~\eqref{i_enumeration},
	and
	omit the dependence 
	on~$(\bar\nu,\la)$ in the notation):
	\begin{align}\label{rlw_system_of_equations}
		\left\{
		\begin{array}{rclclc}
			r_{j_2}T_{j_2}&+&w_{j_1+1}&=&S_{j_1+1};
			\\
			r_{j_{m+1}}T_{j_{m+1}}
			+
			(c_{j_m}-r_{j_m})T_{j_{m}}&+&w_{j_m+1}&=&S_{j_m+1},&
			\ 
			m=2,\ldots,\ii-1;\\
			(c_{j_{\ii}}-r_{j_{\ii}})
			T_{j_{\ii}}&+&w_{j_{\ii}+1}
			&=&S_{j_{\ii}+1}.
		\end{array}
		\right.
	\end{align}
	Here the coefficients $T_{j_m}(\bar\nu,\la)$ and
	$S_{j_m+1}(\bar\nu,\la)$ are defined in 
	\eqref{T_i}--\eqref{S_j} 
	(see also \eqref{T_S_Whittaker}--\eqref{T_1S_1}
	for the $t=0$ 
	specialization).\footnote{Observe 
	that all 
	$T_{j_m}$'s and $S_{j_m+1}$'s are strictly
	positive. See also \eqref{T_S_zero} below.}
\end{proposition}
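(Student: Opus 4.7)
The plan is to derive \eqref{rlw_system_of_equations} as the specialization of identity \eqref{P2'_T_S} from Theorem \ref{thm:general_multivariate} to the nearest neighbor parametrization \eqref{rlw_parameters}, with the short-range pushing clause of that theorem being absorbed into how we treat $\bar\nu \not\prec \la$.

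First I fix $(\bar\nu, \la) \in \GT_{(k-1;k)}$ and enumerate $\II(\bar\nu,\la)=\{j_1+1<\dots<j_\ii+1\}$, with $j_1=0$ always. The free indices at level $k-1$ that correspond to possible moves $\bar\la = \bar\nu - \bar\de_i \to \bar\nu$ are those $i = j_m$ with $m \ge 2$; these are exactly the indices allowed in Definition \ref{def:nn_dynamics}. For each free $j = j_m+1$ (with $m = 1, \ldots, \ii$) I write out \eqref{P2'_T_S}:
\begin{equation*}
\sum_{i=1}^{k-1} V_k(\la, \la + \de_{j_m+1} \,|\, \bar\nu - \bar\de_i, \bar\nu)\, T_i(\bar\nu, \la) + w_{j_m+1} = S_{j_m+1}.
\end{equation*}
The nearest neighbor condition forces $V_k(\la, \la+\de_{j_m+1} \,|\, \bar\nu-\bar\de_i, \bar\nu) = 0$ unless either (a) $i+1 = j_m+1$, i.e.\ $i = j_m$ (the ``pull left'' case, which requires $m \ge 2$ so that $i \ge 1$), or (b) $\nf(i) = j_m+1$ (the ``push right'' case). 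In case (b), $\nf(i) = j_m+1$ combined with $i+1 \in \II$ (needed for the move at level $k-1$ to be nontrivial) forces $i = j_{m+1}$, which requires $m < \ii$. Translating the surviving contributions through \eqref{l_r_V} gives $(c_{j_m} - r_{j_m})T_{j_m}$ for case (a) and $r_{j_{m+1}} T_{j_{m+1}}$ for case (b).

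Assembling these contributions for $m=1$, for $2 \le m \le \ii-1$, and for $m = \ii$ yields exactly the three lines of \eqref{rlw_system_of_equations}. Conversely, any solution of \eqref{rlw_system_of_equations} produces functions $W_k, V_k$ that satisfy \eqref{P2'_T_S} for every free $j$; for $j \notin \II$ identity \eqref{P2'_T_S} is not imposed (the corresponding particle is blocked), and the values of $V_k(\la,\cdot\,|\,\cdot,\bar\nu)$ with $\bar\nu \not\prec \la$ are determined by the short-range pushing rule \eqref{V_k_blocking}, which is condition 1 of Theorem \ref{thm:general_multivariate}. The remaining constraints \eqref{spec_V_conditions_P1'}--\eqref{spec_W_conditions_P1'} are built into our parametrization: the second relation in \eqref{spec_V_conditions_P1'} holds trivially, while the first fixes $V_k(\la,\la\,|\,\bar\nu-\bar\de_j,\bar\nu) = 1 - c_j$ (so this probability is nonnegative precisely when $c_j \le 1$), and the diagonal of $W_k$ is fixed by \eqref{spec_W_conditions_P1'}.

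The main bookkeeping step is identifying, for each target index $j_m+1 \in \II$, the two indices $i = j_m$ and $i = j_{m+1}$ as the only contributors to the sum in \eqref{P2'_T_S}, together with the boundary cases $m=1$ and $m=\ii$; this is straightforward from the definition of $\nf$ and the enumeration of $\II$. No additional equations arise from the ``trivial'' jump $\la \to \la$ because of Remark \ref{rmk:condition_xk_ne_yk}, which tells us that identity \eqref{P2'_la=nu} follows automatically from \eqref{P2'_psi} together with the commutation $\La^{k}_{k-1}Q_{k-1} = Q_k \La^{k}_{k-1}$ already established in Proposition \ref{prop:jump_rate_Q_k_Macdonald_commute}.
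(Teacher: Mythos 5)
Your proposal is correct and follows essentially the same route as the paper, which simply observes that the $m$th equation of \eqref{rlw_system_of_equations} is \eqref{P2'_T_S} written for the allowed move $\la\to\la+\de_{j_m+1}$; you have merely made explicit the index bookkeeping (that only $i=j_m$ and $i=j_{m+1}$ contribute, via the nearest-neighbor vanishing of $V_k$ and \eqref{T_S_zero}) that the paper leaves implicit. No gaps.
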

\begin{proof}
	Immediately follows
	from claim~2 of Theorem \ref{thm:general_multivariate}.
	The $m$th equation above (where $m=1,\ldots,\ii$)
	corresponds to 
	writing \eqref{P2'_T_S}
	for 
	one of the $\ii$ allowed
	moves $\la\to\la+\de_{j_{m}+1}$
	at level $k$.
\end{proof}

\begin{remark}\label{rmk:algebraic_rlw}
	Due to the nature of 
	Proposition \ref{prop:rlw_system},
	it is natural to
	work in a linear-algebraic setting without
	assuming nonnegativity
	of probabilities or jump rates
	(cf. Remark \ref{rmk:algebraic_sense}).
	Namely, we will say that arbitrary
	real parameters~\eqref{rlw_parameters}
	\emph{define a multivariate `dynamics'}
	(in quotation marks)
	on the slice $\GT_{(k-1;k)}$
	if they satisfy
	Proposition \ref{prop:rlw_system}.
	If, in addition, 
	one has 
	$w_m\ge0$
	and $0\le r_j\le c_j\le 1$ for all possible 
	$j$ and $m$, then
	these parameters
	clearly correspond
	to an honest multivariate 
	continuous-time Markov dynamics on interlacing arrays.

	However, to make the discussion more
	understandable, we will use probabilistic 
	language
	even when speaking
	about multivariate `dynamics' 
	which may have negative jump rates or
	probabilities of triggered moves.
	In particular, we will 
	speak about 
	the jump 
	rate matrix
	(= generator)
	$\bq^{(N)}$
	of a multivariate `dynamics' in the same sense as in 
	Remark \ref{rmk:algebraic_sense}. 
\end{remark}

Thus, for each pair of signatures $\bar\nu,\la$
with $\bar\nu\prec\la$, we have 
\begin{align*}
	2\big(\ii(\bar\nu,\la)-1\big)+
	\ii(\bar\nu,\la)=
	3\ii(\bar\nu,\la)-2
\end{align*}
real parameters \eqref{rlw_parameters}
of our `dynamics'. These parameters
must satisfy 
$\ii(\bar\nu,\la)$ linear equations 
corresponding to each free particle
at level $k$
(Proposition~\ref{prop:rlw_system}). 
Therefore, there should be $2\ii(\bar\nu,\la)-2$ 
independent parameters involved in the description of 
the solution of each such system.

In the 
rest of the section
we describe building blocks
of solutions of 
\eqref{rlw_system_of_equations}, namely,
the so-called \emph{fundamental solutions} 
(\S \ref{sub:fundamental_solutions}).
An arbitrary solution of 
the linear system~\eqref{rlw_system_of_equations}
will be a certain linear combination of 
the fundamental solutions 
(\S
\ref{ssub:arbitrary_solutions_as_linear_combinations_of_fundamental_ones}).

After that we will show how to organize fundamental
solutions corresponding
to systems \eqref{rlw_system_of_equations} with
various $\bar\nu,\la$ 
into certain special 
multivariate `dynamics' (\emph{fundamental `dynamics'},
\S \ref{sub:fundamental_dynamics_})
which in turn serve as building blocks
for arbitrary nearest neighbor 
multivariate `dynamics' on interlacing arrays
(\S \ref{sub:characterization_of_nearest_neighbor_dynamics_}).


\subsection{General solution of the system} 
\label{sub:general_solution_of_the_system}

Observe that
the sum of all equations
\eqref{rlw_system_of_equations} reads
\begin{align}\label{sum_of_all_equations}
	\sum_{m=2}^{\ii}
	T_{j_m}c_{j_m}
	+
	\sum_{m=1}^{\ii}w_{j_m+1}=
	\sum_{m=1}^{\ii}S_{j_m+1}.
\end{align}
Next, note that by \eqref{T_i}--\eqref{S_j} one has
\begin{align}\label{T_S_zero}
	\begin{array}{cc}
		S_j(\bar\nu,\la)=T_{j-1}(\bar\nu,\la)=0
		&\quad
		\mbox{if $j\notin\II(\bar\nu,\la)$};\\
		S_j(\bar\nu,\la),\;T_{j-1}(\bar\nu,\la)>0
		&\quad
		\mbox{if $j\in\II(\bar\nu,\la)$}.
	\end{array}
\end{align}
Thus,
keeping only nonzero terms, we rewrite
the 
commutation relation of Proposition 
\ref{prop:jump_rate_Q_k_Macdonald_commute}
as 
\begin{align}\label{1_T_S}
	1+\sum_{m=2}^{\ii}
	T_{j_m}=
	\sum_{m=1}^{\ii}S_{j_m+1}.
\end{align}
Combining this with \eqref{sum_of_all_equations}, 
we get
the following relation between 
the $w_{j_m+1}$'s and the $c_{j_m}$'s:
\begin{align}\label{la=nu_NN_identity}
	\sum_{m=1}^{\ii}w_{j_m+1}
	=1+
	\sum_{m=2}^{\ii}
	T_{j_m}(1-c_{j_m}).
\end{align}
This is equivalent to the identity 
in Remark \ref{rmk:la=nu}, as it 
should be. Thus, we see that \eqref{la=nu_NN_identity}
follows from our system of equations
\eqref{rlw_system_of_equations}. 

Let us treat $w_{j_1+1},\ldots,w_{j_{\ii}+1}$
and $c_{j_2},\ldots,c_{j_{\ii}}$ as parameters of a 
solution of the system \eqref{rlw_system_of_equations}. 
There is only one dependence 
between these parameters, namely, the above 
linear identity
\eqref{la=nu_NN_identity}. 
Then we have:
\begin{proposition}\label{prop:general_solution_r}
	Assume that $(\bar\nu,\la)\in\GT_{(k-1;k)}$ are fixed.
	For any given collection of $2\ii-1$ parameters
	$w_{j_1+1},\ldots,w_{j_{\ii}+1}$
	and $c_{j_2},\ldots,c_{j_{\ii}}$
	satisfying one linear relation \eqref{la=nu_NN_identity}, 
	there exists a unique solution 
	$r_{j_2},\ldots,r_{j_\ii}$
	of the system \eqref{rlw_system_of_equations} 
	given by
	\begin{align}
		\label{r_general_solution}
		r_{j_{m+1}}&=
		\frac{1}{T_{j_{m+1}}}
		\Big(
		S_{j_1+1}+\ldots+S_{j_m+1}
		-w_{j_1+1}-\ldots-w_{j_m+1}
		-c_{j_1}T_{j_1}
		-c_{j_2}T_{j_2}-
		\ldots
		-c_{j_{m}}T_{j_m}
		\Big),
	\end{align}
	$m=1,\ldots,\ii-1$,
	with the agreement $T_{j_1}=T_0\equiv0$.
\end{proposition}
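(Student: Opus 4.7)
The plan is to exploit the triangular (in fact bidiagonal) structure of the linear system~\eqref{rlw_system_of_equations}: the equations can be solved recursively for $r_{j_2}, r_{j_3}, \ldots, r_{j_\ii}$, with the last equation playing the role of a consistency check that is automatically satisfied thanks to the compatibility relation~\eqref{la=nu_NN_identity}.

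First I would treat the base case $m=1$. The first equation of~\eqref{rlw_system_of_equations} reads $r_{j_2}T_{j_2}+w_{j_1+1}=S_{j_1+1}$, so $r_{j_2}=(S_{j_1+1}-w_{j_1+1})/T_{j_2}$, which agrees with the claimed formula \eqref{r_general_solution} for $m=1$ because of the convention $T_{j_1}=T_0\equiv 0$ that kills the final $c_{j_1}T_{j_1}$ summand. Note that $T_{j_2}\neq 0$ by \eqref{T_S_zero}, so the division is legitimate; the same nonvanishing holds for all $T_{j_{m+1}}$ used below.

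Next I would proceed by induction on $m$. Assume the formula for $r_{j_m}$ holds; the $m$-th equation
\[
r_{j_{m+1}}T_{j_{m+1}}+(c_{j_m}-r_{j_m})T_{j_m}+w_{j_m+1}=S_{j_m+1}
\]
can be rewritten as $r_{j_{m+1}}T_{j_{m+1}}=S_{j_m+1}-w_{j_m+1}-c_{j_m}T_{j_m}+r_{j_m}T_{j_m}$. Substituting the inductive formula
\[
r_{j_m}T_{j_m}=\sum_{s=1}^{m-1}(S_{j_s+1}-w_{j_s+1})-\sum_{s=2}^{m-1}c_{j_s}T_{j_s}
\]
telescopes perfectly to produce the claimed expression~\eqref{r_general_solution} for $r_{j_{m+1}}$. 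This determines $r_{j_2},\ldots,r_{j_\ii}$ uniquely from the given parameters, using only the first $\ii-1$ equations of the system.

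The only subtle step is checking that the last (i.e.\ $\ii$-th) equation is then automatically satisfied, so that the constructed tuple is indeed a solution. Setting $m=\ii-1$ in the formula and comparing with the requirement $(c_{j_\ii}-r_{j_\ii})T_{j_\ii}+w_{j_\ii+1}=S_{j_\ii+1}$ reduces, after rearrangement, to
\[
\sum_{m=1}^{\ii}w_{j_m+1}+\sum_{m=2}^{\ii}c_{j_m}T_{j_m}=\sum_{m=1}^{\ii}S_{j_m+1}.
\]
This is exactly~\eqref{sum_of_all_equations}, and combining it with the commutation identity~\eqref{1_T_S} (which is guaranteed by Proposition~\ref{prop:jump_rate_Q_k_Macdonald_commute}) yields precisely the linear relation~\eqref{la=nu_NN_identity} imposed in the hypothesis. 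Conversely, since~\eqref{la=nu_NN_identity} was derived from the system together with~\eqref{1_T_S}, it is both necessary and sufficient for the consistency of the last equation. Thus, once the parameters are constrained by \eqref{la=nu_NN_identity}, the recursive formula gives the unique solution, which completes the proof. I expect no serious obstacle here: the main point is just to carry out the telescoping carefully and to recognize the consistency condition as the already-known commutation relation between $Q_k$ and the stochastic links.
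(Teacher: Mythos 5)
Your proof is correct and follows essentially the same route as the paper's (much terser) argument: solve the bidiagonal system recursively, dividing by the strictly positive $T_{j_{m+1}}$'s, and observe that consistency of the last equation is exactly the imposed relation \eqref{la=nu_NN_identity} via \eqref{sum_of_all_equations} and \eqref{1_T_S}. The telescoping induction and the consistency check are carried out correctly.
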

\begin{proof}
	This can be readily verified
	by solving 
	equations \eqref{rlw_system_of_equations}
	one by one,
	because the matrix of the system 
	\eqref{rlw_system_of_equations}
	is two-diagonal.
	By \eqref{T_S_zero}, one can divide by 
	the quantities
	$T_{j_m}$
	because they are strictly positive.
	The fact that $\ii$
	equations in $\ii-1$ variables
	are consistent follows
	from~\eqref{la=nu_NN_identity}.
	The dimension of the 
	linear space of solutions of 
	\eqref{rlw_system_of_equations} 
	(in all
	$3\ii-2$ variables) is $2\ii-2$, 
	as it should be.
\end{proof}


\subsection{Fundamental solutions} 
\label{sub:fundamental_solutions}

In this subsection we will focus on one 
system \eqref{rlw_system_of_equations} corresponding
to some fixed pair of signatures $\bar\nu\prec\la$,
and 
will present certain distinguished \emph{fundamental
solutions} of this system.

\subsubsection{The push-block solution} 
\label{ssub:push_block_solution}

The first of our
fundamental solutions has
zero propagation
probabilities, i.e., 
$c_{j_2}^{\PBD}=\ldots=c_{j_\ii}^{\PBD}=0$.
In this case the only probabilistically 
meaningful solution must 
have $r^{\PBD}_{j_2}=\ldots=r^{\PBD}_{j_{\ii}}=0$
as well (otherwise there will be negative 
probabilities of triggered moves). Then 
from \eqref{rlw_system_of_equations}
we see that it must be 
$w_{j}^{\PBD}=S_{j}$ for all $j\in\II$.

This corresponds to the observation
made in 
\S \ref{sub:example_push_block_process_on_interlacing_arrays}
that there is a unique honest Markov multivariate dynamics
with zero propagation probability,\footnote{Of course, if $\bar\nu\not\prec\la$ with 
$\bar\nu-\bar\de_j\prec\la$,
then (by the 
short-range pushing rule of \S
\ref{sub:when_a_jumping_particle_has_to_push_its_immediate_upper_right_neighbor})
the move $\bar\nu-\bar\de_j\to\bar\nu$ 
at level $k-1$ must propagate
to the next level $k$ with probability one
(and not zero).}
namely, the push-block process (Dynamics~\ref{dyn:DF_Mac}) introduced
in \cite[\S2.3.3]{BorodinCorwin2011Macdonald}
(hence the letters ``$\PBD$'' in the notation).


\subsubsection{RSK-type fundamental solutions} 
\label{ssub:rs_type_solutions}

In the next family of $\ii$ fundamental solutions
all the
propagation
probabilities are equal to one.
Let us give the corresponding definition:
\begin{definition}\label{def:RS_type}
	A multivariate `dynamics'
	in which a
	move
	at any level $n-1$
	always 
	(i.e., with probability 1)
	propagates
	to the next level $n$ (where $n=2,\ldots,N$), 
	will be called a \emph{Robinson--Schensted--Knuth--type}
	(\emph{RSK-type})
	\emph{multivariate 
	`dynamics'}.\footnote{See also the
	beginning of \S \ref{sub:_boldsymbol_h_robinson_schensted_correspondences} for a discussion of 
	the ``RSK'' terminology.}
\end{definition}

This term is suggested by 
considering the Robinson--Schensted row
insertion algorithm restated in terms of interlacing arrays
(cf.~\S \ref{sub:semistandard_young_tableaux}). 
This algorithm (see \S \ref{sub:row_and_column_insertions} below 
for more detail) 
starts 
with an initial jump
at some level of the interlacing
array (as on Fig.~\ref{fig:la_interlacing}),
and then 
triggered moves
always propagate to all upper levels of the 
array (thus changing the shape of the Young diagram
$\la^{(N)}$). 
We discuss the row insertion,
as well as other
insertion
algorithms in detail
in \S \ref{sec:schur_degeneration_and_robinson_schensted_correspondences} below.

Note also that
Dynamics~\ref{dyn:OConnell}
discussed in 
\S \ref{sub:o_connell_pei_s_process_on_interlacing_arrays} is an RSK-type dynamics. However, that dynamics
is not nearest neighbor.

By the above definition, RSK-type solutions of 
\eqref{rlw_system_of_equations} must have
\begin{align*}
	c_{j_2}^{\RSD}=\ldots=c_{j_\ii}^{\RSD}=1.
\end{align*}
Then from \eqref{la=nu_NN_identity} we conclude that
\begin{align}\label{sum_of_w_is_one}
	w_{j_1+1}^{\RSD}+\ldots+w_{j_\ii+1}^{\RSD}=1.
\end{align}
The \emph{fundamental RSK-type solutions}, by definition, 
correspond to setting one of the $w_{j_m+1}$'s above to 
one, and all others to zero. That is, these fundamental solutions
are indexed by $h\in\II$, and are defined by
\begin{align}\label{w_j_RSh}
	w_{j}^{\RSD(h)}:=1_{j=h}
	\qquad\mbox{for all $j\in\II$}.
\end{align}
Then we have from 
Proposition \ref{prop:general_solution_r}:
\begin{align}\label{r_j_RSh}
	r_j^{\RSD(h)}=T_{j}^{-1}
	\big(
	S_1+\ldots+S_j-T_1- \ldots-T_{j-1}
	-1_{h\le j}
	\big),\quad j+1\in\II.
\end{align}
Here we used the fact that 
$j_m+1\in\II$, and $j_m+2,\ldots,j_{m+1}\notin\II$,
while $j_{m+1}+1\in\II$ (for any $m=1,\ldots,\ii-1$), 
together with \eqref{T_S_zero}, to write 
$r_j^{\RSD(h)}$ in a nicer form.


\subsubsection{Right-pushing fundamental solutions} 
\label{ssub:right_pushing_fundamental_solutions}

In \S \ref{ssub:push_block_solution} and 
\S \ref{ssub:rs_type_solutions} 
we considered solutions which have constant 
propagation probability. 
Here and in \S \ref{ssub:left_pushing_fundamental_solutions}
we assume, on the contrary,
that one of the propagation probabilities
$c_j$, $j+1\in\II$,
is equal to one, and all the other $c_j$'s are zero.

Let us first consider
such solutions which do not have 
the pulling mechanism 
(such as when $\bar\nu_j$ pulls $\la_{j+1}$ on 
Fig.~\ref{fig:nn_dynamics_def}).
We thus arrive at 
$\ii-1$ solutions which we call
the \emph{right-pushing fundamental solutions}.
They
are indexed by $h\in\{1,\ldots,k-1\}$ with $h+1\in\II$, and are defined by setting
\begin{align*}
	c_j^{\RD(h)}=r_j^{\RD(h)}:=1_{j=h}\qquad 
	\mbox{for all $j$ such that $j+1\in\II$}.
\end{align*}
System \eqref{rlw_system_of_equations} then implies
that for the right-pushing fundamental solutions we have
\begin{align*}
	w_{j_m+1}^{\RD(h)}=
	S_{j_m+1}-1_{h=j_{m+1}}T_{j_{m+1}}
	\qquad\mbox{for all $m=1,\ldots,\ii$}.
\end{align*}
In other words, 
we have
\begin{align*}
	w_{j}^{\RD(h)}=S_{j}\mbox{\; if $j\in\II$ and $j\ne\nf(h)$}, 
	\qquad
	w_{\nf(h)}^{\RD(h)}=
	S_{\nf(h)}-T_{h}.
\end{align*}
Here $\nf(h)$ is defined by \eqref{nf_next_free_particle}.
We can also write equivalently
\begin{align*}
	w_{j}^{\RD(h)}=S_{j}-1_{h=\nf^{-1}(j)}T_{h},
	\qquad j\in\II,
\end{align*}
with the understanding that $\nf^{-1}(j)$ 
denotes the unique index of a particle such that 
$\nf^{-1}(j)+1\in\II$, and $\nf(\nf^{-1}(j))=j$.


\subsubsection{Left-pulling fundamental solutions} 
\label{ssub:left_pushing_fundamental_solutions}

Let us now define
\emph{left-pulling fundamental solutions}
which are similar to the right-pushing
ones (\S \ref{ssub:right_pushing_fundamental_solutions}),
but they do not allow (long-range) pushes. That is,
the left-pulling fundamental solutions
are indexed by $h\in\{1,\ldots,k-1\}$
such that $h+1\in\II$, and are determined by
\begin{align*}
	c_j^{\LD(h)}:=1_{j=h}
	\mbox{\; and\; }
	r_j^{\LD(h)}:=0
	\qquad 
	\mbox{for all $j$ such that $j+1\in\II$}.
\end{align*}
Then system \eqref{rlw_system_of_equations} implies that
the jump rate variables have the following form:
\begin{align*}
	w_{j}^{\LD(h)}=S_j-1_{j=h+1}T_{h},\qquad
	j\in\II.
\end{align*}


\subsubsection{Arbitrary solutions as linear combinations of fundamental
solutions} 
\label{ssub:arbitrary_solutions_as_linear_combinations_of_fundamental_ones}

Above we have described $3\ii-1=1+\ii+(\ii-1)+(\ii-1)$ 
so-called fundamental solutions of our linear system
\eqref{rlw_system_of_equations}.
The dimension of the space of 
solutions of this system is $2\ii-2$ 
(see Proposition \ref{prop:general_solution_r}).
Our goal now is to explain
how an arbitrary solution of \eqref{rlw_system_of_equations} 
decomposes 
as a linear combination of the fundamental ones.
To make this decomposition unique 
(see also Remark \ref{rmk:dimension} below), 
it suffices to take 
two of the three families of fundamental solutions 
(RSK-type, right-pushing and left-pulling
solutions). When taking the last two families (i.e., 
without RSK-type solutions), 
one should also add the push-block solution 
to them.

\begin{proposition}[RSK-type and 
right-pushing]
\label{prop:RS_R}
	For level number $k\ge3$,
	any solution $(w,c,r)$ of \eqref{rlw_system_of_equations}
	can be uniquely decomposed in the following way:
	\begin{align}
		\begin{pmatrix}
			w\\c\\r
		\end{pmatrix}
		=\sum_{i=1}^{k-1}
		\co_{\RD(i)}
		\begin{pmatrix}
			w^{\RD(i)}\\c^{\RD(i)}\\r^{\RD(i)}
		\end{pmatrix}
		+
		\sum_{h=1}^{k}
		\co_{\RSD(h)}
		\begin{pmatrix}
			w^{\RSD(h)}\\c^{\RSD(h)}\\r^{\RSD(h)}
		\end{pmatrix},
		\label{RS_R_decomp}
	\end{align}
	where 
	\begin{align}\label{sum_RS_R_one}
		\sum_{i=1}^{k-1}\co_{\RD(i)}+
		\sum_{h=1}^{k}\co_{\RSD(h)}=1,
	\end{align}
	and, by 
	agreement\footnote{This agreement
	is used to simplify notation
	because fundamental
	solutions
	$(w^{\RSD(h)},c^{\RSD(h)},r^{\RSD(h)})$ 
	do not make sense if $h\notin\II$, and 
	similarly for $(w^{\RD(i)},c^{\RD(i)},r^{\RD(i)})$.}
	\begin{align*}
		\co_{\RD(i)}=0
		\mbox{\; if $i+1\notin\II$},
		\qquad
		\co_{\RSD(h)}=0
		\mbox{\; if $h\notin\II$}.
	\end{align*}
\end{proposition}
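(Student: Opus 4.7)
The plan is to show that the affine map
\begin{align*}
\co \mapsto \sum_{i} \co_{\RD(i)} (w^{\RD(i)},c^{\RD(i)},r^{\RD(i)}) + \sum_h \co_{\RSD(h)} (w^{\RSD(h)},c^{\RSD(h)},r^{\RSD(h)})
\end{align*}
from the hyperplane $\big\{\sum_{i} \co_{\RD(i)}+\sum_h \co_{\RSD(h)} = 1\big\}$ to the solution space of the system \eqref{rlw_system_of_equations} is an affine isomorphism. Both sides have affine dimension $2\ii - 2$: the domain has $\ii + (\ii - 1) = 2\ii - 1$ coefficients subject to one constraint, while the codomain has dimension $2\ii - 2$ by Proposition \ref{prop:general_solution_r}. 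The image lies in the solution space, since each fundamental solution satisfies \eqref{rlw_system_of_equations} and the right-hand sides of that system are constants (the $S_{j_m+1}$'s), so every affine combination of solutions is again a solution. It therefore suffices to construct an explicit inverse.

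To recover the $\co_{\RD(i)}$ from $(w,c,r)$, I exploit that $c_j^{\RSD(h)} \equiv 1$ for all $j$ with $j + 1 \in \II$ while $c_j^{\RD(i)} = 1_{j = i}$. Setting $C := \sum_h \co_{\RSD(h)}$ and $D := \sum_i \co_{\RD(i)} = 1 - C$, the $c$-row of the decomposition \eqref{RS_R_decomp} collapses to
\begin{align*}
c_{j_m} = \co_{\RD(j_m)} + C, \qquad m = 2, \ldots, \ii.
\end{align*}
Summing over $m$ and substituting $D = 1 - C$ yields $\sum_{m=2}^{\ii} c_{j_m} = 1 + (\ii - 2) C$, which uniquely determines $C$ when $\ii \geq 3$. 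The hypothesis $k \geq 3$ covers the generic range of $\ii$; the degenerate cases $\ii \in \{1, 2\}$ (where both the list of fundamental solutions and the solution space shrink substantially) must be checked by direct inspection, and for $\ii = 2$ one sees that every affine combination is forced to have $c_{j_2}=1$, so the statement applies only to that subclass. With $C$ in hand, each $\co_{\RD(j_m)} = c_{j_m} - C$ is determined for $m = 2, \ldots, \ii$.

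Given the $\co_{\RD(j_m)}$, I next read off the $\co_{\RSD(h)}$ from the $w$-component using $w_{j_m + 1}^{\RSD(j_n + 1)} = 1_{n = m}$ and $w_{j_m + 1}^{\RD(j_l)} = S_{j_m + 1} - 1_{l = m + 1} T_{j_{m+1}}$ (the $T$-correction absent for $m = \ii$), computed directly from \S \ref{ssub:rs_type_solutions}--\S \ref{ssub:right_pushing_fundamental_solutions}. Substituting into the $w$-rows of \eqref{RS_R_decomp} yields the explicit formula
\begin{align*}
\co_{\RSD(j_m + 1)} = w_{j_m + 1} - S_{j_m + 1} D + 1_{m < \ii}\, T_{j_{m+1}} \co_{\RD(j_{m+1})}, \qquad m=1,\ldots,\ii,
\end{align*}
uniquely pinning down each RSK-type coefficient. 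The step I expect to be the main obstacle is the self-consistency check: one must verify that the $\co_{\RSD(j_m+1)}$ extracted in this way satisfy $\sum_m \co_{\RSD(j_m+1)} = C$, i.e., that \eqref{sum_RS_R_one} indeed holds. Summing the above display over $m$, substituting $\co_{\RD(j_m)} = c_{j_m} - C$, and using the ``summed equation'' \eqref{sum_of_all_equations} together with the identity \eqref{1_T_S} (equivalently, the single linear dependence \eqref{la=nu_NN_identity} among the rows of the system), the expression collapses to $C\cdot(\sum_m S_{j_m+1}-\sum_{m=2}^{\ii}T_{j_m})=C$, confirming that the two independent routes to $C$ agree. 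This establishes that the inverse is well defined and completes the affine isomorphism.
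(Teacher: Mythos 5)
Your proof is correct and is essentially the paper's argument: both reduce to solving the linear system \eqref{RS_R_eqn} for the coefficients, the only cosmetic difference being that you extract $C=\sum_h\co_{\RSD(h)}$ from the $c$-components and then verify \eqref{sum_RS_R_one} as a consistency identity via \eqref{la=nu_NN_identity} and \eqref{1_T_S}, whereas the paper derives \eqref{sum_RS_R_one} first by summing the $w$-components and then solves the $c$-equations. One point in your favor: the paper's proof treats only the case $\ii=k$ and dismisses the rest with ``the general case is similar,'' while you correctly observe that when blocking reduces $\ii$ to $2$ (which can happen even for $k\ge3$) all the admissible fundamental solutions have $c_{j_2}=1$ and the right-pushing one lies in the affine span of the RSK-type ones, so existence and uniqueness of the decomposition degenerate exactly as in the excluded case $k=2$ (cf.\ Remark \ref{rmk:k=2_bad}); this caveat is genuine and is not addressed in the paper.
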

Here and below
we write identities like
\eqref{RS_R_decomp}
as shorthands
for three separate similar identities
for the $c_j$'s, $r_j$'s (where $j+1\in\II$), 
and $w_m$'s (where $m\in\II$), respectively.
These identities must hold for all such $j$ and~$m$.

\begin{proof}
	We need to find quantities $\co_{\RD(i)}$, $i+1\in\II$, and 
	$\co_{\RSD(h)}$, $h\in\II$, from 
	\begin{align}\label{RS_R_eqn}
		\begin{array}{rcll}
			c_j&=&\displaystyle
			\co_{\RD(j)}+\sum_{h=1}^{k}\co_{\RSD(h)},&\qquad
			j+1\in\II;\\
			w_m&=&\displaystyle
			\co_{\RSD(m)}-
			\co_{\RD(\nf^{-1}(m))}
			T_{\nf^{-1}(m)}
			+S_m\sum_{i=1}^{k-1}\co_{\RD(i)},&\qquad
			m\in\II.
		\end{array}
	\end{align}
	Then due to Proposition \ref{prop:general_solution_r}, the 
	$r$'s will also satisfy the linear 
	relations~\eqref{RS_R_decomp}.
	
	To simplify the notation, let us consider 
	the case $\ii=k$ (i.e., all particles
	at level $k$ can move to the right). The
	general case is similar.

	First, summing the $w_m$'s above and 
	using
	\eqref{1_T_S}--\eqref{la=nu_NN_identity}, 
	one can get \eqref{sum_RS_R_one}. Moreover, 
	\eqref{sum_RS_R_one} is in fact equivalent to
	\eqref{la=nu_NN_identity}. 
	
	In view of this fact, it is clear that the above equations 
	\eqref{RS_R_eqn}
	on the $\co$'s have a unique solution.
	Namely, we have 
	$c_j=\co_{\RD(j)}+1-\sum_{i=1}^{k-1}\co_{\RD(i)}$
	from \eqref{sum_RS_R_one} and 
	from the first equation in \eqref{RS_R_eqn}.
	This yields
	\begin{align*}
		\co_{\RD(i)}=c_i+\frac{1}{k-2}
		(1-c_1- \ldots-c_{k-1}),\qquad i=1,\ldots,k-1.
	\end{align*}
	Then one can find the $\co_{\RSD(h)}$'s from the 
	second equation in \eqref{RS_R_eqn}.
	This concludes the proof.
\end{proof}

The next two propositions are proved similarly to 
Proposition~\ref{prop:RS_R}:

\begin{proposition}[RSK-type and 
left-pulling]
\label{prop:RS_L}
	For level number $k\ge3$, 
	any solution $(w,c,r)$ 
	of \eqref{rlw_system_of_equations}
	can be uniquely decomposed in the following way:
	\begin{align}
		\begin{pmatrix}
			w\\c\\r
		\end{pmatrix}
		=\sum_{i=1}^{k-1}
		\co_{\LD(i)}
		\begin{pmatrix}
			w^{\LD(i)}\\c^{\LD(i)}\\r^{\LD(i)}
		\end{pmatrix}
		+
		\sum_{h=1}^{k}
		\co_{\RSD(h)}
		\begin{pmatrix}
			w^{\RSD(h)}\\c^{\RSD(h)}\\r^{\RSD(h)}
		\end{pmatrix},
		\label{RS_L_decomp}
	\end{align}
	where 
	$\sum_{i=1}^{k-1}\co_{\LD(i)}+
	\sum_{h=1}^{k}\co_{\RSD(h)}=1,$
	and, by agreement,
	\begin{align*}
		\co_{\LD(i)}=0
		\mbox{\; if $i+1\notin\II$},
		\qquad
		\co_{\RSD(h)}=0
		\mbox{\; if $h\notin\II$}.
	\end{align*}
\end{proposition}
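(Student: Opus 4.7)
I will mimic the proof of Proposition \ref{prop:RS_R}. Plugging the explicit expressions for the left-pulling fundamental solutions of \S \ref{ssub:left_pushing_fundamental_solutions} and the RSK-type fundamental solutions of \S \ref{ssub:rs_type_solutions} into the ansatz \eqref{RS_L_decomp}, and using the conventions $\theta_{\LD(j_1)}=\theta_{\LD(0)}:=0$ and $T_{j_1}=T_{0}:=0$, the $c$- and $w$-components of \eqref{RS_L_decomp} translate into
\begin{align*}
	c_{j_s}&=\theta_{\LD(j_s)}+\sum_{h\in\II}\theta_{\RSD(h)},\qquad s=2,\dots,\ii,\\
	w_{j_s+1}&=\theta_{\RSD(j_s+1)}-\theta_{\LD(j_s)}\,T_{j_s}+S_{j_s+1}\sum_{i}\theta_{\LD(i)},\qquad s=1,\dots,\ii.
\end{align*}
By Proposition \ref{prop:general_solution_r} the $r$-component needs no separate treatment: in any solution of \eqref{rlw_system_of_equations} the $r$'s are determined by the $c$'s and $w$'s, and each fundamental solution satisfies \eqref{rlw_system_of_equations} by construction.

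The first step is to verify the normalization $\Theta:=\sum_{i}\theta_{\LD(i)}+\sum_{h}\theta_{\RSD(h)}=1$. Summing the $w$-equations above over $s=1,\dots,\ii$, substituting \eqref{1_T_S} for $\sum_{s}S_{j_s+1}$, and comparing with \eqref{la=nu_NN_identity} applied to the given $(w,c,r)$, produces an identity of the form $\bigl(1+\sum_{s=2}^{\ii}T_{j_s}\bigr)(1-\Theta)=0$. Since every $T_{j_s}>0$ by \eqref{T_S_zero}, this forces $\Theta=1$. Using the normalization to eliminate $\sum_{h}\theta_{\RSD(h)}=1-\sum_{i}\theta_{\LD(i)}$ from the $c$-equations reduces them to an $(\ii-1)\times(\ii-1)$ linear system in $\theta_{\LD(j_2)},\dots,\theta_{\LD(j_{\ii})}$ with coefficient matrix $I-\mathbf{1}\mathbf{1}^{\top}$; summing all reduced equations yields a scalar equation whose leading coefficient is $2-\ii$, which determines $\sum_{i}\theta_{\LD(i)}$, and then each $\theta_{\LD(j_s)}$ follows by back-substitution. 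Once these are pinned down, each $w$-equation (for $s=1,\dots,\ii$) supplies the corresponding $\theta_{\RSD(j_s+1)}$ in closed form.

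The main delicacy is the analog of the $k-2$ denominator from the proof of Proposition \ref{prop:RS_R}: the scalar equation for $\sum_i\theta_{\LD(i)}$ is invertible precisely when $\ii\ne 2$, and the hypothesis $k\ge 3$ excludes this borderline case in exactly the same manner as there. The edge conventions $\theta_{\LD(j_1)}=0$ and $T_0=0$ correctly reflect the fact that the left-pulling family is defined only for $h\ge 1$ (so no left-pulling solution is indexed at the leftmost position $j_1=0$); consistently, the $s=1$ $w$-equation reduces to $w_1=\theta_{\RSD(1)}+S_1\sum_i\theta_{\LD(i)}$, pinning down $\theta_{\RSD(1)}$. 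Existence and uniqueness of the decomposition then follow.
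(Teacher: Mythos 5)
Your argument follows the same route as the paper's, which disposes of Proposition \ref{prop:RS_L} with the single sentence that it is ``proved similarly to Proposition \ref{prop:RS_R}.'' The $c$- and $w$-equations you write for the coefficients are the correct translations of \eqref{RS_L_decomp} (including the edge conventions $\co_{\LD(j_1)}=0$, $T_{j_1}=0$), the derivation of the normalization from \eqref{1_T_S} and \eqref{la=nu_NN_identity} is the same computation the paper runs for \eqref{sum_RS_R_one}, the reduced system with matrix $I-\mathbf{1}\mathbf{1}^{\top}$ reproduces the analogue of the explicit formula $\co_{\RD(i)}=c_i+\frac{1}{k-2}(1-c_1-\cdots-c_{k-1})$, and delegating the $r$-component to Proposition \ref{prop:general_solution_r} is exactly what the paper does. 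So the construction and the existence/uniqueness mechanism are right.

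The one assertion that is false is that ``the hypothesis $k\ge3$ excludes this borderline case'' $\ii=2$. It does not: $\ii=\ii(\bar\nu,\la)$ is the number of unblocked particles on the given slice and can equal $2$ for particular $(\bar\nu,\la)$ with $k$ arbitrarily large (for instance $k=3$, $\la=(2,0,0)$, $\bar\nu=(1,0)$ gives $\II=\{1,2\}$). When $\ii=2$ the decomposition genuinely fails, not just your argument for it: the only available fundamental solutions $\RSD(1)$, $\RSD(j_2+1)$, $\LD(j_2)$ all have $c_{j_2}=1$, so no affine combination of them reaches a solution with $c_{j_2}\ne1$ (e.g.\ the push-block solution). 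This is a lacuna you share with the source --- the paper's proof of Proposition \ref{prop:RS_R} only works out the case $\ii=k$ and calls the rest ``similar,'' and Remark \ref{rmk:k=2_bad} addresses only $k=2$ --- but you should not claim that $k\ge3$ rules the degenerate slices out. The honest statement is that your argument proves the decomposition for every slice with $\ii(\bar\nu,\la)\ne2$ (the case $\ii=1$ being trivial), and that slices with $\ii=2$ require adjoining the push-block solution, as in Proposition \ref{prop:R_L}.
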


\begin{proposition}[right-pushing and 
left-pulling]
\label{prop:R_L}
	For a level
	$k=2,\ldots,N$,
	any 
	solution $(w,c,r)$ of \eqref{rlw_system_of_equations}
	can be uniquely decomposed as follows:
	\begin{align}
		\begin{pmatrix}
			w\\c\\r
		\end{pmatrix}
		=
		\co_{\PBD}
		\begin{pmatrix}
			w^{\PBD}\\c^{\PBD}\\r^{\PBD}
		\end{pmatrix}
		+
		\sum_{i=1}^{k-1}
		\co_{\LD(i)}
		\begin{pmatrix}
			w^{\LD(i)}\\c^{\LD(i)}\\r^{\LD(i)}
		\end{pmatrix}
		+
		\sum_{i=1}^{k-1}
		\co_{\RD(i)}
		\begin{pmatrix}
			w^{\RD(i)}\\c^{\RD(i)}\\r^{\RD(i)}
		\end{pmatrix},
		\label{R_L_decomp}
	\end{align}
	where 
	$\co_{\PBD}+\sum_{i=1}^{k-1}\co_{\LD(i)}+
	\sum_{i=1}^{k-1}\co_{\RD(i)}=1,$
	and, by agreement,
	$\co_{\LD(i)}=\co_{\RD(i)}=0$
	if $i+1\notin\II$.
\end{proposition}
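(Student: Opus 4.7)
The plan is to establish the decomposition by reading off the coefficients from the $r$ and $c$ coordinates, and then invoking a dimension count (or a direct check) for consistency of the $w$ coordinate. First I would verify that each of the three fundamental families satisfies the linear system \eqref{rlw_system_of_equations}: this was already done in \S\ref{ssub:push_block_solution}, \S\ref{ssub:right_pushing_fundamental_solutions}, and \S\ref{ssub:left_pushing_fundamental_solutions}. Since \eqref{rlw_system_of_equations} is inhomogeneous and its solutions form an affine space, any combination with coefficients summing to $1$ is again a solution, so the right-hand side of \eqref{R_L_decomp} always lies in the solution space.

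For uniqueness of the coefficients, I would exploit the fact that the three families are well separated in the $(c,r)$-plane. Since $r^{\PBD}_j = r^{\LD(i)}_j = 0$ and $r^{\RD(i)}_j = 1_{j=i}$, the $r$-coordinate of \eqref{R_L_decomp} forces
\[
\co_{\RD(j)} = r_j, \qquad j+1 \in \II.
\]
Using $c^{\PBD}_j = 0$ and $c^{\LD(i)}_j = c^{\RD(i)}_j = 1_{j=i}$, the $c$-coordinate then gives
\[
\co_{\LD(j)} = c_j - r_j, \qquad j+1 \in \II,
\]
and the sum-to-one constraint forces $\co_{\PBD} = 1 - \sum_{i+1 \in \II} c_i$. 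Thus the coefficients are uniquely determined by $(w,c,r)$.

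For existence, I would use a dimension count. By Proposition \ref{prop:general_solution_r} the affine space of solutions of \eqref{rlw_system_of_equations} has dimension $2\ii - 2$. On the other hand, the set of coefficient tuples $(\co_{\PBD}, \{\co_{\LD(i)}\}_{i+1\in\II}, \{\co_{\RD(i)}\}_{i+1\in\II})$ with $\sum \co = 1$ is an affine subspace of $\mathbb{R}^{1+2(\ii-1)}$ of dimension $2\ii-2$. The map sending coefficients to their affine combination of fundamental solutions is affine, and the uniqueness argument above proves it is injective. By matching dimensions, it is an affine isomorphism onto the solution space, which gives existence.

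The main point to watch is the consistency of the $w$-coordinate, since the coefficients were pinned down using only $r$ and $c$. The dimension argument above bypasses an explicit check, but one can also verify it directly: substituting $\co_{\RD(j)} = r_j$, $\co_{\LD(j)} = c_j - r_j$, and $\co_{\PBD} = 1 - \sum c_i$ into the $w$-combination and using the explicit formulas
$w^{\PBD}_j = S_j$, $w^{\LD(h)}_j = S_j - 1_{j=h+1} T_h$, $w^{\RD(h)}_j = S_j - 1_{j=\nf(h)}T_h$, one collects terms and recovers each equation of \eqref{rlw_system_of_equations} by invoking the consistency identity \eqref{la=nu_NN_identity}. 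Finally, the case $k=2$ (excluded from Propositions \ref{prop:RS_R} and \ref{prop:RS_L}) is covered here because the push-block solution supplies the extra degree of freedom that the RSK-type family would have provided.
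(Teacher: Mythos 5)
Your proposal is correct and follows essentially the same route as the paper, whose proof of Proposition \ref{prop:R_L} is by the same computation as Proposition \ref{prop:RS_R}: one reads off $\co_{\RD(j)}=r_j$ and $\co_{\LD(j)}=c_j-r_j$ for $j+1\in\II$, sets $\co_{\PBD}=1-\sum_{i+1\in\II}c_i$, and checks consistency of the $w$-coordinate. Your dimension count is a clean substitute for that last check (and indeed the direct substitution reproduces each equation of \eqref{rlw_system_of_equations} without even needing \eqref{la=nu_NN_identity} here), so both uniqueness and existence are fully justified.
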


\begin{remark}\label{rmk:k=2_bad}
	In Propositions \ref{prop:RS_R}
	and \ref{prop:RS_L}
	we must assume that $k\ge3$ because
	if $k=2$, then the solutions
	$\RD(1)$ and $\LD(1)$
	belong to 
	the linear span of
	RSK-type fundamental solutions
	(and have propagation probability $c_1=1$).
	Thus, for $k=2$
	the 
	linear combinations 
	of Propositions
	\ref{prop:RS_R}
	and \ref{prop:RS_L}
	do not exhaust 
	all possible solutions
	of \eqref{rlw_system_of_equations}.
\end{remark}

Let us also consider a special case of solutions, 
namely, when the probability of propagation
$c_{j}$ (where $j+1\in\II$) does not depend on the particle that
has moved:
\begin{align}\label{constant_pp_C}
	c_{j_2}=c_{j_3}=\ldots=c_{j_{\ii}}=C.
\end{align}
In this case the natural fundamental solutions
to choose are the push-block solution and the 
RSK-type solutions which also have constant
propagation probabilities (0 and 1, respectively).

\begin{proposition}[constant propagation probability]
\label{prop:const_pp}
	Any solution $(w,r)$ of
	\eqref{rlw_system_of_equations},
	where the $c_j$'s satisfy
	\eqref{constant_pp_C}, can be 
	uniquely expressed as a linear combination
	of RSK-type and push-block fundamental solutions 
	in the following way:
	\begin{align}
	 	\begin{pmatrix}
			w\\r
		\end{pmatrix}
		=
		(1-C)
		\begin{pmatrix}
			w^{\PBD}\\r^{\PBD}
		\end{pmatrix}
		+
		\sum_{h=1}^{k}
		\co_{\RSD(h)}
		\begin{pmatrix}
			w^{\RSD(h)}\\r^{\RSD(h)}
		\end{pmatrix},
		\label{const_pp_decomp}
 	\end{align} 
	where $\sum_{h=1}^{k}\co_{\RSD(h)}=C$, 
	and, by agreement, $\co_{\RSD(h)}=0$
	if $h\notin\II$.
\end{proposition}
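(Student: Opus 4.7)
The plan is to verify the decomposition \eqref{const_pp_decomp} by matching the $w$- and $c$-components directly, and then invoke Proposition \ref{prop:general_solution_r} to obtain the $r$-component automatically. Since the $r$'s are uniquely determined by the $w$'s and $c$'s via the explicit formulas in that proposition, once both sides of \eqref{const_pp_decomp} agree on their $w$- and $c$-entries and both solve \eqref{rlw_system_of_equations}, they must also agree on their $r$-entries.

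First, I would match the $c$-components. From \S\ref{ssub:push_block_solution} the push-block solution has $c^{\PBD}_j=0$ for every $j+1\in\II$, and from \S\ref{ssub:rs_type_solutions} every RSK-type fundamental solution has $c^{\RSD(h)}_j=1$. Hence the $c_j$-entry of the right-hand side of \eqref{const_pp_decomp} is $\sum_{h\in\II}\co_{\RSD(h)}$, and matching with the constant value $C$ on the left immediately forces the claimed constraint $\sum_{h\in\II}\co_{\RSD(h)}=C$.

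Next I would match the $w$-components. Since $w^{\PBD}_m=S_m$ and $w^{\RSD(h)}_m=1_{m=h}$ for $m,h\in\II$ (see \eqref{w_j_RSh}), the right-hand side of \eqref{const_pp_decomp} contributes $(1-C)S_m+\co_{\RSD(m)}$ in the $w_m$-slot, so uniqueness is forced by
\[
\co_{\RSD(m)}=w_m-(1-C)S_m,\qquad m\in\II.
\]
It remains to verify that these $\co_{\RSD(m)}$ actually sum to $C$. Summing over $m\in\II$ and specializing identity \eqref{la=nu_NN_identity} to the constant-propagation hypothesis $c_{j_m}=C$ for $m=2,\ldots,\ii$, together with the commutation identity \eqref{1_T_S}, gives
\[
\sum_{m\in\II}\co_{\RSD(m)}
=\Bigl(1+(1-C)\sum_{m=2}^{\ii}T_{j_m}\Bigr)-(1-C)\Bigl(1+\sum_{m=2}^{\ii}T_{j_m}\Bigr)=C,
\]
so the affine constraint is automatic. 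Proposition \ref{prop:general_solution_r} then supplies the matching $r$-component, yielding both existence and uniqueness of the decomposition.

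The argument is essentially a linear-algebra verification once the explicit forms of the fundamental solutions are recalled. The only thing that could in principle go wrong is the compatibility of the forced formula $\co_{\RSD(m)}=w_m-(1-C)S_m$ with $\sum_{h\in\II}\co_{\RSD(h)}=C$, but this is exactly what \eqref{la=nu_NN_identity} provides in the constant-$c$ case, so no genuine obstacle arises.
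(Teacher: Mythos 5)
Your proof is correct and follows essentially the same route as the paper: the paper's own (very terse) argument likewise reduces to matching components, notes that $\co_{\RSD(h)}=w_h-(1-C)S_h$, and relies on Proposition \ref{prop:general_solution_r} to recover the $r$-component, exactly as you do. Your verification that the forced coefficients sum to $C$ via \eqref{la=nu_NN_identity} and \eqref{1_T_S} is the right consistency check and is carried out correctly.
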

\begin{proof}
	Similar to the proof of Proposition
	\ref{prop:RS_R}.
	In fact, $\co_{\RSD(h)}$ in \eqref{const_pp_decomp}
	is equal to
	$\co_{\RSD(h)}=w_h-(1-C)S_h$
	for every $h\in\II$.
\end{proof}

\begin{remark}\label{rmk:dimension}
	In each of Propositions \ref{prop:RS_R}, \ref{prop:RS_L},
	\ref{prop:R_L},
	and \ref{prop:const_pp}
	we have taken one more fundamental solution than 
	the dimension $2\ii-2$ of the space of solutions.
	This is simply because our linear system
	\eqref{rlw_system_of_equations} is
	non-homogeneous, and its solutions form
	an affine subspace.
	This also results in the constraint on
	the $\co$'s in each of the
	propositions (i.e., the 
	sum of the corresponding
	coefficients $\co$ must be one).
\end{remark}



\subsection{Fundamental `dynamics'} 
\label{sub:fundamental_dynamics_}

\subsubsection{Motivation: 
fundamental solutions\\ and arbitrary 
nearest neighbor `dynamics'} 
\label{ssub:fundamental_solutions_and_arbitrary_nearest_neighbor_dynamics_}

Before going into details, let us present a brief 
overview of what we did 
in~\S\S \ref{sub:parametrization_and_linear_equations}--\ref{sub:fundamental_solutions},
and explain our further steps.

Let $(W_k,V_k)$ define a multivariate
`dynamics' on the slice $\GT_{(k-1;k)}$ (we assume 
that $k=2,\ldots,N$ is fixed). 
By the discussion in \S \ref{sub:parametrization_and_linear_equations}, 
these functions $(W_k,V_k)$ are 
in one-to-one correspondence
with 
the parameters $(w,c,r)$ as 
in~\eqref{l_r_V}--\eqref{rlw_parameters}.
These parameters
depend on a pair of signatures
$(\bar\nu,\la)\in\GT_{(k-1;k)}$,
and
satisfy the linear system~\eqref{rlw_system_of_equations}.
As we saw in 
\S \ref{ssub:arbitrary_solutions_as_linear_combinations_of_fundamental_ones}, 
for every $(\bar\nu,\la)$ the 
parameters $(w,c,r)$ can be expressed
as a linear combination of fundamental solutions
(in three ways, see
Propositions \ref{prop:RS_R}, 
\ref{prop:RS_L}, and \ref{prop:R_L}). 
In this sense, one can formulate the following property:
\begin{quote}
	An arbitrary nearest neighbor 
	`dynamics' on the slice $\GT_{(k-1;k)}$ 
	can be viewed as 
	a linear combination
	of fundamental solutions 
	(in one of the three ways,
	see Propositions \ref{prop:RS_R}, 
	\ref{prop:RS_L}, and \ref{prop:R_L})
	with coefficients $\co_{\cdots}(\bar\nu,\la)$ 
	of this linear combination
	which sum to one and 
	depend on 
	a pair of signatures $(\bar\nu,\la)\in\GT_{(k-1;k)}$.
\end{quote}

In this and the next subsection
our aim is to restate this 
property in a more convenient form
(i.e., in terms of
multivariate `dynamics'
on interlacing arrays).

First, we organize
the fundamental solutions
into certain natural fundamental
`dynamics'. These `dynamics'
have nice `probabilistic' descriptions,
but conceptually they are nothing more than 
unions of fundamental solutions described 
in \S \ref{sub:fundamental_solutions}.
Then in \S \ref{sub:characterization_of_nearest_neighbor_dynamics_} 
we show that the generator
of any nearest neighbor multivariate `dynamics'
can be expressed as a certain linear combination
of generators of these fundamental `dynamics'.
See Theorem \ref{thm:characterization_NN} for a final formulation.


\subsubsection{Setup and push-block dynamics} 
\label{ssub:setup}

Now let us start constructing
the fundamental `dynamics'.
In \S \ref{sub:fundamental_solutions}
we have discussed 
the push-block fundamental solution 
$(w^{\PBD},c^{\PBD},r^{\PBD})$,
and also defined
three distinguished 
families of fundamental 
solutions:
\begin{align*}&
	\begin{pmatrix}
		w^{\RSD(h)}\\c^{\RSD(h)}\\r^{\RSD(h)}
	\end{pmatrix}
	,\quad 
	h\in\II(\bar\nu,\la);
	\qquad
	\begin{pmatrix}
		w^{\RD(i)}\\c^{\RD(i)}\\r^{\RD(i)}
	\end{pmatrix}
	,\ 
	\begin{pmatrix}
		w^{\LD(i)}\\c^{\LD(i)}\\r^{\LD(i)}
	\end{pmatrix}
	,\quad 
	i+1\in\II(\bar\nu,\la),
\end{align*}
which are constructed 
for any fixed $(\bar\nu,\la)\in\GT_{(k-1;k)}$.

\begin{figure}[htbp]
	\begin{center}
		\includegraphics[width=300pt]{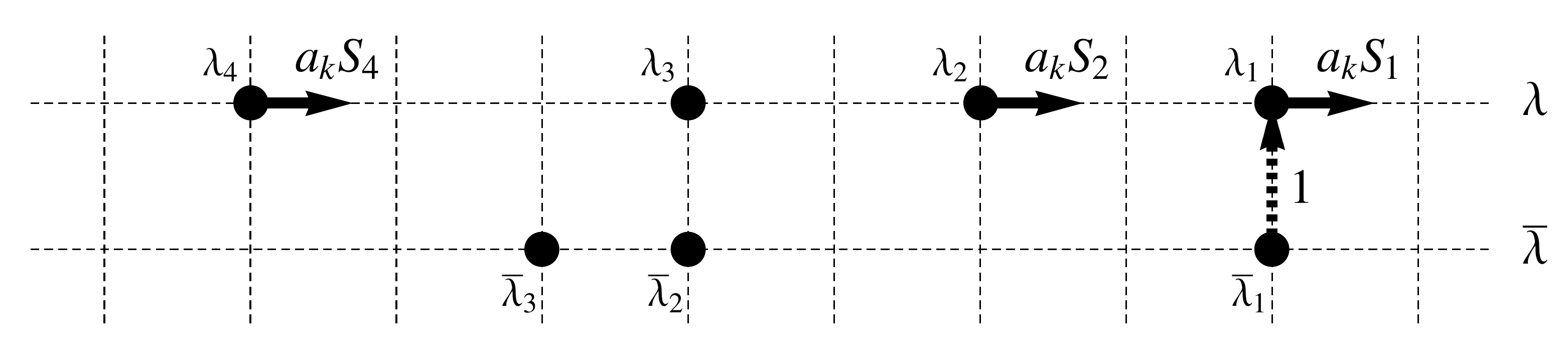}
	\end{center}  
  	\caption{Example of the behavior of the
	push-block fundamental dynamics 
  	$(W_k^{\PBD},V_k^{\PBD})$ for $k=4$ (see
  	Dynamics \ref{dyn:DF_Mac} for a
  	detailed description). 
  	The particle $\la_3$ is blocked
  	and has jump rate $a_kS_3(\bar\la,\la)=0$.
  	The jump rates of $\la_1,\la_2$, and $\la_4$
  	are shown on the picture.
  	The dashed arrow from $\bar\la_1$ to $\la_1$ denotes
  	short-range pushing: if $\bar\la_1$ moves,
  	then it pushes $\la_1$ with probability~1
  	(cf. Fig.~\ref{fig:nu_not_prec_la}
  	where the lower row is the new state~$\bar\nu$ 
  	instead of the old state~$\bar\la$ here). 
  	No
  	pushing occurs
  	if $\bar\la_2$ or $\bar\la_3$ move.}
  	\label{fig:DF_pic}
\end{figure}
Note that the push-block solutions already correspond to an 
existing multivariate Markov dynamics discussed in 
\S \ref{sub:example_push_block_process_on_interlacing_arrays}
(see Fig.~\ref{fig:DF_pic}). This dynamics
always has nonnegative 
jump rates $V_k^{\PBD}$ 
and probabilities of triggered moves $W_k^{\PBD}$.
Let us 
denote by $\bq^{(N)}_{\PBD}$ the 
generator of the push-block dynamics.

We will now focus on the three 
remaining families of fundamental solutions.
We want to naturally define 
for each pair $\bar\nu\prec\la$ the
rates of independent jumps
and the probabilities of triggered moves
\begin{align*}
	W_k^{\RSD(h)}(\la,\la+\de_m\,|\,\bar\nu),&
	\qquad
	m\in\II(\bar\nu,\la),\\
	V_k^{\RSD(h)}(\la,\la+\de_m\,|\,
	\bar\nu-\bar\de_j,\bar\nu),
	&\qquad
	m,j+1\in\II(\bar\nu,\la)
\end{align*}
(indexed by \emph{all} $h\in\{1,\ldots,k\}$),
and similarly for $\RD(i)$ and $\LD(i)$
(where $i=1,\ldots,k-1$), 
corresponding to fundamental 
solutions.\footnote{The quantities 
$W_k^{\RSD(h)}(\la,\cdot\,|\,\bar\nu)$ and
$V_k^{\RSD(h)}(\la,\cdot\,|\,\cdot,\bar\nu)$
for $\bar\nu\not\prec\la$ are always
dictated by the short-range pushing rule (see
\S \ref{sub:when_a_jumping_particle_has_to_push_its_immediate_upper_right_neighbor}), and similarly
for $\RD(i)$ and $\LD(i)$.}
We have to make sure that 
these $W_k$'s and $V_k$'s 
satisfy Theorem~\ref{thm:general_multivariate},
and thus they will produce the desired
\emph{fundamental multivariate `dynamics'}
on the slice $\GT_{(k-1;k)}$. Unifying 
all the slices, one can obtain 
(via \eqref{spec_bq_N_product_form})
the corresponding fundamental
`dynamics'
on interlacing arrays 
which serve as building blocks for 
all nearest neighbor `dynamics'
(\S \ref{sub:characterization_of_nearest_neighbor_dynamics_}).


\subsubsection{RSK-type fundamental `dynamics'} 
\label{ssub:rs_type_fundamental_dynamics_}

The RSK-type fundamental solutions 
(\S \ref{ssub:rs_type_solutions})
are indexed by $h\in\II(\bar\nu,\la)$
for each pair $\bar\nu\prec\la$. Thus, 
we can define
$W_k^{\RSD(h)}(\la,\cdot\,|\,\bar\nu)$ 
and $V_k^{\RSD(h)}(\la,\cdot\,|\,\cdot,\bar\nu)$ for
$h\in\II(\bar\nu,\la)$ 
simply by \eqref{l_r_V}--\eqref{w_m_shorthand}.
It remains to \emph{extend} the definition of
$W_k^{\RSD(h)}, V_k^{\RSD(h)}$ to the case 
$h\notin\II(\bar\nu,\la)$.

\begin{remark}\label{rmk:local_extension}
	We would like to perform 
	this extension (and a similar extension in 
	\S \ref{ssub:right_and_left_pushing_fundamental_dynamics_}) 
	in a ``local'' way. That is, for now 
	the action of `dynamics'
	$\RSD(h)$ on 
	$\GT_{(k-1;k)}$
	is defined on configurations
	in which the particle $\la_h$ is not 
	blocked (i.e., $h\in\II(\bar\nu,\la)$). 
	If this particle $\la_h$ becomes blocked,
	then this situation requires a special 
	treatment in the ``vicinity'' of $\la_h$. 
	But we would like the particles that are 
	far from $\la_h$ to not ``feel'' 
	that $\la_h$ became blocked.
\end{remark}

According to Remark \ref{rmk:local_extension},
observe that the quantities $r_j^{\RSD(h)}$, 
$j+1\in\II(\bar\nu,\la)$, 
are still well-defined by \eqref{r_j_RSh}
in the case $h\notin\II$ (in particular, $T_j$
is nonzero by \eqref{T_S_zero}). Thus, 
in this case 
we can put,
as before,
\begin{align}\label{Vk_RS_h}
	V_k^{\RSD(h)}(\la,\la+\de_m\,|\,
	\bar\nu-\bar\de_j,\bar\nu)=
	r_j^{\RSD(h)}(\bar\nu,\la)
	1_{m=\nf(j)}+
	\big(1-r_j^{\RSD(h)}(\bar\nu,\la)\big)
	1_{m=j+1}
\end{align}
for all meaningful $m$ and $j$
(the index $\nf^{-1}(m)$ is defined in 
\S \ref{ssub:right_pushing_fundamental_solutions}).

Then one can readily check that  
condition \eqref{P2'_T_S}
(or, equivalently, system~\eqref{rlw_system_of_equations}),
yields for $m\in\II(\bar\nu,\la)$:
\begin{align*}&
	W_k^{\RSD(h)}(\la,\la+\de_m\,|\,\bar\nu)
	=
	a_k\big(
	1_{h=m}+1_{h=m+1}+\ldots+1_{h=\nf^{-1}(m)}
	\big).
\end{align*}

Clearly, the above definitions
of $V_k^{\RSD(h)}$ and $W_k^{\RSD(h)}$
now work for any $h\in\{1,\ldots,k\}$.
Thus, for each $h$ we have constructed 
functions $(W_k^{\RSD(h)},V_k^{\RSD(h)})$
satisfying Theorem \ref{thm:general_multivariate},
and so we get $k$ different `dynamics' on the slice 
$\GT_{(k-1;k)}$.

The corresponding
`dynamics' on interlacing arrays
which are obtained by ``stacking'' the 
$(W_k^{\RSD(h))},V_k^{\RSD(h))})$'s
for all $k=2,\ldots,N$ 
using \eqref{spec_bq_N_product_form},
are para\-metrized by integer sequences 
\begin{align}\label{RS_h_index}
	\boldsymbol h:=
	(h^{(1)},h^{(2)}\ldots,h^{(N)}),\qquad
	1\le h^{(j)}\le j.
\end{align}
Let us denote 
by $\bq^{(N)}_{\RSD[\boldsymbol h]}$
the corresponding
generator defined by \eqref{spec_bq_N_product_form}
with $W_k=W_k^{\RSD(h^{(k)})}$
and $V_k=V_k^{\RSD(h^{(k)})}$, $k=2,\ldots,N$.
We will refer to these~$N!$ `dynamics' 
$\bq^{(N)}_{\RSD[\boldsymbol h]}$
on 
interlacing arrays
as to the \emph{fundamental RSK-type `dynamics'}. 
They can be `probabilistically' described
as follows:
\begin{dynamics}[RSK-type fundamental `dynamics'
$\bq^{(N)}_{\RSD[\boldsymbol h]}$]
\label{dyn:RS_type_fund}
	Let $k=1,\ldots,N$.
	To shorten the notation, denote $h=h^{(k)}$.
	{\ }\begin{enumerate}[(1)]
		\item (independent jumps)
		The only particle that can try to
		jump at level $k$ is $\la_{h}^{(k)}$. 
		It has the independent jump rate $a_k$.
		If this jump
		is blocked,
		i.e., if 
		$\la_{h}^{(k)}=\la_{h-1}^{(k-1)}$,
		then the first free
		particle $\la_{\nf(h)}^{(k)}$ 
		to the right of $\la_h^{(k)}$
		jumps instead.

		\item (triggered moves)
		If any particle $\la^{(k-1)}_{j}$
		moves to the right by one
        and $\la^{(k-1)}_{j}<\la^{(k)}_{j}$, 
        then this particle
        long-range
        pushes
        $\la_{\nf(j)}^{(k)}$
        or pulls $\la_{j+1}^{(k)}$
        with probabilities
        $1-r_j^{\RSD(h)}(\nu^{(k-1)},\la^{(k)})$ and 
        $r_j^{\RSD(h)}(\nu^{(k-1)},\la^{(k)})$,
        respectively (see \eqref{r_j_RSh}), 
        where $\nu^{(k-1)}$ differs from 
        $\la^{(k-1)}$ only by 
        $\nu^{(k-1)}_j=\la^{(k-1)}_j+1$.
        If $\la^{(k-1)}_{j}=\la^{(k)}_{j}$,
        then a short-range push happens 
        according to  
     	\S \ref{sub:when_a_jumping_particle_has_to_push_its_immediate_upper_right_neighbor}.
	\end{enumerate}
\end{dynamics}
\begin{figure}[htbp]
	\begin{center}
		\includegraphics[width=345pt]{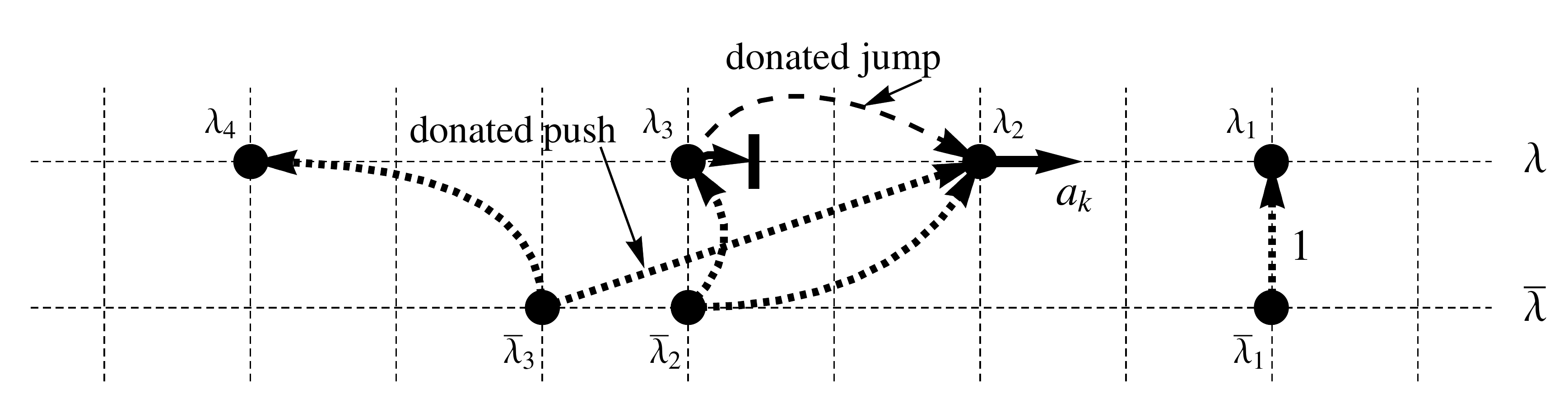}
	\end{center}  
  	\caption{Example of the behavior of 
  	an RSK-type fundamental `dynamics'
  	on the slice $\GT_{(3;4)}$
  	with $h=h^{(4)}=3$.
  	(see Dynamics \ref{dyn:RS_type_fund}).
  	The only particle that can independently
  	jump on the 
  	upper level is $\la_2$: because $\la_3$
  	is blocked, it donates its jump rate to~$\la_2$. 
  	If $\bar\la_1$ moves, it short-range
  	pushes $\la_1$ (the rule of 
  	\S \ref{sub:when_a_jumping_particle_has_to_push_its_immediate_upper_right_neighbor}). 
  	If $\bar\la_2$ moves, it pushes 
  	$\la_2$ with probability $r_{2}^{\RSD(3)}
  	(\bar\la+\bar\de_2,\la)$ (which is 
  	well-defined), and pulls $\la_3$ with the 
  	complementary probability $1-r_{2}^{\RSD(3)}
  	(\bar\la+\bar\de_2,\la)$.
  	If $\bar\la_3$ moves, it pushes $\la_2$
  	with probability $r_{3}^{\RSD(3)}
  	(\bar\la+\bar\de_3,\la)$ (this push would 
  	have gone to $\la_3$ if $\la_3$ were not blocked),
  	and pulls $\la_4$ with the complementary probability
  	$1-r_{3}^{\RSD(3)}
  	(\bar\la+\bar\de_3,\la)$.}
  	\label{fig:RS_h}
\end{figure}

One can say that
in both cases above 
the blocking of a particle
leads to \emph{donation} of a jump 
or a push.
Let us formulate a 
corresponding
rule for future reference:
\begin{drule}
	Assume that a particle
	$\la_m^{(k)}$ at level $k$
	must move to the right by one
	(due to an independent jump or 
	a triggered move) but is blocked, 
	i.e., 
	$\la^{(k)}_m=\la^{(k-1)}_{m-1}$.
	Then this particle
	donates the move
	to the first free particle $\la^{(k)}_{\nf(m)}$
	to the right of itself (see Fig.~\ref{fig:RS_h} 
	for an example).
\end{drule}\label{donation}
Using this rule, one can simplify
descriptions of nearest neighbor
multivariate `dynamics'
by not explaining 
what happens when some of the particles
of the interlacing array
become blocked (and the above donation
rule will take care of
blocking situations automatically). 
In fact, 
by the general Theorem \ref{thm:characterization_NN}
below (and using Dynamics \ref{dyn:DF_Mac},
\ref{dyn:RS_type_fund}, \ref{dyn:Ri}, and \ref{dyn:Li}), 
one can \emph{always} employ such a simplified description of 
\emph{any}
nearest neighbor multivariate `dynamics' (and,
in particular, assign nonzero jump rates 
to blocked particles).
Below we use these simplified descriptions
when it is convenient.


\subsubsection{Right-pushing and left-pulling 
fundamental `dynamics'} 
\label{ssub:right_and_left_pushing_fundamental_dynamics_}

Similarly to \S \ref{ssub:rs_type_fundamental_dynamics_},
we will now define fundamental
`dynamics'
corresponding to the 
right-pushing and left-pulling fundamental solutions, 
respectively (see 
\S \ref{ssub:right_pushing_fundamental_solutions}
and \S \ref{ssub:left_pushing_fundamental_solutions}).

First, observe that we can define 
the jump rates $W_k^{\RD(h)}(\la,\cdot\,|\,\bar\nu)$
and the probabilities of triggered moves
$V_k^{\RD(h)}(\la,\cdot\,|\,\cdot,\bar\nu)$,
and similarly for $\LD(h)$,
in the case when $h+1$ belongs to
$\II(\bar\nu,\la)$. That is, by 
\eqref{l_r_V}--\eqref{w_m_shorthand}
with the help of the corresponding fundamental
solutions we set:
\begin{align}\label{Ri_Li_VW}
	\begin{array}{rcl}
		V_k^{\RD(h)}
		(\la,\la+\de_m\,|\,\bar\nu-\bar\de_j,\bar\nu)
		&=&1_{j=h}1_{m=\nf(h)},
		\\
		W_k^{\RD(h)}
		(\la,\la+\de_m\,|\,\bar\nu)
		&=&a_k(S_m-1_{\nf^{-1}(m)=h}T_h);
		\\
		V_k^{\LD(h)}
		(\la,\la+\de_m\,|\,\bar\nu-\bar\de_j,\bar\nu)
		&=&
		1_{j=h}1_{m=h+1},
		\\
		W_k^{\LD(h)}
		(\la,\la+\de_m\,|\,\bar\nu)
		&=&a_k(S_m-1_{m=h+1}T_h)
	\end{array}
\end{align}
for all $m\in\II(\bar\nu,\la)$ 
and all $j$ with $j+1\in\II(\bar\nu,\la)$,
where $\nf^{-1}(m)$ is defined 
in~\S \ref{ssub:right_pushing_fundamental_solutions}.

Next, note that if $h+1\notin\II(\bar\nu,\la)$, then
$\bar\nu_h$ cannot be
the particle that has just moved at level $k-1$, 
and it also cannot happen that $j=h$ in
the definitions of $V_k^{\RD(h)}$ and
$V_k^{\LD(h)}$ in 
\eqref{Ri_Li_VW}.
Still, in both `dynamics' $\RD(h)$ and 
$\LD(h)$ we would like to let \emph{only} the particle 
$\bar\nu_h$ at level $k-1$ 
to push (resp. pull) particles at level $k$
(here we use the ``locality'' idea, cf.
Remark \ref{rmk:local_extension}). 
Thus, for $h+1\notin\II(\bar\nu,\la)$ 
it is natural not to allow
\emph{any} pushing (resp. pulling) at all. 
Note that in this case by \eqref{T_S_zero}
we have $T_h=0$ in \eqref{Ri_Li_VW}.

Thus, 
on the slice $\GT_{(k-1;k)}$
we have completely defined 
the rates of independent jumps 
$W_k^{\RD(h)}$ and $W_k^{\LD(h)}$
and the probabilities of triggered
moves $V_k^{\RD(h)}$ and $V_k^{\LD(h)}$
for each $h=1,\ldots,k-1$.

To define the corresponding `dynamics'
on interlacing arrays, let us choose an integer
sequence
\begin{align}\label{RL_h_index}
	\boldsymbol
	h=(h^{(1)},h^{(2)},\ldots,h^{(N-1)}),
	\qquad 1\le h^{(j)}\le j,
\end{align}
and denote by $\bq^{(N)}_{\RD[\boldsymbol h]}$
the generator given by \eqref{spec_bq_N_product_form}
with $W_k=W_k^{\RD(h^{(k-1)})}$
and $V_k=V_k^{\RD(h^{(k-1)})}$
(note the difference with the RSK-type `dynamics'
in \S \ref{ssub:rs_type_fundamental_dynamics_}),
$k=2,\ldots,N$,
and similarly for 
$\bq^{(N)}_{\LD[\boldsymbol h]}$. We will
call these `dynamics' the 
\emph{right-pushing and left-pulling fundamental `dynamics'},
respectively. Both families
consist of $(N-1)!$ `dynamics'.
Let us provide their equivalent 
`probabilistic'
description.

\begin{dynamics}
[Right-pushing fundamental `dynamics'
$\bq^{(N)}_{\RD[\boldsymbol h]}$]
\label{dyn:Ri}
	Let us take any $k=1,\ldots,N$, and 
	put $h=h^{(k-1)}$.
	{\ }\begin{enumerate}[(1)]
		\item (independent jumps)
		Each particle $\la_m^{(k)}$,
		$m\ne h$,
		has an independent
		exponential clock with
		rate $a_kS_m(\la^{(k-1)},\la^{(k)})$ 
		(or simply $a_1$ for $k=1$).
		The clock of the particle 
		$\la_h^{(k)}$ has a different
		rate
		\begin{align*}
			a_k\big(S_h(\la^{(k-1)},\la^{(k)})
			-
			T_h(\la^{(k-1)},\la^{(k)})
			\big)
		\end{align*}
		(there is no such special particle 
		if $k=1$).
		When the clock 
		of any $\la_{m}^{(k)}$, $m=1,\ldots,k$,
		rings, this
		particle tries to jump to the right by one,
		using donation rule of \S \ref{donation}
		if it is blocked.

		\item (triggered moves)
		If the particle $\la^{(k-1)}_{h}$ 
		moves to the right by one
        and $\la^{(k-1)}_{h}<\la^{(k)}_{h}$, 
        then $\la^{(k-1)}_{h}$ 
        long-range pushes
        its first unblocked 
        upper right neighbor
        $\la_{\nf(h)}^{(k)}$ with probability 1.
        No other long-range pushing or pulling is present.
		
		When $\la^{(k-1)}_{h}=\la^{(k)}_{h}$,
		or a move of any other particle at level $k-1$
		breaks the interlacing, then
		short-range pushing 
        takes place
        according to~\S \ref{sub:when_a_jumping_particle_has_to_push_its_immediate_upper_right_neighbor}.
	\end{enumerate}
\end{dynamics}
In the right-pushing
fundamental `dynamics' one sees the same 
mechanism of donation of moves
as in Dynamics \ref{dyn:RS_type_fund}
above. See also Fig.~\ref{fig:Ri} for an example.

\begin{figure}[htbp]
	\begin{center}
		\includegraphics[width=345pt]{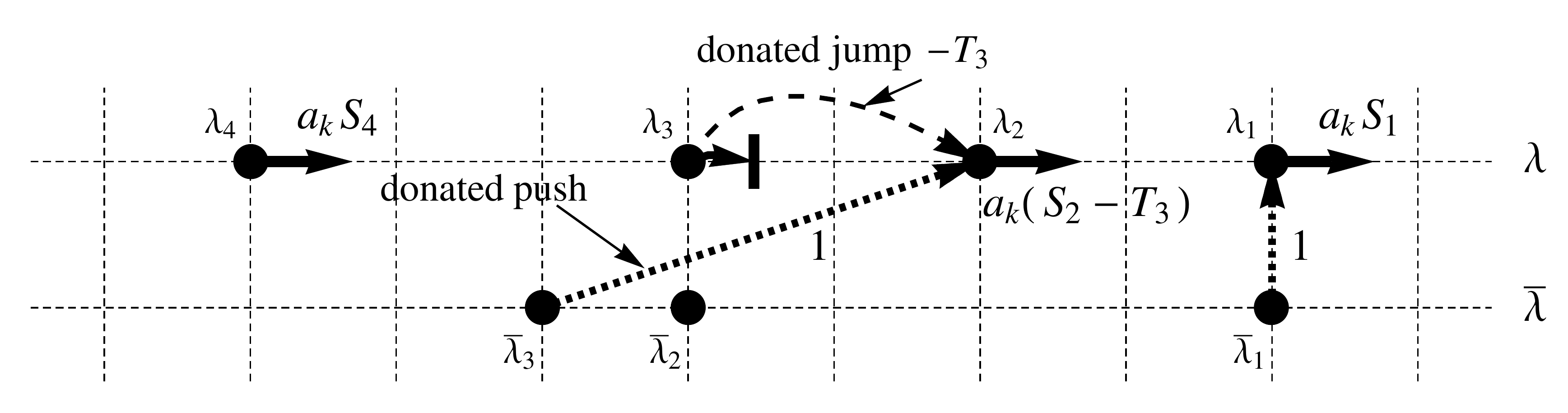}
	\end{center}  
  	\caption{Example of the behavior of 
  	a right-pushing fundamental `dynamics'
  	on the slice $\GT_{(3;4)}$
  	with $h=h^{(3)}=3$
  	(see Dynamics \ref{dyn:Ri}).
  	Rates of independent
  	jumps of the upper level
  	particles are given. 
  	Since $\la_3$ is blocked and 
  	$S_3(\bar\la,\la)=0$,
  	$\la_3$ donates 
  	the remaining nonzero
  	jump rate
  	$-T_3(\bar\la,\la)$
  	to the first unblocked particle
  	$\la_{\nf(3)}=\la_2$.
  	On the lower level,
  	if $\bar\la_1$ moves, it short-range
  	pushes $\la_1$ (the rule of 
  	\S \ref{sub:when_a_jumping_particle_has_to_push_its_immediate_upper_right_neighbor}). 
  	If $\bar\la_3$ moves, it 
  	long-range
  	pushes $\la_2$
  	with probability~1
  	(this push would 
  	have gone to $\la_3$ if $\la_3$ were not blocked).
  	No other pushes or pulls are possible on this picture. 
  	In particular,
  	if $\bar\la_2$ moves, it affects
  	no one at the upper level.}
  	\label{fig:Ri}
\end{figure}
\begin{dynamics}
[Left-pulling fundamental `dynamics'
$\bq^{(N)}_{\LD[\boldsymbol h]}$]
Take any $k=1,\ldots,N$, and 
put $h=h^{(k-1)}$.
\label{dyn:Li}
	{\ }\begin{enumerate}[(1)]
		\item (independent jumps)
		Each particle $\la_m^{(k)}$,
		$m\ne h+1$,
		has an independent
		exponential clock with
		rate $a_kS_m(\la^{(k-1)},\la^{(k)})$
		(or simply $a_1$ for $k=1$).
		The clock of the particle 
		$\la_{h+1}^{(k)}$ has a different
		rate
		\begin{align*}
			a_k\big(S_{h+1}(\la^{(k-1)},\la^{(k)})
			-
			T_h(\la^{(k-1)},\la^{(k)})
			\big)
		\end{align*}
		(there is no such special particle 
		if $k=1$).
		When the clock 
		of any $\la_{m}^{(k)}$, $m=1,\ldots,k$,
		rings, this
		particle jumps to the right by one;
		the rate automatically vanishes
		if $\la^{(k)}_{m}$ is blocked.
		
		\item (triggered moves)
		If the particle $\la^{(k-1)}_{h}$ 
		moves to the right by one
		and $\la^{(k-1)}_{h}<\la^{(k)}_{h}$, 
		then $\la^{(k-1)}_{h}$ 
		pulls
		its immediate
		upper left neighbor
		$\la_{h+1}^{(k)}$ with probability 1.
		No other long-range pushes or pulls are present.
		
		When $\la^{(k-1)}_{h}=\la^{(k)}_{h}$,
		or a move of any other particle at level $k-1$
		breaks the interlacing, then
		short-range pushing 
		takes place
		according to~\S \ref{sub:when_a_jumping_particle_has_to_push_its_immediate_upper_right_neighbor}.
	\end{enumerate}
\end{dynamics}
In contrast with Dynamics \ref{dyn:RS_type_fund} and
\ref{dyn:Ri}, we see that in the 
left-pulling fundamental `dynamics'
no independent jumps or pulls
need to be donated.
See also Fig.~\ref{fig:Li} 
for an example 
of a left-pulling fundamental `dynamics'.

\begin{figure}[htbp]
	\begin{center}
		\includegraphics[width=345pt]{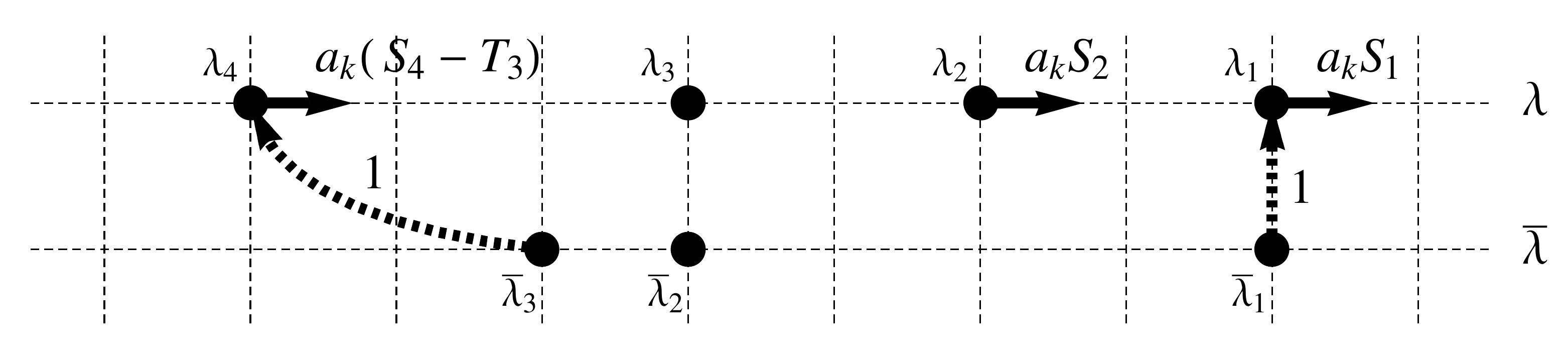}
	\end{center}  
  	\caption{Example of the behavior of 
  	a left-pulling fundamental `dynamics'
  	on the slice $\GT_{(3;4)}$
  	with $h=h^{(3)}=3$
  	(see Dynamics \ref{dyn:Ri}).
  	Rates of independent
  	jumps of the upper level
  	particles are given. 
  	The particle $\la_3$ is blocked and 
  	has jump rate
  	$a_kS_3(\bar\la,\la)=0$.
  	On the lower level,
  	if $\bar\la_1$ moves, it short-range
  	pushes $\la_1$ (the rule of 
  	\S \ref{sub:when_a_jumping_particle_has_to_push_its_immediate_upper_right_neighbor}). 
  	If $\bar\la_3$ moves, it
  	(long-range)
  	pulls $\la_4$
  	with probability~1.
  	No other pushes or pulls are possible on this picture. 
  	In particular,
  	if $\bar\la_2$ moves, it affects
  	no one.}
  	\label{fig:Li}
\end{figure}


\subsubsection{Conclusion} 
\label{ssub:conclusion}

In this subsection we have 
introduced
$1+N!+2(N-1)!$
fundamental `dynamics'
on interlacing arrays 
(as on Fig.~\ref{fig:la_interlacing})
by unifying 
fundamental solutions of the 
linear system \eqref{rlw_system_of_equations}
in a certain way.
The first fundamental 
`dynamics', namely,
the push-block process,
already appeared 
in \cite[\S2.3.3]{BorodinCorwin2011Macdonald}.
This push-block dynamics 
can be characterized as 
having no long-range 
pushes or pulls~(\S \ref{sub:example_push_block_process_on_interlacing_arrays}).

The right-pushing and left-pulling
`dynamics' may be viewed as 
having only one 
long-range push (resp. pull) 
at each slice
$\GT_{(k-1;k)}$.
These `dynamics' are parametrized
by sequences $(h^{(1)},\ldots,h^{(N-1)})$,
$1\le h^{(j)}\le j$,
where $h^{(j)}$ is the index of the only particle 
at level $j$
which pushes (resp. pulls) someone at level $j+1$.

Finally, the RSK-type fundamental `dynamics'
can be characterized as having the 
least possible number of independent
jumps. They are parametrized
by sequences $(h^{(1)},\ldots,h^{(N)})$,
$1\le h^{(j)}\le j$, 
and only the particle $h^{(k)}$
at each level $k$ is 
allowed to jump.
This of course leads to 
the presence of many 
long-range pushes and pulls in an 
RSK-type `dynamics'.

Observe that all the fundamental
`dynamics' 
on interlacing arrays
are pairwise distinct if particles are 
``apart''. That is, for every $k=2,\ldots,N$
the functions
$W_k(\la,\cdot\,|\,\bar\nu),
V_k(\la,\cdot\,|\,\cdot,\bar\nu)$
corresponding to different `dynamics'
are different when 
no particles
at level $k$ are blocked. When particles
at level $k-1$ and $k$ get closer to each other
in the sense that $\ii(\bar\nu,\la)<k$, some of these
functions coincide. 
One can readily describe these coincidences
in detail, but we will not need pursue a description.
Note that at the first level $k=1$ of the 
interlacing array 
each of the fundamental 
`dynamics' behaves in the same way dictated
by the univariate dynamics $Q_1$ 
(see the discussion before Remark \ref{rmk:la=nu}).

Let us also emphasize that 
the fundamental `dynamics'
do not necessarily have nonnegative
jump rates or probabilities
of triggered moves. We are guaranteed, however,
that at least the push-block dynamics 
(Dynamics \ref{dyn:DF_Mac})
is always an honest
Markov process because of \eqref{T_S_zero}.
Moreover, we will see in 
\S \ref{sec:schur_degeneration_and_robinson_schensted_correspondences} that \emph{all} the fundamental
`dynamics' have nonnegative jump rates and 
probabilities of triggered moves 
in the Schur ($q=t$) case. 
In \S \ref{sec:multivariate_dynamics_in_the_q_whittaker_case_and_q_pushasep_} we will also 
consider this problem of nonnegativity
in the $q$-Whittaker (i.e., $t=0$) 
case. It turns out that 
in this case
some of the
fundamental `dynamics' are also 
honest Markov processes.



\subsection[Classification of nearest neighbor 
`dynamics']{Classification of nearest neighbor 
`dynamics'\\ on interlacing arrays} 
\label{sub:characterization_of_nearest_neighbor_dynamics_}

Now we are in a position to present the main result
of the present section.
Informally, we would like to say that an 
arbitrary nearest neighbor 
multivariate `dynamics'
$\bq^{(N)}$ on interlacing arrays
can be expressed as a linear combination
of the fundamental `dynamics'
(\S \ref{sub:fundamental_dynamics_})
with coefficients which sum to one (cf.
the discussion in 
\S \ref{ssub:fundamental_solutions_and_arbitrary_nearest_neighbor_dynamics_}).

To be more precise, let us 
take any two nearest neighbor multivariate
`dynamics' with generators
$\bq^{(N)}$ and $\bq'^{(N)}$, respectively.
Recall that a generator $\bq^{(N)}$ 
(or $\bq'^{(N)}$)
as in \eqref{spec_bq_N_product_form}
is a matrix with row and columns indexed by 
Gelfand--Tsetlin schemes
of depth $N$.
Let $\bq^{(N)}$ and $\bq'^{(N)}$
correspond to 
functions $\{W_k,V_k\}$
and $\{W_k',V_k'\}$, respectively.
Let us define
\emph{mixing} 
$\tilde\bq^{(N)}$ 
(this generator corresponds to 
$\{\tilde W_k,\tilde V_k\}$)
of the `dynamics' 
$\bq^{(N)}$ and $\bq'^{(N)}$
as follows:
\begin{align}\label{mixing_generators}
	\begin{array}{l}
		\tilde W_k(\la^{(k)},\nu^{(k)}\,|\,\nu^{(k-1)})
		:=
		\co^{(k)}(\nu^{(k-1)},\la^{(k)})
		W_k(\la^{(k)},\nu^{(k)}\,|\,\nu^{(k-1)})
		\\
		\rule{0pt}{15pt}
		\hspace{155pt}
		+\big(1-\co^{(k)}(\nu^{(k-1)},\la^{(k)})
		\big)
		W_k'(\la^{(k)},\nu^{(k)}\,|\,\nu^{(k-1)}),\\
		\rule{0pt}{20pt}
		\tilde V_k(\la^{(k)},\nu^{(k)}
		\,|\,\la^{(k-1)},\nu^{(k-1)}):=
		\co^{(k)}(\nu^{(k-1)},\la^{(k)})
		V_k(\la^{(k)},\nu^{(k)}
		\,|\,\la^{(k-1)},\nu^{(k-1)})
		\\
		\rule{0pt}{15pt}
		\hspace{155pt}
		+\big(1-\co^{(k)}(\nu^{(k-1)},\la^{(k)})\big)
		V_k'(\la^{(k)},\nu^{(k)}
		\,|\,\la^{(k-1)},\nu^{(k-1)}),
	\end{array}
\end{align}
for each $k=2,\ldots,N$,
where 
$\boldsymbol\la=(\la^{(1)}\prec
\ldots
\la^{(N)})$
and 
$\boldsymbol\nu=(\nu^{(1)}\prec
\ldots
\nu^{(N)})$
are our Gelfand--Tsetlin 
schemes of depth $N$, and 
$\co^{(k)}(\cdot,\cdot)$ 
is any function on the slice $\GT_{(k-1;k)}$. 
Mixing \eqref{mixing_generators} of 
multivariate `dynamics'
is simply a composition of 
operations \eqref{mixing_WV}
performed on each slice.
Thus, we conclude that $\tilde\bq^{(N)}$
is again a multivariate `dynamics'
(in the sense of 
\S \ref{sub:characterization_of_multivariate_dynamics}).
Moreover, the `dynamics' $\tilde\bq^{(N)}$
is clearly nearest neighbor.

Operation \eqref{mixing_generators}
is more general than a 
linear combination of two `dynamics'
$\bq^{(N)}$ and $\bq'^{(N)}$
with coefficients which sum to one, as it
allows \emph{a lot} of freedom
in choosing the coefficients
$\co^{(k)}$ for each $k=2,\ldots,N$.
Moreover, 
mixing \eqref{mixing_generators}
includes
linear combinations 
of the generators with constant
coefficients (summing to one)
as a particular case.
Of course, one can define a similar mixing of
any finite number of multivariate `dynamics'.

The next theorem
(already formulated as 
Theorem 
\ref{thm:main_intro}
in the Introduction)
summarizes the development of the 
present section, and follows
from all the above definitions
together with Propositions
\ref{prop:RS_R}, \ref{prop:RS_L}, 
and~\ref{prop:R_L}:

\begin{theorem}\label{thm:characterization_NN}
	Any multivariate nearest neighbor
	`dynamics' 
	$\bq^{(N)}$
	on interlacing
	arrays of depth	$N$
	can be obtained as a mixing
	\eqref{mixing_generators}
	of fundamental `dynamics',
	which are:

	$\bullet$ the push-block dynamics $\bq^{(N)}_{\PBD}$;

	$\bullet$ $N!$ RSK-type fundamental 
	`dynamics' 
	\begin{align*}
		\bq^{(N)}_{\RSD[\boldsymbol h]},\qquad
		\boldsymbol h\in\{1\}\times\{1,2\}
		\times \ldots\times\{1,\ldots,N\};
	\end{align*}

	$\bullet$ $(N-1)!$ right-pushing 
	fundamental `dynamics'
	\begin{align*}
		\bq^{(N)}_{\RD[\boldsymbol h]},
		\qquad
		\boldsymbol h\in\{1\}\times\{1,2\}
		\times \ldots\times\{1,\ldots,N-1\};
	\end{align*}

	$\bullet$ $(N-1)!$ left-pulling fundamental 
	`dynamics'
	\begin{align*}
		\bq^{(N)}_{\LD[\boldsymbol h]},
		\qquad
		\boldsymbol h\in\{1\}\times\{1,2\}
		\times \ldots\times\{1,\ldots,N-1\}.
	\end{align*}
\end{theorem}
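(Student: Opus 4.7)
The plan is to reduce the global statement about multivariate `dynamics' on interlacing arrays of depth $N$ to the slicewise decomposition results already established in Propositions \ref{prop:RS_R}, \ref{prop:RS_L}, and \ref{prop:R_L}. By \eqref{spec_bq_N_product_form}, the generator $\bq^{(N)}$ of a nearest neighbor multivariate `dynamics' is completely encoded by the pairs $(W_k,V_k)_{k=1}^{N}$, or equivalently, via Proposition \ref{prop:rlw_system} together with \eqref{l_r_V}--\eqref{w_m_shorthand}, by a family of triples $\{(w,c,r)(\bar\nu,\la)\}$ --- one solution of the linear system \eqref{rlw_system_of_equations} for each $k=2,\dots,N$ and each $(\bar\nu,\la)\in\GT_{(k-1;k)}$. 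The nearest-neighbor condition of Definition \ref{def:nn_dynamics} is already built into this parametrization. Crucially, the mixing operation \eqref{mixing_generators} acts independently at every level $k$ and reduces on each slice to the pointwise combination \eqref{mixing_WV}, whose coefficient $\co^{(k)}$ is an arbitrary function of $(\bar\nu,\la)\in\GT_{(k-1;k)}$.

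With this reformulation in hand, I would fix $k$ and apply Proposition \ref{prop:R_L} pointwise: for each $(\bar\nu,\la)$ this produces nonnegative-free real numbers $\co_{\PBD}(\bar\nu,\la)$, $\{\co_{\RD(i)}(\bar\nu,\la)\}_{i+1\in\II}$, $\{\co_{\LD(i)}(\bar\nu,\la)\}_{i+1\in\II}$ summing to $1$, with the convention $\co_{\RD(i)}=\co_{\LD(i)}=0$ whenever $i+1\notin\II(\bar\nu,\la)$, such that $(w,c,r)(\bar\nu,\la)$ is the corresponding linear combination of the fundamental solutions. Treating these coefficients as functions on $\GT_{(k-1;k)}$ and feeding them into the many-term version of the mixing operation \eqref{mixing_generators} (obtained by iterating its two-term version, as indicated at the end of \S \ref{sub:characterization_of_nearest_neighbor_dynamics_}), we realize $(W_k,V_k)$ as the $\co^{(k)}$-weighted mixture of the level-$k$ ingredients of the fundamental `dynamics' $\bq^{(N)}_{\PBD}$, $\bq^{(N)}_{\RD[\boldsymbol h]}$, $\bq^{(N)}_{\LD[\boldsymbol h]}$. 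Doing this simultaneously at every level $k=2,\dots,N$ (with $k=1$ being automatic since $W_1=a_1 Q_1$ is the same for all multivariate `dynamics') yields the desired expression of $\bq^{(N)}$ as a mixing of fundamental `dynamics'. Proposition \ref{prop:R_L} already involves only the push-block, right-pushing and left-pulling families, so in principle one does not even need the RSK-type `dynamics' in the theorem; but Propositions \ref{prop:RS_R} and \ref{prop:RS_L} show that decompositions through the RSK-type family are equally valid (and, as explained in Remark \ref{rmk:k=2_bad}, these propositions require $k\ge 3$, which is why including the push-block dynamics in the $k=2$ case is convenient).

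The one subtlety, which I expect to be the main bookkeeping obstacle, is reconciling the pointwise nature of the fundamental solutions with the global definition of the fundamental `dynamics'. On the slice $\GT_{(k-1;k)}$, the solutions $(w^{\RSD(h)},c^{\RSD(h)},r^{\RSD(h)})$, $(w^{\RD(i)},c^{\RD(i)},r^{\RD(i)})$, $(w^{\LD(i)},c^{\LD(i)},r^{\LD(i)})$ are naturally indexed by $h\in\II(\bar\nu,\la)$ and $i+1\in\II(\bar\nu,\la)$, which varies with $(\bar\nu,\la)$. The global fundamental `dynamics' in \S \ref{ssub:rs_type_fundamental_dynamics_}--\ref{ssub:right_and_left_pushing_fundamental_dynamics_} are obtained by the ``local'' extensions of \eqref{Vk_RS_h}, \eqref{Ri_Li_VW}, and the accompanying donation rule. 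What has to be checked is that the vanishing conventions in Propositions \ref{prop:RS_R}--\ref{prop:R_L} (e.g.\ $\co_{\RD(i)}=0$ when $i+1\notin\II$) are perfectly compatible with these extensions, so that the mixing weights $\co^{(k)}(\bar\nu,\la)$ reproduce $(W_k,V_k)(\bar\nu,\la)$ even when the configuration forces certain fundamental ingredients to act only through their donation-rule extensions. Once this compatibility is verified --- which follows mechanically from the definitions in \S \ref{sub:fundamental_dynamics_} --- the theorem is an immediate consequence of the three slicewise decomposition propositions.
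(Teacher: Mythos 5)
Your proposal is correct and follows essentially the same route as the paper: the paper's own (very terse) proof states that the theorem "follows from all the above definitions together with Propositions \ref{prop:RS_R}, \ref{prop:RS_L}, and \ref{prop:R_L}," i.e.\ precisely the pointwise, slice-by-slice application of the decomposition propositions with $(\bar\nu,\la)$-dependent coefficients assembled into the mixing \eqref{mixing_generators}, including the $k=2$ caveat of Remark \ref{rmk:k=2_bad} and the vanishing conventions for blocked indices that you flag. The compatibility check you isolate at the end is exactly the content the paper delegates to the "local" extensions of \S\ref{sub:fundamental_dynamics_}, so nothing is missing.
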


Moreover, to get \emph{all} possible nearest neighbor 
`dynamics', in the above theorem
it is enough to take either of the three combinations:

\begin{enumerate}[($\RSD$--$\RD$--$\PBD$)]
	\item[($\RSD$--$\RD$--$\PBD$)]
	all RSK-type and all right-pushing `dynamics'
	together with the push-block dynamics;
	\item[($\RSD$--$\LD$--$\PBD$)]
	all RSK-type and all left-pulling `dynamics'
	together with the push-block dynamics;
	\item[($\RD$--$\LD$--$\PBD$)]
	all right-pushing and left-pulling `dynamics'
	together with the push-block dynamics.
\end{enumerate}
Each combination 
directly corresponds to one of 
Propositions
\ref{prop:RS_R}, \ref{prop:RS_L}, 
and~\ref{prop:R_L}.
We have added the push-block
dynamics 
to the first two families
to resolve the 
issues for $k=2$ described in 
Remark \ref{rmk:k=2_bad}.

The expression of a 
nearest neighbor `dynamics' $\bq^{(N)}$
as a mixing
of the fundamental ones is far from 
being unique.
Let us briefly indicate how one can
formulate a precise 
uniqueness statement (without 
going into further detail).
It would require
a separate treatment
when particles
of interlacing arrays
$\boldsymbol\la$ and $\boldsymbol\nu$
are close to each other
(cf. 
the discussion in
\S \ref{ssub:conclusion}).
Moreover, to get uniqueness
with mixing \eqref{mixing_generators}, 
one would have to
reduce the number of allowed fundamental
`dynamics' 
in Theorem \ref{thm:characterization_NN}
to linear in $N$.
See also Remark \ref{rmk:RS_Schur_unique} below
for a related discussion
under $q=t$ (Schur) degeneration.

Finally, we note that 
taking certain (not completely arbitrary)
mixings (similarly to \eqref{mixing_generators})
of the push-block dynamics
with all RSK-type fundamental `dynamics', 
one gets a subclass
of nearest neighbor `dynamics'
which have constant
probability $C$ of move propagation
on every slice
(as in \eqref{constant_pp_C}).
Namely, to get the propagation probability
$C$, one must impose 
(for all $k$)
the constraint 
$\co^{(k)}_{\PBD}(\nu^{(k-1)},\la^{(k)})\equiv 1-C$
on the push-block coefficients 
in the corresponding mixing.
This follows from Proposition \ref{prop:const_pp}.
See also 
Proposition~\ref{prop:RS_Schur} below.



\section
{Schur degeneration and Robinson--Schensted correspondences} 
\label{sec:schur_degeneration_and_robinson_schensted_correspondences}

In this section we discuss a $q=t$
degenerate 
version of the formalism of multivariate dynamics
on interlacing arrays
developed in \S \ref{sec:multivariate_continuous_time_dynamics_on_interlacing_arrays_} 
and \S \ref{sec:nearest_neighbor_multivariate_continuous_time_dynamics_on_interlacing_arrays}.
This degeneration turns the 
Macdonald polynomials into 
much simpler 
Schur polynomials, 
see~\S \ref{sub:schur_appendix}.
In this case certain \emph{Schur-multivariate stochastic
dynamics} (namely, the fundamental RSK-type dynamics,
see \S \ref{ssub:rs_type_fundamental_dynamics_})
lead to many new \emph{deterministic} insertion algorithms 
extending the usual
row and column insertions. 
These considerations 
lead to new Robinson--Schensted-type correspondences.

\subsection{Row and column insertions} 
\label{sub:row_and_column_insertions}

Here we recall the classical row and column insertion
algorithms, and translate them into the
language of interlacing arrays.

\subsubsection{Row insertion} 
\label{ssub:row_insertion}

The input of the row insertion algorithm
is $(\Ptab,m)$, where $\Ptab$ is a semistandard
Young tableau 
over the alphabet $\{1,\ldots,N\}$
(see Definition \ref{def:SSYT}),
and $m$ is a letter from $1$ to $N$.
The output is a semistandard Young
tableau which we will denote by 
$\ins^{\mathit{row}}_{m}\Ptab$.	
The shape of $\ins^{\mathit{row}}_{m}\Ptab$
is obtained from the shape of $\Ptab$
by adding exactly one box.

The row insertion algorithm proceeds 
according to the following
rules:
{\sl\begin{enumerate}[(R-start)]
	\item[(R-start)]
	Start by inserting
	the letter $m$ into the
	first row. (That is, initialize $x:=m$, 
	and $R:=\mbox{the}$ first row of the tableau.)
	\item[(R-step)]
	Suppose a letter $x$ is to be 
	inserted into some row $R$
	of the tableau.
	\begin{enumerate}
		\item 
		Let $y_0$ (if it exists)
		be the smallest letter in 
		$R$ which is strictly
		greater than~$x$.
		In this case, 
		$x$ replaces $y_0$ in $R$.
		The letter $y_0$ is \textit{bumped} out of 
		the row $R$, and 
		we insert $y_0$ into 
		the next 
		row of the tableau.
		That is, set $x:=y_0$ and 
		$R:=\mbox{the}$ next row, and repeat~(R-step).
		\item 
		Otherwise, 
		if $x\ge y$ for every letter $y$
		in $R$, then 
		$x$ is appended to the end of the row $R$, and 
		the insertion algorithm \textbf{ends}.
	\end{enumerate}
\end{enumerate}}

An example of 
the row insertion of the letter 2
is given below:
\begin{align}\label{row_insertion_example}
	\Ptab=
	\begin{array}{|c|c|c|c|c|c|c|}
	\hline
	1&1&2&3&5\\
	\hline
	2&3&5&5\\
	\cline{1-4}
	3\\
	\cline{1-1}
	4\\
	\cline{1-1}
	\end{array}
	\qquad
	\qquad
	\ins^{\mathit{row}}_{2}\Ptab=
	\begin{array}{|c|c|c|c|c|c|c|}
	\hline
	1&1&2&\mathbf{2}&5\\
	\hline
	2&3&\mathbf{3}&5\\
	\cline{1-4}
	3&\mathbf{5}\\
	\cline{1-2}
	4\\
	\cline{1-1}
	\end{array}
\end{align}
The bold letters indicate 
how the insertion $\ins^{\mathit{row}}_{2}$ proceeded: 
2 bumps 3 from the first row, then
3 bumps 5 from the second row, and
finally 5 settles in the third row.


\subsubsection{Column insertion} 
\label{ssub:column_insertion}

The column insertion is another
algorithm which
transforms a pair $(\Ptab,m)$ 
($\Ptab$~is a semistandard Young tableau, 
$m$ is a letter)
into a new semistandard Young tableau
which we denote by
$\ins^{\mathit{col}}_{m}\Ptab$.

The rules are the following:
{\sl\begin{enumerate}[(C-start)]
	\item[(C-start)]
	Start by inserting
	the letter $m$ into the
	first column. (That is, initialize $x:=m$, 
	and $C:=\mbox{the}$ first column of the tableau.)
	\item[(C-step)]
	Suppose a letter $x$ is to be 
	inserted into some column $C$
	of the tableau.
	\begin{enumerate}
		\item 
		Let $y_0$ (if it exists)
		be the smallest letter in 
		$C$ which is greater than or equal to~$x$.
		In this case, 
		$x$ replaces $y_0$ in $C$.
		The letter $y_0$ is bumped out of 
		the column $C$, and 
		we insert $y_0$ into 
		the next 
		column of the tableau.
		That is, we set $x:=y_0$ and 
		$C:=\mbox{the}$ next column, and repeat (C-step).
		\item 
		Otherwise, 
		if $x> y$ for every letter $y$
		in $C$, then 
		$x$ is appended to the bottom of the 
		column $C$, and 
		the insertion algorithm \textbf{ends}.
	\end{enumerate}
\end{enumerate}}
The following example illustrates the column
insertion of the letter $2$ into a 
tableau $\Ptab$ (same as in 
\eqref{row_insertion_example}):
\begin{align}\label{column_insertion_example}
	\Ptab=
	\begin{array}{|c|c|c|c|c|c|c|}
	\hline
	1&1&2&3&5\\
	\hline
	2&3&5&5\\
	\cline{1-4}
	3\\
	\cline{1-1}
	4\\
	\cline{1-1}
	\end{array}
	\qquad
	\qquad
	\ins^{\mathit{col}}_{2}\Ptab=
	\begin{array}{|c|c|c|c|c|c|c|}
	\hline
	1&1&2&3&\mycirc{{\bf5}}&\mathbf{5}\\
	\hline
	{\mycirc{{\bf2}}}&
	\mathbf{2}&\mathbf{3}&\mathbf{5}\\
	\cline{1-4}
	3\\
	\cline{1-1}
	4\\
	\cline{1-1}
	\end{array}
\end{align}
Again, the letters in bold
(including the circled ones)
show how the column insertion proceeded.

Column insertion also
admits a more ``row-oriented'' description.
Let us discuss it in our 
concrete example \eqref{column_insertion_example}.
One can say that the letter~2 
is inserted into the second row of $\Ptab$ (the 
circled ``2''
in $\ins^{\mathit{col}}_{2}\Ptab$ 
in \eqref{column_insertion_example}), 
and then
\emph{shifts} all the possible
letters that were to the right of it.
Namely, these 
are the letters 2, 3, and 5, and they remain
in the second row in \eqref{column_insertion_example}. 
However, the rightmost
``5'' in the second row of $\Ptab$ cannot be shifted 
in a similar way,
as this would 
violate the definition of a semistandard
tableau. This letter 5 then goes to the
first row (and becomes 
the circled ``5'' in 
$\ins^{\mathit{col}}_{2}\Ptab$),
and shifts ``5'' that was already present in the 
first row of $\Ptab$.


\subsubsection{Insertions in terms of interlacing arrays} 
\label{ssub:insertions_in_terms_of_interlacing_arrays}

Now we aim to translate the row and column insertions
into the language of interlacing arrays
which are in a natural bijection
with semistandard Young
tableaux (Proposition \ref{prop:SSYT}).

Assume that one wants to \emph{row insert}
a letter $m$
into a semistandard Young tableau 
$\Ptab$ which is represented 
by an interlacing array
$\boldsymbol\la=
(\la^{(1)}\prec \ldots\prec\la^{(N)})$. 
The row insertion works according to the following
rules (formulated in terms of particles of
$\boldsymbol\la$ moving to the right by one):
{\sl\begin{enumerate}[(R-start)]
	\item[(R-start)] 
	The process starts with the
	rightmost
	particle $\la^{(m)}_{1}$ 
	at level $m$
	jumping to the right by one.
	\item[(R-step)]  
	Suppose a particle $\la_{j}^{(k-1)}$
	(for some $k=m+1,\ldots,N$ and $j=1,\ldots,k$)
	has moved (to the right by one). 
	If $\la_{j}^{(k)}=\la_{j}^{(k-1)}$, then
	the upper right neighbor
	$\la_{j}^{(k)}$ of $\la_{j}^{(k-1)}$ moves. Otherwise, 
	the upper left neighbor
	$\la_{j+1}^{(k)}$ moves.
	When $k=N$, the insertion \textbf{ends}.
\end{enumerate}}
For example, 
insertion 
\eqref{row_insertion_example}
of the letter $2$ 
in terms of interlacing
arrays
looks as follows:
\begin{align*}
	\boldsymbol\la=
	\begin{matrix}
		0&&1&&1&&4&&5\\
		&1&&1&&2&&4\\
		&&1&&2&&4\\
		&&&1&&3\\
		&&&&2
	\end{matrix}
	\qquad
	\ins^{\mathit{row}}_{2}\boldsymbol\la=
	\begin{matrix}
		0&&1&&\mycirc{2}&&4&&5\\
		&1&&1&&\mycirc{3}&&4\\
		&&1&&\mycirc{3}&&4\\
		&&&1&&\mycirc{4}\\
		&&&&2
	\end{matrix}
\end{align*}
Circled are the particle
positions that changed during the row insertion.

The \emph{column insertion}
of a letter $m$ into $\boldsymbol\la$
works as follows:
{\sl\begin{enumerate}[(C-donate)]
	\item[(C-start)] 
	The process starts with the
	leftmost
	particle $\la^{(m)}_{m}$ 
	at level $m$
	trying to jump to the right by one.
	\item[(C-donate)] 
	If a particle $\la_{j}^{(k)}$
	(for some $k=m,\ldots,N$ and $j=1,\ldots,k$)
	tries to move
	but is blocked (i.e., 
	if $\la_{j}^{(k)}=\la_{j-1}^{(k-1)}$), 
	then it donates the move
	to 
	its first unblocked right neighbor
	$\la^{(k)}_{\nf(j)}$ at level $k$ 
	(cf. the rule of \S \ref{donation}).
	If $\la_{j}^{(k)}$ is not blocked, 
	then
	$\nf(j)=j$ by agreement
	(cf. \eqref{nf_next_free_particle}).
	\item[(C-step)] 
	Suppose a particle $\la_{j}^{(k-1)}$
	(for some $k=m+1,\ldots,N$ and $j=1,\ldots,k$)
	has moved (to the right by one). 
	Then its upper right
	neighbor
	$\la_{j}^{(k)}$
	also moves to the right,
	or, if it is blocked,
	donates this move
	with the help of \mbox{(C-donate)} rule.
	When $k=N$, the insertion \textbf{ends}.
\end{enumerate}}
Let us illustrate
the column insertion
\eqref{column_insertion_example}
of the letter 2
using interlacing arrays.
We have:
\begin{align*}
	\boldsymbol\la=
	\begin{matrix}
		0&&1&&1&&4&&5\\
		&1&&1&&2&&4\\
		&&1&&2&&4\\
		&&&1&&3\\
		&&&&2
	\end{matrix}
	\quad
	\ins^{\mathit{col}}_{2}\boldsymbol\la=
	\begin{matrix}
		0&&1&&1&&4&&\mycirc{6}\\
		&1&&1&&\mycirc{3}&&4\\
		&&1&&\mycirc{3}&&4\\
		&&&\mycirc{2}&&3\\
		&&&&2
	\end{matrix}
\end{align*}
Here we have
also circled
the particle positions
that changed during the column
insertion.
Note that
when the move propagates 
from level 4 to level 5, the particle
$\la^{(5)}_{2}=\la^{(4)}_{1}=4$
is blocked and thus the move is 
donated to~$\la^{(5)}_{1}$.

It is straightforward to check the 
equivalence of the 
two descriptions of row and column insertions
(in terms of interlacing arrays 
and semistandard tableaux).
This equivalence also 
follows from a more detailed discussion 
in \S \ref{sub:from_interlacing_arrays_to_semistandard_tableaux} below.



\subsection{From interlacing arrays to 
semistandard tableaux and back} 
\label{sub:from_interlacing_arrays_to_semistandard_tableaux}

Insertion algorithms which we introduce 
later in this section
will be described in terms of 
interlacing arrays.
Here let us present a ``dictionary'' which would help
one
to restate these algorithms
in the language of semistandard Young tableaux.

Let $\Ptab$ be a semistandard Young
tableau corresponding to an interlacing 
array $\boldsymbol\la=(\la^{(1)}\prec
\ldots\prec\la^{(N)})\in\GT^+(N)$ 
of depth $N$ (as on Fig.~\ref{fig:la_interlacing}):
\begin{enumerate}[${\rm{}(iii)}$]
	\item[(i)] ($\Ptab\leftrightarrow\boldsymbol\la$) 
	By the very construction of
	\S \ref{sub:semistandard_young_tableaux}, 
	the $k$th level $\la^{(k)}_1\ge \ldots
	\ge \la^{(k)}_k$ of the array $\boldsymbol\la$
	means the shape 
	occupied by the letters $1,\ldots,k$ in the tableau $\Ptab$.
	In particular, 
	the coordinate of
	each rightmost particle 
	$\la^{(k)}_{1}$ is the total number of letters $1,\ldots,k$
	in the first row of $\Ptab$; and the coordinate 
	of each leftmost particle
	$\la^{(k)}_{k}$ is the number of letters 
	$k$ in the $k$th row of $\Ptab$.
\end{enumerate}
Let us describe the start of an insertion algorithm:
\begin{enumerate}[${\rm{}(iii)}$]
	\item[(ii)] (initial insert $\leftrightarrow$ ``independent jump'')
	Every insertion algorithm starts 
	with some \emph{new} letter $m$, $m=1,\ldots,N$, which must be inserted
	into the tableau $\Ptab$. This means that the shape of 
	$\Ptab$ occupied by the letters $1,\ldots,m-1$
	does not change. 
	Thus, the alteration of the array
	$\boldsymbol\la$ begins with 
	level $\la^{(m)}$, and after that affects all the higher levels
	$\la^{(m+1)},\ldots,\la^{(N)}$.

	If the new letter $m$ is to be inserted into some row $j=1,\ldots,m$ of the 
	tableau $\Ptab$,
	this means that the particle $\la^{(m)}_{j}$ must jump to the right by one.
	
	(ii$^{*}$)
		(donation of initial jump)
		The jump of $\la^{(m)}_{j}$ 
		may be not possible, this happens 
		if $\la^{(m)}_{j}=\la^{(m-1)}_{j-1}$. 
		Equivalently, the number of letters 
		$1,\ldots,m-1$ in the $(j-1)$th row of the tableau 
		$\Ptab$ is the same as
		the number of letters $1,\ldots,m$ in the $j$th row
		(in fact, this number of letters can be zero).

		In this case, the letter $m$ tries to be inserted into 
		the row $j-1$. Equivalently, the particle
		$\la^{(m)}_{j}$ donates its jump 
		with the help of the rule of \S \ref{donation}.
\end{enumerate}
Now let us discuss inductive rules with which 
an insertion algorithm can proceed after the initial
insertion of a letter. 
The
row and column insertion algorithms
(\S\S\ref{ssub:row_insertion}--\ref{ssub:insertions_in_terms_of_interlacing_arrays})
may be viewed as examples of combinations of these rules.

Fix $k=2,\ldots,N$ and $j=1,\ldots,k-1$, 
and assume that 
the letter
$k-1$ was just inserted into the row $j$ of the tableau $\Ptab$.
In terms of the interlacing array $\boldsymbol\la$,
this means that 
the particle 
$\la^{(k-1)}_{j}$
(call it a \emph{trigger})
at level $k-1$ of the array $\boldsymbol\la$
has just moved to the right by one
(due to an initial insert or 
following a move at level $k-2$).
This move at level $k-1$ in $\boldsymbol\la$
must result in exactly one of the following
three moves at higher levels:
\begin{enumerate}[${\rm{}(iii)}$]
	\item[(iii)] (``gap'' in letters in $\Ptab$ 
	$\leftrightarrow$ short-range pushing)
	Assume that there are no letters 
	$k,k+1,\ldots,k'-1$
	in the $j$th row of $\Ptab$, 
	but this row contains at least one letter~$k'>k$. 
	This means that the insertion
	of $k-1$ into the $j$th row of~$\Ptab$
	would next affect (i.e., bump or shift, see below)
	the letter $k'$, skipping all the letters 
	$k,k+1,\ldots,k'-1$. In terms of the interlacing
	array $\boldsymbol\la$, this means that 
	$\la^{(k-1)}_{j}=\la^{(k)}_{j}=\ldots=\la^{(k'-1)}_{j}$. 
	Thus, the particle $\la^{(k-1)}_{j}$
	moving 
	to the right by one will
	short-range push (\S \ref{sub:when_a_jumping_particle_has_to_push_its_immediate_upper_right_neighbor})
	all the particles $\la^{(k)}_{j},\ldots,\la^{(k'-1)}_{j}$
	to the right by one. 
	
	The insertion algorithm
	then continues inductively
	at levels $k',k'+1,\ldots,N$,
	triggered
	by the move of the last particle
	$\la^{(k'-1)}_{j}$.
	For the purposes of this inductive continuation
	of moves, one can assume that the inserted letter was $k'-1$
	and not $k-1$, i.e., that the 
	trigger now is the particle $\la^{(k'-1)}_{j}$.

	\item[(iv)] (bumping = row insertion $\leftrightarrow$ 
	pulling) 
	Assume now that the $j$th row of the tableau $\Ptab$
	contained at least one letter $k$, and that the 
	letter $k-1$ inserted into the $j$th row 
	\emph{bumps} this letter $k$ out of this row.
	The bumped letter $k$ will be inserted into the
	row $j+1$. 
	Observe that such an insertion is always possible.

	In terms of interlacing arrays, 
	the bumping precisely means that the moved
	particle $\la^{(k-1)}_{j}$ \emph{pulls}
	its immediate upper \emph{left} neighbor $\la^{(k)}_{j+1}$ 
	(see also \S \ref{ssub:insertions_in_terms_of_interlacing_arrays}).
	The trigger now is the particle
	$\la^{(k)}_{j+1}$.

	\item[(v)] (shifting~=~column insertion $\leftrightarrow$ 
	long-range pushing)
	Suppose
	that the $j$th row of the tableau $\Ptab$
	contained at least one letter $k$, and that
	the 
	letter $k-1$ inserted into the $j$th row 
	must now \emph{shift} all the letters $k$ (belonging to this row) 
	to the right, cf. the end of \S \ref{ssub:column_insertion}.

	In terms of insertions into columns, this means that the 
	inserted letter $k-1$ (arrived in the $j$th row of $\Ptab$) 
	bumps the letter $k$, and this bumped $k$ must go 
	to the next column (=~the column to the right).
	This column-bumping process continues 
	until the bumped letter becomes strictly greater than $k$; 
	we understand this sequence of column-bumpings as one step.

	In the language of 
	interlacing arrays $\boldsymbol\la$, 
	the shifting means that the moved particle
	$\la^{(k-1)}_{j}$
	\emph{long-range pushes}
	its upper \emph{right} neighbor $\la^{(k)}_{j}$ 
	(see also \S \ref{ssub:insertions_in_terms_of_interlacing_arrays}).\footnote{In principle, this long-range 
	push
	includes the short-range push described in 
	(iii). However,
	we would like to think that the short-range
	interaction is \emph{stronger} than 
	(it takes preference over)
	long-range pushes and pulls.}

	(v$^*$)
		(donation of moves)
		It can happen that the pushed particle 
		$\la^{(k)}_{j}$ is blocked and thus cannot be pushed
		(when $\la^{(k)}_{j}=\la^{(k-1)}_{j-1}$).
		Then this push is donated to 
		$\la^{(k)}_{\nf(j)}$
		using the rule of \S \ref{donation}.
		This situation is similar to 
		the donation of the initial move, see above.

		In terms of a sequence
		of column-bumpings of the letters $k$, 
		this means that some of the bumped
		letters $k$ will be inserted
		not into the row $j$ (because this would violate
		the definition of a semistandard tableau),
		but into other row or rows 
		which are above the $j$th row in the 
		semistandard tableau $\Ptab$.
	
	For the purposes of the inductive continuation
	of moves, the trigger now is the particle 
	$\la^{(k)}_{\nf(j)}$.
\end{enumerate}

Further developments in the present section 
(in particular, see \S \ref{sub:rs_type_fundamental_dynamics_in_the_schur_case_and_deterministic_insertion_algorithms})
show how one can \emph{combine}
inductive bumping and
shifting steps (iv) and (v) in certain ways
to get
many new insertion algorithms with properties
similar to those of the 
row and column insertions of 
\S \ref{sub:row_and_column_insertions}.
We describe a total of $N!$ such insertion algorithms
which can be applied to words 
in the alphabet $\{1,\ldots,N\}$.


\subsection{Nearest neighbor 
RSK-type `dynamics'
as (possibly random) insertion algorithms} 
\label{sub:rs_type_dynamics_and_random_insertions}

In view of insertion algorithms 
(\S\S \ref{sub:row_and_column_insertions}--\ref{sub:from_interlacing_arrays_to_semistandard_tableaux}), 
let us
take a second look at the nearest neighbor 
RSK-type multivariate
`dynamics' 
with Macdonald parameters $q$ and $t$
(see Definitions~\ref{def:nn_dynamics}
and \ref{def:RS_type}; we also follow the conventions of 
Remark~\ref{rmk:algebraic_rlw}). 

Consider the following input of 
random letters from $1$ to $N$:
\begin{align}\label{random_input}
	\parbox{0.85\textwidth}
	{\vspace{4pt}
	Each letter $m$, $1\le m\le N$
	appears independently 
	of other letters
	after exponentially distributed
	time intervals of rate $a_m$.\vspace{4pt}}
\end{align}
Equivalently, one can say that we have $N$ independent
Poisson processes with rates $a_1,\ldots,a_N$,
and a new letter $m$ appears precisely 
when the $m$th Poisson
process makes an increment.

Let $\bq^{(N)}$ be a nearest neighbor RSK-type `dynamics'
on interlacing arrays of depth $N$. Let the current
state of the `dynamics' be 
represented by an interlacing
array $\boldsymbol\la=(\la^{(1)}
\prec \ldots\prec\la^{(N)})$
which corresponds
to a semistandard Young tableau $\Ptab$.
We will freely switch between the
languages of interlacing arrays and semistandard 
tableaux with the help of 
\S \ref{sub:from_interlacing_arrays_to_semistandard_tableaux}.

Due to 
\eqref{w_m_shorthand} and
\eqref{sum_of_w_is_one},
for an RSK-type `dynamics'
we must have for all $k$
(see 
\S \ref{sub:characterization_of_multivariate_dynamics} and in particular
\eqref{spec_bq_N_product_form}
for the notation):
\begin{align}\label{sum_Wk_a_k}
	\sum_{\nu\in\GT_{k}}
	W_k(\la,
	\nu\,|\,\bar\la)=a_k,
	\qquad k=1,\ldots,N,
	\quad \la=\la^{(k)},
	\quad \bar\la=\la^{(k-1)},
\end{align}
which means that an independent
jump at each level $k$ happens
at rate~$a_k$.

This implies that an
instantaneous
transition $\boldsymbol\la\to\boldsymbol\nu$
in the multivariate 
`dynamics' $\bq^{(N)}$ occurs according
to the following three steps:
\begin{enumerate}[{(I)}]
	\item A new random letter $m$ arrives 
	from the random input \eqref{random_input}. 
	This letter must be inserted into 
	the tableau $\Ptab$ (=~array $\boldsymbol\la$).
	\item The new letter $m$
	randomly chooses to be inserted into 
	one of the rows $i=1,\ldots,m$ of $\la^{(m)}$ 
	(equivalently, one of the particles 
	$\la^{(m)}_{i}$
	jumps to the right by one)
	with probabilities
	(cf. \eqref{sum_Wk_a_k})
	\begin{align*}
		\prob(
		\mbox{$m$ is inserted into row $i$})
		=
		a_m^{-1}W_m(\la^{(m)},\nu^{(m)}\,|\,\la^{(m-1)}),
	\end{align*}
	where 
	the Young diagram 
	$\nu^{(m)}$ differs from 
	$\la^{(m)}$
	by adding one box to the $i$th row.
	\item 
	The initial insertion 
	of $m$ into the $i$th row of $\Ptab$
	triggers successive insertion
	steps which ultimately result
	in a change of the shape 
	of $\Ptab$ (i.e., in adding a box to the Young
	diagram $\la^{(N)}$). This is because
	the `dynamics' $\bq^{(N)}$ is RSK-type, and 
	hence moves always propagate to higher rows.

	This insertion process 
	is possibly \emph{random}.
	Namely, if at level $k-1$
	(of the interlacing array $\boldsymbol\la$)
	some particle $\la^{(k-1)}_{j}$
	has moved to the right, then 
	this move leads to one of the following
	consequences at level $k$.
	First, if a \emph{short-range push} 
	is necessary (i.e., if
	$\la^{(k-1)}_{j}=\la^{(k)}_{j}$),
	then it occurs with probability 
	one (\S \ref{sub:from_interlacing_arrays_to_semistandard_tableaux}.(iii)).
	If there is no need for a short-range push,
	then a \emph{bumping} 
	(\S \ref{sub:from_interlacing_arrays_to_semistandard_tableaux}.(iv)) 
	or a \emph{shifting}
	(\S \ref{sub:from_interlacing_arrays_to_semistandard_tableaux}.(v))
	happens with probabilities
	$V_k(\la,\la+\de_{j+1}\,|\,
	\bar\la,\bar\la+\bar\de_j)$
	and $V_k(\la,\la+\de_{\nf(j)}\,|\,
	\bar\la,\bar\la+\bar\de_j)$, 
	respectively (where $\la=\la^{(k)}$,
	and $\bar\la=\la^{(k-1)}$),
	see \S \ref{sub:parametrization_and_linear_equations}
	for notation. The 
	sum of these bumping and shifting
	probabilities is one because 
	our `dynamics' is RSK-type and nearest neighbor.
\end{enumerate}

Thus, we see that any RSK-type nearest neighbor
`dynamics' corresponds to 
a possibly random \emph{insertion algorithm}
(proceeding with bumps and shifts)
which is applied to the 
random input \eqref{random_input}.


\subsection{RSK-type fundamental dynamics in the 
Schur case and deterministic insertion algorithms} 
\label{sub:rs_type_fundamental_dynamics_in_the_schur_case_and_deterministic_insertion_algorithms}

Now assume that we are in the Schur case, i.e.,
we have $q=t$, where $q$ and $t$ are 
the Macdonald parameters. 
Consider the RSK-type fundamental `dynamics'
$\bq^{(N)}_{\RSD[\boldsymbol h]}$
introduced in 
\S \ref{ssub:rs_type_fundamental_dynamics_}.
They are indexed by $N$-tuples of integers
$\boldsymbol h=
(h^{(1)},h^{(2)}\ldots,h^{(N)})$,
where
$1\le h^{(j)}\le j$.
Thus, there are $N!$ such `dynamics'. 

In the Schur case,
the quantities
$T_i(\bar\nu,\la)$ and 
$S_j(\bar\nu,\la)$ 
\eqref{T_i}--\eqref{S_j}
(employed in the definition of the RSK-type
fundamental `dynamics')
clearly become
\begin{align}\label{T_S_Schur}
	T_i(\bar\nu,\la)=
	1_{\bar\nu-\bar\de_i\prec\la}
	1_{\bar\nu-\bar\de_i\prec\bar\nu},
	\qquad
	S_j(\bar\nu,\la)=
	1_{\bar\nu\prec\la+\de_j}
	1_{\la\prec\la+\de_j}.
\end{align}
Here we use the notational conventions
of \S \ref{sub:specialization_of_formulas_from_s_sub:sequential_update_dynamics_in_continuous_time}
(in particular,
$\bar\nu\in\GT_{k-1}$ and $\la\in\GT_{k}$
for some fixed $k$), and we also 
assume that $\bar\nu\prec\la$ in \eqref{T_S_Schur}.
This implies the following:
\begin{proposition}\label{prop:RS_nonnegative}
	In the case $q=t$,
	for every $\boldsymbol h$
	as above,
	the fundamental RSK-type `dynamics' 
	$\bq^{(N)}_{\RSD[\boldsymbol h]}$
	has nonnegative jump rates and probabilities
	of triggered moves (and so it an
	honest Markov process).\footnote{In the 
	general Macdonald (and $q$-Whittaker)
	case this statement fails, see 
	\S\ref{ssub:fundamental_dynamics_}.}
\end{proposition}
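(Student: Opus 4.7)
The plan is to verify nonnegativity of the rates $W_k^{\RSD(h)}$ and the probabilities $V_k^{\RSD(h)}$ separately on each slice $\GT_{(k-1;k)}$, since the generator $\bq^{(N)}_{\RSD[\boldsymbol h]}$ is built multiplicatively from these through \eqref{spec_bq_N_product_form} with $h = h^{(k)}$.

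The first step is to exploit the drastic simplification \eqref{T_S_Schur}. Combined with \eqref{T_S_zero} and the observation right after \eqref{II} that $\bar\nu \prec \la + \de_j$ is equivalent to $\bar\nu - \bar\de_{j-1} \prec \la$ iff $j \in \II(\bar\nu,\la)$, this collapses both $T_{j-1}(\bar\nu,\la)$ and $S_j(\bar\nu,\la)$ to the single indicator $1_{j \in \II(\bar\nu,\la)}$. Thus all ingredients in \eqref{r_j_RSh} become 0 or 1, which is the whole reason nonnegativity will hold in the Schur case.

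Next, I would plug this into the explicit formula \eqref{r_j_RSh}. The sum
\begin{equation*}
S_1+\cdots+S_j - T_1-\cdots-T_{j-1} = \bigl|\II\cap\{1,\dots,j\}\bigr| - \bigl|\II\cap\{2,\dots,j\}\bigr|
\end{equation*}
telescopes to $1_{1\in\II}$, which equals $1$ since the first particle at level $k$ is never blocked. Dividing by $T_j = 1$ (valid because $r_j^{\RSD(h)}$ is only defined for $j+1\in\II$) yields
\begin{equation*}
r_j^{\RSD(h)}(\bar\nu,\la) = 1 - 1_{h\le j} = 1_{h\ge j+1} \in \{0,1\}.
\end{equation*}
Substituting back into \eqref{Vk_RS_h} shows that $V_k^{\RSD(h)}(\la,\la+\de_m\mid\bar\nu-\bar\de_j,\bar\nu)$ is a point mass on either $\nf(j)$ or $j+1$, so it lies in $\{0,1\}$ and in particular in $[0,1]$, for all $(\bar\nu,\la)\in\GT_{(k-1;k)}$.

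For the jump rates $W_k^{\RSD(h)}$, the two cases from \S\ref{ssub:rs_type_fundamental_dynamics_} are handled separately: when $h\in\II(\bar\nu,\la)$, the value is $a_k\cdot 1_{h=m}$ by \eqref{w_j_RSh}, and when $h\notin\II(\bar\nu,\la)$ it is $a_k$ times a sum of indicators; both are $\ge 0$. The remaining cases $\bar\nu\not\prec\la$ are dictated by the short-range pushing rule \eqref{V_k_blocking}, which gives $V_k = 1$, and the corresponding $W_k$ values are irrelevant (Remark~\ref{rmk:W_k_nu_always_prec_la}). There is no real obstacle here: the proof is essentially the telescoping identity above together with bookkeeping, and the statement is almost a tautology once one observes that in the Schur degeneration every quantity entering the construction of the fundamental RSK-type dynamics is an indicator.
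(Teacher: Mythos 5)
Your proof is correct and follows exactly the route the paper takes: the paper's own proof of Proposition \ref{prop:RS_nonnegative} simply says the claim "directly follows from \eqref{T_S_Schur} and the definitions of \S\ref{ssub:rs_type_fundamental_dynamics_} (see also \eqref{r_j_RSh})", and your telescoping computation showing $r_j^{\RSD(h)}=1_{j<h}\in\{0,1\}$ is precisely the detail being left implicit there (it reappears explicitly as \eqref{r_j_RSD_Schur}). No gaps.
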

\begin{proof}
	Directly follows from 
	\eqref{T_S_Schur} and definitions
	of \S \ref{ssub:rs_type_fundamental_dynamics_}
	(see also \eqref{r_j_RSh}).
\end{proof}

\begin{dynamics}[RSK-type 
fundamental dynamics in the Schur case]
\label{dyn:RS_type_Schur}
{\ }\begin{enumerate}[(1)]
		\item (independent jumps)
		Under $\bq^{(N)}_{\RSD[\boldsymbol h]}$,
		at each level $k$
		only one particle, 
		namely, $\la^{(k)}_{h^{(k)}}$, can
		independently try to jump to the right
		(with jump rate $a_k$).

		\item (triggered moves)
		If a particle $\la^{(k-1)}_{j}$
		moves at level 
		$k-1$, then this move 
		propagates to level $k$
		according to 
		the probabilities of triggered moves 
		(defined via \eqref{l_r_V} and \eqref{Vk_RS_h}):
		\begin{align}\label{r_j_RSD_Schur}
			r_j^{\RSD(h^{(k)})}
			(\nu^{(k-1)},\la^{(k)})=1_{j<h^{(k)}},
			\qquad
			j=1,\ldots,k-1.
		\end{align}
		Here $\nu^{(k-1)}$
		differs from $\la^{(k-1)}$
		only by
		$\nu^{(k-1)}_j=\la^{(k-1)}_j+1$.
		Note that the propagation probabilities
		$c_j=r_j+l_j$ \eqref{c_propagation} 
		in any RSK-type dynamics are all 
		equal to one.
	\end{enumerate}

	Finally, $\bq^{(N)}_{\RSD[\boldsymbol h]}$
	possesses our usual move donation mechanism
	of \S \ref{donation}.
\end{dynamics}

We see from \eqref{r_j_RSD_Schur}
and \eqref{Vk_RS_h}
that the probabilities
of triggered moves 
$V_k$ are all equal to~0 or 1. 
This means that 
the insertion algorithm corresponding
to $\bq^{(N)}_{\RSD[\boldsymbol h]}$
(steps (II)--(III) in
\S \ref{sub:rs_type_dynamics_and_random_insertions})
is \emph{deterministic}, so all the randomness
in the dynamics $\bq^{(N)}_{\RSD[\boldsymbol h]}$
consists in the random choice \eqref{random_input} of 
the new arriving letters 
(see \S \ref{sub:rs_type_dynamics_and_random_insertions}.(I)).

\begin{definition}\label{def:h_insertion}
	We will call the insertion
	of a new letter
	corresponding to the dynamics
	$\bq^{(N)}_{\RSD[\boldsymbol h]}$ 
	the \textit{$\boldsymbol h$-insertion}.
	If the new inserted letter is $m$, we will denote 
	this operation~by~$\ins^{\boldsymbol h}_{m}$.
\end{definition}

Let us describe how an 
$\boldsymbol h$-insertion
works.
Assume that one wants to $\boldsymbol h$-insert
a letter $m$
into a semistandard Young tableau 
$\Ptab$ which is represented 
by an interlacing array
$\boldsymbol\la=
(\la^{(1)}\prec \ldots\prec\la^{(N)})$.
The output is a semistandard
Young tableau denoted 
by $\ins^{\boldsymbol h}_m\Ptab=\ins^{\boldsymbol h}_m
\boldsymbol \la$,
and the insertion proceeds according to
the following rules:
{\sl\begin{enumerate}[($\boldsymbol h$-donate)]
	\item[($\boldsymbol h$-start)] 
	The process starts with the
	particle 
	$\la^{(m)}_{h^{(m)}}$ 
	at level $m$
	trying to jump to the right by one.
	\item[($\boldsymbol h$-donate)] 
	If a particle $\la_{j}^{(k)}$
	(for some $k=m,\ldots,N$ and $j=1,\ldots,k$)
	tries to move
	but is blocked (i.e., 
	if $\la_{j}^{(k)}=\la_{j-1}^{(k-1)}$), 
	then it donates the move
	to 
	its first unblocked right neighbor
	according to the rule of \S \ref{donation}.
	\item[($\boldsymbol h$-step)] 
	Suppose a particle $\la_{j}^{(k-1)}$
	(for some $k=m+1,\ldots,N$ and $j=1,\ldots,k$)
	has moved (to the right by one). 
	If $\la^{(k)}_j=\la^{(k-1)}_j$
	or $j<h^{(k)}$, 
	then the upper right neighbor $\la^{(k)}_j$
	of $\la^{(k-1)}_{j}$ tries to move to the right by one.
	Otherwise, 
	the upper left neighbor
	$\la^{(k)}_{j+1}$
	of $\la^{(k-1)}_{j}$
	moves to the right by one
	(note that $\la^{(k)}_{j+1}$
	cannot be blocked).
	When $k=N$, the insertion \textbf{ends}.
\end{enumerate}}

Let us illustrate this by considering an 
$\boldsymbol h$-insertion
for $N=6$ with $\boldsymbol h=(1,2,1,4,4,1)$,
see Fig.~\ref{fig:big_rs}.\begin{figure}[htbp]
	\begin{center}
		\includegraphics[width=150pt]
		{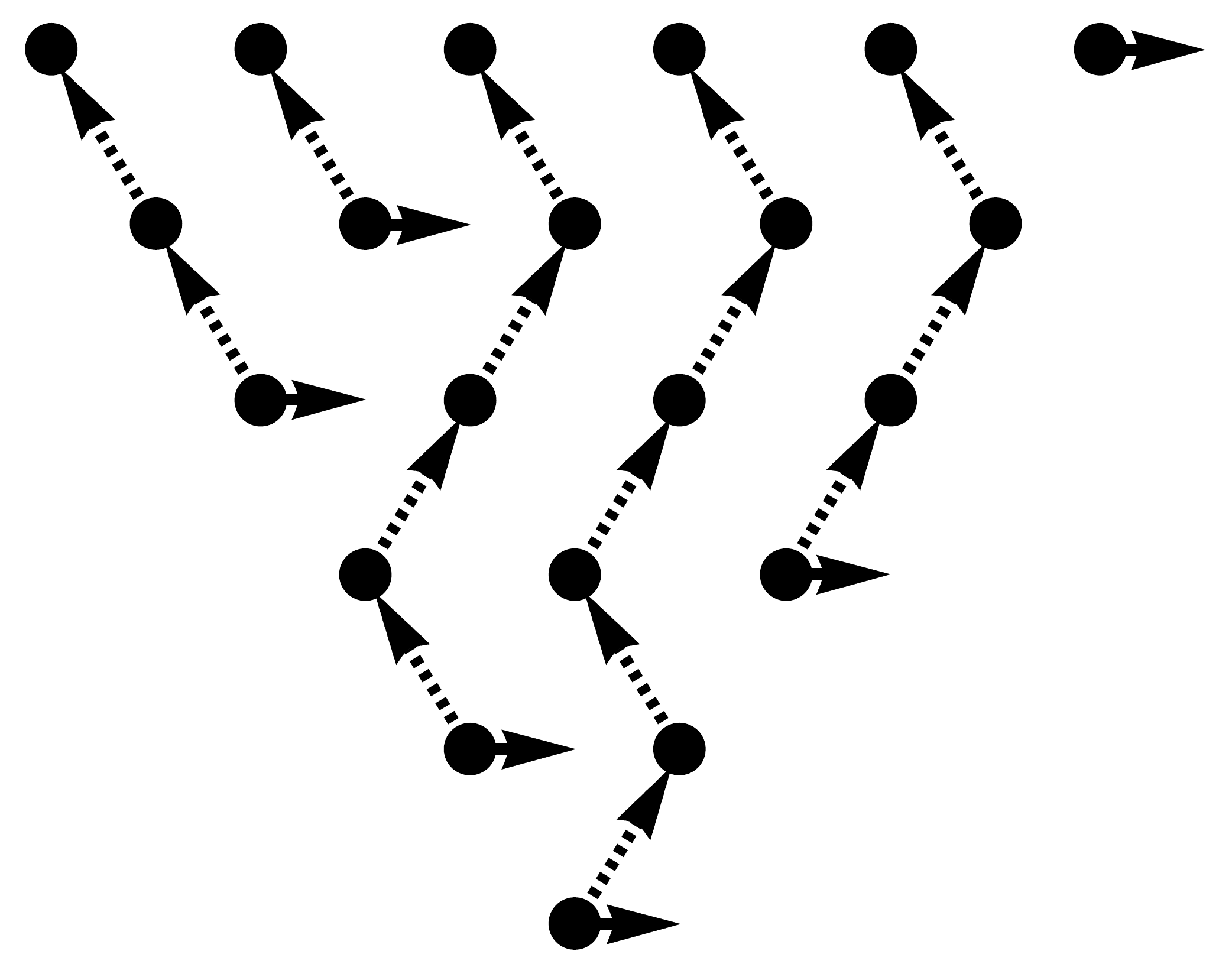}
	\end{center}
  	\caption{
  	Schematic picture of 
  	an $\boldsymbol h$-insertion
  	with $N=6$ and $\boldsymbol h=(1,2,1,4,4,1)$.
	Arrows in front of each particle
	represent nonzero rates of independent jumps
	in $\bq^{(N)}_{\RSD[\boldsymbol h]}$
	where 
	the $\boldsymbol h$-insertion 
	of a new letter starts (arrival of each letter $k$
	starts insertion at level $k$ of the array).
	Dashed arrows 
	pointing to the right mean 
	long-range pushes; 
	when dashed arrows point 
	to the left, this corresponds to
	long-range pulling.
	This picture of jump rates and 
	pushes/pulls is valid
	when the particles 
	are ``apart'';
	otherwise, there would be 
	short-range pushes
	and/or 
	donations of moves (see the rules
	of \S \ref{sub:when_a_jumping_particle_has_to_push_its_immediate_upper_right_neighbor} and
	\S \ref{donation}).}
  	\label{fig:big_rs}
\end{figure}
When particles are ``apart'',
the insertion trajectory
in an interlacing array
follows the dashed arrows
on Fig.~\ref{fig:big_rs}. 
For example, the $\boldsymbol h$-insertion of the letter 2 
into the array
\begin{align}\label{big_rs_lambda}
	\boldsymbol\la=
	\begin{array}{ccccccccccccc}
		0&&0&&1&&4&&4&&5\\
		&0&&1&&3&&4&&4\\
		&&1&&2&&4&&4\\
		&&&1&&3&&4\\
		&&&&1&&3\\
		&&&&&2
	\end{array}
\end{align}
yields
\begin{align}\label{big_rs_I2lambda}
	\ins^{\boldsymbol h}_{2}\boldsymbol\la=
	\begin{array}{ccccccccccccc}
		0&&0&&\mycirc{2}&&4&&4&&5\\
		&0&&1&&\mycirc{4}&&4&&4\\
		&&1&&\mycirc{3}&&4&&4\\
		&&&\mycirc{2}&&3&&4\\
		&&&&\mycirc{2}&&3\\
		&&&&&2
	\end{array}
\end{align}
(circled are the positions of particles that changed).

However, when particles are close to each other,
a short-range push or a donation of move can happen.
After that, the insertion trajectory 
continues along the dashed arrows on Fig.~\ref{fig:big_rs}.
Let us illustrate this effect by inserting another letter 2
into the above array $\ins^{\boldsymbol h}_{2}\boldsymbol\la$
\eqref{big_rs_I2lambda}, where $\boldsymbol\la$
is given in \eqref{big_rs_lambda}. We get
\begin{align}\label{big_rs_I2I2lambda}
	\ins^{\boldsymbol h}_{2}
	\ins^{\boldsymbol h}_{2}
	\boldsymbol\la=
	\begin{array}{ccccccccccccc}
		0&&0&&{2}&&4&&\mycirc{5}&&5\\
		&0&&1&&{4}&&4&&\mycirc{5}\\
		&&1&&{3}&&4&&\mycirc{5}\\
		&&&{2}&&\mycirc{4}&&4\\
		&&&&{2}&&\mycirc{4}\\
		&&&&&2
	\end{array}
\end{align}
Again, we have circled positions
of particles that changed.
Donations of moves happen at the 
second level (during an initial
insertion, cf. \S \ref{sub:from_interlacing_arrays_to_semistandard_tableaux}.(ii$^{*}$)), and also 
during the move propagation
from the third to the fourth level.
In fact, one can say that the
propagation of move from level 4 to level 5
is governed by a short-range push, and this agrees with 
the long-range push (see dashed arrows on Fig.~\ref{fig:big_rs}).

One can readily translate the 
$\boldsymbol h$-insertion algorithm 
into the language of semistandard tableaux
using our ``dictionary'' in 
\S \ref{sub:from_interlacing_arrays_to_semistandard_tableaux}. The example 
\eqref{big_rs_lambda}--\eqref{big_rs_I2I2lambda}
becomes
\begin{align*}
	\Ptab=
	\begin{array}{|c|c|c|c|c|c|c|}
	\hline
	1&1&2&3&6\\
	\hline
	2&3&3&4\\
	\cline{1-4}
	3&4&5&6\\
	\cline{1-4}
	4\\
	\cline{1-1}
	\end{array}
	\quad
	\ins^{\boldsymbol h}_{2}\Ptab
	=
	\begin{array}{|c|c|c|c|c|c|c|}
	\hline
	1&1&2&3&6\\
	\hline
	2&\mathbf{2}&3&4\\
	\cline{1-4}
	3&\mathbf{3}&\mathbf{4}&\mathbf{5}\\
	\cline{1-4}
	4&\mathbf{6}\\
	\cline{1-2}
	\end{array}
	\quad
	\ins^{\boldsymbol h}_{2}
	\ins^{\boldsymbol h}_{2}\Ptab
	=
	\begin{array}{|c|c|c|c|c|c|c|}
	\hline
	1&1&2&\mathbf{2}&\mathbf{4}\\
	\hline
	2&{2}&3&\mathbf{3}&\mathbf{6}\\
	\cline{1-5}
	3&{3}&{4}&{5}\\
	\cline{1-4}
	4&{6}\\
	\cline{1-2}
	\end{array}
\end{align*}
Bold letters indicate differences
before and after each of the two 
$\boldsymbol h$-insertions.

\begin{proposition}\label{prop:row_col}
	The row and column insertions
	(\S \ref{sub:row_and_column_insertions})
	arise as particular cases of 
	$\boldsymbol h$-insertions
	for $\boldsymbol h=(1,\ldots,1)$
	and
	$\boldsymbol h=(1,2,\ldots,N)$,
	respectively (see Fig.~\ref{fig:row_col}).
\end{proposition}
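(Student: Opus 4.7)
The plan is to verify Proposition \ref{prop:row_col} by a direct comparison of the three defining rules of the $\boldsymbol h$-insertion (start, donation, step) with the corresponding rules for row and column insertion given in \S \ref{ssub:insertions_in_terms_of_interlacing_arrays}. Since both the $\boldsymbol h$-insertion and the row/column insertions are already formulated in the language of interlacing arrays, no translation via \S \ref{sub:from_interlacing_arrays_to_semistandard_tableaux} is needed: the comparison is purely a matter of matching conditions on indices.

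First, I would handle $\boldsymbol h = (1,\ldots,1)$, i.e., the claim that $\ins^{\boldsymbol h}_m = \ins^{\mathit{row}}_m$. The start rule of $\boldsymbol h$-insertion initiates the process at $\la^{(m)}_{h^{(m)}} = \la^{(m)}_1$, matching (R-start). For the step rule, the dichotomy in ($\boldsymbol h$-step) reads: push upward (trigger $\la^{(k)}_j$) if $\la^{(k)}_j = \la^{(k-1)}_j$ or $j < h^{(k)} = 1$, and pull upward-left (trigger $\la^{(k)}_{j+1}$) otherwise. Since $j \geq 1$ always, the condition $j<1$ is vacuous, so the rule collapses to exactly the alternative in (R-step). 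Finally, I would note that for row insertion, the particle $\la^{(m)}_1$ never needs donation, and the propagating particles $\la^{(k)}_j$ or $\la^{(k)}_{j+1}$ selected in (R-step) are never blocked by interlacing (the pulled $\la^{(k)}_{j+1}$ satisfies $\la^{(k)}_{j+1} \leq \la^{(k-1)}_j < \la^{(k-1)}_{j-1}$ after the trigger moves; the pushed $\la^{(k)}_j$ is by definition equal to the trigger's old position, hence not blocked above once the trigger moves). Thus ($\boldsymbol h$-donate) is never invoked, and the two processes execute identical moves step by step.

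Next, I would treat $\boldsymbol h = (1,2,\ldots,N)$, i.e., $\ins^{\boldsymbol h}_m = \ins^{\mathit{col}}_m$. The start rule picks $\la^{(m)}_{h^{(m)}} = \la^{(m)}_m$, matching (C-start). For ($\boldsymbol h$-step) with $h^{(k)} = k$ and the trigger at level $k-1$ having index $j \in \{1,\ldots,k-1\}$, the condition $j < h^{(k)} = k$ is always satisfied, so the ``push upper-right neighbor'' branch is always taken — this is exactly the instruction in (C-step). The donation rule ($\boldsymbol h$-donate) is word-for-word the content of (C-donate), both relying on the convention \eqref{nf_next_free_particle}. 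So the two algorithms coincide.

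The verifications above are straightforward and I do not anticipate a genuine obstacle; the only place that deserves a careful sentence or two is the observation that in the row-insertion case the ``pulling'' branch of ($\boldsymbol h$-step) can never produce a blocked target, which is why donation is silently absent in the classical row insertion (R-step) formulation. Writing this out cleanly — together with the remark that the optional short-range push of \S \ref{sub:from_interlacing_arrays_to_semistandard_tableaux}.(iii), built into ($\boldsymbol h$-step) via the first clause $\la^{(k)}_j = \la^{(k-1)}_j$, matches the implicit handling of equal coordinates in (R-step) — completes the proof.
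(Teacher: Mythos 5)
Your proposal is correct and follows exactly the route the paper intends: the paper's proof of Proposition \ref{prop:row_col} is the one-line remark that the claim ``immediately follows from descriptions of insertions in terms of interlacing arrays,'' and your rule-by-rule matching of ($\boldsymbol h$-start)/($\boldsymbol h$-donate)/($\boldsymbol h$-step) against (R-start)/(R-step) and (C-start)/(C-donate)/(C-step) is precisely the verification being left to the reader, with the useful extra observation that donation never fires for $\boldsymbol h=(1,\ldots,1)$. No gaps.
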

\begin{proof}
	Immediately
	follows from descriptions
	of insertions in terms of
	interlacing arrays.
\end{proof}

\begin{figure}[phtb]
	\begin{center}
		\begin{tabular}{|r|r|}
			\hline
			\rule{0pt}{95pt}
			\includegraphics[width=102pt]{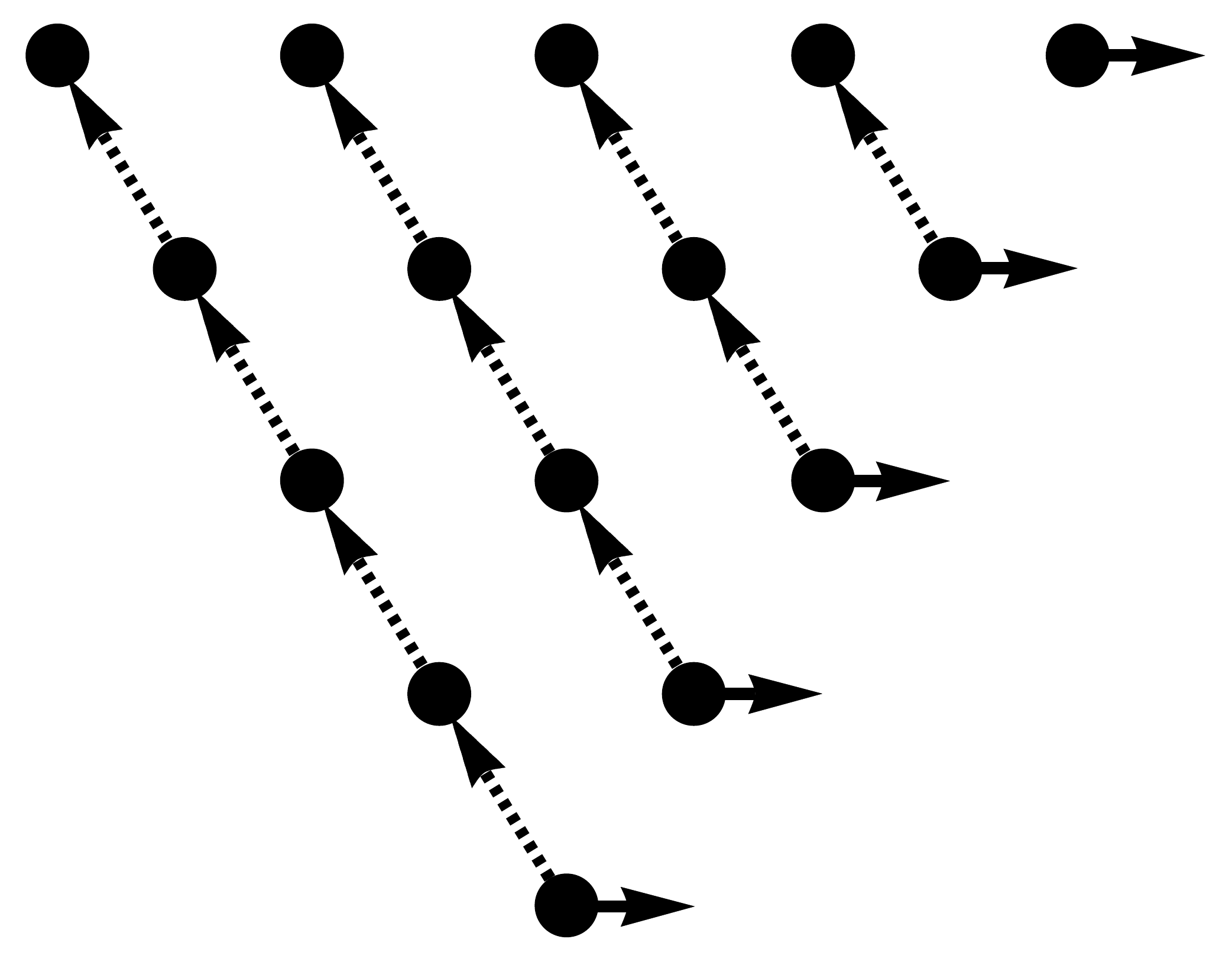}
			&
			\includegraphics[width=94.5pt]{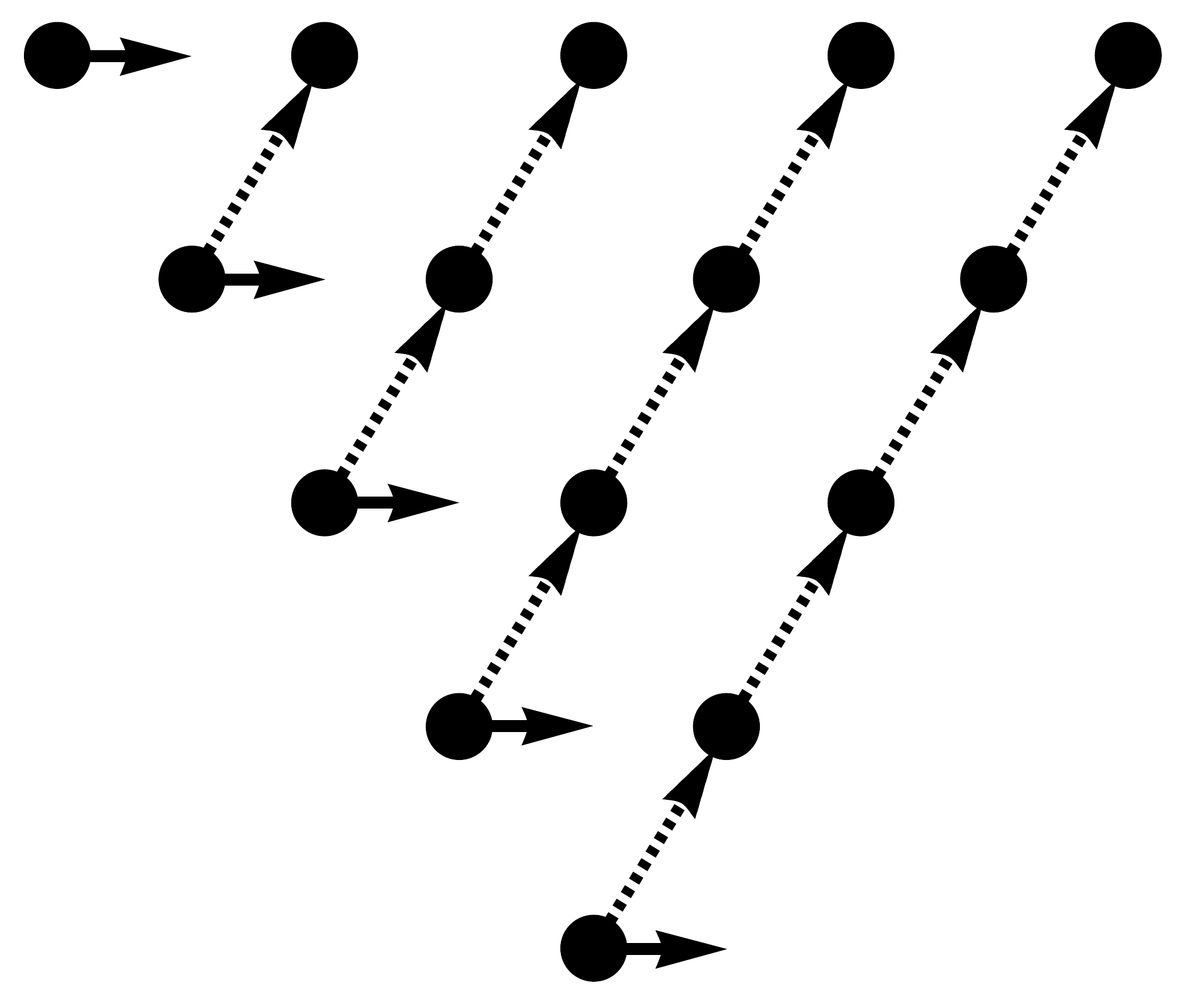}
			\\
			(a)&(b)
			\\
			\hline
		\end{tabular}
	\end{center}
  	\caption{
  	Schematic pictures of 
  	(a) row; (b) column
  	insertion algorithms 
  	and the corresponding multivariate dynamics
  	(see Fig.~\ref{fig:big_rs} for more
  	explaination).}
  	\label{fig:row_col}
\end{figure}

Let us denote 
the
row (resp. column)
insertion Schur-multivariate
dynamics $\bq^{(N)}_{\RSD[\boldsymbol h]}$
with $\boldsymbol h=(1,\ldots,1)$
(resp. $\boldsymbol h=(1,2,\ldots,N)$)
by $\bq^{(N)}_{\mathit{row}}$
(resp. $\bq^{(N)}_{\mathit{col}}$).
These dynamics 
have already appeared in the literature.
Application of Robin\-son--Schensted
insertion algorithms to random words 
can be traced back to \cite{Vershik1986}.
The column insertion dynamics
$\bq^{(N)}_{\mathit{col}}$
was introduced and
studied 
in \cite{OConnell2003Trans}, \cite{OConnell2003}.
Questions related to application of
insertion algorithms (including
the general Robinson--Schensted--Knuth 
correspondence) 
to random input 
were considered in, e.g., 
\cite{baik1999distribution},
\cite{johansson2000shape},
\cite[\S5]{Johansson2005lectures}, and
\cite{ForresterRains2007}.
See 
\cite[\S6.3]{BorodinGorinSPB12}
for a discussion of the row insertion dynamics, 
and also
\cite{COSZ2011} for a ``geometric''
(Whittaker process) analogue (cf.
\S \ref{sub:limit_of_our_formulas_as_qto1_}
below).

\begin{remark}
	A possibly interesting connection is that
	schematic pictures corresponding to
	$\boldsymbol h$-insertions
	(like Fig.~\ref{fig:big_rs} and \ref{fig:row_col})
	also appear in the description of 
	simple vertices of Gelfand--Tsetlin
	polytopes, e.g., see \cite{SmirnovSchubert2012}. 
	We thank Evgeny Smirnov for pointing out 
	this appearance of the same schematic pictures.
\end{remark}


\subsection{Fundamental nearest neighbor dynamics
in the Schur case} 
\label{sub:fundamental_nearest_neighbor_dynamics_in_the_schur_case}

One can completely characterize RSK-type
nearest neighbor Markov dynamics
on interlacing arrays in the Schur ($q=t$)
case:

\begin{proposition}\label{prop:RS_Schur}
	Let $\bq^{(N)}$	be 
	the generator of
	a Schur-multivariate 
	RSK-type 
	nea\-rest-neighbor
	dynamics (Definitions \ref{def:nn_dynamics} and 
	\ref{def:RS_type})
	which has nonnegative jump rates and 
	probabilities of triggered moves.
	Then $\bq^{(N)}$ can be expressed as a convex
	linear combination of 
	generators of
	RSK-type fundamental
	dynamics:
	\begin{align}
		\bq^{(N)}(\boldsymbol\la,
		\boldsymbol\nu)=
		\sum_{\boldsymbol h}
		\co_{\boldsymbol h}(\boldsymbol\la,\boldsymbol\nu)
		\bq^{(N)}_{\RSD[\boldsymbol h]}
		(\boldsymbol\la,\boldsymbol\nu).
		\label{RS_type_decomp}
	\end{align}
	Here
	$\boldsymbol\la,\boldsymbol\nu\in\GT(N)$,
	the sums above are taken over
	$\boldsymbol h=(h^{(1)},\ldots,h^{(N)})$, $1\le 
	h^{(k)}\le k$,
	and the coefficients $\co_{\boldsymbol h}$ have the form
	\begin{align}\label{co_h_product1}
		\co_{\boldsymbol h}(\boldsymbol\la,\boldsymbol\nu)=
		\co^{(2)}_{h^{(2)}}(\nu^{(1)},\la^{(2)})
		\ldots
		\co^{(N)}_{h^{(N)}}(\nu^{(N-1)},\la^{(N)}),
	\end{align}
	where one has (for any $k=2,\ldots,N$)
	\begin{align}\label{co_h_product2}
		\co^{(k)}_{h}
		(\nu^{(k-1)},\la^{(k)})\ge0,\quad
		1\le h\le k,
		\qquad
		\sum_{h=1}^{k}
		\co^{(k)}_{h}
		(\nu^{(k-1)},\la^{(k)})\equiv1.
	\end{align}
\end{proposition}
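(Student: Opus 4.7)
The plan is to perform the decomposition slice-by-slice using Proposition~\ref{prop:const_pp}, and then to assemble these pieces into the global product formula \eqref{co_h_product1} via the sequential product structure \eqref{spec_bq_N_product_form}.

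First, I would fix $k \in \{2, \ldots, N\}$ and a pair $(\bar\nu, \la) \in \GT_{(k-1;k)}$, and denote by $(w_m, c_j, r_j)$ the parameters \eqref{rlw_parameters} describing the restriction of $\bq^{(N)}$ to this slice (so $w_m = a_k^{-1} W_k(\la, \la+\de_m \,|\, \bar\nu)$, and $r_j$ is the pulling probability from \eqref{l_r_V}). Since $\bq^{(N)}$ is RSK-type, the propagation probabilities satisfy $c_j \equiv 1$ for all $j$ with $j+1 \in \II(\bar\nu, \la)$, placing us in the constant-propagation setting of Proposition \ref{prop:const_pp} with $C = 1$. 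That proposition expresses $(w, r)$ uniquely as a linear combination of the fundamental RSK-type solutions (the push-block contribution drops out since $1 - C = 0$), and the explicit formula at the end of its proof yields
\[
  \co^{(k)}_h(\bar\nu, \la) = w_h(\bar\nu, \la) \text{ for } h \in \II(\bar\nu, \la), \qquad \co^{(k)}_h(\bar\nu, \la) = 0 \text{ for } h \notin \II(\bar\nu, \la).
\]
Nonnegativity $\co^{(k)}_h \ge 0$ is immediate from the assumed nonnegativity of the original jump rates, and the identity $\sum_{h=1}^k \co^{(k)}_h(\bar\nu, \la) \equiv 1$ is precisely \eqref{sum_of_w_is_one}.

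Next, I would translate the slice-wise decomposition back into identities for the functions themselves:
\begin{align*}
W_k(\la, \la+\de_m \,|\, \bar\nu) &= \sum_{h=1}^k \co^{(k)}_h(\bar\nu, \la) \, W_k^{\RSD(h)}(\la, \la+\de_m \,|\, \bar\nu), \\
V_k(\la, \la+\de_m \,|\, \bar\la, \bar\nu) &= \sum_{h=1}^k \co^{(k)}_h(\bar\nu, \la) \, V_k^{\RSD(h)}(\la, \la+\de_m \,|\, \bar\la, \bar\nu),
\end{align*}
valid for all $m$ and $\bar\la \prec \la$. These identities extend to the short-range pushing regime $\bar\nu \not\prec \la$ for free, because by claim~1 of Theorem~\ref{thm:general_multivariate} both $V_k$ and every $V_k^{\RSD(h)}$ equal $1$ there, and the identity collapses to $1 = \sum_h \co^{(k)}_h$. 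The essential point to verify here is that $\co^{(k)}_h$ depends only on the slice $(\bar\nu, \la)$ (and not on $\bar\la$), which is visible from the fact that the fundamental solutions $r^{\RSD(h)}_j$ and $w^{\RSD(h)}_m$ of \S\ref{ssub:rs_type_solutions} are functions of $(\bar\nu, \la)$ alone.

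Finally, I would assemble the global decomposition via \eqref{spec_bq_N_product_form}. For any transition $\boldsymbol\la \to \boldsymbol\nu$ with lowest-changing level $k$, substituting the slice-wise identities into the product $W_k \prod_{i=k+1}^N V_i$ produces
\[
  \bq^{(N)}(\boldsymbol\la, \boldsymbol\nu) = \sum_{h^{(k)}, \ldots, h^{(N)}} \prod_{i=k}^N \co^{(i)}_{h^{(i)}}(\nu^{(i-1)}, \la^{(i)}) \cdot \bq^{(N)}_{\RSD[\boldsymbol h]}(\boldsymbol\la, \boldsymbol\nu),
\]
where I used $\la^{(k-1)} = \nu^{(k-1)}$. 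To include the remaining indices $h^{(1)}, \ldots, h^{(k-1)}$ and recover the full product \eqref{co_h_product1} over $\boldsymbol h \in \{1\} \times \{1,2\} \times \cdots \times \{1,\ldots,N\}$, I would multiply by the trivial factors $\sum_{h^{(j)}} \co^{(j)}_{h^{(j)}}(\nu^{(j-1)}, \la^{(j)}) = 1$ for $j = 2, \ldots, k-1$ (using $\nu^{(j-1)} = \la^{(j-1)}$ below level $k$), while $h^{(1)}$ contributes only the single value $1$. Because Proposition~\ref{prop:const_pp} has already handled the algebraic core of the argument, I do not anticipate a serious obstacle; the most delicate step is just the bookkeeping needed to confirm that the slice-wise coefficients really depend only on $(\nu^{(k-1)}, \la^{(k)})$, so that they fit together into the product form \eqref{co_h_product1}.
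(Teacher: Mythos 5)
Your proposal is correct and follows essentially the same route as the paper: fix a slice, observe that the RSK-type condition forces $c_j\equiv 1$, apply Proposition~\ref{prop:const_pp} with $C=1$ to get $\co^{(k)}_h=w_h\ge 0$ summing to one by \eqref{sum_of_w_is_one}, and assemble the slice-wise decompositions into the product form \eqref{co_h_product1}. The additional bookkeeping you supply (the short-range regime $\bar\nu\not\prec\la$ and the padding by trivial factors for levels below $k$) is exactly the content the paper compresses into ``combining these decompositions for all $(\bar\nu,\la)$ and at all slices.''
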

\begin{proof}
	The existence of decomposition
	essentially follows from 
	Theorem \ref{thm:characterization_NN},
	but one has to turn
	the notion of mixing 
	\eqref{mixing_generators}
	into a
	convex combination
	\eqref{RS_type_decomp}.

	Fix $k$ 
	and signatures
	$(\bar\nu,\la)\in\GT_{(k-1;k)}$.
	The
	RSK-type
	dynamics $\bq^{(N)}$
	corresponds to 
	quantities $w_j(\bar\nu,\la)\ge0$
	and $r_j(\bar\nu,\la)\ge0$ 
	(as explained in 
	\S \ref{sub:parametrization_and_linear_equations})
	which satisfy
	system \eqref{rlw_system_of_equations}
	with $c_j(\bar\nu,\la)=1$ for all $j$.
	By Proposition \ref{prop:const_pp} with $C=1$, 
	this solution $(w,r)$
	can be written as a
	linear
	combination (with coefficients
	summing to one) of $k$ solutions 
	corresponding to the RSK-type fundamental dynamics.
	The coefficients
	of this linear combination are equal to $w_j$
	and hence are nonnegative. 
	Combining these decompositions
	for all $(\bar\nu,\la)$ 
	and at all slices $\GT_{(k-1;k)}$, 
	we see that the coefficients
	in \eqref{RS_type_decomp} may be written 
	as \eqref{co_h_product1}--\eqref{co_h_product2}.
	This concludes the proof.
\end{proof}

\begin{remark}\label{rmk:RS_Schur_unique}
	For any coefficients 
	$\co_{\boldsymbol h}(\boldsymbol\la,\boldsymbol\nu)$
	given by \eqref{co_h_product1}
	and satisfying \eqref{co_h_product2},
	the right-hand side of \eqref{RS_type_decomp}
	is an honest multivariate RSK-type dynamics.

	The coefficients 
	$\co^{(k)}_{h}(\bar\nu,\la)$, 
	$k=2,\ldots,N$,
	$1\le h\le k$,
	(which constitute 
	the $\co_{\boldsymbol h}$'s)
	are determined uniquely by $\bq^{(N)}$ 
	if particles of the 
	array
	$\boldsymbol\la$ 
	are ``apart'' from each other.
	Using
	Proposition~\ref{prop:const_pp},
	it is possible to formulate a precise
	uniqueness statement when particles become close.
	We do not pursue this direction in the present paper.
\end{remark}

\begin{figure}[phtb]
	\begin{center}
		\begin{tabular}{|r|r|r|}
			\hline
			\rule{0pt}{90pt}
			\includegraphics[width=95pt]{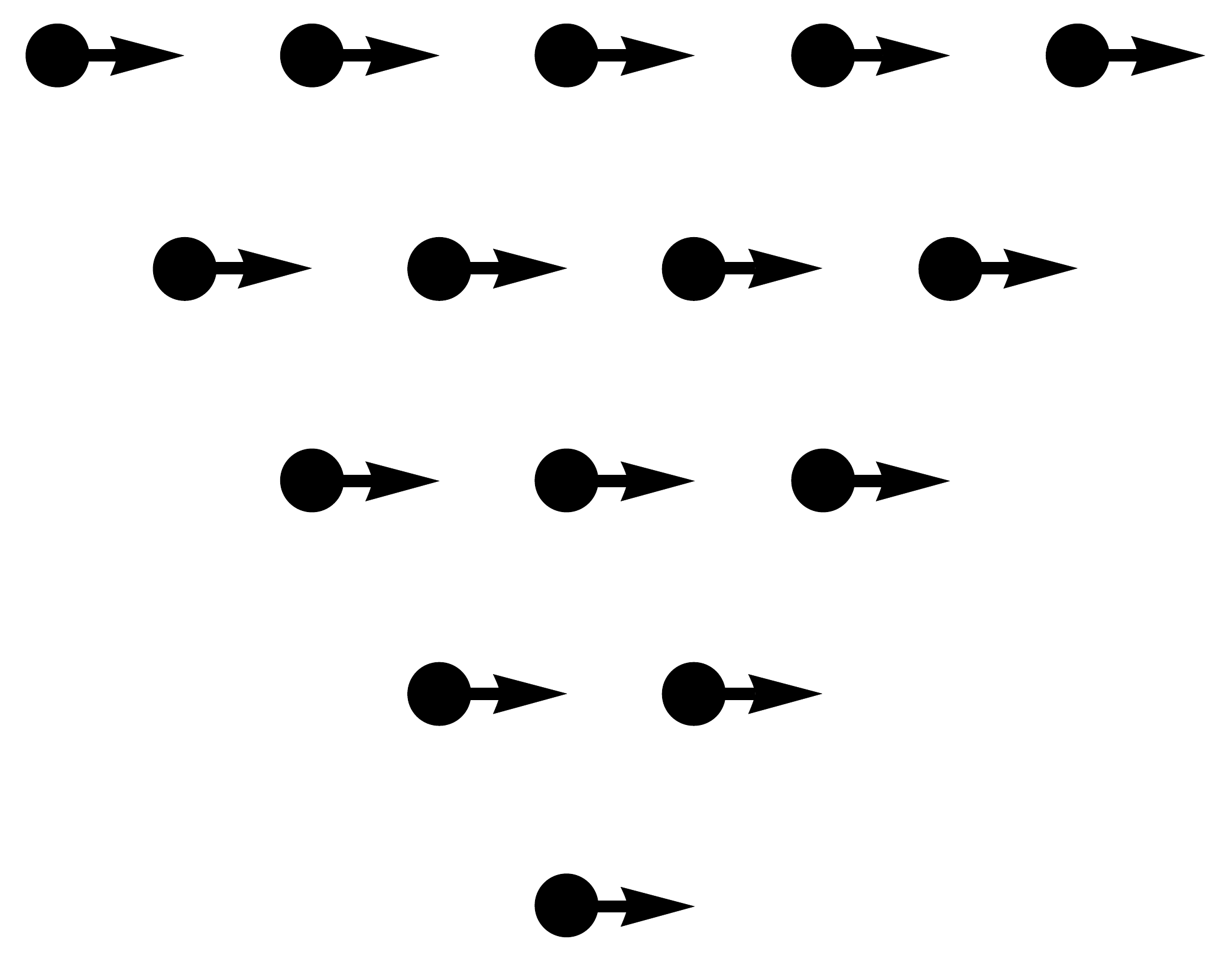}
			&
			\includegraphics[width=95pt]{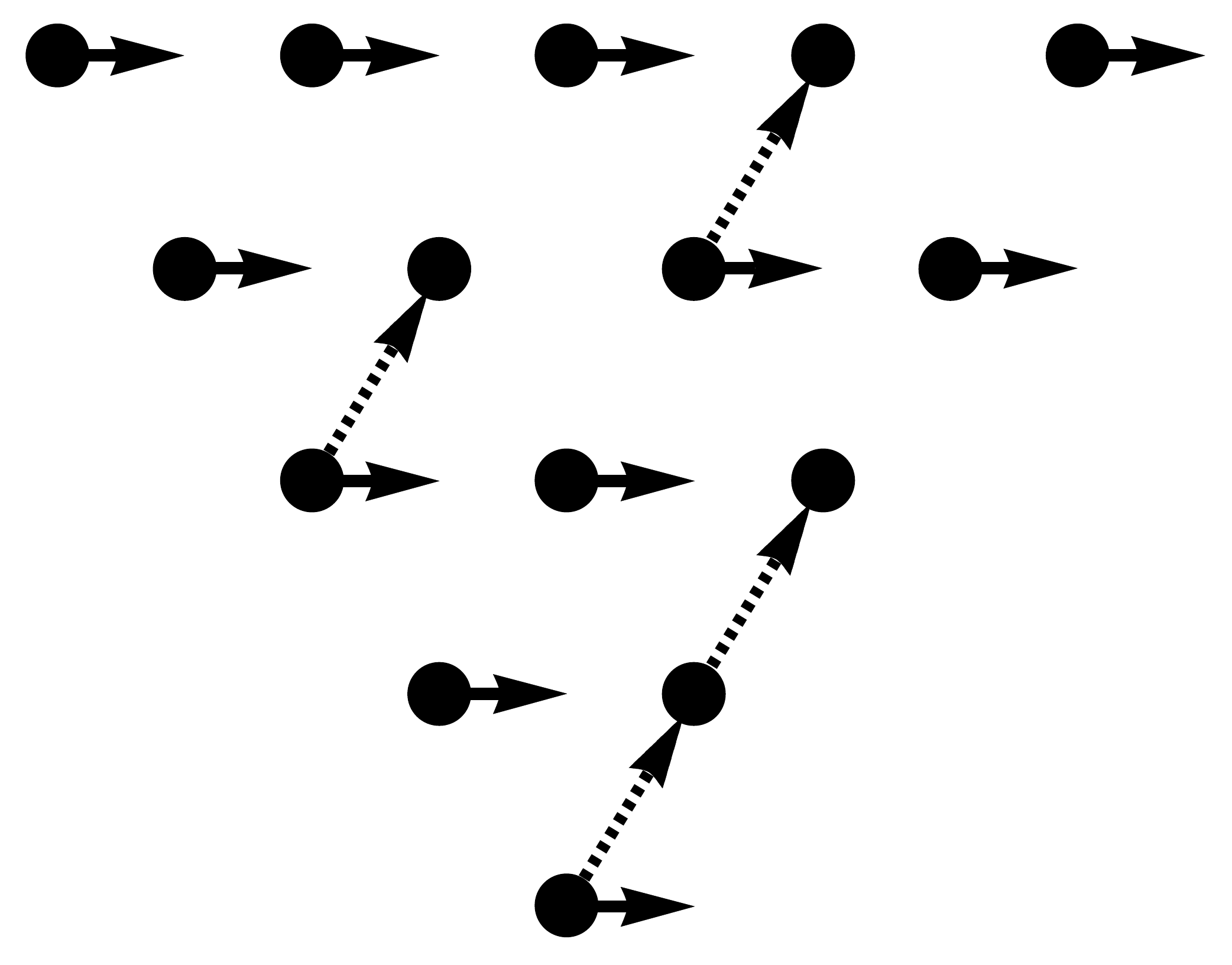}
			&
			\includegraphics[width=95pt]{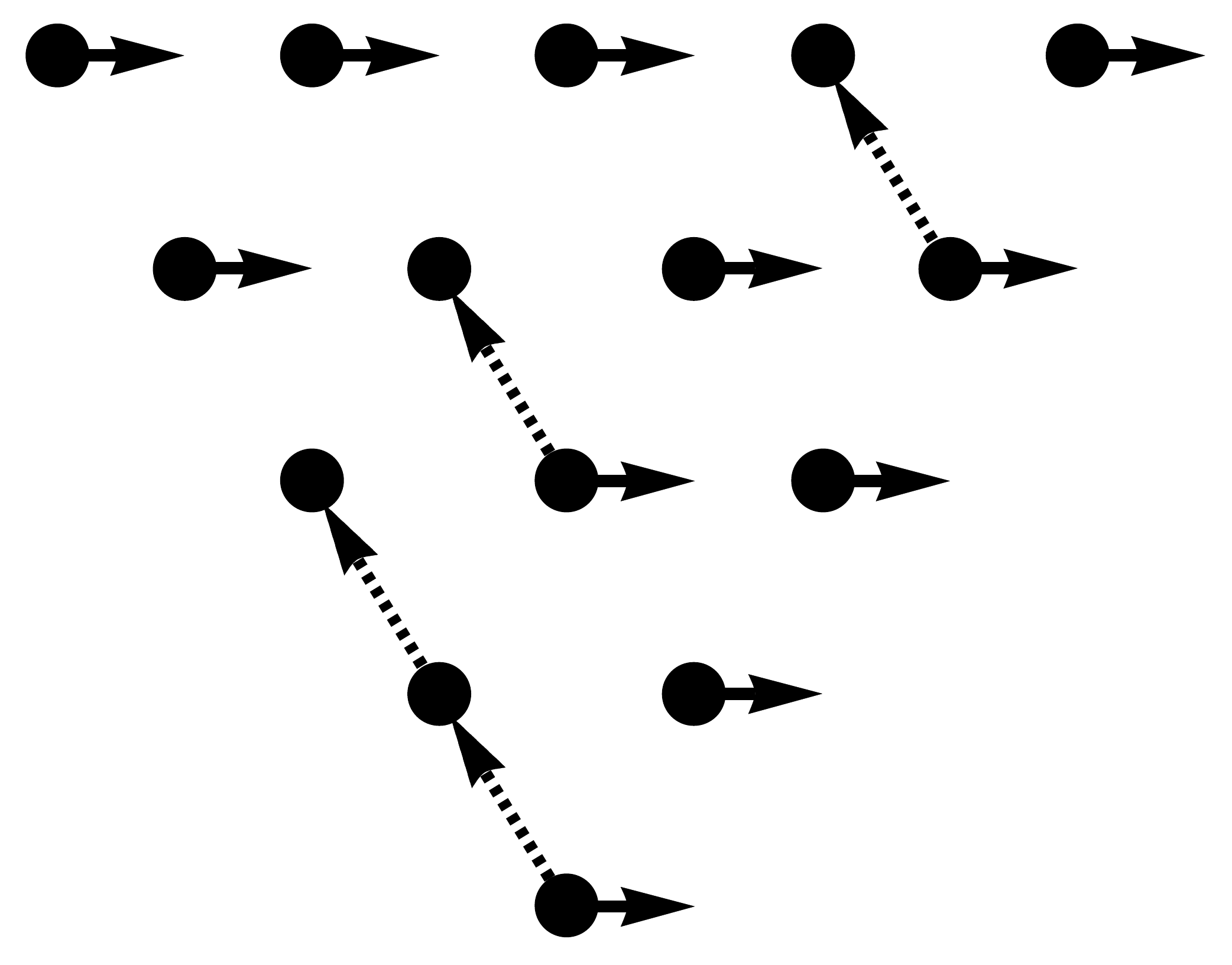}
			\\
			(a)&(b)&(c)
			\\
			\hline
		\end{tabular}
	\end{center}
  	\caption{Schematic pictures of 
  	(a) push-block; 
  	(b) right-pushing 
  	with $\boldsymbol h=(1,1,3,2)$;
  	(c) left-pulling with 
  	$\boldsymbol h=(1,2,2,1)$
  	insertion 
  	algorithms 
  	and the corresponding multivariate dynamics
  	(see Fig.~\ref{fig:big_rs} for more
  	explaination).}
  	\label{fig:pb_left_right}
\end{figure}

Let us now briefly look at all the remaining
(non-RSK-type) fundamental
`dynamics' 
introduced in \S \ref{sub:fundamental_dynamics_}.
At the general Macdonald 
(or $q$-Whittaker) level 
we are not guaranteed that these `dynamics' have 
nonnegative jump rates and probabilities
of triggered moves 
(cf. \S \ref{ssub:fundamental_dynamics_} 
below). 
However, it can be readily checked that
this nonnegativity holds under the
Schur degeneration (i.e.,
one has a statement similar to 
Proposition \ref{prop:RS_nonnegative}). 

The push-block dynamics $\bq^{(N)}_{\PBD}$
in the Schur case was introduced and studied
in \cite{BorFerr2008DF}. 
Under this dynamics, all particles at all levels
jump to the right by one 
if not blocked
(the jump rate at level $k$ is $a_k$),
and the only pushing mechanism
is the short-range one (see 
\S \ref{sub:when_a_jumping_particle_has_to_push_its_immediate_upper_right_neighbor}).

The right-pushing
and left-pulling dynamics
$\bq^{(N)}_{\RD[\boldsymbol h]}$
and 
$\bq^{(N)}_{\LD[\boldsymbol h]}$
indexed by $(N-1)$-tuples
$\boldsymbol h$
\eqref{RL_h_index}
may be viewed as certain 
``minimal perturbations''
of~$\bq^{(N)}_{\PBD}$. 
Under the 
dynamics 
$\bq^{(N)}_{\RD[\boldsymbol h]}$,
on each slice $\GT_{(k-1;k)}$ 
the behavior of all the particles
is the same as under $\bq^{(N)}_{\PBD}$, 
except that the particle
$\la^{(k)}_{\nf(h^{(k-1)})}$ now has zero 
independent
jump rate,
but this is ``compensated'' by
the long-range pushing (with probability one)
of $\la^{(k)}_{\nf(h^{(k-1)})}$ by
$\la^{(k-1)}_{h^{(k-1)}}$.
Typically (when all particles are ``apart''),
one has $\nf(h^{(k-1)})=h^{(k-1)}$ 
(see \eqref{nf_next_free_particle} for a 
general definition).
The left-pulling dynamics 
$\bq^{(N)}_{\LD[\boldsymbol h]}$
can be described
in a similar way, see
also \S \ref{ssub:conclusion}.
See Fig.~\ref{fig:pb_left_right} 
for schematic pictures 
of $\bq^{(N)}_{\PBD}$,
$\bq^{(N)}_{\RD[\boldsymbol h]}$,
and~$\bq^{(N)}_{\LD[\boldsymbol h]}$.


\subsection{$\boldsymbol h$-Robinson--Schensted correspondences} 
\label{sub:_boldsymbol_h_robinson_schensted_correspondences}

This subsection
is devoted to new 
Robinson--Schensted-type
correspondences between words and pairs of
Young tableaux which arise from our
$\boldsymbol h$-insertions introduced in 
\S \ref{sub:rs_type_fundamental_dynamics_in_the_schur_case_and_deterministic_insertion_algorithms}.

Traditionally,
the most general \emph{Robinson--Schensted--Knuth}
(RSK) correspondences associate to
a matrix with nonnegative integer entries
a pair of semistandard Young tableaux
(Definition \ref{def:SSYT})
of the same shape. 

We are working in a less general
setting when this
nonnegative integer 
matrix
is in fact a zero-one
matrix having exactly one ``1''
in each column.
If this matrix has $N$ rows,
then it may be 
equivalently
described as a
word in the alphabet $\{1,\ldots,N\}$: 
each $i$th 
letter of the word corresponds to the 
row number of the entry 1
in the $i$th column of the matrix.
The length of the word is equal to the number
of columns.
A \emph{Robinson--Schensted} 
(RS)
correspondence
associates with such a word a pair
of Young tableaux 
$(\Ptab,\Qtab)$
of the same shape,
where $\Ptab$
is a
\emph{semistandard} tableau 
over the alphabet
$\{1,\ldots,N\}$, and $\Qtab$ is a
\emph{standard} tableau
(\S \ref{sub:semistandard_young_tableaux}).

For more detail on
Robinson--Schensted(--Knuth)
correspondences and 
insertion algorithms
we refer to 
\cite{sagan2001symmetric}, 
\cite{Stanley1999},
\cite{fulton1997young},
and 
\cite{Knuth1973art3}.

\subsubsection{Definition and
invertibility of $\boldsymbol h$-correspondences} 
\label{ssub:definition_of_boldsymbol_h_correspondences}

We assume that an integer $N$
and an $N$-tuple $\boldsymbol h$ 
as in \eqref{RS_h_index}
are fixed.
Let $\boldsymbol w=w_1 \ldots w_n$ be 
a word of length
$n$ in the alphabet $\{1,\ldots,N\}$.
Start with an empty semistandard
Young tableau $\varnothing$, and
$\boldsymbol h$-insert 
(\S \ref{sub:rs_type_fundamental_dynamics_in_the_schur_case_and_deterministic_insertion_algorithms})
into it 
the letters $w_1,\ldots,w_n$
one by one. Thus, we 
get a semistandard Young tableau
\begin{align*}
	\Ptab^{\boldsymbol h}_{\boldsymbol w}
	:=
	\ins^{\boldsymbol h}_{w_n}
	\ldots
	\ins^{\boldsymbol h}_{w_1}
	\varnothing
\end{align*}
having $n$ boxes. 

The insertion of every letter $w_i$
changes the shape of the semistandard
tableau by adding exactly one box. 
Let us \emph{record} this added box 
with the help of another (now standard) 
Young tableau. 
Initially, the recording tableau is also
empty. 
When
each new letter
$w_i$ is $\boldsymbol h$-inserted
into $\ins^{\boldsymbol h}_{w_{i-1}}
\ldots\ins^{\boldsymbol h}_{w_1}\varnothing$,
one adds a box at the same position 
in the recording tableau
$\Qtab^{\boldsymbol h}_{w_1\ldots w_{i-1}}$, and 
puts the letter $i$ into this 
new box of the tableau
$\Qtab^{\boldsymbol h}_{w_1\ldots w_{i}}$.

In this way, to every 
word $\boldsymbol w=w_1\ldots w_n$
over the alphabet 
$\{1,\ldots,N\}$
one associates a pair
$(\Ptab^{\boldsymbol h}_{\boldsymbol w},
\Qtab^{\boldsymbol h}_{\boldsymbol w})$
of Young tableaux of the same shape, 
where $\Ptab^{\boldsymbol h}_{\boldsymbol w}$ is a semistandard,
and $\Qtab^{\boldsymbol h}_{\boldsymbol w}$
is a standard tableau.

\begin{theorem}\label{thm:h_RS_correspondence}
	For each $\boldsymbol h$, 
	we get a bijection
	\begin{align}\label{h_RS_correspondence}
		\boldsymbol w
		=w_1 \ldots w_n
		\xrightarrow{\text{\rm{}\sf{}$\boldsymbol h$-RS}}
		(\Ptab^{\boldsymbol h}_{\boldsymbol w},
		\Qtab^{\boldsymbol h}_{\boldsymbol w})
	\end{align}
	between the set of words of length $n$
	in the alphabet $\{1,\ldots,N\}$
	and the set of pairs of Young tableaux
	$(\Ptab,\Qtab)$
	having the same shape (with $n$ boxes),
	where $\Ptab$ is semistandard
	and $\Qtab$ is standard.
\end{theorem}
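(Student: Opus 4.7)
The plan is to establish the bijection in Theorem~\ref{thm:h_RS_correspondence} by induction on the word length $n$, reducing the problem to the following invertibility lemma: given the output semistandard tableau $\Ptab' = \ins^{\boldsymbol h}_m \Ptab$ together with the position of the box newly added at level $N$, the pair $(\Ptab, m)$ is uniquely recovered. Granting this lemma, the inverse of the $\boldsymbol h$-RS map is constructed as follows: for a pair $(\Ptab,\Qtab)$ with $n$ boxes, locate the box labeled $n$ in $\Qtab$, apply the inverse insertion to the pair (tableau $\Ptab$, this marked box) to recover $(\Ptab^{(n-1)}, w_n)$, remove the labeled box from $\Qtab$, and iterate down to the empty word.

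I will prove the invertibility lemma by describing an explicit inverse insertion procedure that traces through the interlacing array from the top level $N$ downward. Starting from the particle $\la^{(N)}_{j_N}$ specified by the marked corner box and the post-insertion array $\boldsymbol\nu$, at each level $k$ (decreasing from $N$) the trigger index $j_{k-1}$ of the moved particle at level $k-1$ is recovered from $(\boldsymbol\nu, j_k, h^{(k)})$ via a trichotomy extracted from Dynamics~\ref{dyn:RS_type_fund} under the Schur specialization \eqref{T_S_Schur}--\eqref{r_j_RSD_Schur}. A short-range push is detected by the equality $\nu^{(k-1)}_{j_k} = \nu^{(k)}_{j_k}$ in the post-state and gives $j_{k-1} = j_k$. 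Otherwise, if $j_k > h^{(k)}$ and no donation chain reaches $j_k$ from a smaller index, the move came from a left pull and $j_{k-1} = j_k - 1$. In the remaining case (right push with possibly trivial donation), the trigger $j_{k-1} = i$ is identified as the unique index $i \le j_k$ with $i \le h^{(k)}$ characterized by the post-state pattern $\nu^{(k)}_l = \nu^{(k-1)}_{l-1}$ for $l \in \{i,\, i+2,\, \ldots,\, j_k-1\}$ together with the telltale gap $\nu^{(k)}_{i+1} = \nu^{(k-1)}_i - 1$ (or simply $i = j_k$ when no donation occurred). In each case, the pre-state array at level $k-1$ is obtained from $\boldsymbol\nu$ by decrementing $\nu^{(k-1)}_{j_{k-1}}$ by one; the procedure terminates at the bottommost level $m$ where the trigger represents an independent jump, and this $m$ is output as the inserted letter.

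The main obstacle is verifying that the above trichotomy is exhaustive and mutually exclusive given only the data $(\boldsymbol\nu, j_k, h^{(k)})$, and that the index $i$ in the right-push case is well-defined. This amounts to a careful case analysis translating the forward $\boldsymbol h$-insertion rules into post-state conditions, using that interlacing is satisfied throughout and that only one particle per level moves during any single insertion; particular attention is required near the boundaries $j_k \in \{1, k\}$ and when $h^{(k)}$ sits at either extreme of $\{1,\ldots,k\}$. Once this verification is complete, the two directional identities $\ins^{\boldsymbol h,-1} \circ \ins^{\boldsymbol h}_m = \mathrm{id}$ and $\ins^{\boldsymbol h}_m \circ \ins^{\boldsymbol h,-1} = \mathrm{id}$ (suitably interpreted as identities on pairs of tableaux with a marked corner) are immediate from the deterministic nature of both directions, yielding Theorem~\ref{thm:h_RS_correspondence}. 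As a consistency check, surjectivity is reconfirmed via the classical cardinality identity $N^n = \sum_{\lambda \vdash n,\, \ell(\lambda) \le N} \dim\lambda \cdot \Dim_N\lambda$, which shows that injectivity of the $\boldsymbol h$-RS map already implies bijectivity.
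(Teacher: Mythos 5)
Your overall architecture is the same as the paper's: peel off the box labeled $n$ in $\Qtab$, trace the insertion trajectory through the interlacing array from level $N$ downward, identify at each slice the unique particle at level $k-1$ whose move triggered the observed move at level $k$, and terminate when the observed move must have been an independent jump (which yields the letter). However, the heart of the proof is precisely the step you defer as ``the main obstacle'' --- showing that the trigger is uniquely determined by the post-state and $h^{(k)}$ --- and the sketch you give of it is both incomplete and partly wrong. Concretely: (i) your trichotomy has no criterion for detecting that the move at level $k$ was an \emph{independent jump} (the termination case); you cannot fold this into ``the remaining case is a right push,'' since for $j_k\le h^{(k)}$ both a donated independent jump (target $\nf(h^{(k)})$) and a long-range push (target $\nf(i)$ for some $i<h^{(k)}$) are a priori possible, and distinguishing them is exactly what needs proof. (ii) Your description of the push/donation case has the index direction reversed: since $\nf(i)\le i$, the pushing particle $\nu^{(k-1)}_i$ has index $i\ge j_k$, not $i\le j_k$, and donation chains run from larger to smaller indices; the condition for pushing is $i<h^{(k)}$, not $i\le h^{(k)}$. (iii) The ``unique index $i$ characterized by the post-state pattern'' is not pinned down correctly; the correct statement is that the only admissible pusher of $\la_{j_k}$ is $i=\nf^{-1}(j_k)$, because any other $i'$ with $\nf(i')=j_k$ has $i'+1\notin\II$ and hence could not itself have just moved.

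The paper resolves all of this at once with one observation you are missing: write the single equation of the linear system \eqref{rlw_system_of_equations} corresponding to the observed move $\la\to\la+\de_{j_k}$ at level $k$; in the Schur case \eqref{T_S_Schur} it collapses to $w_{j_k}+(1-r_{j_k-1})+r_{\nf^{-1}(j_k)}=1$, and by \eqref{r_j_RSD_Schur} and the definition of the fundamental RSK-type dynamics exactly one summand equals $1$ and the other two vanish, with the nonzero one determined by $\boldsymbol h$ and the post-state. This simultaneously gives exhaustiveness, mutual exclusivity, the identification of the unique trigger, and the termination criterion ($w_{j_k}=1$, i.e.\ $\nf(h^{(k)})=j_k$). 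Without this (or an equivalent correct case analysis), your inverse procedure is not well defined, so the proposal as written does not yet prove the theorem.
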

We will call
this bijection the 
\textit{$\boldsymbol h$-Robinson--Schensted
correspondence}, and denote it
by \textsf{$\boldsymbol h$-RS}.
\begin{proof}
	It suffices to observe that 
	the correspondence in \eqref{h_RS_correspondence}
	admits an inverse. 
	One can construct such an inverse letter by letter.

	Let us argue in
	terms of the interlacing arrays.
	The recording tableau $\Qtab$ 
	provides information
	where the insertion
	trajectory
	in the array  
	(like the one 
	in \eqref{big_rs_I2lambda}
	or \eqref{big_rs_I2I2lambda}) 
	ends at level $N$.
	Indeed, what happens at level $N$
	is responsible for the change of shape 
	of the tableaux $(\Ptab,\Qtab)$.

	It suffices then to reconstruct the 
	insertion trajectory in the interlacing 
	array~$\boldsymbol\nu$,
	and thus produce (in a unique way) 
	the previous
	array $\boldsymbol\la$ and
	the letter $m$ such that 
	$\boldsymbol\nu=
	\ins^{\boldsymbol h}_m\boldsymbol\la$.
	One can do this consecutively on each slice, 
	from the 
	top level $N$ down to level~1. 
	
	So now we can assume 
	that on some slice $\GT_{(k-1;k)}$,
	$k=2,\ldots,N$,
	the particle $\nu^{(k)}_{j}$
	moved to the right. 
	Set $\la^{(k)}_{j}:=\nu^{(k)}_{j}-1$.
	If $\la^{(k)}_{j}<\nu^{(k-1)}_{j}$,
	then the short-range pushing
	(\S \ref{sub:when_a_jumping_particle_has_to_push_its_immediate_upper_right_neighbor})
	took place, and
	$\nu^{(k-1)}_{j}$ 
	is the particle that moved at level $k-1$.
	Proceed to the slice $\GT_{(k-2;k-1)}$.

	Otherwise, set $\nu=\nu^{(k)}$,
	$\la=\nu-\de_j$, and $\bar\nu=\nu^{(k-1)}$
	(i.e., we use the notational
	conventions of \S \ref{sub:specialization_of_formulas_from_s_sub:sequential_update_dynamics_in_continuous_time}).
	We have $\bar\nu\prec\la$.
	Let us write the equation 
	of the linear system
	\eqref{rlw_system_of_equations}
	that corresponds to the move $\la\to\nu=\la+\de_j$.
	It has the form ($c_{j-1}=1$ because this
	is an RSK-type dynamics)
	\begin{align}\label{h_RS_correspondence_proof1}
		S_j=w_j+(1-r_{j-1})T_{j-1}+
		r_{\nf^{-1}(j)}T_{\nf^{-1}(j)},
	\end{align}
	where $\nf^{-1}(j)$ is defined in the end of 
	\S \ref{ssub:right_pushing_fundamental_solutions}.
	If $j=1$ or $j=k$, then the above equation
	will of course contain fewer terms, cf. 
	\eqref{rlw_system_of_equations}.
	All quantities in \eqref{h_RS_correspondence_proof1}
	depend on $(\bar\nu,\la)$. Using \eqref{T_S_Schur}
	and the fact that in our situation
	$S_j\ne0$,
	we can rewrite \eqref{h_RS_correspondence_proof1} as
	\begin{align}\label{h_RS_correspondence_proof2}
		w_j+(1-r_{j-1})+
		r_{\nf^{-1}(j)}=1.
	\end{align}
	By the properties of 
	the fundamental RSK-type dynamics
	(in particular, by \eqref{r_j_RSD_Schur}),
	we conclude that exactly one
	of the summands 
	$w_j$, $1-r_{j-1}$, and $r_{\nf^{-1}(j)}$
	in \eqref{h_RS_correspondence_proof2}
	is one, and the other two are zero.
	Which of the summands is nonzero
	depends only on 
	our sequence $\boldsymbol h$.

	Using this information, we can reconstruct
	the insertion trajectory
	as follows (cf. \S \ref{sub:rs_type_dynamics_and_random_insertions}):
	\begin{enumerate}[1.]
		\item If $w_j=1$, then 
		the particle
		$\nu_{j}^{(k)}$ 
		performed an 
		independent jump. Thus, 
		the letter that 
		was inserted is $m=k$, and 
		the insertion trajectory 
		\textbf{ends} at level~$k$.
		\item If $1-r_{j-1}=1$, then 
		$\nu_{j}^{(k)}$ was pulled 
		by the particle $\nu_{j-1}^{(k-1)}$
		that moved at level $k-1$.
		Proceed to the lower slice $\GT_{(k-2;k-1)}$.
		\item If $r_{\nf^{-1}(j)}=1$, then 
		$\nu_{j}^{(k)}$ was long-range pushed
		(with a possible move donation)
		by the particle $\nu_{\nf^{-1}(j)}^{(k-1)}$
		that moved at level $k-1$.
		With this 
		information, we 
		proceed to the lower slice $\GT_{(k-2;k-1)}$.
	\end{enumerate}
	Thus, one can reconstruct
	the insertion trajectory
	level by level
	in a unique way.
	Continuing 
	this procedure for each letter, 
	it is possible 
	to reconstruct the 
	word $\boldsymbol w$
	from the tableaux 
	$(\Ptab^{\boldsymbol h}_{\boldsymbol w},
	\Qtab^{\boldsymbol h}_{\boldsymbol w})$.
	Of course, this reconstruction 
	depends on $\boldsymbol h$.
	This concludes the proof.
\end{proof}

Thus, for large enough $n$, 
we get $N!$ \emph{different} bijections 
(indexed by $\boldsymbol h\in\{1\}\times\{1,2\}
\times \ldots\times\{1,\ldots,N\}$)
between words of length
$n$ in the alphabet 
$\{1,\ldots,N\}$,
and pairs of semistandard
Young tableaux. 
Two of these bijections are well-known, these
are the row and column Robinson--Schensted
correspondences 
arising for $\boldsymbol h=(1,\ldots,1)$
and $\boldsymbol h=(1,2,\ldots,N)$, 
respectively (cf. Proposition \ref{prop:row_col}).
For future convenience let us denote these
two distinguished correspondences
by {\rm{}\sf{}row-RS} and {\rm{}\sf{}col-RS}.

All our $\boldsymbol h$-Robinson--Schensted 
correspondences 
fall under the general formalism of Fomin
\cite{fomin1995schur}. 
They seem to form a relatively tractable subclass 
in the variety of general bijections constructed
in \cite{fomin1995schur}.


\subsubsection{Action of {\sf{}$\boldsymbol h$-RS} on permutation words} 
\label{ssub:action_of_boldsymbol_h_rs_on_permutation_words}

Let us now present a few \emph{experimental} observations 
about our $N!$ 
correspondences \textsf{$\boldsymbol h$-RS}.
For simplicity, we reduce the set of allowed
words $\boldsymbol w$
to
\emph{permutation words} 
$w_1 \ldots w_N$
of length $N$
in which each letter $1,\ldots,N$ appears only once.
These words may be identified with permutations
of $\{1,\ldots,N\}$
(the image of $i$ is $w_i$) and, equivalently, 
with permutation
matrices (cf. the beginning of 
\S \ref{sub:_boldsymbol_h_robinson_schensted_correspondences}).
Denote by 
$\mathfrak{W}_{N}$
the set of all permutation
words of length $N$.
Under each of the \textsf{$\boldsymbol h$-RS},
such words correspond to 
pairs of \emph{standard}
Young tableaux of the same shape.

\medskip

\noindent\textbf{1.} Our first observation is triggered by the 
connection of the two distinguished
Robinson--Schensted
correspondences 
{\rm{}\sf{}row-RS} and {\rm{}\sf{}col-RS}
with
increasing and decreasing
subsequences in the word $\boldsymbol w$, 
e.g.,
see \cite[Ch. III]{sagan2001symmetric}.
In fact, this connection implies:
\begin{proposition}[\cite{Schensted1961}]
	Take a word $\boldsymbol w=w_1 \ldots w_N$, and 
	let
	$\boldsymbol w'=w_N \ldots w_1$ be the same word in
	reverse order. Then the application
	of {\rm{}\sf{}row-RS} to $\boldsymbol w$
	gives the same pair of standard tableaux as
	the application
	of {\rm{}\sf{}col-RS} to $\boldsymbol w'$.
\end{proposition}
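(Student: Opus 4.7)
By Proposition \ref{prop:row_col}, row-RS on $\boldsymbol w$ is the successive application of $\boldsymbol h$-insertion with $\boldsymbol h = (1,\ldots,1)$ to $w_1, \ldots, w_N$, while col-RS on $\boldsymbol w'$ is the successive application of $\boldsymbol h$-insertion with $\boldsymbol h = (1,2,\ldots,N)$ to the reversed sequence $w_N, \ldots, w_1$. My plan is to show that these two sequences of operations on the empty interlacing array produce the same final state, and hence the same pair of tableaux (up to the natural identification of recording tableaux). The argument has two ingredients: a plactic-monoid argument for the $P$-tableau and an inductive trajectory-matching argument for the $Q$-tableau.

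For the $P$-tableau equality, I would invoke the classical characterizations: $P_{\text{row}}(\boldsymbol w)$ is the unique semistandard tableau whose row reading word is Knuth-equivalent to $\boldsymbol w$, and $P_{\text{col}}(\boldsymbol w')$ is the unique semistandard tableau whose column reading word is Knuth-equivalent to $\boldsymbol w'$. Since the row and column reading words of any fixed tableau are related by plactic relations that precisely mirror the effect of word reversal, the two characterizations pick out the same tableau. An alternative route goes through Greene's theorem: the shape of the common tableau is determined by lengths of longest monotone subsequences, statistics that behave predictably under simultaneously reversing the word and interchanging ``longest increasing'' with ``longest decreasing''.

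For the $Q$-tableau equality, I would proceed by induction on $N$, explicitly matching insertion trajectories on the interlacing array. The key claim to establish is that inserting $w_i$ at step $i$ in row-RS adds a box at the same position of the shape as inserting $w_i$ at step $N+1-i$ in col-RS on $\boldsymbol w'$, under the appropriate relabeling between the two processes. This requires careful use of the explicit rules of Dynamics \ref{dyn:RS_type_Schur} for the specific values of $\boldsymbol h$. The main obstacle lies here: the two trajectories traverse the interlacing array in genuinely different patterns (pulls and bumping paths for $\boldsymbol h = (1,\ldots,1)$ versus pushes with donation for $\boldsymbol h = (1,2,\ldots,N)$), and matching the box positions requires a delicate simultaneous analysis of how the partial tableaux on each side grow through the sequence. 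I expect this matching of intermediate shapes, rather than the final-tableau comparison, to be the technically most involved part of the proof.
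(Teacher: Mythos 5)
The paper offers no proof of this proposition — it is quoted from Schensted's 1961 paper — so there is no in-text argument to compare against; reviewing your proposal on its own terms, the $P$-tableau half has the right strategy but a wrong key fact, and the $Q$-tableau half aims at a claim that is actually false. On the $P$-tableau: the plactic route is correct, but word reversal is \emph{not} mirrored by plactic relations ($\boldsymbol w$ and $\boldsymbol w'$ are not Knuth-equivalent; for permutation words $P^{\mathit{row}}_{\boldsymbol w'}$ is the \emph{transpose} of $P^{\mathit{row}}_{\boldsymbol w}$), so your "the two characterizations pick out the same tableau" does not follow as written. The fact you need is that column insertion \emph{prepends} to the plactic word: the reading word of $\ins^{\mathit{col}}_{m}\Ptab$ is Knuth-equivalent to $m$ followed by the reading word of $\Ptab$. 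Hence the reading word of $\Ptab^{\mathit{col}}_{\boldsymbol w'}$ is Knuth-equivalent to $(\boldsymbol w')^{r}=\boldsymbol w$, i.e.\ to the same class as the reading word of $\Ptab^{\mathit{row}}_{\boldsymbol w}$, and uniqueness of the tableau word in each Knuth class gives $\Ptab^{\mathit{col}}_{\boldsymbol w'}=\Ptab^{\mathit{row}}_{\boldsymbol w}$. This is precisely why the reversal is there.

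On the $Q$-tableau: the inductive trajectory/box-matching you propose cannot succeed, because the intermediate shapes of the two processes genuinely differ; only the final insertion tableaux agree. Take $\boldsymbol w=132$, so $\boldsymbol w'=231$. Row-inserting $1,3,2$ adds boxes at positions $(1,1),(1,2),(2,1)$ in that order, so $\Qtab^{\mathit{row}}_{\boldsymbol w}=\begin{smallmatrix}1&2\\3&\end{smallmatrix}$; column-inserting $2,3,1$ adds boxes at $(1,1),(2,1),(1,2)$, so $\Qtab^{\mathit{col}}_{\boldsymbol w'}=\begin{smallmatrix}1&3\\2&\end{smallmatrix}$. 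The two recording tableaux are in general related by the Sch\"utzenberger evacuation involution, not by equality, so the proposition's "same pair" must be read as asserting equality of the insertion tableaux only — which is what Schensted proved and what the surrounding discussion of longest increasing and decreasing subsequences actually uses. You should prove the $P$-statement as above and abandon the box-matching program for $Q$: the "delicate simultaneous analysis" you anticipate is not merely hard, it is chasing a false identity.
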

Moreover \cite{Schensted1961}, the length of the
first row 
of the tableau $\Ptab^{\mathit{row}}_{\boldsymbol w}$
is equal to the length of the longest increasing 
subsequence in $\boldsymbol w$, while
the
first row 
of $\Ptab^{\mathit{col}}_{\boldsymbol w}$
is equal to the 
length of the longest \emph{decreasing} 
(=~increasing in another linear order 
$1\succ 2\succ\ldots\succ N$) 
subsequence
of $\boldsymbol w$. Thus,
one can think that {\rm{}\sf{}col-RS}
is somehow related to {\rm{}\sf{}row-RS}
modulo 
another linear order on the letters:
$1\succ 2\succ\ldots\succ N$ instead of $1<2<\ldots<N$.

However, all our $N!$ correspondences 
\textsf{$\boldsymbol h$-RS}
\emph{do not} admit an interpretation 
in terms of 
different linear orders 
on $\{1,\ldots,N\}$, as one could suspect.
One can see this by taking $N=4$ and 
considering the word $\boldsymbol w=3241$.
All $24=4!$ 
correspondences 
\textsf{$\boldsymbol h$-RS} 
applied
to this 
word produce Young tableaux 
$(\Ptab^{\boldsymbol h}_{\boldsymbol w},
\Qtab^{\boldsymbol h}_{\boldsymbol w})$
whose first row has length at most 3.
On the other hand, the length of the 
longest increasing
subsequence in $\boldsymbol w=3241$
in the linear order $3\prec 2\prec 4\prec 1$ 
is of course equal to~4.

\medskip

\noindent\textbf{2.} For each 
$\boldsymbol h\in
\{1\}\times\{1,2\}
\times \ldots\times\{1,\ldots,N\}$,
let us consider the composition of 
\textsf{$\boldsymbol h$-RS}
with the inverse of 
the usual row insertion
correspondence \textsf{row-RS}. 
Thus, for each $\boldsymbol h$ we get a 
bijection of $\mathfrak{W}_{N}$ with itself defined by
\begin{align*}
	\mathfrak{W}_{N}
	\xrightarrow{\text{\rm{}\sf{}$\boldsymbol h$-RS}}
	\{(\Ptab,\Qtab)\}
	\xrightarrow{(\text{\rm{}\sf{}row-RS})^{-1}}
	\mathfrak{W}_{N},\qquad
	\boldsymbol w\mapsto \sm_{\boldsymbol h}(\boldsymbol w),
\end{align*}
where 
$\{(\Ptab,\Qtab)\}$
is the set
of pairs of standard
Young tableaux of the same shape.

Let us write down the maps 
$\sm_{\boldsymbol h}$
for $N=3$.
We will list images of 
\begin{align*}
	\mathfrak{W}_{3}=
	{\scriptsize\begin{Bmatrix}
		1&2&3\\
		1&3&2\\
		2&1&3\\
		2&3&1\\
		3&1&2\\
		3&2&1
	\end{Bmatrix}}
\end{align*}
under each of the 
maps $\sm_{\boldsymbol h}$. We have
\begin{align*}&
	\sm_{(1,1,1)}\mathfrak{W}_{3}=
	{\scriptsize\begin{Bmatrix}
		1&2&3\\
		1&3&2\\
		2&1&3\\
		2&3&1\\
		3&1&2\\
		3&2&1
	\end{Bmatrix}},\qquad
	\sm_{(1,1,2)}\mathfrak{W}_{3}=
	{\scriptsize\begin{Bmatrix}
		1&3&2\\
		3&1&2\\
		2&1&3\\
		3&2&1\\
		1&2&3\\
		2&3&1
	\end{Bmatrix}},\qquad
	\sm_{(1,1,3)}\mathfrak{W}_{3}=
	{\scriptsize\begin{Bmatrix}
		1&3&2\\
		3&1&2\\
		3&2&1\\
		2&1&3\\
		1&2&3\\
		2&3&1
	\end{Bmatrix}},\\
	\rule{0pt}{32pt}
	&
	\sm_{(1,2,1)}\mathfrak{W}_{3}=
	{\scriptsize\begin{Bmatrix}
		2&1&3\\
		2&3&1\\
		1&2&3\\
		1&3&2\\
		3&2&1\\
		3&1&2
	\end{Bmatrix}},\qquad
	\sm_{(1,2,2)}\mathfrak{W}_{3}=
	{\scriptsize\begin{Bmatrix}
		2&1&3\\
		3&2&1\\
		1&3&2\\
		3&1&2\\
		2&3&1\\
		1&2&3
	\end{Bmatrix}},\qquad
	\sm_{(1,2,3)}\mathfrak{W}_{3}=
	{\scriptsize\begin{Bmatrix}
		3&2&1\\
		2&1&3\\
		1&3&2\\
		3&1&2\\
		2&3&1\\
		1&2&3
	\end{Bmatrix}}.
\end{align*}
We see that the maps 
$\sm_{\boldsymbol h}\colon
\mathfrak{W}_{N}\to\mathfrak{W}_{N}$
are pairwise distinct.
Moreover,
$\sm_{\boldsymbol h}$
preserves the obvious 
structure of the symmetric group
on $\mathfrak{W}_{N}$ (when each permutation
word is identified with the corresponding
permutation) 
only in the case $\boldsymbol h=(1,\ldots,1)$.

\medskip

\noindent\textbf{3.} Instead of looking at the 
individual maps
$\sm_{\boldsymbol h}$,
it could be more natural to consider 
the subgroup $G(N)\subset\mathfrak{S}(N!)$
generated by all the maps $\sm_{\boldsymbol h}$.
Here we understand
$\mathfrak{S}(N!)$
as the group of permutations of the set
$\mathfrak{W}_{N}$.

The orders of the groups $G(N)$ for small $N$
can be computed using a computer algebra system.
They are as follows:
\begin{align*}
	|G(2)|=2
	,\quad
	|G(3)|=2\cdot(3!)^2
	,\quad
	|G(4)|=2\cdot(12!)^2
	,\quad
	|G(5)|=2\cdot(60!)^2.
\end{align*}
We conjecture that in general,
\begin{align*}
	|G(N)|=2\cdot \Big(\big(\tfrac12
	N!\big)!\Big)^{2}.
\end{align*}

\medskip

\noindent\textbf{4.} 
We conjecture that the group $G(N)$ 
may be identified with a subgroup of permutations
of $\mathfrak{W}_N$
which have the following form.
For each $N$, there should exist
a representation of the set 
$\mathfrak{W}_N$
as a disjoint union 
$\mathfrak{W}_N'\sqcup \mathfrak{W}_N''$
such that 
$|\mathfrak{W}_N'|=|\mathfrak{W}_N''|=\frac12 N!$.
Fix any one-to-one map $\chi\colon
\mathfrak{W}_N'\to \mathfrak{W}_N''$.
Then $G(N)$ is generated by $\chi$ and
by all the permutations of 
$\mathfrak{W}_N$ which permute the parts
$\mathfrak{W}_N'$ and $\mathfrak{W}_N''$
separately.

For $N=3$ and $4$ we are able to 
explicitly write down the above splitting 
$\mathfrak{W}_N=\mathfrak{W}_N'\sqcup \mathfrak{W}_N''$.
In the case $N=3$ it looks as 
\begin{align*}
	\mathfrak{W}_3'=
	{\scriptsize\begin{Bmatrix}
		1&2&3\\
		1&3&2\\
		3&1&2
	\end{Bmatrix}},\qquad
	\mathfrak{W}_3''=
	{\scriptsize\begin{Bmatrix}
		3&2&1\\
		2&3&1\\
		2&1&3
	\end{Bmatrix}}.
\end{align*}
For $N=4$, the splitting is
given by
\begin{align*}
	\mathfrak{W}_4'=
	{\scriptsize\begin{Bmatrix}
		1 & 2 & 3 & 4 \\
		1 & 2 & 4 & 3 \\
		1 & 3 & 2 & 4 \\
		1 & 3 & 4 & 2 \\
		1 & 4 & 2 & 3 \\
		1 & 4 & 3 & 2 \\
		3 & 1 & 2 & 4 \\
		3 & 1 & 4 & 2 \\
		3 & 4 & 1 & 2 \\
		4 & 1 & 2 & 3 \\
		4 & 1 & 3 & 2 \\
		4 & 3 & 1 & 2 
	\end{Bmatrix}},\qquad
	\qquad
	\mathfrak{W}_4''=
	{\scriptsize\begin{Bmatrix}
		4 & 3 & 2 & 1 \\
		3 & 4 & 2 & 1 \\
		4 & 2 & 3 & 1 \\
		2 & 4 & 3 & 1 \\
		3 & 2 & 4 & 1 \\
		2 & 3 & 4 & 1 \\
		4 & 2 & 1 & 3 \\
		2 & 4 & 1 & 3 \\
		2 & 1 & 4 & 3 \\
		3 & 2 & 1 & 4 \\
		2 & 3 & 1 & 4 \\
		2 & 1 & 3 & 4
	\end{Bmatrix}}.
\end{align*}
In both cases $N=3$ and $4$,
one can choose
a one-to-one map $\chi\colon
\mathfrak{W}_N'\to \mathfrak{W}_N''$ 
which acts by rewriting each word 
in the reverse order.




\section[Multivariate dynamics in the $q$-Whittaker case and $q$-PushTASEP]
{Multivariate dynamics in the $q$-Whittaker case\\ and $q$-PushTASEP} 
\label{sec:multivariate_dynamics_in_the_q_whittaker_case_and_q_pushasep_}

Here we will discuss 
multivariate dynamics on interlacing
arrays in the $q$-Whit\-taker case
(we will sometimes refer to them as to 
\emph{$q$-Whittaker-multivariate} dynamics).
That is,
throughout the whole section we will assume that 
$t=0$ and $0<q<1$, where $q$ and $t$ are 
the Macdonald parameters.

In \S \ref{sub:limit_of_our_formulas_as_qto1_}
we write down (formal) scaling limits 
as  $q\nearrow1$
of our
$q$-Whittaker-multivariate
dynamics. This leads to diffusions
on $\R^{\frac{N(N+1)}2}$, and we write down systems 
of stochastic differential equations (SDEs) for them.

The multivariate dynamics in the $q$-Whittaker case
($0<q<1$)
lead to discovery 
of a new one-dimensional interacting particle system
which we call \textit{$q$-PushTASEP} (\S \ref{sub:taseps}).
We briefly indicate its relevance to the 
O'Connell--Yor semi-discrete directed polymer
\cite{OConnellYor2001}, \cite{Oconnell2009_Toda}.

\subsection{$q$-quantities} 
\label{sub:_q_quantities}

Before discussing
$q$-Whittaker-multivariate dynamics, 
let us write down the quantities $T_i$ and $S_j$ 
\eqref{T_i}--\eqref{S_j}
specialized for $t=0$. 
These quantities 
enter our linear conditions
on multivariate dynamics 
(Theorem \ref{thm:general_multivariate} and 
system \eqref{rlw_system_of_equations}).
We have
\begin{align}
	T_i(\bar\nu,\la)&=
	\frac{(1-q^{\bar\nu_i-\la_{i+1}})
	(1-q^{\bar\nu_{i-1}-\bar\nu_i+1})}
	{1-q^{\la_i-\bar\nu_i+1}};
	\quad\label{T_S_Whittaker}
	S_j(\bar\nu,\la)&=
	\frac{(1-q^{\bar\nu_{j-1}-\la_j})
	(1-q^{\la_j-\la_{j+1}+1})}
	{1-q^{\la_j-\bar\nu_j+1}}.
\end{align}
Here as usual we assume that $k=2,\ldots,N$ is fixed,
and that $\la\in\GT_k$, $\bar\nu\in\GT_{k-1}$.
Moreover, we must have $\bar\nu\prec\la$, 
see \eqref{T_i}--\eqref{S_j}.
In \eqref{T_S_Whittaker}
one has $i,j=2,\ldots,k-1$.
The remaining cases 
are resolved as follows:
\begin{align}\label{T_1S_1}
	T_1(\bar\nu,\la)=\frac{1-q^{\bar\nu_1-\la_2}}
	{1-q^{\la_1-\bar\nu_1+1}},\qquad
	S_1(\bar\nu,\la)=\frac{1-q^{\la_1-\la_2+1}}
	{1-q^{\la_1-\bar\nu_1+1}},
	\qquad
	S_k(\bar\nu,\la)&=1-q^{\bar\nu_{k-1}-\la_k}.
\end{align}
That is, if a bracket of the from 
$(1-q^{\cdots})$
does not make sense, we set it equal to one.
Note that the quantity $T_k$ also does not make sense;
by agreement, we set it equal to zero.

\begin{remark}
	Observe that $T_i(\bar\nu,\la)$ given 
	by \eqref{T_S_Whittaker}--\eqref{T_1S_1}
	vanishes if 
	the particle $\la_{i+1}$ is blocked and cannot jump
	(i.e., if $\la_{i+1}=\bar\nu_{i}$). Similarly, 
	$S_j(\bar\nu,\la)$ 
	given by \eqref{T_S_Whittaker}--\eqref{T_1S_1}
	vanishes if $\la_j$ is blocked.
	This agrees with the definitions in the general 
	Macdonald case
	\eqref{T_i}--\eqref{S_j}. Thus, for $t=0$, $0<q<1$ 
	one does not need 
	to insert any indicators in the definitions
	of $T_i$ and $S_j$. Recall that 
	for $q=t=0$ (Schur case) the situation
	was different, cf. \eqref{T_S_Schur}.
\end{remark}

We will also use the 
quantities (cf. \eqref{F_big})
\begin{align}\label{big_F_8}
	F_j(\bar\nu,\la)=&\begin{cases}
		0,&j=1;\\
		q^{\bar\nu_{j-1}-\la_j}
		,&2\le j\le k;
		\\
		1,&j=k+1.
	\end{cases}
\end{align}
It can be readily checked that for any $j=1,\ldots,k$
one has
\begin{align}\label{STFF}
	S_j(\bar\nu,\la)-T_j(\bar\nu,\la)
	=
	F_{j+1}(\bar\nu,\la)-F_j(\bar\nu,\la).
\end{align}


\subsection{Nearest neighbor 
$q$-Whittaker-multivariate dynamics} 
\label{sub:multivariate_dynamics_in_the_q_whittaker_case}

Here we discuss various multivariate
dynamics on interlacing arrays
in the \mbox{$q$-Whittaker} case. 
Probability measures on interlacing
arrays on which these dynamics act nicely 
(see Remark \ref{rmk:multivariate_action_on_Gibbs})
are the Macdonald processes 
(\S \ref{sub:ascending_macdonald_processes})
in which the Macdonald parameter $t$ is specialized to zero. 
They are called the 
\textit{$q$-Whittaker processes},
see \cite[Ch. 3]{BorodinCorwin2011Macdonald} for
a detailed discussion.

\subsubsection{Fundamental `dynamics'} 
\label{ssub:fundamental_dynamics_}

Almost none of the fundamental nearest neighbor
`dynamics' introduced in \S \ref{sub:fundamental_dynamics_}
have nonnegative jump rates and probabilities of triggered
moves in the $q$-Whittaker case
(in contrast with the Schur case, cf. 
\S\S \ref{sub:rs_type_fundamental_dynamics_in_the_schur_case_and_deterministic_insertion_algorithms}--\ref{sub:fundamental_nearest_neighbor_dynamics_in_the_schur_case}). 
\begin{proposition}\label{prop:q_honest_Markov}
	Below is the complete list 
	of fundamental
	nearest neighbor `dynamics' 
	which are honest Markov processes
	on interlacing arrays:
	\begin{enumerate}[$\bullet$]
		\item The push-block dynamics $\bq^{(N)}_{\PBD}$;
		\item The RSK-type dynamics
		$\bq^{(N)}_{\RSD[\boldsymbol h]}$
		with $\boldsymbol h=({1,1,\ldots,1})$
		($N$ ones);
		\item 
		The right-pushing dynamics
		$\bq^{(N)}_{\RD[\boldsymbol h]}$
		with $\boldsymbol h=({1,\ldots,1})$
		($N-1$ ones).
	\end{enumerate}
\end{proposition}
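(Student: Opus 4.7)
The plan is to analyze each family of fundamental nearest neighbor `dynamics' from Theorem~\ref{thm:characterization_NN} separately, using the explicit $q$-Whittaker formulas \eqref{T_S_Whittaker}--\eqref{big_F_8} together with the key telescoping identity \eqref{STFF}, namely $S_j-T_j=F_{j+1}-F_j$. The goal is to verify nonnegativity for the three listed dynamics, and, for every remaining dynamics in the families $\bq^{(N)}_{\RSD[\boldsymbol h]}$ (with $\boldsymbol h\ne(1,\ldots,1)$), $\bq^{(N)}_{\RD[\boldsymbol h]}$ (with $\boldsymbol h\ne(1,\ldots,1)$), and $\bq^{(N)}_{\LD[\boldsymbol h]}$ (for all $\boldsymbol h$), to exhibit a concrete slice configuration $(\bar\nu,\la)\in\GT_{(k-1;k)}$ with well-separated particles (so that $\II(\bar\nu,\la)=\{1,\ldots,k\}$) producing a strictly negative rate or probability.

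\textbf{Honest Markov processes.} The push-block dynamics is handled by inspection: by Dynamics~\ref{dyn:DF_Mac} the rates are $a_k S_j$, and $S_j\ge 0$ is manifest from \eqref{T_S_Whittaker}--\eqref{T_1S_1}, while all $V_k$ take values in $\{0,1\}$. For the RSK-type dynamics with $\boldsymbol h=(1,\ldots,1)$, telescoping \eqref{STFF} in \eqref{r_j_RSh} reduces matters to $r_j^{\RSD(1)}=(F_j+S_j-1)/T_j$ and $1-r_j^{\RSD(1)}=(1-F_{j+1})/T_j$. The second quantity is clearly in $[0,1]$ since $F_{j+1}=q^{\bar\nu_j-\la_{j+1}}\in(0,1]$, while checking $F_j+S_j\ge 1$ reduces after clearing denominators to the inequality $q^{\la_j-\bar\nu_j+1}\ge q^{\la_j-\la_{j+1}+1}$, which is exactly the interlacing $\bar\nu_j\ge\la_{j+1}$. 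For the right-pushing dynamics with $\boldsymbol h=(1,\ldots,1)$, the only non-obvious jump rate is $w_1^{\RD(1)}=a_k(S_1-T_1)=a_k F_2\ge 0$ by \eqref{STFF}, while $w_j^{\RD(1)}=a_k S_j\ge 0$ for $j\ge 2$.

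\textbf{Ruling out the rest.} For any RSK-type dynamics with $\boldsymbol h\ne(1,\ldots,1)$, let $k$ be minimal with $h^{(k)}\ge 2$. Formula \eqref{r_j_RSh} at $j=1$ then gives $r_1^{\RSD(h^{(k)})}=S_1/T_1$, and the pulling probability becomes $l_1=1-r_1=-F_2/T_1$, which is strictly negative in any configuration with $\bar\nu_1>\la_2$. For any left-pulling dynamics $\bq^{(N)}_{\LD[\boldsymbol h]}$ (with $N\ge 2$), the constraint $1\le h^{(1)}\le 1$ forces $h^{(1)}=1$, so at the slice $\GT_{(1;2)}$ the rate $w_2^{\LD(1)}=a_2(S_2-T_1)$ simplifies via \eqref{T_1S_1} to
\[
w_2^{\LD(1)}=a_2\,(1-q^{\bar\nu_1-\la_2})\cdot\frac{-q^{\la_1-\bar\nu_1+1}}{1-q^{\la_1-\bar\nu_1+1}},
\]
which is strictly negative whenever $\bar\nu_1>\la_2$. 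For any right-pushing dynamics with $\boldsymbol h\ne(1,\ldots,1)$, let $k\ge 3$ be minimal with $h^{(k-1)}=h\ge 2$; then \eqref{STFF} gives $w_h^{\RD(h)}=a_k(F_{h+1}-F_h)$, and I will construct configurations where $F_h>F_{h+1}$, e.g.\ $\bar\nu_{h-1}=\la_h$, $\bar\nu_h=\la_h-1$, $\la_{h+1}=\la_h-2$, producing $F_h=1>q=F_{h+1}$ (and with the remaining coordinates chosen arbitrarily subject to $\bar\nu\prec\la$).

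The main technical step is the collapse carried out by \eqref{STFF}, which turns awkward sums of $S$'s and $T$'s into compact expressions in the $F$'s; without this, neither the positivity proofs nor the sign determinations for the counterexamples would be tractable. The only real obstacle is careful bookkeeping of boundary cases in $\II(\bar\nu,\la)$, which I bypass by taking ``apart'' configurations where $\II=\{1,\ldots,k\}$, together with a quick verification that the explicit configurations above define valid elements of $\GT_{(k-1;k)}$ and extend to full interlacing arrays in $\GT(N)$.
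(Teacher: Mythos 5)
Your overall strategy is the same as the paper's proof: treat the four families of fundamental `dynamics' separately, use the telescoping identity \eqref{STFF} to collapse $S_j-T_j$ into $F_{j+1}-F_j$, verify nonnegativity of the parameters $(w,c,r)$ for the three listed dynamics, and exhibit a sign failure for every other one. Two of your verifications are in fact a bit sharper than the paper's: for $\bq^{(N)}_{\RSD[(1,\ldots,1)]}$ you explicitly check $r_j^{\RSD(1)}\ge0$ (reducing $F_j+S_j\ge1$ to the interlacing $\bar\nu_j\ge\la_{j+1}$), a point the paper leaves implicit, and your left-pulling argument --- observing that $h^{(1)}=1$ is forced, so every $\bq^{(N)}_{\LD[\boldsymbol h]}$ already has the negative rate $w_2^{\LD(1)}=a_2(S_2-T_1)<0$ on the bottom slice $\GT_{(1;2)}$ --- disposes of the whole family at once, whereas the paper computes $S_{h+1}-T_h$ at a general slice and merely asserts it can be made negative.

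The one step that does not work as written is your right-pushing counterexample for $h=h^{(k-1)}\ge2$. Setting $\bar\nu_{h-1}=\la_h$ forces $F_h=1$, but it also blocks the particle $\la_h$, so $h\notin\II(\bar\nu,\la)$ and $\nf(h)<h$; the parameter of the dynamics in that configuration is $w_{\nf(h)}^{\RD(h)}=S_{\nf(h)}-T_h$, not $S_h-T_h=F_{h+1}-F_h$, so the inequality $F_h>F_{h+1}$ you display is not the sign of any actual jump rate (and the choice also contradicts your own announced restriction to configurations with $\II=\{1,\ldots,k\}$). The repair is immediate: keep $\la_h$ free and make the gap above it larger than the gap below, e.g.\ $\bar\nu_{h-1}=\la_h+1$ and $\bar\nu_h-\la_{h+1}\ge2$, which gives $F_h=q>q^{2}\ge F_{h+1}$ and hence $w_h^{\RD(h)}=a_k(F_{h+1}-F_h)<0$ by \eqref{STFF}. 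With that correction the argument is complete and matches the paper's conclusion.
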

\begin{proof}
	For each of the 
	fundamental `dynamics' defined in
	\S \ref{sub:fundamental_dynamics_}, 
	one has to check that 
	$w_m\ge0$ and $0\le r_j\le c_j\le 1$
	for all possible $m$ and $j$ at each slice
	$\GT_{(k-1;k)}$ (see 
	\S \ref{sub:parametrization_and_linear_equations}
	and
	\eqref{rlw_parameters} 
	in particular
	for the 
	definition of the parameters $(w,c,r)$).

	\textbf{1.}
	For the push-block dynamics $\bq^{(N)}_{\PBD}$
	(Dynamics \ref{dyn:DF_Mac}), one has
	$r_j=c_j\equiv0$, and $w_m=S_m\ge0$, 
	see \eqref{T_S_Whittaker}--\eqref{T_1S_1}. 
	Thus, the push-block dynamics enters the desired list,
	which is not surprising, as it is positive 
	in the general $(q,t)$-setting as well, 
	cf. \S \ref{ssub:setup}.

	\textbf{2.}
	Now consider the RSK-type fundamental `dynamics'
	$\bq^{(N)}_{\RSD[\boldsymbol h]}$ 
	(Dynamics \ref{dyn:RS_type_fund}). Let us 
	argue on a fixed slice $\GT_{(k-1;k)}$.
	Denote $h:=h^{(k)}\in\{1,\ldots,k\}$.
	Clearly, 
	for an RSK-type fundamental `dynamics'
	one has $w_m\ge0$ and $c_j=1\ge0$ for all
	possible
	$m$ and $j$. 
	By \eqref{r_j_RSh} and \eqref{STFF}, we have 
	\begin{align}\label{r_j_RSh_q}
		r_j^{\RSD(h)}=
		T_j^{-1}
		\big(
		S_j+F_j-1_{h\le j}
		\big)=
		1+\frac{F_{j+1}-1_{h\le j}}{T_j}.
	\end{align}
	In the second equality we have used \eqref{STFF} again.
	One also has
	\begin{align}\label{l_j_RSh_q}
		1-r_j^{\RSD(h)}=
		\frac{1_{h\le j}-F_{j+1}}{T_j}.
	\end{align}
	From \eqref{big_F_8}
	we see that this quantity can be made negative unless $h=1$.
	Therefore, of all the RSK-type fundamental
	`dynamics', only the one with $\boldsymbol h=(1,1,\ldots,1)$
	is an honest Markov process.

	\textbf{3.} 
	Take a right-pushing fundamental
	`dynamics' 
	$\bq^{(N)}_{\RD[\boldsymbol h]}$
	(Dynamics~\ref{dyn:Ri})
	on the slice $\GT_{(k;k-1)}$. Denote $h:=h^{(k-1)}
	\in\{1,\ldots,k-1\}$.
	By the very definition of the `dynamics',
	all the $r_j$'s and $c_j$'s are nonnegative, 
	and, moreover, $r_j\le c_j$. 
	We also have $w_m=S_m$ for all 
	possible $m$ except $m=h$. In the latter case 
	\begin{align*}
		w_h^{\RD(h)}=S_h-T_h=F_{h+1}-F_{h}.
	\end{align*}
	This quantity can be made negative unless $h=1$.
	In the case $h=1$ we obtain
	$w_1=q^{\bar\nu_1-\la_2}$, which is always nonnegative.
	Thus, the only honest Markov process
	among the right-pushing `dynamics'
	is the one with $\boldsymbol h=(1,\ldots,1)$.

	\textbf{4.} 
	Finally, consider a left-pulling fundamental
	`dynamics' $\bq^{(N)}_{\LD[\boldsymbol h]}$
	(Dynamics~\ref{dyn:Li})
	on the slice $\GT_{(k;k-1)}$, and denote $h:=h^{(k-1)}
	\in\{1,\ldots,k-1\}$.
	We have (using \eqref{STFF})
	\begin{align*}
		w_{h+1}^{\LD(h)}=S_{h+1}-T_h=
		\big(F_{h+2}+T_{h+1}\big)
		-
		\big(
		F_{h+1}+T_{h}
		\big).
	\end{align*}
	It is possible check that all these 
	numbers can be made negative
	by an appropriate choice of
	$\bar\nu$
	and $\la$.
	This concludes the proof.
\end{proof}

Let us briefly describe the three
dynamics of Proposition \ref{prop:q_honest_Markov}
for $0<q<1$. 

\begin{dynamics}[$q$-Whittaker case 
of Dynamics \ref{dyn:DF_Mac}]
\label{dyn:q_DF}
	The push-block $q$-Whittaker-multi\-variate 
	dynamics $\bq^{(N)}_{\PBD}$
	was introduced and studied in 
	\cite[\S 3.3]{BorodinCorwin2011Macdonald}.
	In this dynamics, every particle $\la^{(k)}_{j}$
	jumps to the right at rate 
	$a_kS_j(\la^{(k-1)},\la^{(k)})$
	which is given by \eqref{T_S_Whittaker}--\eqref{T_1S_1}.
	(If $\la^{(k)}_{j}$ is blocked, 
	then
	$S_j=0$, so this particle does not jump.)
	If a moved particle $\la^{(k)}_{j}$
	violates interlacing with 
	$\la^{(k+1)}_{j}$, then a short-range push
	happens according to~\S \ref{sub:when_a_jumping_particle_has_to_push_its_immediate_upper_right_neighbor}.
\end{dynamics}

\begin{dynamics}[$q$-Whittaker 
case of Dynamics \ref{dyn:RS_type_fund}
with $\boldsymbol h=(1,\ldots,1)$]
\label{dyn:q_row}
	{\ }\begin{enumerate}[(1)]
		\item (independent jumps)
		Under the RSK-type dynamics
		of Proposition \ref{prop:q_honest_Markov},
		each rightmost particle 
		$\la^{(k)}_{1}$
		jumps to the right at rate $a_k$.
		There are no other independent jumps in this dynamics.	
		
		\item (triggered moves)	
		When a particle $\la^{(k-1)}_{j}$
		moves, 
		it 
		long-range
		pushes its first unblocked
		upper right neighbor 
		$\la^{(k)}_{\nf(j)}$
		with probability
		(see \eqref{r_j_RSh_q})
		\begin{align}\label{r_j_q_row}
			r_j^{\RSD(1)}=
			\begin{cases}
				q^{\la_1^{(k)}-\la_1^{(k-1)}},& j=1;\\
				q^{\la_j^{(k)}-\la_j^{(k-1)}}
				\dfrac{1-q^{\la_{j-1}^{(k-1)}-\la_{j}^{(k)}}}
				{1-q^{\la^{(k-1)}_{j-1}-\la^{(k-1)}_{j}}},
				& 2\le j\le k-1.
			\end{cases}
		\end{align}
		Here $\la^{(k-1)}_{j}$
		is the coordinate of the $j$th 
		particle at level $k-1$ before its move.
		With the complementary probability $1-r_j^{\RSD(1)}$,
		the moved particle
		$\la^{(k-1)}_{j}$
		(long-range) pulls
		its immediate upper left neighbor
		$\la^{(k)}_{j+1}$.
	\end{enumerate}
\end{dynamics}

Note that
the short-range pushing mechanism
(\S \ref{sub:when_a_jumping_particle_has_to_push_its_immediate_upper_right_neighbor}) is built 
into the probabilities \eqref{r_j_q_row}. Indeed, 
for $\la^{(k-1)}_j=\la^{(k)}_j$, one has
$r_j^{\RSD(1)}=1$, so the
long-range push of $\la^{(k)}_j$ 
happens with probability 1
and coincides with the short-range push.

Moreover, in this dynamics pushes and
jumps are never donated (cf. the rule of 
\S \ref{donation}). 
Indeed, 
if $\la_{j}^{(k)}=\la_{j-1}^{(k-1)}$, then
$r_j^{\RSD(1)}=0$, 
so the moved particle $\la^{(k-1)}_{j}$
will always pull $\la^{(k)}_{j+1}$
in this situation. Jumps cannot
be donated as well because the only
jumping particles are the rightmost ones.

Dynamics \ref{dyn:q_row} 
may be regarded as a natural
$q$-Whittaker deformation of
the row insertion Schur-multivariate
dynamics 
$\bq^{(N)}_{\mathit{row}}$
(e.g., see Fig.~\ref{fig:row_col}(a)),
so let 
us denote the generator
of the former dynamics
by $\bq^{(N)}_{\text{\textit{q-row}}}$.
In particular, the rates
of independent jumps
of $\bq^{(N)}_{\text{\textit{q-row}}}$
coincide with those of 
$\bq^{(N)}_{\mathit{row}}$.
However, in the $q$-Whittaker case,
the dynamics
$\bq^{(N)}_{\text{\textit{q-row}}}$
has \emph{random} long-range interactions
(i.e., pushes and pulls).

\begin{dynamics}
[$q$-Whittaker version of Dynamics 
\ref{dyn:Ri} with
$\boldsymbol h=(1,\ldots,1)$]
	Under the right-pushing dynamics 
	of Proposition \ref{prop:q_honest_Markov},
	all particles of the array
	$\boldsymbol\la$
	behave in the same way as 
	in Dynamics \ref{dyn:q_DF}, 
	except for the rightmost particles
	$\la^{(k)}_{1}$, $k=1,\ldots,N$.
	These particles have jump rates 
	\begin{align*}
		a_k\big(
		S_1(\la^{(k-1)},\la^{(k)})-T_1(\la^{(k-1)},\la^{(k)})
		\big)=
		a_kq^{\la^{(k-1)}_{1}-\la^{(k)}_{2}},\qquad
		k=2,\ldots,N
	\end{align*}
	(the bottommost particle $\la^{(1)}_{1}$ has jump 
	rate $a_1$).
	When a particle $\la^{(k)}_{1}$ moves, 
	it long-range pushes
	all its upper right 
	neighbors
	$\la^{(k+1)}_{1},\ldots,\la^{(N)}_{1}$
	(with probability one).
	No other long-range interactions are present.
	Of course, there are still short-range pushes, cf. 
	\S \ref{sub:when_a_jumping_particle_has_to_push_its_immediate_upper_right_neighbor}.
\end{dynamics}


\subsubsection{$q$-Whittaker-multivariate 
`dynamics' with deterministic long-range interactions} 
\label{ssub:_q_whittaker_multivariate_dynamics_with_deterministic_long_range_interactions}

Let us now consider 
another natural class of
$q$-Whittaker-multivariate
nearest neighbor
`dynamics'
which are distinguished by having 
\emph{deterministic} long-range interactions.
This condition in fact 
forces
each particle
in the interlacing array $\boldsymbol\la$
to have a nonzero jump rate.
However, these jump 
rates are not necessarily nonnegative
(but we are still using probabilistic
language, cf. Remark~\ref{rmk:algebraic_rlw}).

\begin{proposition}
\label{prop:q_h_insertion}
	Fix
	$\boldsymbol h\in\{1\}\times \{1,2\}
	\times \ldots\times \{1,2,\ldots,N\}$, 
	and consider a $q$-Whittaker-multivariate
	`dynamics' 
	$\bq^{(N)}$
	with deterministic
	propagation of moves 
	given by the 
	$\boldsymbol h$-insertion.\footnote{See
	\S\S \ref{sub:rs_type_dynamics_and_random_insertions}--\ref{sub:rs_type_fundamental_dynamics_in_the_schur_case_and_deterministic_insertion_algorithms}, 
	and in particular \eqref{r_j_RSD_Schur} 
	and
	Fig.~\ref{fig:big_rs}.}
	Then on each slice
	$\GT_{(k-1;k)}$
	this `dynamics' has the following
	rates of independent jumps:
	\begin{align}\label{mixed_h_qWhittaker}
		W_k(\la,\la+\de_j\,|\,\bar\nu)=
		a_k\big(
		S_j(\bar\nu,\la)-
		1_{j<h^{(k)}}\cdot T_{j}(\bar\nu,\la)
		-1_{j>h^{(k)}}\cdot T_{j-1}(\bar\nu,\la)
		\big),
	\end{align}
	where $\bar\nu\in\GT_{k-1}$
	and $\la\in\GT_{k}$.
	We assume that the jump donation 
	rule of \S\ref{donation}
	is applied to \eqref{mixed_h_qWhittaker}.
\end{proposition}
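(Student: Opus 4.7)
The plan is to apply Proposition \ref{prop:rlw_system}, which reduces the problem to solving the linear system \eqref{rlw_system_of_equations} for the independent jump rates $w_m = a_k^{-1} W_k(\la, \la + \de_m \,|\, \bar\nu)$ once the $V_k$ parameters are fixed. The hypothesis that triggered moves follow the $\boldsymbol h$-insertion pins down those parameters on each slice $\GT_{(k-1;k)}$: since $\boldsymbol h$-insertion is RSK-type, the propagation probabilities are $c_j = 1$; and, writing $h := h^{(k)}$, the right-pushing probabilities are $r_j = 1_{j < h}$ just as in the Schur specialization \eqref{r_j_RSD_Schur}. The combinatorial rule underlying $\boldsymbol h$-insertion (``push right if $j < h$, pull left otherwise'') does not involve $q$ or $t$, so the same values of $r_j$ are forced in the $q$-Whittaker setting.

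First I would treat the generic ``apart'' configuration, in which $\ii(\bar\nu, \la) = k$ and the enumeration \eqref{i_enumeration} becomes $j_m = m - 1$. Substituting $c_{m-1} = 1$ and $r_{m-1} = 1_{m - 1 < h}$ into the three families of equations \eqref{rlw_system_of_equations}, each row collapses to $w_m + 1_{m < h}\, T_m + 1_{m > h}\, T_{m - 1} = S_m$ for $m = 1, \ldots, k$, with the boundary conventions $T_0 = T_k = 0$ handling $m = 1$ and $m = k$. Rearranging and multiplying by $a_k$ reproduces \eqref{mixed_h_qWhittaker} exactly.

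For the general case with blocked upper particles ($\ii < k$), the same substitution $c_{j_m} = 1$, $r_{j_m} = 1_{j_m < h}$ yields, for each free index $j_m + 1 \in \II$, an equation whose unique solution agrees with \eqref{mixed_h_qWhittaker} at $j = j_m + 1$. The value of \eqref{mixed_h_qWhittaker} at a blocked index $j \notin \II$ is a formal rate that never triggers a jump on its own; by the donation rule of \S \ref{donation} it is redirected to the first free right neighbor $\la_{\nf(j)}$, which is precisely what $\boldsymbol h$-insertion prescribes for blocked particles (cf. Dynamics \ref{dyn:RS_type_fund}).

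The main obstacle is the close-particle bookkeeping: one must verify that summing the formal rates $W_k(\la, \la + \de_j \,|\, \bar\nu)$ over all $j$ with $\nf(j) = j_m + 1$ (including $j = j_m + 1$ itself) reproduces the $w_{j_m + 1}$ dictated by the reduced system. This is a telescoping argument resting on the identity \eqref{STFF}, $S_j - T_j = F_{j + 1} - F_j$, which collapses runs of consecutive blocked indices into a single boundary contribution; this is the same identity that secures the positivity of the $\boldsymbol h = (1, \ldots, 1)$ case in Proposition \ref{prop:q_honest_Markov}, so the two computations can be done in parallel.
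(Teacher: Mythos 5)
Your proposal is correct and follows essentially the same route as the paper, whose entire proof is the observation that one substitutes $c_j=1$ and $r_j=1_{j<h^{(k)}}$ into the linear system \eqref{rlw_system_of_equations} on each slice. Your additional verification of the blocked-particle case (which the paper leaves implicit behind the donation-rule clause in the statement) also checks out, since by \eqref{T_S_zero} all terms of \eqref{mixed_h_qWhittaker} attached to indices strictly inside a blocked run vanish and the sum over $\{j:\nf(j)=j_m+1\}$ collapses to the value dictated by the reduced system.
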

Note that some of the jump rates \eqref{mixed_h_qWhittaker}
can be made negative
(by a choice of $\bar\nu$ and $\la$); see also the proof
of Proposition 
\ref{prop:q_honest_Markov}.
\begin{proof}
	This is evident if 
	for each slice $\GT_{(k-1;k)}$
	one puts
	$c_j=1$ 
	and $r_j=1_{j<h^{(k)}}$
	(for all $j$)
	in \eqref{rlw_system_of_equations}.
\end{proof}

The above `dynamics' 
with $\boldsymbol h$-insertions
is RSK-type
(Definition \ref{def:RS_type}).
One can readily check that its generator $\bq^{(N)}$ 
can be represented 
as a linear combination of those of 
the fundamental RSK-type `dynamics' (cf.~Proposition 
\ref{prop:RS_Schur}):
\begin{align}\label{q_push_representation}
	\bq^{(N)}(\boldsymbol\la,
	\boldsymbol\nu)=
	\sum_{\tilde{\boldsymbol h}}
	\co_{\tilde{\boldsymbol h}}(\boldsymbol\la,\boldsymbol\nu)
	\bq^{(N)}_{\RSD[\tilde{\boldsymbol h}]}
	(\boldsymbol\la,\boldsymbol\nu),
\end{align}
where
$\boldsymbol\la,\boldsymbol\nu\in\GT(N)$ and
the sum is taken over all 
$\tilde{\boldsymbol h}=
(\tilde{h}^{(1)},\ldots,\tilde{h}^{(N)})$
such that $1\le \tilde{h}^{(k)}\le k$.
The coefficients 
$\co_{\tilde{\boldsymbol h}}$
have the product form as in 
\eqref{co_h_product1}
with terms
given by (the quantities below
depend on two signatures $\nu^{(k-1)}$ and 
$\la^{(k)}$)
\begin{align*}
	\co^{(k)}_{\tilde{h}^{(k)}}=
	S_{\tilde{h}^{(k)}}-
	1_{\tilde{h}^{(k)}<h^{(k)}}T_{j}
	-1_{\tilde{h}^{(k)}>h^{(k)}}T_{j-1},
\end{align*}
where $k=2,\ldots,N$.
This formula for the 
coefficients of course directly follows
from \eqref{mixed_h_qWhittaker}.

\medskip

It can be seen that for
$q=0$, the
 `dynamics'
$\bq^{(N)}$ of Proposition \ref{prop:q_h_insertion}
corresponding to $\boldsymbol h
\in\{1\}\times \{1,2\}
\times \ldots\times \{1,2,\ldots,N\}$
turns into the 
Schur-multivariate
fundamental RSK-type dynamics
$\bq^{(N)}_{\RSD[\boldsymbol h]}$
(see \S \ref{sub:rs_type_fundamental_dynamics_in_the_schur_case_and_deterministic_insertion_algorithms}
for a detailed discussion of the latter dynamics).

\medskip

For $\boldsymbol h=(1,\ldots,1)$, 
the `dynamics' of Proposition 
\ref{prop:q_h_insertion}
has the property that 
any moving particle (long-range)
pulls its immediate
upper left neighbor with probability
one. 
Let us denote this dynamics by 
$\bq^{(N)}_{\text{\textit{q-pull}}}$.
The jump rates in this `dynamics'
are given by 
$W_k^{\text{\textit{q-pull}}}
(\la,\la+\de_j\,|\,\bar\nu)=
a_k\big(S_j(\bar\nu,\la)-T_{j-1}(\bar\nu,\la)\big)$
(with the understanding that $T_0\equiv 0$).
One can argue that
$\bq^{(N)}_{\text{\textit{q-pull}}}$
and 
$\bq^{(N)}_{\text{\textit{q-row}}}$ 
(see \S \ref{ssub:fundamental_dynamics_})
provide two (very different)
$q$-Whittaker deformations
of the Schur-multivariate row insertion dynamics
$\bq^{(N)}_{\mathit{row}}$.
One of these deformations is an honest Markov process,
and the other is not. 

Denote also by 
$\bq^{(N)}_{\text{\textit{q-push}}}$
the generator of the `dynamics' of 
Proposition \ref{prop:q_h_insertion}
corresponding to 
$\boldsymbol h=(1,2,\ldots,N)$.
In this `dynamics', 
any moving particle long-range
pushes its 
first unblocked 
upper right neighbor with probability
one. The jump rates in 
$\bq^{(N)}_{\text{\textit{q-push}}}$
have a rather simple form:
\begin{align*}
	W_k^{\text{\textit{q-push}}}
	(\la,\la+\de_j\,|\,\bar\nu)=
	\begin{cases}
		a_k
		q^{\bar\nu_1-\la_2}
		& j=1;\\
		a_k
		\big(
		q^{\bar\nu_j-\la_{j+1}}-
		q^{\bar\nu_{j-1}-\la_{j}}
		\big),
		& 2\le j\le k-1;\\
		a_k
		\big(1-
		q^{\bar\nu_{k-1}-\la_k}
		\big),
		& j=k
	\end{cases}
\end{align*}
(see \eqref{mixed_h_qWhittaker} and \eqref{big_F_8}).
These jump rates, however, are not 
always nonnegative.
The dynamics $\bq^{(N)}_{\text{\textit{q-push}}}$
can be viewed as a $q$-Whittaker
deformation of the column insertion dynamics
$\bq^{(N)}_{\mathit{col}}$ (\S \ref{sub:rs_type_fundamental_dynamics_in_the_schur_case_and_deterministic_insertion_algorithms}).


\subsubsection{Remark: a nearest neighbor 
Markov process inspired by
Dynamics \ref{dyn:OConnell}} 
\label{ssub:remark_a_nearest_neighbor_markov_process_inspired_by_dynamics_dyn:oconnell}

Let us briefly mention one more 
nearest neighbor $q$-Whittaker-multivariate
`dynamics'
which can be 
``invented'' by looking at the 
O'Connell--Pei's insertion algorithm
(see 
\S \ref{sub:o_connell_pei_s_process_on_interlacing_arrays}).
The latter dynamics is RSK-type but not nearest neighbor
(see Definitions \ref{def:nn_dynamics} and 
\ref{def:RS_type}); 
we aim to turn it into a nearest neighbor one,
but the result will not be RSK-type.
In order to do that, one should look at
identity \eqref{Oconnell_514_identity}
(it is equivalent to \eqref{P2'_T_S})
which governs Dynamics \ref{dyn:OConnell}.
This identity has the form
\begin{align*}
	f_jT_j+\big(\cdots\big)=S_j,
\end{align*}
where $f_j$ is given by \eqref{f_small},
and dots mean all the remaining terms. 
Let us 
understand these remaining terms
$(\cdots)$
as 
$a_k^{-1}$ times 
the probability of an
independent jump of the $j$th particle.
In fact, these terms ``collapse'',
and their sum is simply 
$(1-F_j)F_{j+1}$, see \eqref{OConnell_proof_collapsing}.
In this way
we arrive at a new 
\emph{nearest neighbor}
dynamics 
governed by 
identity 
\eqref{OConnell_proof_collapsing},
which now should be understood 
according to 
the linear system \eqref{rlw_system_of_equations}.
To be more precise, 
the modified dynamics
can be described as follows:
\begin{dynamics}[Nearest neighbor 
modification of Dynamics \ref{dyn:OConnell}]
	\label{dyn:collapsed_OConnell}
	{\ }\begin{enumerate}[(1)]
		\item (independent jumps)
		Every particle $\la_j^{(k)}$ 
		independently jumps to the right at rate
		$\big(1-F_{j}(\la^{(k-1)},\la^{(k)})\big)
		F_{j+1}(\la^{(k-1)},\la^{(k)})$.
		\item (triggered moves)
		When a particle $\la^{(k-1)}_{j}$
		moves, it long-range pushes 
		its immediate upper right neighbor
		with probability
		$f_j(\nu^{(k-1)},\la^{(k)})$ (see \eqref{f_small}), 
		where 
		$\nu^{(k-1)}$
		differs from $\la^{(k-1)}$
		only by
		$\nu^{(k-1)}_j=\la^{(k-1)}_j+1$.
		With the complementary probability 
		$1-f_j(\nu^{(k-1)},\la^{(k)})$,
		the move of $\la^{(k-1)}_{j}$ does not 
		propagate from level $k-1$ to level $k$.
	\end{enumerate}
\end{dynamics}

In this process there is no need to 
donate independent jumps or triggered 
moves (cf. the donation
rule in \S \ref{donation}).
Indeed, if a particle $\la_j^{(k)}$
is blocked, (i.e., if
$\la_j^{(k)}=\la_{j-1}^{(k-1)}$),
then one automatically 
has $F_j=1$ and $f_j=0$.

It can be readily checked that the
jump rates and probabilities of triggered moves
in Dynamics \ref{dyn:collapsed_OConnell} are nonnegative.

For $q=0$, both 
Dynamics 
\ref{dyn:OConnell} and
\ref{dyn:collapsed_OConnell}
turn into the column insertion
Schur-multivariate dynamics 
$\bq^{(N)}_{\mathit{col}}$.



\subsection{TASEPs} 
\label{sub:taseps}

\subsubsection{Left-Markov and right-Markov multivariate dynamics} 
\label{ssub:left_markov_and_right_markov_multivariate_dynamics}

In this subsection we examine nearest neighbor
$q$-Whittaker-multivariate
dynamics ($0<q<1$)
on interlacing arrays $\boldsymbol\la=(\la^{(1)}\prec \ldots\prec\la^{(N)})$
(see Fig.~\ref{fig:la_interlacing})
with the property that the leftmost particles
$\{\la^{(k)}_{k}\colon k=1,\ldots,N\}$
or the rightmost 
particles
$\{\la^{(k)}_{1}\colon k=1,\ldots,N\}$
of the array evolve according to a \emph{Markov} 
process (in its own filtration).\footnote{Multivariate dynamics
of the whole interacting array are of course Markov.
The question is when 
these Markov processes remain Markov 
when projected to a subset of particles.}
We formulate natural sufficient conditions implying 
these \emph{left-Markov} and \emph{right-Markov} properties.

In this subsection we also
briefly discuss one-dimensional
Markov dynamics of interacting particles on 
$\Z$ arising in this way. These 
dynamics (coming from either left- or right- Markov property) 
can be viewed as versions of the 
\emph{TASEP} (=~\emph{totally asymmetric simple
exclusion process}). 

Left-Markov multivariate dynamics induce 
\emph{$q$-TASEP} 
\cite[\S3.3.2]{BorodinCorwin2011Macdonald}, 
\cite{BorodinCorwinSasamoto2012}
as the
Markov evolution of leftmost particles.
The consideration of right-Markov 
$q$-Whittaker-multivariate dynamics 
gives rise to
a new interacting particle system on $\Z$ which we call
\textit{$q$-PushTASEP}. 

For $q=0$, $q$-TASEP and $q$-PushTASEP 
degenerate into TASEP and PushTASEP, respectively. 
About these $q=0$ exclusion processes, 
e.g., see \cite{BorFerr2008DF} and references therein.



\subsubsection{When the leftmost particles in the array evolve in a Markovian way} 
\label{ssub:when_leftmost_particles_in_the_array_evolve_in_a_markovian_way}

Let us now consider nearest neighbor 
$q$-Whittaker-multivariate
`dynamics' $\bq^{(N)}$ in which
the leftmost particles
$\{\la^{(k)}_{k}\colon k=1,\ldots,N\}$
evolve in a Markovian way. 

Natural sufficient
conditions for the 
left-Markov property are the following
(we use the notation
$\la=\la^{(k)}$,
and $\bar\nu=\nu^{(k-1)}$
explained in 
\S \ref{sub:specialization_of_formulas_from_s_sub:sequential_update_dynamics_in_continuous_time}):
\begin{enumerate}[(L1)]
	\item For each $k$, the jump rate of the 
	particle $\la^{(k)}_{k}$, i.e., 
	$W_k(\la,\la+\de_k\,|\,\bar\nu)$,
	must not depend on the coordinates
	of the remaining particles $\{\la^{(m)}_{j}
	\colon m=1,\ldots,N,\;j=1,\ldots,k-1\}$.
	\item 
	For each $k$, the probability
	$V_k(\la,\la+\de_k\,|\,\bar\nu-\bar\de_{k-1},\bar\nu)$
	that $\la^{(k-1)}_{k-1}$
	pulls $\la^{(k)}_{k}$
	must be zero.
\end{enumerate}

Condition (L1) is rather obvious for 
the left-Markov property. 
As for (L2), note that 
the (long-range) pulling interaction
is \emph{weaker} than the short-range pushing
(cf. \S \ref{sub:when_a_jumping_particle_has_to_push_its_immediate_upper_right_neighbor}). Thus, 
when $\la^{(k-1)}_{k-1}=\la^{(k)}_{k-1}$, the 
moving particle $\la^{(k-1)}_{k-1}$ cannot pull $\la^{(k)}_{k}$.
Since in a left-Markov situation
the pulling must not 
depend on the coordinate 
of $\la^{(k)}_{k-1}$, 
we conclude
that the pulling probabilities 
in (L2)
must be zero.


\begin{proposition}\label{prop:qTASEP}
	Let $0<q<1$.
	Under any 
	nearest neighbor 
	$q$-Whittaker-multivariate 
	`dynamics' satisfying
	{\rm{}(L1)--(L2)}, 
	each of the
	leftmost particles $\la^{(k)}_{k}$,
	$k=1,\ldots,N$,
	jumps to the right by one 
	independently
	of others at rate 
	\begin{align}\label{qTASEP_rate}
		a_k\big(1-q^{\la^{(k-1)}_{k-1}-\la^{(k)}_{k}}
		\big)
	\end{align}
	(with the agreement that 
	this rate is equal to $a_1$ for $k=1$). 
	Note that when $\la^{(k)}_{k}$
	is blocked (i.e., if $\la^{(k)}_{k}=\la^{(k-1)}_{k-1}$),
	then the rate vanishes.
\end{proposition}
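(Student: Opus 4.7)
The plan is to compute the effective rate at which each leftmost particle $\la^{(k)}_k$ moves under the multivariate `dynamics', and to show that this rate depends only on $\la^{(k)}_k$ and its immediate right neighbor $\la^{(k-1)}_{k-1}$. The approach combines the linear characterization of nearest neighbor dynamics via system \eqref{rlw_system_of_equations} on each slice $\GT_{(k-1;k)}$ with a direct enumeration of all mechanisms by which $\la^{(k)}_k$ can possibly move.

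First, I would fix $k$ and consider the linear system \eqref{rlw_system_of_equations} at a configuration with $\la^{(k)}_k<\la^{(k-1)}_{k-1}$, so that $k\in\II(\bar\nu,\la)$ and $j_{\ii}=k-1$ in the notation of \S \ref{sub:parametrization_and_linear_equations}. The last equation of that system reads
\begin{align*}
    l_{k-1}\,T_{k-1}(\bar\nu,\la)+w_k(\bar\nu,\la)=S_k(\bar\nu,\la),
\end{align*}
where $l_{k-1}=c_{k-1}-r_{k-1}$ is precisely the probability that the moved particle $\bar\nu_{k-1}$ pulls its immediate upper left neighbor $\la_k=\la^{(k)}_k$. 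Hypothesis (L2) forces $l_{k-1}=0$, and therefore $w_k=S_k$; invoking \eqref{T_1S_1}, this yields
\begin{align*}
    W_k(\la,\la+\de_k\,|\,\bar\nu)=a_k\,S_k(\bar\nu,\la)=a_k\bigl(1-q^{\la^{(k-1)}_{k-1}-\la^{(k)}_k}\bigr).
\end{align*}
In the blocked configuration $\la^{(k)}_k=\la^{(k-1)}_{k-1}$, the independent jump of $\la^{(k)}_k$ is ruled out by the generator structure (see Remark \ref{rmk:zero_outside_GT}), and consistently the claimed formula vanishes as $1-q^0=0$.

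Second, I would verify that no triggered mechanism can move $\la^{(k)}_k$. By Definition \ref{def:nn_dynamics}, any moving particle at level $k-1$ affects only its immediate upper left neighbor (via pulling) or its first free upper right neighbor (via pushing). The only level-$(k-1)$ particle having $\la^{(k)}_k$ as its upper left neighbor is $\la^{(k-1)}_{k-1}$ itself, and (L2) forbids this pull. The first free right neighbor of $\la^{(k-1)}_{k-1}$ lies strictly to the right of $\la^{(k)}_k$, so long-range pushing cannot reach $\la^{(k)}_k$ either. Finally, the short-range pushing rule of \S \ref{sub:when_a_jumping_particle_has_to_push_its_immediate_upper_right_neighbor} is triggered only when a moving $\bar\nu_j$ would exceed $\la_j$; for the move of $\la^{(k-1)}_{k-1}$ this can at most force a jump of $\la^{(k)}_{k-1}$, never of $\la^{(k)}_k$, whose interlacing constraint $\la^{(k)}_k\le \la^{(k-1)}_{k-1}$ is automatically preserved when $\la^{(k-1)}_{k-1}$ increases.

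Combining the two steps, $\la^{(k)}_k$ moves only through its own independent exponential clock, at rate $a_k(1-q^{\la^{(k-1)}_{k-1}-\la^{(k)}_k})$, which depends solely on $\la^{(k-1)}_{k-1}$ and $\la^{(k)}_k$. Condition (L1) is then automatically consistent with this formula and, in fact, provides no additional constraint beyond what (L2) already delivers through the linear system. Since the generator $\bq^{(N)}$ assigns to the jump at each level $k$ an independent exponential clock, the projection of the full Markov evolution to $(\la^{(1)}_1,\ldots,\la^{(N)}_N)$ is itself Markovian with the asserted rates, and ``independently of others'' refers precisely to the independence of these clocks. I do not anticipate any substantive obstacle: the only point requiring care is that (L1)--(L2), combined with the nearest neighbor property, rule out every possible pathway by which a disturbance at lower or other slices could propagate into $\la^{(k)}_k$, so that the clean one-equation computation of $w_k$ indeed captures the entire dynamics of the leftmost column.
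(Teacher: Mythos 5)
Your proof is correct and follows essentially the same route as the paper: the paper's argument is exactly the observation that, by (L2), the last equation of \eqref{rlw_system_of_equations} on each slice collapses to $w_k=S_k$, and then \eqref{T_1S_1} gives the rate \eqref{qTASEP_rate}. Your additional check that no pull, long-range push, short-range push, or donated move can ever target the leftmost particle $\la^{(k)}_{k}$ just makes explicit what the paper leaves implicit in its setup of (L1)--(L2).
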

The evolution of the leftmost
particles described by the proposition 
it known as
\textit{$q$-TASEP}, see \cite[\S3.3.2]{BorodinCorwin2011Macdonald}
and \cite{BorodinCorwinSasamoto2012}.
\begin{proof}
	This is evident when looking 
	at the last 
	equation of \eqref{rlw_system_of_equations}
	corresponding to each slice $\GT_{(k-1;k)}$.
	Because there is no pulling, this equation 
	simply reads $w_k=S_k$ (where $w_j$
	is defined by \eqref{w_m_shorthand}). 
	Using \eqref{T_1S_1}, 
	we see that the jump rate 
	of each $\la^{(k)}_{k}$ is given by \eqref{qTASEP_rate}.
\end{proof}

\begin{remark}\label{rmk:Macdonald_bad}
	There is no hope of 
	obtaining an analogue of
	Proposition \ref{prop:qTASEP}
	in the general Macdonald setting
	(i.e., when $t\ne0$)
	because in this case the quantity
	$S_k(\la^{(k-1)},\la^{(k)})$ 
	depends
	on \emph{all} the particles
	at levels $k-1$ and $k$, see~\eqref{S_j}.
\end{remark}

Proposition \ref{prop:qTASEP}
suggests that 
at the $q$-Whittaker level 
(in fact, one can include the Schur degeneration,
so $0\le q<1$), 
the evolution
of the leftmost particles 
can be Markovian in a unique way which is
dictated by the $q$-TASEP.
However, this Markovian evolution
of the leftmost particles can be extended to
a multivariate dynamics of the 
whole interlacing array $\boldsymbol\la$
in many ways. In particular, 
the following $q$-Whittaker-multivariate `dynamics'
considered in the present paper
are left-Markov and lead to $q$-TASEP:
\begin{enumerate}[$\bullet$]
	\item The push-block dynamics
	(Dynamics \ref{dyn:q_DF});
	\item
	Dynamics driven by O'Connell--Pei's insertion algorithm, 
	and its modified version
	(Dynamics \ref{dyn:OConnell} and 
	\ref{dyn:collapsed_OConnell});
	\item  
	Any right-pushing `dynamics' (Dynamics \ref{dyn:Ri});
	\item 
	The ``column'' RSK-type `dynamics' 
	$\bq^{(N)}_{\RSD[\boldsymbol h]}$, i.e.,
	the one with 
	$\boldsymbol h=(1,2,\ldots,N)$;
	\item The $q$-Whittaker-multivariate
	`dynamics' with deterministic move
	propagation dictated by the column insertion
	algorithm
	(i.e., the `dynamics' of 
	Proposition \ref{prop:q_h_insertion}
	with $\boldsymbol h=(1,2,\ldots,N)$). 
\end{enumerate}
Using mixing of `dynamics' 
(\S \ref{sub:characterization_of_nearest_neighbor_dynamics_}),
it is possible to produce a variety
of other left-Markov `dynamics'.

To conclude the discussion of left-Markov 
multivariate `dynamics', we note that
it is possible to 
generalize the left-Markov
property to, say, two 
leftmost particles
at each level $k$.
That is, one could consider $q$-Whittaker-multivariate
`dynamics' under which the evolution
of $2N-1$ particles
$\la^{(1)}_1$ and
$\la^{(k)}_{k},\la^{(k)}_{k-1}$
(where $k=2,\ldots,N$)
is Markovian.
In this way one gets interacting
particle systems on $\Z\sqcup\Z$
(one copy of $\Z$ contains all the 
$\la^{(k)}_{k}$'s, and the remaining $N-1$
particles live on the other copy of $\Z$).
This ``two-diagonal'' 
setting
is not as rigid 
as in Proposition \ref{prop:qTASEP}:
one can construct \emph{many different}
Markov evolutions
of configurations of particles on $\Z\sqcup\Z$
which come from various multivariate
`dynamics' on the whole interlacing array $\boldsymbol\la$.
Fixed-time distributions
of all these Markov processes on 
configurations on $\Z\sqcup\Z$
are the same if the processes 
start from the same initial conditions.

\subsubsection{When the rightmost particles in the array evolve in a Markovian way} 
\label{ssub:when_rightmost_particles_in_the_array_evolve_in_a_markovian_way}

We now aim to discuss \emph{right-Markov}
nearest neighbor 
$q$-Whittaker-multivariate `dynamics', i.e.,
`dynamics' under which the rightmost
particles 
$\la^{(k)}_{1}$, $k=1,\ldots,N$
of the array $\boldsymbol\la$
evolve in a Markovian way.
Thus, the remaining particles
$\la^{(k)}_{j}$, $2\le j\le k$, must not
influence the rightmost ones. 
Since all the particles of the array $\boldsymbol\la$
jump to the right, the nature of this influence differs from 
the one in \S \ref{ssub:when_leftmost_particles_in_the_array_evolve_in_a_markovian_way}.

We propose the following natural sufficient
conditions for the right-Markov property:
\begin{enumerate}[(R1)]
	\item The jump rate of each rightmost 
	particle $\la^{(k)}_{1}$, 
	i.e., $W_k(\la,\la+\de_1\,|\,\bar\nu)$,
	must not depend on the coordinates
	of the remaining particles $\{\la^{(m)}_{j}
	\colon m=1,\ldots,N,\;j=2,\ldots,k\}$.
	\item The probability 
	$V_k(\la,\la+\de_1\,|\,\bar\nu-\bar\de_1,\bar\nu)$
	that 
	$\la^{(k-1)}_{1}$ long-range pushes
	$\la^{(k)}_{1}$ ($k=2,\ldots,N$)
	also must not depend on the 
	coordinates of the
	remaining
	particles.
	\item In the multivariate
	`dynamics' of the whole interlacing array
	$\boldsymbol\la$, there must be no donations of jumps 
	or pushes (cf. the rule of \S\ref{donation}). That is,
	for each $k=2,\ldots,N$ and $j=2,\ldots,k$, the 
	quantities 
	$W_k(\la,\la+\de_j\,|\,\bar\nu)$ and
	$V_k(\la,\la+\de_j\,|\,\bar\nu-\bar\de_j,\bar\nu)$
	must depend on $\bar\nu$ and $\la$ in such a way that they 
	automatically become zero if $\la_j=\bar\nu_{j-1}$ 
	(i.e., if the particle $\la_j=\la_j^{(k)}$
	is blocked).
\end{enumerate}
Here we also used the notation
$\la=\la^{(k)}$,
and $\bar\nu=\nu^{(k-1)}$
explained in 
\S \ref{sub:specialization_of_formulas_from_s_sub:sequential_update_dynamics_in_continuous_time}.

Let us briefly comment on condition (R3)
(the two other conditions are rather obvious).
Assume that (R3) does not hold for, say, the particle
$\la^{(k)}_{j}$. Consider the configuration in which
$\la^{(k)}_j=\la^{(k-1)}_{j-1}$, 
$\la^{(k)}_{j-1}=\la^{(k-1)}_{j-2}$, etc.,
and $\la^{{(k)}}_{2}=\la^{(k-1)}_{1}$.
Then, according to the rule of \S\ref{donation}, 
$\la^{(k)}_{j}$ must donate its move
(which could be an independent jump or a triggered move)
to $\la^{(k)}_{1}$, its first unblocked neighbor
at the same level $k$.
On the other hand, if, 
say,
$\la^{{(k)}}_{2}<\la^{(k-1)}_{1}$,
then the donated move from $\la^{(k)}_{j}$
goes to $\la^{{(k)}}_{2}$. 
We see (at least informally) that the dynamics
of the rightmost particles depends on, e.g., 
the position of $\la^{{(k)}}_{2}$.
Thus, condition (R3) is also quite natural.

\begin{proposition}
\label{prop:qpushTASEP}
	Let $0<q<1$.
	Under any 
	nearest neighbor 
	$q$-Whittaker-multivariate `dynamics'
	satisfying {\rm{}(R1)--(R3)}, 
	each rightmost particle $\la^{(k)}_{1}$
	has jump rate $a_k$. When any of the rightmost
	particles
	$\la^{(k-1)}_{1}$ moves (due to an independent jump
	or a push), it long-range pushes
	the next rightmost particle
	$\la^{(k)}_{1}$ with probability
	$q^{\la^{(k)}_{1}-\la^{(k-1)}_{1}}$
	(here $\la^{(k-1)}_{1}$ is the coordinate
	before the move). Note that 
	this interaction has the possibility to
	propagate to 
	all the higher levels $k+1,k+2,\ldots,N$.
\end{proposition}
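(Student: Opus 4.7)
The strategy is to extract the jump rates and push probabilities of the rightmost particles by isolating the relevant equation in the linear system \eqref{rlw_system_of_equations} and exploiting conditions (R1)--(R3). Fix $k\ge 2$ and a slice $\GT_{(k-1;k)}$. Following the notational conventions of \S\ref{sub:specialization_of_formulas_from_s_sub:sequential_update_dynamics_in_continuous_time}, write $\bar\nu\in\GT_{k-1}$, $\la\in\GT_k$ with $\bar\nu\prec\la$, and set $w_1:=a_k^{-1}W_k(\la,\la+\de_1\,|\,\bar\nu)$ and $r_1:=V_k(\la,\la+\de_1\,|\,\bar\nu-\bar\de_1,\bar\nu)$. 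Conditions (R1) and (R2) say that both $w_1$ and $r_1$ depend only on $(\la_1,\bar\nu_1)$.

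Now specialize to ``apart'' configurations in which $\la_2<\bar\nu_1$ (so $2\in\II(\bar\nu,\la)$, hence $j_2=1$), which can be achieved e.g.\ by taking $\la_3=\ldots=\la_k=\bar\nu_2=\ldots=\bar\nu_{k-1}$ very negative. The first equation of \eqref{rlw_system_of_equations} reads $r_1 T_1(\bar\nu,\la)+w_1=S_1(\bar\nu,\la)$, where by \eqref{T_1S_1},
\begin{align*}
	T_1=\frac{1-q^{\bar\nu_1-\la_2}}{1-q^{\la_1-\bar\nu_1+1}},\qquad
	S_1=\frac{1-q^{\la_1-\la_2+1}}{1-q^{\la_1-\bar\nu_1+1}}.
\end{align*}
Multiplying through by $(1-q^{\la_1-\bar\nu_1+1})$ and grouping terms, the equation becomes
\begin{align*}
	(1-r_1)-q^{-\la_2}\bigl(q^{\la_1+1}-r_1 q^{\bar\nu_1}\bigr)=w_1(1-q^{\la_1-\bar\nu_1+1}).
\end{align*}
Since $w_1$ and $r_1$ are independent of $\la_2$ by (R1)--(R2), and since this identity must hold for at least two values of $\la_2$, the coefficient of $q^{-\la_2}$ must vanish and the constant term must match. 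This yields $r_1=q^{\la_1-\bar\nu_1+1}$ and $w_1=1$. Writing $\bar\nu_1=\la^{(k-1)}_1+1$ (the new coordinate after the move), this gives $W_k(\la,\la+\de_1\,|\,\bar\nu)=a_k$ and push probability $q^{\la^{(k)}_1-\la^{(k-1)}_1}$, as claimed.

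Finally, the values just found hold for \emph{all} valid configurations, not only the apart ones, because by (R1)--(R2) the quantities $w_1,r_1$ are functions of $(\la_1,\bar\nu_1)$ alone. Condition (R3) guarantees that no move of a blocked particle $\la^{(k)}_j$ ($j\ge 2$) is donated up to the rightmost particle $\la^{(k)}_1$, so the evolution of $\{\la^{(k)}_1\}_{k=1}^N$ closes. The case $k=1$ is trivial since the bottommost particle has jump rate $a_1$ by the univariate dynamics. Combined with the automatic propagation of pushes along the rightmost column (a move of $\la^{(k-1)}_1$ triggers $\la^{(k)}_1$ with probability $q^{\la^{(k)}_1-\la^{(k-1)}_1}$, and this cascades upward), this yields the $q$-PushTASEP description. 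The main subtlety is not any calculation but the correct handling of blocked configurations via (R3); once one sees that (R3) rules out the ``donated push onto $\la_1$'' that would otherwise appear as the $r_{j_2}T_{j_2}$ term in the first equation when $j_2>1$, the derivation reduces cleanly to the apart case.
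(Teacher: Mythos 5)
Your proof is correct and follows essentially the same route as the paper's: isolate the first equation of the linear system \eqref{rlw_system_of_equations}, invoke (R3) to rule out a donated term $r_jT_j$ with $j\ge2$, and use (R1)--(R2) to force the $\la_2$-dependence to cancel, which pins down $r_1=q^{\la_1+1-\bar\nu_1}$ and $w_1=1$. The only cosmetic difference is that you first restrict to ``apart'' configurations and then extend by (R1)--(R2), whereas the paper works with the generic equation directly; the algebra and the logic are the same.
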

The evolution of the 
rightmost particles described 
by the proposition is called
\textit{$q$-PushTASEP}.
\begin{proof}
	Let us 
	take any slice $\GT_{(k-1;k)}$, and 
	look at the first equation of the system
	\eqref{rlw_system_of_equations}. It can be rewritten in 
	the following form
	(we use \eqref{T_1S_1}):
	\begin{align*}
		w_1=
		S_1(\bar\nu,\la)-r_1 
		T_1(\bar\nu,\la)=
		\frac{
		1-r_1+q^{-\la_2}
		(r_1q^{\bar\nu_1}-q^{\la_1+1})}
		{1-q^{\la_1+1-\bar\nu_1}},
	\end{align*}
	where $w_1$ is $a_k^{-1}$
	times the rate of independent jump
	of the particle $\la^{(k)}_{1}$, and $r_1$
	is the probability with which 
	$\la^{(k-1)}_{1}$
	long-range pushes $\la^{(k)}_{1}$.
	Note that here we have used (R3),
	because otherwise the first equation 
	of \eqref{rlw_system_of_equations}
	could contain the term $r_j T_j$ for some $j\ge2$.

	By (R1)--(R2), both $w_1$ and $r_1$ must not 
	depend on the coordinates of $\la_2,\la_3,\ldots,\la_k$ and 
	$\bar\nu_2,\bar\nu_3,\ldots,\bar\nu_{k-1}$.
	Next, observe that if the factor
	$r_1q^{\bar\nu_1}-q^{\la_1+1}$
	is not identically zero, then $w_1$ 
	depends on $\la_2$. So, one should have 
	$r_1=q^{\la_1+1-\bar\nu_1}$, and hence $w_1=1$.
	This concludes the proof.
\end{proof}

Similarly to 
Remark \ref{rmk:Macdonald_bad},
there is little hope of finding 
right-Markov multivariate
`dynamics' with the general Macdonald parameters.

Proposition \ref{prop:qpushTASEP}
(in analogy with Proposition \ref{prop:qTASEP} above)
suggests a certain rigidity of the 
$q$-PushTASEP. 
One can, however, extend this Markov process
from the rightmost particles to 
the whole interlacing array $\boldsymbol\la$ 
in many ways:
there are a lot of multivariate
`dynamics' satisfying
conditions (R1)--(R3). 

However, in the subclass of RSK-type
`dynamics', such an extension of 
the $q$-PushTASEP is unique (this can be readily checked), 
namely, 
this is the dynamics
$\bq^{(N)}_{\text{\textit{q-row}}}$ (Dynamics \ref{dyn:q_row}).

Similarly to the discussion of 
\S \ref{ssub:when_leftmost_particles_in_the_array_evolve_in_a_markovian_way}, one can also consider 
multivariate `dynamics' on the whole array
which induce a Markov evolution
of the particles on, say, two rightmost
diagonals of the array.
This does not define an induced Markov evolution
uniquely.


\subsubsection{$q$-PushTASEP} 
\label{ssub:_q_pushtasep}

Let us now represent $q$-PushTASEP 
(process of Proposition \ref{prop:qpushTASEP})
as a
system of interacting particles on $\Z$
which do not collide. 
Set $x_n=\la^{(n)}_{1}+n$, $n=1,\ldots,N$,
and treat the integers $x_1<x_2<\ldots<x_N$ 
as particle location on $\Z$. The evolution 
of $q$-PushTASEP in continuous time is described
as follows. Each particle $x_n$ has an independent
exponential clock with rate $a_n$. When the 
clock rings (say, at some time $\tau$), 
$x_n$ jumps to the right by one, so 
$x_n(\tau+d\tau)=x_n(\tau)+1$.
Moreover, every particle $x_k$
that has just moved, i.e., for which
$x_k(\tau+d\tau)=x_k(\tau)+1$,
long-range pushes (=~forces to immediately
move to the right by one)
its right neighbor $x_{k+1}$
with probability 
\begin{align*}
 	q^{x_{k+1}(\tau)-x_k(\tau)-1}=
	q^{x_{k+1}(\tau+d\tau)-x_k(\tau+d\tau)}.
\end{align*} 
Note that when $x_k(\tau)=x_{k+1}(\tau)-1$
(i.e., the destination of $x_k$ is in fact occupied),
the probability of push is one, so in this case
$x_{k+1}$ is always pushed and thus frees the destination
for $x_k$: 
\begin{align*}
	x_k(\tau+d\tau)=x_k(\tau)+1,\
	x_{k+1}(\tau+d\tau)=x_{k+1}(\tau)+1 
	\ \ \mbox{if $x_k(\tau)=x_{k+1}(\tau)-1$}.
\end{align*}
If the particle $x_{k+1}$
was pushed, it can continue the pushing interaction and 
push $x_{k+2}$, and so on.

Let us write down the Markov generator 
$L^{\text{$q$-PushTASEP}}$
of this process. It acts on functions
$f(x_1,\ldots,x_N)$, where $x_1<\ldots <x_N$:
\begin{align*}&
	(L^{\text{$q$-PushTASEP}}f)(x_1,\ldots,x_N)
	\\&
	\hspace{40pt}=\sum_{i=1}^{N}
	a_i\sum_{j=i}^{N}
	q^{x_j-x_i-(j-i)}\big(
	1-q^{x_{j+1}-x_j-1}\big)\times\\&
	\hspace{90pt}\times
	\big(
	f(x_1,\ldots,x_{i-1},
	x_{i}+1,\ldots,x_j+1,x_{j+1},\ldots,x_N)
	-
	f(x_1,\ldots,x_N)
	\big).
\end{align*}
Here $q^{x_j-x_i-(j-i)}\big(
1-q^{x_{j+1}-x_j-1}\big)$ in the sum 
is the probability that the particle 
$x_i$, if it jumps, will push the particles
$x_{i+1},\ldots,x_j$, and not $x_{j+1}$.
The study of $q$-PushTASEP is 
to be continued in a subsequent work.

In \S \ref{sub:connections_with_o_connell_yor_semi_discrete_directed_polymer} below we explain connections of the $q$-PushTASEP
to the O'Connell--Yor semi-discrete directed polymer.
It turns out that $q$-PushTASEP is more
directly related to the 
time 
evolution of the polymer partition functions than 
the $q$-TASEP.

\medskip

The $q$-PushTASEP is a 
natural $q$-deformation of the 
PushTASEP 
considered in \cite{BorFerr2008DF}
and first introduced in
\cite{Spitzer1970} under the name of \emph{long-range TASEP}.
In PushTASEP, each particle $x_n$
jumps to the right by one 
independently of others
at rate $a_n$.
If the destination of $x_n$
is occupied by $x_{n+1}$ (i.e., $x_{n}(\tau)=x_{n+1}(\tau)-1$), 
then the jumping particle $x_n$
(short-range) pushes $x_{n+1}$ to the right by one.
More generally, $x_n$ finds the 
block of particles to the right of itself 
($x_n=x_{n+1}-1=x_{n+2}-2=\ldots=x_{n+j}-j$)
and 
pushes the whole block to the right by one (note that the 
block can be empty).
See also 
\cite{BorFerr08push}
for an interacting particle system
unifying TASEP and PushTASEP.



\subsection{Formal scaling 
limits as $q\nearrow1$. Diffusions
on Whittaker processes.
Connections to the O'Connell--Yor
semi-discrete directed polymer} 
\label{sub:limit_of_our_formulas_as_qto1_}

\subsubsection{Setup} 
\label{ssub:setup_qWhittaker}

In this subsection we discuss 
a scaling limit of our $q$-Whittaker-multivariate 
`dynamics' of 
\S \ref{sub:multivariate_dynamics_in_the_q_whittaker_case} as $q\nearrow1$. 
The limit transition we consider
takes $q$-Whittaker processes 
to \emph{Whittaker processes}.
See \cite[Ch. 4]{BorodinCorwin2011Macdonald}
for definition and detailed discussion of
Whittaker processes.

The scaling in question is defined
as follows 
\cite[Thm. 4.1.21]{BorodinCorwin2011Macdonald}:
\begin{align}
	\label{q_to_1_scaling}
	\begin{array}{ll}
		q=e^{-\varepsilon},
		\qquad
		\tau=\varepsilon^{-2}\cdot\tlim,
		\qquad
		a_k=e^{-\varepsilon \cdot\alim_k},
		&\  k=1,\ldots,N,
		\\
		\la^{(k)}_{j}=
		\tlim\cdot \varepsilon^{-2}
		-(k+1-2j)\varepsilon^{-1}\log\varepsilon
		+\lalim^{(k)}_{j}\varepsilon^{-1},
		&\  k=1,\ldots,N,\  
		j=1,\ldots,k.
	\end{array}
\end{align}
Here $\tlim>0$ is the scaled time,
and $(\alim_1,\ldots,\alim_N)\in\R^{N}$
are scaled values of the $a_j$'s.
The $q$-Whittaker process
$\M_{asc,t=0}(a_1,\dots,a_N;\rho_{\tau})$,
where $\rho_{\tau}$ is the Plancherel
specialization 
(defined as in \S \ref{sub:nonnegative_specializations}
with the second Macdonald parameter $t$ being zero),
leads (via \eqref{q_to_1_scaling}) 
to a probability measure on the 
real numbers $\lablim=\{\lalim^{(k)}_{j}\colon
k=1,\ldots,N,\; j=1,\ldots,k\}$.
As $\varepsilon\downarrow 0$, this probability
measure on $\R^{\frac{N(N+1)}2}$ 
converges to the Whittaker process.
Note that the quantities $\lalim^{(k)}_{j}$
are \emph{any} real numbers, i.e., they
do not have to satisfy interlacing
constraints.

In this section we aim to formally write systems of 
SDEs
for diffusions on $\R^{\frac{N(N+1)}2}$ 
which correspond to 
various $q$-Whittaker-multivariate
`dynamics'.
These diffusions should
act on Whittaker processes
in a way similar to 
Remark \ref{rmk:multivariate_action_on_Gibbs}: 
the application of the diffusion semigroup to 
a Whittaker process changes its parameter $\tlim$ 
(cf. \cite[Def. 4.1.16]{BorodinCorwin2011Macdonald}).


\subsubsection{Expansion of $T_i$ and $S_j$} 
\label{ssub:expansions_of_t_i_and_s_j}

Let us first consider the 
scaling \eqref{q_to_1_scaling}
of the quantities $T_i$ and $S_j$
given by \eqref{T_S_Whittaker}--\eqref{T_1S_1}.
They are defined for a particular slice
$\GT_{(k-1;k)}$, so we assume that $k=2,\ldots,N$
is fixed. Let $\la^{(k-1)}\in\GT_{k-1}$
and $\la^{(k)}\in\GT_k$ depend on 
$\lalim^{(k-1)}\in\R^{k-1}$ and 
$\lalim^{(k)}\in\R^{k}$, respectively,
as in \eqref{q_to_1_scaling}.
\begin{proposition}
	For $\la^{(k-1)}$ and $\la^{(k)}$
	as above, expansions of 
	$T_i(\la^{(k-1)},\la^{(k)})$ and 
	$S_j(\la^{(k-1)},\la^{(k)})$
	in
	$\varepsilon$ 
	(up to the first order)
	look as follows
	($i=1,\ldots,k-1$ and $j=2,\ldots,k-1$):
	\begin{align}\label{T_eps_exp}
		T_i(\la^{(k-1)},\la^{(k)})
		&=1-
		\varepsilon 
		e^{\lalim^{(k)}_{i+1}-\lalim^{(k-1)}_{i}}
		+
		\varepsilon 
		e^{\lalim^{(k-1)}_{i}-\lalim^{(k)}_{i}}
		+O(\varepsilon^{2});\\
		S_j(\la^{(k-1)},\la^{(k)})
		&=1-
		\varepsilon 
		e^{\lalim^{(k)}_{j}-\lalim^{(k-1)}_{j-1}}
		+
		\varepsilon 
		e^{\lalim^{(k-1)}_{j}-\lalim^{(k)}_{j}}
		+O(\varepsilon^{2}).
		\label{S_eps_exp}
	\end{align}
	The remaining cases
	are 
	\begin{align}
		\label{S1_eps_exp}
		S_1(\la^{(k-1)},\la^{(k)})&=
		1+\varepsilon e^{\lalim^{(k-1)}_{1}-
		\lalim^{(k)}_{1}};\\
		S_k(\la^{(k-1)},\la^{(k)})&=
		1-\varepsilon e^{\lalim^{(k)}_{k}-
		\lalim^{(k-1)}_{k-1}}.
		\label{Sk_eps_exp}
	\end{align}
\end{proposition}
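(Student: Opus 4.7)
The plan is direct substitution of the scaling \eqref{q_to_1_scaling} into the explicit formulas \eqref{T_S_Whittaker}--\eqref{T_1S_1}, followed by a term-by-term Taylor expansion. Writing $A(m,n):=-(m+1-2n)$ for the coefficient of $\varepsilon^{-1}\log\varepsilon$ in $\la^{(m)}_n$, the key initial observation is that every exponent of $q$ appearing in $T_i$ or $S_j$ has the form $\la^{(m_1)}_{n_1}-\la^{(m_2)}_{n_2}+c$ with $c\in\{0,1\}$, so the leading term $\tlim\varepsilon^{-2}$ cancels. Therefore
\begin{align*}
	-\varepsilon\bigl(\la^{(m_1)}_{n_1}-\la^{(m_2)}_{n_2}+c\bigr)
	=-a\log\varepsilon-\bigl(\lalim^{(m_1)}_{n_1}-\lalim^{(m_2)}_{n_2}\bigr)-c\varepsilon,
\end{align*}
where $a:=A(m_1,n_1)-A(m_2,n_2)=(m_2-m_1)+2(n_1-n_2)$, and consequently
\begin{align*}
	q^{\la^{(m_1)}_{n_1}-\la^{(m_2)}_{n_2}+c}=\varepsilon^{-a}\,e^{\lalim^{(m_2)}_{n_2}-\lalim^{(m_1)}_{n_1}}\,\bigl(1+O(\varepsilon)\bigr).
\end{align*}

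Next I would classify the factors $(1-q^{\cdots})$ by the integer $a$. For each of the ``adjacent level'' factors $1-q^{\bar\nu_i-\la_{i+1}}$, $1-q^{\la_i-\bar\nu_i+1}$, $1-q^{\bar\nu_{j-1}-\la_j}$, $1-q^{\la_j-\bar\nu_j+1}$ and $1-q^{\bar\nu_{k-1}-\la_k}$, a direct calculation gives $a=-1$, so each such factor equals $1-\varepsilon\cdot e^{\text{exponent}}+O(\varepsilon^2)$ with the exponent read off from the coordinate difference. For the ``same level'' factors $1-q^{\bar\nu_{i-1}-\bar\nu_i+1}$ and $1-q^{\la_j-\la_{j+1}+1}$ one finds $a=-2$, so $q^{\cdots}=O(\varepsilon^2)$ and the whole factor contributes $1+O(\varepsilon^2)$. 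This is the only genuine case analysis required.

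The formulas \eqref{T_eps_exp}--\eqref{S_eps_exp} now fall out by combining the three relevant factors via the elementary identity
\begin{align*}
	\frac{(1-\varepsilon u)(1+O(\varepsilon^2))}{1-\varepsilon v}=1-\varepsilon u+\varepsilon v+O(\varepsilon^2),
\end{align*}
substituting the exponents from the previous paragraph. The boundary formulas \eqref{S1_eps_exp}--\eqref{Sk_eps_exp} arise from the convention in \S\ref{sub:_q_quantities} that brackets $(1-q^{\cdots})$ whose indices are out of range equal~$1$. For $S_1$ the missing numerator factor $1-q^{\bar\nu_0-\la_1}$ becomes $1$, and the surviving numerator factor $1-q^{\la_1-\la_2+1}$ is of same-level type, hence $1+O(\varepsilon^2)$; only the denominator produces the $O(\varepsilon)$ contribution. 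For $S_k$ both the $\la_{k+1}$ and $\bar\nu_k$ factors disappear, leaving the single adjacent-level factor in the numerator. One can also note that formula \eqref{T_eps_exp} is already correct at $i=1$ as stated, since the formally missing middle factor would have $a=-2$ and contributes only at $O(\varepsilon^2)$.

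There is no real obstacle here; the proof is pure bookkeeping of the $\varepsilon^{-1}\log\varepsilon$ and $\varepsilon^{-1}$ parts of the scaled coordinates. The one point that warrants care is recognizing that $a=-1$ is precisely the borderline value producing a genuine $O(\varepsilon)$ correction while $a=-2$ (same level) is one order smaller, so the final answer is determined by exactly which pairs of coordinates appearing in each bracket live on the same level of the array versus on adjacent levels.
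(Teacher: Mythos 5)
Your proposal is correct and is essentially the paper's own proof: direct substitution of the scaling \eqref{q_to_1_scaling} into \eqref{T_S_Whittaker}--\eqref{T_1S_1}, with the single observation (stated explicitly in the paper) that same-level factors of the form $1-q^{\la^{(k)}_{j}-\la^{(k)}_{j+1}+1}$ are $1+O(\varepsilon^{2})$ and hence do not contribute. Your computation of the exponent $a$ and the resulting classification into $a=-1$ (adjacent levels, genuine $O(\varepsilon)$ term) versus $a=-2$ (same level, negligible) is exactly the bookkeeping the paper leaves to the reader, and your checks of the boundary cases $S_1$, $S_k$, $T_1$ are all accurate.
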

\begin{proof}
	This is readily obtained using 
	definitions
	\eqref{T_S_Whittaker}--\eqref{T_1S_1}
	and 
	scaling \eqref{q_to_1_scaling}.
	Note that factors of the form
	$1-q^{\la^{(k)}_{j}-\la^{(k)}_{j+1}+1}$
	(where both $\la$'s have the same upper index)
	do not contribute to expansions  
	\eqref{T_eps_exp}--\eqref{Sk_eps_exp}
	because they have order $\varepsilon^{2}$.
\end{proof}
We also clearly have, using \eqref{big_F_8},
\begin{align}
	F_j(\la^{(k-1)},\la^{(k)})
	=O(\varepsilon^{2})+
	\begin{cases}
		0,&j=1;\\
		\varepsilon
		e^{\lalim^{(k)}_{j}
		-\lalim^{(k-1)}_{j-1}},&2\le j\le k;\\
		1,&j=k+1.
	\end{cases}
	\label{F_eps_exp}
\end{align}

To shorten formulas below,
let us introduce the following notation:
\begin{align}
	\label{F_Whittaker}
	\flim _{j}=\flim_{j}(\lalim^{(k-1)},\lalim^{(k)})&:=
	\begin{cases}
		0,&j=1;\\
		\exp\big(
		\lalim^{(k)}_{j}-
		\lalim^{(k-1)}_{j-1}
		\big)
		,& 2\le j\le k;\\
		0,&j=k+1,
	\end{cases}
	\\\rule{0pt}{20pt}
	\label{E_Whittaker}
	\elim _{j}=
	\elim_{j}(\lalim^{(k-1)},\lalim^{(k)})&:=
	\begin{cases}
		\exp\big(
		\lalim^{(k-1)}_{j}-
		\lalim^{(k)}_{j}
		\big),& 1\le j\le k-1;\\
		0,& j=k.
	\end{cases}
\end{align}
The $\flim_j$'s will appear in 
the SDEs in 
connection with 
(long-range) pushing (of upper \emph{right} neighbors), 
and the $\elim_j$'s will
be related to pulling
(of upper \emph{left} neighbors).

It can be readily checked that
(cf. \eqref{T_eps_exp}--\eqref{F_eps_exp})
\begin{align}
	\begin{array}{rcll}
		T_i&=&1-\varepsilon \flim_{i+1}+\varepsilon
		\elim_i+O(\varepsilon^{2}),& 
		\qquad i=1,\ldots,k-1;\\
		\rule{0pt}{12pt}
		S_j&=&1-\varepsilon \flim_{j}+\varepsilon
		\elim_j+O(\varepsilon^{2}),& 
		\qquad j=1,\ldots,k;\\
		\rule{0pt}{12pt}
		F_j&=&1_{j=k+1}+
		\varepsilon \flim_j+O(\varepsilon^{2}),
		& \qquad j=1,\ldots,k+1.
	\end{array}
	\label{E_F_expansions}
\end{align}


\subsubsection{Diffusions related to Whittaker processes} 
\label{ssub:whittaker_multivariate_diffusions}

Here we will write down systems of
SDEs
for diffusions in $\R^{\frac{N(N+1)}2}$
which correspond
to our $q$-Whittaker-multivariate `dynamics'
considered in 
\S \ref{sub:multivariate_dynamics_in_the_q_whittaker_case}.
This correspondence is seen 
with the help of expansions obtained in 
\S \ref{ssub:expansions_of_t_i_and_s_j}.

\begin{remark}
	It seems likely that 
	one can 
	obtain \emph{convergence} (like in
	\cite[Thm. 4.1.27]{BorodinCorwin2011Macdonald})
	of $q$-Whittaker-multivariate
	`dynamics' to the corresponding diffusions.
	That is, it should be possible to 
	prove 
	the actual convergence 
	(under the scaling \eqref{q_to_1_scaling},
	as $\varepsilon\downarrow0$) of 
	measures on trajectories,
	even if the pre-limit
	$q$-Whittaker-multivariate `dynamics'
	admits negative jump rates or probabilities of 
	triggered moves. However, we will not pursue this 
	direction in the present paper,
	and support 
	the correspondence between
	$q$-Whittaker-multivariate `dynamics'
	and systems of SDEs 
	only by informal computations 
	as presented in 
	\S \ref{ssub:informal_argument_for_convergence}
	below.
\end{remark}

First, note that in \emph{any} 
$q$-Whittaker-multivariate `dynamics', the 
bottommost particle $\la^{(1)}_{1}$
jumps to the right independently of other particles
at rate $a_1$. Thus,
the corresponding equation for $\lalim^{(1)}_{1}$
has the following form
(in particular, we have used the scaling
\eqref{q_to_1_scaling} of 
drifts, $a_1=e^{-\varepsilon\cdot \alim_1}$):
\begin{align}\label{G_11_always}
	d\lalim^{(1)}_{1}=dW^{(1)}_{1}-\alim_1 d\tlim,
\end{align}
where $W^{(1)}_1$ is the standard one-dimensional
Brownian motion.
In all systems of SDEs 
below we will assume that $k=2,\ldots,N$, 
and that 
\eqref{G_11_always}
is 
the equation corresponding
to $k=1$.

We will consider the usual four 
families of fundamental nearest neighbor 
`dynamics' (cf. \S \ref{sub:fundamental_dynamics_}),
and write down systems of 
SDEs corresponding to them.
Moreover, one can also consider various mixings
of the fundamental `dynamics' (as defined in 
\S \ref{sub:characterization_of_nearest_neighbor_dynamics_}),
and readily obtain a variety of other diffusions.

\medskip
\noindent
\textbf{(push-block dynamics)}
Scaling limit \eqref{q_to_1_scaling} 
of the push-block dynamics (Dynamics \ref{dyn:q_DF})
was considered in \cite{BorodinCorwin2011Macdonald}
(see Theorem 4.1.27). 
This leads to the following system of SDEs:
\begin{align}\label{push_block_SDE}
	d\lalim^{(k)}_{j}=dW^{(k)}_{j}+
	\big(-\alim_k+\elim_j(\lalim^{(k-1)},\lalim^{(k)})
	-\flim_j(\lalim^{(k-1)},\lalim^{(k)})\big)d\tlim,
\end{align}
where $k=2,\ldots,N$, $j=1,\ldots,k$.
Here and below $\{W^{(k)}_{j}\}_{1\le j\le k\le N}$ 
mean independent
standard one-dimensional Brownian
motions.
Diffusion \eqref{push_block_SDE} 
coincides with the ``symmetric
dynamics'' of \cite[\S9]{Oconnell2009_Toda},
and it degenerates in a certain limit to the Warren process
\cite{warren2005dyson}.

\medskip

\noindent
\textbf{(right-pushing `dynamics')}
Each right-pushing fundamental `dynamics' (Dynamics \ref{dyn:Ri})
depending on $\boldsymbol h
\in\{1\}\times
\{1,2\}\times \ldots\times\{1,2,\ldots,N-1\}$
is a ``minimal perturbation'' of the push-block
dynamics (cf. \S \ref{ssub:conclusion}). 
This leads to a change 
in the equation number $j=h^{(k)}$ 
for each $k=2,\ldots,N$. Other equations
stay the same as in \eqref{push_block_SDE}. 
Thus, we get the following
system corresponding to 
the right-pushing 
`dynamics':\footnote{Here and below we will omit the 
dependence of the $\elim_j$'s and the $\flim_j$'s on
$(\lalim^{(k-1)},\lalim^{(k)})$ which is the same as in  
\eqref{push_block_SDE}.}
\begin{align}\label{right_SDE}	
	d\lalim^{(k)}_{j}
	=
	\begin{cases}
		dW^{(k)}_{j}+
		\big(-\alim_k+\elim_j
		-
		\flim_j
		\big)d\tlim,
		& j\ne h^{(k-1)};\\\rule{0pt}{15pt}
		d\lalim^{(k-1)}_{j}+
		\big(\flim_{j+1}-\flim_j\big)d\tlim,&
		j=h^{(k-1)},
	\end{cases}
\end{align}
where $k=2,\ldots,N$, $j=1,\ldots,k$.
Note that the SDEs \eqref{right_SDE}
involve $1+\frac{N(N-1)}{2}$
independent Brownian motions $W^{(k)}_{j}$
in contrast with $\frac{N(N+1)}{2}$
for the push-block case \eqref{push_block_SDE}.

The right-pushing 
dynamics of Proposition \ref{prop:q_honest_Markov}
corresponds to the system of SDEs 
\eqref{right_SDE}
with $\boldsymbol h=(1,\ldots,1)$ ($N-1$ ones).

\medskip

\noindent
\textbf{(left-pulling `dynamics')}
The system corresponding to the left-pulling
fundamental
`dynamics' (Dynamics
\ref{dyn:Li}) depending on 
$\boldsymbol h
\in\{1\}\times
\{1,2\}\times \ldots\times\{1,2,\ldots,N-1\}$
is similar to \eqref{right_SDE}, and is as follows:
\begin{align}\label{left_SDE}	
	d\lalim^{(k)}_{j}
	=
	\begin{cases}
		dW^{(k)}_{j}+
		\big(-\alim_k+\elim_j
		-
		\flim_j
		\big)d\tlim,
		& j\ne h^{(k-1)}+1;\\\rule{0pt}{15pt}
		d\lalim^{(k-1)}_{j-1}+
		\big(\elim_{j}-\elim_{j-1}\big)d\tlim,&
		j=h^{(k-1)}+1,
	\end{cases}
\end{align}
$k=2,\ldots,N$, $j=1,\ldots,k$.

\medskip

\noindent
\textbf{(RSK-type `dynamics')}
The fundamental RSK-type `dynamics'
(Dynamics \ref{dyn:RS_type_fund})
depends on 
$\boldsymbol h\in\{1\}
\times\{1,2\}\times 
\ldots \times\{1,2,\ldots,N\}$, and corresponds 
to the following system of~SDEs:
\begin{align}\label{RS_SDE}
	d\lalim^{(k)}_{j}=
	\begin{cases}
		d\lalim^{(k-1)}_{j}+
		\big(\flim_{j+1}-\flim_j\big)d\tlim
		,& j<h^{(k)};\\\rule{0pt}{15pt}
		dW^{(k)}_{j}+
		\big(-\alim_k+\elim_j
		-
		\flim_j
		\big)d\tlim,& j=h^{(k)};\\\rule{0pt}{15pt}
		d\lalim^{(k-1)}_{j-1}+
		\big(\elim_{j}-\elim_{j-1}\big)d\tlim,& j>h^{(k)},
	\end{cases}
\end{align}
where $k=2,\ldots,N$, and $j=1,\ldots,k$.
Note that these SDEs use \emph{only} $N$ independent
Brownian motions, namely, $W^{(k)}_{h^{(k)}}$, 
$k=1,\ldots,N$.

For a fixed $\boldsymbol h$,
one can also see that 
the system of SDEs \eqref{RS_SDE}
corresponds to 
the 
$q$-Whittaker-multivariate
`dynamics' with 
deterministic move propagation
dictated by the
$\boldsymbol h$-insertion 
(see Proposition \ref{prop:q_h_insertion}).
Thus, 
for each $\boldsymbol h$
one has  
two $q$-Whittaker-multivariate 
`dynamics' 
resulting in the same 
SDEs~\eqref{RS_SDE}.

Moreover, in the particular case 
$\boldsymbol h=(1,2,\ldots,N)$, 
along with the fundamental
RSK-type `dynamics'
and the `dynamics'
$\bq^{(N)}_{\text{\textit{q-push}}}$
(which comes from Proposition \ref{prop:q_h_insertion}),
there is a third process which corresponds
to \eqref{RS_SDE}, namely,
Dynamics \ref{dyn:collapsed_OConnell}.

\begin{remark}
	Let $k=1,\ldots,N$.
	We note that under any of the systems of SDEs
	\eqref{push_block_SDE}--\eqref{RS_SDE},
	the coordinates
	$\lalim^{(k)}_{1},\ldots,\lalim^{(k)}_{k}$
	must
	evolve according to one and the same diffusion
	process in $\R^{k}$. This statement
	is parallel to the relation between multivariate
	and univariate
	dynamics on Macdonald processes
	(cf. \S \ref{sec:ascending_macdonald_processes} 
	and \S \ref{sec:multivariate_continuous_time_dynamics_on_interlacing_arrays_}).
	The univariate diffusions are related to the
	quantum Toda lattice Hamiltonian, cf. \cite{Oconnell2009_Toda}
	and \cite[\S 5.2]{BorodinCorwin2011Macdonald}.
\end{remark}


\subsubsection{Empiric argument for convergence} 
\label{ssub:informal_argument_for_convergence}

To illustrate how one can obtain SDEs 
from a $q$-Whittaker-multivariate `dynamics', 
let us consider one such `dynamics', 
namely, 
$\bq^{(N)}_{\text{\textit{q-row}}}$
(Dynamics~\ref{dyn:q_row}).
Take a small increment 
$d\tlim$
of the scaled time
$\tlim=\varepsilon^{2}\tau$ (see \eqref{q_to_1_scaling}). 
It corresponds
to a large time $\varepsilon^{-2}d\tlim$
spent by the 
dynamics at the
$q$-Whittaker level.
By \eqref{q_to_1_scaling},
the increments of $\boldsymbol\la$
and $\lablim$ 
during this time
are related as
\begin{align}\label{Delta_la_lalim}
	\underbrace{\la^{(k)}_{j}(\tau+\varepsilon^{-2}d\tlim)
	-\la^{(k)}_{j}(\tau)}_{\Delta \la^{(k)}_{j}}
	=\varepsilon^{-2}
	d\tlim + \varepsilon^{-1}
	\big(
	\underbrace{\lalim^{(k)}_{j}(\tlim+d\tlim)-
	\lalim^{(k)}_{j}(\tlim)}_{\Delta
	\lalim^{(k)}_{j}}
	\big).
\end{align}
On the other hand, we know 
the Markov evolution of $\la^{(k)}_{j}$.
If $j=1$,\footnote{That is, we speak about the 
rightmost particle. Note that these rightmost
particles under $\bq^{(N)}_{\text{\textit{q-row}}}$ 
evolve in a Markovian way as the $q$-PushTASEP, see
\S \ref{ssub:_q_pushtasep}.} 
then during the time $\varepsilon^{-2}d\tlim$
the independent jumps of $\la^{(k)}_{1}$,
will produce a Poisson increment with mean 
$a_k\varepsilon^{-2}d\tlim$;
under the scaling
\eqref{q_to_1_scaling}, 
this Poisson increment 
will turn into a Brownian motion with drift.

In addition to independent jumps, 
the particle
$\la^{(k)}_{1}$ is pushed by 
$\la^{(k-1)}_{1}$
with the following probability (see Dynamics \ref{dyn:q_row}):
\begin{align*}
	1-\frac{1-F_2}{T_1}=
	1- \frac{1-\varepsilon \flim_2}
	{1-\varepsilon \flim_2+\varepsilon \elim_1}
	+O(\varepsilon^{2})=\varepsilon \elim_1
	+O(\varepsilon^{2}).
\end{align*}
(here we used \eqref{E_F_expansions}).
One has to multiply 
this probability by the increment
of $\la^{(k-1)}_{1}$ which can be expressed using 
\eqref{Delta_la_lalim}:
\begin{align*}
	\big(\varepsilon \elim_1
	+O(\varepsilon^{2})
	\big)
	\big(
	\varepsilon^{-2}d\tlim+\varepsilon^{-1}
	\Delta \lalim^{(k-1)}_{1}
	\big)=
	\varepsilon^{-1}\elim_1 d\tlim
	+\elim_1 \cdot\Delta \lalim^{(k-1)}_{1}
	+O(\varepsilon).
\end{align*}
The only term which is relevant is the leading one,
$\varepsilon^{-1}\elim_1 d\tlim$.
Indeed, combining
the Poisson term with the above 
expression, 
we have
\begin{align*}
	\Delta\lalim^{(k)}_{1}
	=\frac{\Delta\la^{(k)}_{1}-\varepsilon^{-2}d\tlim}
	{\varepsilon^{-1}}=
	\frac{Z-\varepsilon^{-2}d\tlim}
	{\varepsilon^{-1}}
	+\elim_1 d\tlim+o(1),
\end{align*}
where $Z$ is the Poisson random variable
with mean $a_k\varepsilon^{-2}d\tlim$.
The first summand clearly gives the term
$dW^{(k)}_{1}-\alim_k d\tlim$, and the second one 
is equal to $\elim_1d\tlim=(\elim_1-\flim_1)d\tlim$
(cf. \eqref{F_Whittaker}).
In this way we get the first equation
of the desired system of SDEs
\eqref{RS_SDE}
with $\boldsymbol h=(1,\ldots,1)$.

All other equations
are obtained in a similar manner. Other particles
do not perform independent jumps, so 
only the equation for $d\lalim^{(k)}_{1}$ contains
the differential of a Brownian motion.


\subsubsection{Schematic pictures again. 
Geometric (tropical) RSK} 
\label{ssub:schematic_pictures_and_further_connections}

One can associate to any system
of SDEs \eqref{push_block_SDE}--\eqref{RS_SDE}
a schema\-tic
picture as on
Fig.~\ref{fig:big_rs}, \ref{fig:row_col}, 
and \ref{fig:pb_left_right}.
Namely, if at some level 
$k$, a particle $\la^{(k)}_{j}$
jumps independently (i.e., 
there are no 
dashed arrows ending at 
$\la^{(k)}_{j}$), then the corresponding
equation looks as 
$d\lalim^{(k)}_{j}=
dW^{(k)}_{j}+
\big(-\alim_k+\elim_j
-
\flim_j
\big)d\tlim$.
If $\la^{(k)}_{j}$
is at the end of a dashed arrow
pointing to the \emph{right}, 
then the equation has the form
$d\lalim^{(k)}_{j}=
d\lalim^{(k-1)}_{j}+
\big(\flim_{j+1}
-
\flim_j
\big)d\tlim$.
Finally, if an arrow points to the \emph{left} and 
ends at 
$\la^{(k)}_{j}$, then the equation is
$d\lalim^{(k)}_{j}=
d\lalim^{(k-1)}_{j-1}+
\big(\elim_{j}
-
\elim_{j-1}
\big)d\tlim$.

This observation
should be relevant to the
geometric (sometimes called tropical) 
Robinson--Schensted(--Knuth) correspondence
\cite{Kirillov2000_Tropical}, 
\cite{NoumiYamada2004}, 
\cite{Oconnell2009_Toda},
\cite{COSZ2011} (see, e.g., the beginning of
\S \ref{sub:_boldsymbol_h_robinson_schensted_correspondences}
for the explaination of RS/RSK terminology).
In particular, it should be 
possible to define 
$\boldsymbol h$-generalized
versions of the geometric
correspondence 
with the help of the SDEs \eqref{RS_SDE}.
This construction has to be in some sense parallel
(cf. \cite{NoumiYamada2004})
to what was done in 
\S \ref{sub:_boldsymbol_h_robinson_schensted_correspondences}
for the usual RS correspondences. 


\subsubsection{Involution} 
\label{ssub:symmetry}

Let us now discuss how the involution
described in 
\cite{Oconnell2009_Toda} and 
\cite[\S5.2.1]{BorodinCorwin2011Macdonald}
applies to our systems of SDEs
\eqref{push_block_SDE}--\eqref{RS_SDE}. 
This involution consists of two steps:
\begin{enumerate}[\bf{}1.]
	\item Change the sign of 
	the standard Brownian motions and of the drifts:
	\begin{align*}
		W^{(k)}_{j}\mapsto-W^{(k)}_{j},
		\qquad 
		\alim_k\mapsto -\alim_k,
		\qquad
		1\le j\le k\le N.
	\end{align*}
	\item Change the sign and 
	order of the $\lalim^{(k)}_j$'s 
	for fixed $k$:
	\begin{align*}
		\lalim^{(k)}_j\mapsto-
		\lalim^{(k)}_{k+1-j},
		\qquad
		1\le j\le k\le N.
	\end{align*}
\end{enumerate}
One can readily see (by definition
\eqref{F_Whittaker}--\eqref{E_Whittaker})
that this involution results in the following
swapping of the $\flim_j$'s and 
the $\elim_j$'s:
\begin{align*}
	\flim_j\mapsto \elim_{k+1-j},
	\qquad
	\elim_j\mapsto \flim_{k+1-j},
	\qquad
	1\le j\le k\le N.
\end{align*}

Under this involution, the system of SDEs
corresponding to the push-block multivariate
dynamics (``symmetric
dynamics'' of \cite[\S9]{Oconnell2009_Toda})
does not change.

The systems \eqref{right_SDE} and \eqref{left_SDE}
swap. More precisely, if 
\eqref{right_SDE} corresponds to the parameters
$\boldsymbol h=(h^{(1)},\ldots,h^{(N-1)})$, then
the involution takes this system to
\eqref{left_SDE} with another
$\boldsymbol h=(2-h^{(1)},3-h^{(2)},
\ldots,
k+1-h^{(k)},
,\ldots,
N-h^{(N-1)})$.

An RSK-type systems of SDEs 
\eqref{RS_SDE} stays RSK-type,
but changes its parameter 
$\boldsymbol h$
in the same way as above,
\begin{align*}
	(h^{(1)},\ldots,h^{(N)})
	\mapsto
	(2-h^{(1)},\ldots,k+1-h^{(k)},\ldots,N+1-h^{(N)}).
\end{align*}
In particular (as was implicitly observed 
in \cite[\S5.2]{BorodinCorwin2011Macdonald}), 
the ``row'' system with $\boldsymbol h=(1,1,\ldots,1)$
turns into the ``column'' one
with $\boldsymbol h=(1,2,\ldots,N)$.

In terms of schematic pictures 
(\S \ref{ssub:schematic_pictures_and_further_connections}), 
the involution
has a graphical interpretation: it
simply reflects the picture 
with respect to the vertical axis.


\subsubsection{Connection of $q$-PushTASEP to the O'Connell--Yor semi-discrete\\ directed polymer} 
\label{sub:connections_with_o_connell_yor_semi_discrete_directed_polymer}

Let us first briefly recall the definition of the 
O'Connell--Yor polymer partition 
function \cite{OConnellYor2001}, \cite{Oconnell2009_Toda}. 
Let $B_1,\ldots,B_N$ be $N$ independent
standard Brownian motions such that $B_i$ has drift $\blim_i$.
Let us take some $k=1,\ldots,N$, and
define for $0<s_1<\ldots<s_{k-1}<\tlim$:
\begin{align*}
	E_{s_1,\ldots,s_{k-1}}:=
	B_1(s_1)+
	\big(B_2(s_2)-B_2(s_1)\big)
	+\ldots+
	\big(B_k(\tlim)-B_k(s_{k-1})\big).
\end{align*}
This may be regarded as the energy of an
up-right path in $\R\times\Z$
from $(0,1)$ to $(\tlim,k)$ which either proceeds to the 
right or jumps up by one unit. Here $s_1,\ldots,s_{k-1}$
are moments of jumps.

The \emph{semi-discrete directed polymer partition function}
is given by
\begin{align*}
	\Zf^{(k)}(\tlim):=
	\int_{0<s_1<\ldots<s_{k-1}<\tlim} 
	e^{E_{s_1,\ldots,s_{k-1}}}ds_1 \ldots ds_{k-1},
\end{align*}
where the integral is taken over the $(k-1)$-dimensional
simplex $0<s_1<\ldots<s_{k-1}<\tlim$ with respect to 
the Lebesgue measure
$ds_1 \ldots ds_{k-1}$ on this simplex.
In fact, for each $k$, one gets its own
partition function $\Zf^{(k)}(\tlim)$,
so we have an hierarchy of $N$
partition functions. 
\begin{remark}\label{rmk:free_energy_big_hierarchy}
	One can 
	introduce even more partition functions 
	(indexed by $(k,j)$ with $1\le j\le k\le N$),
	cf.
	\cite{OConnellWarren2011}, 
	\cite{Oconnell2009_Toda}, and
	\cite[\S 5.2.1]{BorodinCorwin2011Macdonald}. 
	They correspond to taking several nonintersecting
	up-right paths.
\end{remark}

The \emph{free energies}
$\mathsf{F}^{(k)}(\tlim):=
\log(\Zf^{(k)}(\tlim))$, $k=1,2,\ldots,N$,
satisfy a certain system of SDEs. Let us explain how one can 
intuitively write down this system. The $\Zf^{(k)}$'s 
can be written in the following hierarchical form:
\begin{align*}
	\Zf^{(k)}(\tlim)=
	\int_{s_{k-2}}^{\tlim}ds_{k-1}\,
	e^{B_k(\tlim)-B_k(s_{k-1})}\,\Zf^{(k-1)}(s_{k-1}).
\end{align*}
Formally taking the $\tlim$ derivative, one gets
\begin{align*}
	d\Zf^{(k)}
	=\Zf^{(k-1)}d\tlim+
	\Zf^{(k)}d B_k=
	\Zf^{(k)}dW_k+
	\big(\blim_k\Zf^{(k)}+
	\Zf^{(k-1)}
	\big)d\tlim,
\end{align*}
where $W_k$ is the standard one-dimensional \emph{driftless}
Brownian motion.
Thus, the free energies satisfy the following
SDEs:
\begin{align}\label{Free_energy_SDEs}
	d\mathsf{F}^{(k)}=dW_k+
	\big(\blim_k+e^{\mathsf{F}^{(k-1)}-\mathsf{F}^{(k)}}\big)
	d\tlim,
	\qquad k=1,\ldots,N.
\end{align}

Clearly, these SDEs 
for the free energies
arise as parts 
corresponding to $\lalim^{(k)}_{1}$, $k=1,\ldots,N$,
of some of the systems
\eqref{push_block_SDE}--\eqref{RS_SDE}
(not all of them, in parallel to the fact that not all 
multivariate `dynamics' are left- or right-Markov, cf.
\S \ref{sub:taseps}).
Namely, one can find the SDEs \eqref{Free_energy_SDEs}
in the push-block system \eqref{push_block_SDE},
as well as in systems \eqref{right_SDE} and \eqref{RS_SDE}
for $\boldsymbol h$ consisting of all 1's (in both cases).
One should also set $\blim_k:=-\alim_k$.

The $q$-PushTASEP (\S \ref{ssub:_q_pushtasep}) 
is a proper discretization of 
the system \eqref{Free_energy_SDEs} in the sense that
under the scaling \eqref{q_to_1_scaling}, the evolution
of $q$-PushTASEP converges to 
the diffusions \eqref{Free_energy_SDEs}.
This may be seen from the
argument in \S \ref{ssub:informal_argument_for_convergence}
for the rightmost particles.

To observe connections with $q$-TASEP, one should
consider a larger hierarchy of free energies
(cf. Remark \ref{rmk:free_energy_big_hierarchy}).
We refer to \cite[\S 5.2.1]{BorodinCorwin2011Macdonald}
for this connection.




\appendix

\section{Macdonald polynomials and related objects} 
\label{sec:macdonald_processes}

In the appendix, we recall the definitions of symmetric 
functions, Macdonald polynomials, and other related objects. 
To make the presentation self-contained, we will list all the 
necessary facts and formulas along the way. Our exposition is 
based on \cite{Macdonald1995} (especially on Chapter VI); 
some parts of it closely 
follow \cite[\S2]{BorodinCorwin2011Macdonald}.

\subsection{Symmetric functions. Specializations} 
\label{sub:symmetric_functions}

Let $\Sym$ denote the 
\emph{algebra of symmetric functions}. 
The detailed definition and properties of 
$\Sym$ may be found in \cite[I.2]{Macdonald1995}. 
Here we will list facts that are 
important for the present paper. 

We understand $\Sym$ as a commutative algebra $\R[p_1,p_2,\ldots]$ which is generated by 1 and by the (algebraically independent) power sums
\begin{align*}
	p_k(x_1,x_2,\ldots)=\sum_{i=1}^{\infty}x_i^{k},\qquad
	k=1,2,\ldots.
\end{align*}
The products of power sums $p_\la:=p_{\la_1}\ldots p_{\la_\ell(\la)}$, where $\la$ runs over the set $\GT^+$ of all partitions (with the agreement $p_\varnothing=1$), form a linear basis in $\Sym$. The algebra $\Sym$ possesses a natural grading determined by setting $\deg p_k=k$, $k=1,2,\ldots$.

By a \emph{specialization} of the algebra $\Sym$ we mean an algebra homomorphism $\rho\colon\Sym\to\R$. Such a map is completely determined by its values $\rho(p_k)$ on the power sums. The \emph{trivial} specialization $\varnothing$ is defined as taking value 1 at the constant function $1\in\Sym$ and sending all the power sums $p_k$, $k\ge1$, to zero.

For two specializations $\rho_1$ and $\rho_2$ we define their \emph{union} $\rho=(\rho_1,\rho_2)$ as the specialization defined on power sums as
\begin{align*}
p_k(\rho_1,\rho_2)=p_k(\rho_1)+p_k(\rho_2), \qquad k\ge 1.
\end{align*}

Important examples of specializations are the so-called \emph{finite length specializations}. Fix $N\ge1$ and let $y_1,\ldots,y_N$ be real numbers (we may also treat $y_i$'s as formal variables). Set
\begin{align*}
	\rho_{y_1,\ldots,y_N}(p_k)
	=p_k(y_1,\ldots,y_N):=y_1^{k}+\ldots+y_N^{k}.
\end{align*}
This specialization turns $\Sym$ into the algebra of symmetric polynomials in $N$ variables $y_1,\ldots,y_N$. 

In fact, every symmetric function $f\in\Sym$ can be understood as a sequence of symmetric polynomials $f_N(y_1,\ldots,y_N)$, $N=1,2,\ldots$, in $N$ variables of bounded degree (i.e., $\sup_N\deg f_N<\infty$), which are compatible with the operation of setting the last variable to zero: $f_{N+1}(y_1,\ldots,y_N,0)=f_{N}(y_1,\ldots,y_N)$. We have $f_N(y_1,\ldots,y_N)=\rho_{y_1,\ldots,y_N}(f)$.

The finite length specializations suggest the notation: 
for a symmetric function $f\in\Sym$ and 
a specialization $\rho$ 
we will often write $f(\rho)$ instead of $\rho(f)$.


\subsection{Macdonald symmetric functions} 
\label{sub:macdonald_symmetric_functions}

Macdonald symmetric functions form a remarkable two-parameter family of symmetric functions depending on parameters $q,t\in[0,1)$ 
(there parameters can also be considered formal). They are indexed by all partitions $\la\in\GT^+$ and may be defined as follows \cite[VI.4]{Macdonald1995}. Define first the scalar product $\langle\cdot,\cdot\rangle_{q,t}$ on $\Sym$ by
\begin{align*}
	\langle p_\la,p_\mu\rangle_{q,t}=\delta_{\la\mu}
	z_\la(q,t),\qquad
	z_\la(q,t):=\bigg(\prod_{i\ge1}i^{m_i}(m_i)!\bigg)
	\cdot
	\bigg(\prod_{i=1}^{\ell(\la)}
	\frac{1-q^{\la_i}}{1-t^{\la_i}}\bigg),
\end{align*}
where $\la=(1^{m_1}2^{m_2}\ldots)$ means that $\la$ has $m_1$ parts equal to 1, $m_2$ parts equal to 2, etc.

\begin{definition}
	The \emph{Macdonald symmetric functions} 
	$P_\la(x;q,t)$, $\la\in\GT^+$, 
	form a unique family of 
	homogeneous symmetric functions such that:
	\begin{enumerate}[\bf{}1.]
		\item The functions are pairwise 
		orthogonal with respect to 
		the scalar product $\langle\cdot,\cdot\rangle_{q,t}$.
		\item For every $\la$, we have
		\begin{align*}
			P_\la(x;q,t)=
			x_1^{\la_1}\ldots x_{\ell(\la)}^{\la_{\ell(\la)}}{}+
			{}\mbox{lower monomials in lexicographic order}.
		\end{align*}
		The dependence on the parameters $(q,t)$ 
		is in coefficients of the 
		lexicographically 
		lower monomials.\footnote{Lexicographic order 
		means that, for example, $x_1^{2}$ is higher 
		than $\mathrm{const}\cdot x_1x_2$ 
		which is in turn higher 
		than $\mathrm{const}\cdot x_2^{2}$.}
	\end{enumerate}
\end{definition}
Define 
\begin{align*}
	Q_\la:=
	\frac{P_\la}{\langle P_\la,P_\la \rangle_{q,t}},
	\qquad \la\in\GT^+,
\end{align*}
so that the functions $P_\la$ and $Q_\mu$ are 
orthonormal. 
(We will sometimes omit 
the notation $(q,t)$, 
and simply write $P_\la(x)$ 
or $P_\la$ instead of $P_\la(x;q,t)$;  
similarly for $Q_\la$.)

The \emph{Macdonald polynomials} are finite length specializations of the Macdonald symmetric functions:
\begin{align}\label{Macd_poly}
	P_\la(x_1,\ldots,x_N)=\rho_{x_1,\ldots,x_N}(P_\la),\qquad
	\la\in\GT^+.
\end{align}
If $N<\ell(\la)$, then $P_\la(x_1,\ldots,x_N)=0$. 

\begin{remark}\label{rmk:Macdonald_negative_sign}
	The Macdonald polynomials possess the following \emph{index shift property}: 
	\begin{align*}
		(x_1\cdot\ldots\cdot x_N)\cdot 
		P_\la(x_1,\ldots,x_N)=
		P_{\la+1}(x_1,\ldots,x_N), 
	\end{align*}
	where $\ell(\la)=N$, and $\la+1$ is the partition $(\la_1+1,\ldots,\la_N+1)$. Using this property, we may define the Macdonald symmetric polynomials $P_\la$ in $N$ variables $x_1,\ldots,x_N$ for every $\la\in\GT_N$ (i.e., for not necessarily nonnegative signatures). If the signature $\la$ has negative parts, then $P_\la$ is a Laurent polynomial.
\end{remark}

\emph{The $q$-Whittaker symmetric functions} are simply the Macdonald symmetric functions with $t=0$. We will sometimes denote them by $P_\la(x;q,t=0)$. Their name comes from the fact that the Macdonald polynomials in $\ell+1$ variables with $t=0$ are the $q$-deformed $\mathfrak{gl}_{\ell+1}$ Whittaker functions \cite{GerasimovLebedevOblezin2011}. 

When $q=t$, the Macdonald symmetric 
functions turn into the 
Schur symmetric functions $s_\la$. 
This is also true for the $q$-Whittaker functions: 
when $q=0$, they become the Schur functions. 

Other remarkable special cases of the 
Macdonald symmetric functions include the 
Hall-Little\-wood symmetric functions ($q=0$) 
and the Jack symmetric functions ($t=q^{\theta}$ and $q\to1$). 

In the main part of the present paper we focus on the 
general Macdonald case ($0<q,t<1$) and on
its degenerations to the $q$-Whittaker case
($t=0$), and further to the Schur ($q=t=0$) case.

\begin{remark}\label{rmk:q_whitt}
	In the appendix 
	in \S\S \ref{sub:symmetric_functions}--\ref{sub:commuting_markov_operators}
	we will write formulas and give definitions for the general Macdonald case only, i.e., with parameters $(q,t)$. The corresponding definitions and properties in the $q$-Whittaker case are obtained by taking the limits as $t\to0$, which exist and are readily written out. See also \cite[\S3.1]{BorodinCorwin2011Macdonald} for more references and for properties which are specific to the $q$-Whittaker functions.

	In 
	\S \ref{sub:schur_appendix}
	we discuss Schur functions and related 
	objects.
\end{remark}


\subsection{Skew shapes and skew semistandard tableaux} 
\label{sub:skew_shapes_and_semistandard_tableaux}

For two Young diagrams $\la,\mu\in\GT^+$ such that $\mu\subseteq\la$, the \emph{skew shape} $\la/\mu$ is defined as the set difference $\la\setminus\mu$. If $\mu=\varnothing$, one has $\la/\varnothing=\la$. There are two particular cases of skew shapes which are of interest: 
\begin{enumerate}[\bf{}1.]
	\item \emph{Horizontal strip} is a skew shape having no more than one box in every column. The fact that $\la/\mu$ is a horizontal strip means precisely that $\mu$ and $\la$ interlace: $\mu\prec\la$.\footnote{Here, by agreement, we choose $N$ so large that $\mu\in\GT_{N-1}^{+}$ and $\la\in\GT_N^{+}$ (recall that we may append nonnegative signatures by zeroes, see \S \ref{sub:signatures_young_diagrams_and_interlacing_arrays}).}
	\item \emph{Vertical strip} is a skew shape having no more than one box in every row. A skew shape $\la/\mu$ is a vertical strip iff the transposed skew shape $\la'/\mu'$ is a horizontal strip.\footnote{The transposition $\la\mapsto\la'$ is defined for any Young diagram $\la\in\GT^+$; it interchanges rows and columns of this Young diagram.}
\end{enumerate}

Let us extend the definition of a 
semistandard Young tableau 
(\S \ref{sub:semistandard_young_tableaux}) 
to skew shapes. 
We will use the identification of semistandard
tableaux with interlacing integer arrays 
(Proposition \ref{prop:SSYT}). 
A \emph{semistandard Young tableau} 
of skew shape $\la/\mu$ over the alphabet 
$\{1,\ldots,k\}$, where $\mu,\la\in\GT^+$ and $k\ge1$, 
can be defined as a sequence 
of interlacing nonnegative signatures
\begin{align}\label{trapezoidal_GT_schemes}
	\mu=\nu^{(N-k)}\prec\nu^{(N-k+1)}\prec
	\ldots\prec\nu^{(N)}=\la,
\end{align}
where $N$ is so large that 
$\mu\in\GT_{N-k}^{+}$ and $\la\in\GT_N^{+}$, 
and $\nu^{(j)}\in\GT_j^{+}$. 
Clearly, a semistandard tableau of skew shape $\la/\mu$
can be also viewed as 
an interlacing integer array 
(as on Fig.~\ref{fig:GT_scheme}) 
of trapezoidal shape having depth $k$, 
top row $\la$, and bottom row~$\mu$. 
For $\mu=\varnothing$ we return to the situation 
described in \S \ref{sub:semistandard_young_tableaux}.


To every skew semistandard Young tableau $\Ptab$ as in \eqref{trapezoidal_GT_schemes} one can associate a monomial as follows:
\begin{align}\label{tableau_monomial}
	x^{\Ptab}:=x_1^{|\nu^{(N-k+1)}|-|\nu^{(N-k)}|}
	x_2^{|\nu^{(N-k+2)}|-|\nu^{(N-k+1)}|}
	\ldots
	x_k^{|\nu^{(N)}|-|\nu^{(N-1)}|}.
\end{align}

\begin{remark}\label{rmk:negative_skew_tableaux}
	The above definition of a semistandard tableau can be extended to the following two cases allowing negative parts in signatures:
	
	\textbf{1.} If $\mu\in\GT_{N-k}$ and $\la\in\GT_N$, then semistandard tableaux of depth~$k$ (i.e., over the alphabet $\{1,\ldots,k\}$) are well-defined as sequences of interlacing signatures \eqref{trapezoidal_GT_schemes} (now these signatures are not necessarily nonnegative).

	\textbf{2.} If $\mu,\la\in\GT_N^{+}$, then one readily sees that the skew shape (i.e., the set difference of Young diagrams) $\la/\mu$ is the same as $(\la+1)/(\mu+1)$ (cf. Remark~\ref{rmk:Macdonald_negative_sign}). This allows to define $\la/\mu$ for every $\la,\mu\in\GT_N$ such that $\mu\subseteq\la$ (i.e., $\mu_i\le\la_i$ for all $i=1,\ldots,N$). We can also define the corresponding semistandard tableaux of shape $\la/\mu$ via definition \eqref{trapezoidal_GT_schemes} that worked for nonnegative signatures.

	In both these cases it is also 
	clear how to assign a monomial \eqref{tableau_monomial}
	to each semistandard skew tableau.
\end{remark}


\subsection{Skew Macdonald functions and polynomials} 
\label{sub:skew_functions}

\begin{definition}
	A \emph{skew Macdonald symmetric function} $Q_{\la/\mu}$ indexed by $\mu,\la\in\GT^+$ is defined as the only symmetric function such that $\langle Q_{\la/\mu},P_\nu\rangle_{q,t}=\langle Q_{\la},P_\mu P_\nu\rangle_{q,t}$ for all $\nu\in\GT^+$. 

	The $P$ version is then defined through $Q_{\la/\mu}$ 
	as $P_{\la/\mu}:=
	\dfrac{\langle P_\la,P_\la\rangle_{q,t}}
	{\langle P_\mu,P_\mu\rangle_{q,t}}Q_{\la/\mu}$.
\end{definition}
The skew functions vanish unless $\mu\subseteq\la$, i.e., unless $\mu_j\le \la_j$ for all $j=1,\ldots,\ell(\la)$. One also has $P_{\la/\varnothing}=P_\la$ and $Q_{\la/\varnothing}=Q_\la$.

There are combinatorial formulas for the skew functions $P_{\la/\mu}$ and $Q_{\la/\mu}$ expressing them as sums over semistandard Young tableaux of skew shape~$\la/\mu$. Let us consider specializations into finitely many variables $x_1,\ldots,x_k$ (which completely determine the corresponding symmetric functions, cf. \S \ref{sub:symmetric_functions}). We have \cite[VI.7]{Macdonald1995}
\begin{align}\label{combinatorial_skew_Macdonald}
	P_{\la/\mu}(x_1,\ldots,x_k)=\sum_{\Ptab}\psi_\Ptab x^{\Ptab}
	,\qquad
	Q_{\la/\mu}(x_1,\ldots,x_k)=\sum_{\Ptab}\varphi_\Ptab 
	x^{\Ptab}.
\end{align}
Both sums are taken over all semistandard 
Young tableaux of shape $\la/\mu$ over 
the alphabet $\{1,\ldots,k\}$ \eqref{trapezoidal_GT_schemes}, 
and $x^{\Ptab}$ is defined in \eqref{tableau_monomial}. 
If there are no such tableaux, then 
the corresponding polynomials are zero. 
The coefficients $\psi_\Ptab$ and $\varphi_\Ptab$ are 
defined via the following two steps:

\textbf{1.} For interlacing partitions $\ka\prec\nu$, $\ka,\nu\in\GT^+$, we set 
\begin{align}\label{psi_horizontal_strip_Macd}
	\psi_{\nu/\ka}=\psi_{\nu/\ka}(q,t)&=\prod_{1\le i\le j\le \ell(\ka)}
	\frac{f(q^{\ka_i-\ka_j}t^{j-i})f(q^{\nu_i-\nu_{j+1}}t^{j-i})}
	{f(q^{\nu_i-\ka_j}t^{j-i})f(q^{\ka_i-\nu_{j+1}}t^{j-i})},\\
	\label{phi_horizontal_strip_Macd}
	\varphi_{\nu/\ka}=\varphi_{\nu/\ka}(q,t)&=\prod_{1\le i\le j\le \ell(\nu)}\frac{f(q^{\nu_i-\nu_j}t^{j-i})f(q^{\ka_i-\ka_{j+1}}t^{j-i})}
	{f(q^{\nu_i-\ka_j}t^{j-i})f(q^{\ka_i-\nu_{j+1}}t^{j-i})},
\end{align}
where $f(u):=(tu;q)_{\infty}/(qu;q)_{\infty}$, and the (infinite) $q$-Pochhammer symbol is defined as 
\begin{align*}
	(a;q)_{\infty}:=\prod_{i=0}^{\infty}(1-aq^{i})=(1-a)(1-aq)(1-aq^{2})\ldots.
\end{align*}
Recall that $0\le q<1$, so the infinite product converges.

\textbf{2.} For a semistandard Young tableau \eqref{trapezoidal_GT_schemes} of shape $\la/\mu$ we set
\begin{align*}
	\psi_\Ptab=
	\psi_{\nu^{(N-k+1)}/\nu^{(N-k)}}
	\psi_{\nu^{(N-k+2)}/\nu^{(N-k+1)}}
	\ldots
	\psi_{\nu^{(N)}/\nu^{(N-1)}},
\end{align*}
and similarly for $\varphi_\Ptab$.
\begin{remark}
	[cf. Remarks \ref{rmk:Macdonald_negative_sign} and \ref{rmk:negative_skew_tableaux}]
	\label{rmk:negative_skew_functions}
	\textbf{1.}	If, say, $\ka\in\GT_{m-1}$ and $\nu\in\GT_m$ (not necessarily nonnegative signatures), then the quantities $\psi_{\nu/\ka}$ and $\varphi_{\nu/\ka}$ are still well-defined by \eqref{psi_horizontal_strip_Macd}--\eqref{phi_horizontal_strip_Macd}; one should simply replace $\ell(\ka)$ by $m-1$ and $\ell(\la)$ by $m$. Moreover, $\psi_{\nu/\ka}$ and $\varphi_{\nu/\ka}$ are translation-invariant: they do not change if one replaces $\ka$ and $\nu$ by $\ka+1$ and $\nu+1$, respectively.

	Thus, if $\la\in\GT_N$ and $\mu\in\GT_{N-k}$, we may define the (in general, Laurent) polynomials $P_{\la/\mu}(x_1,\ldots,x_k)$ and $Q_{\la/\mu}(x_1,\ldots,x_k)$ by \eqref{combinatorial_skew_Macdonald}. (Note that Remark \ref{rmk:Macdonald_negative_sign} is a particular case of this definition when $\mu=\varnothing$.)

	\textbf{2.}
	If $\mu,\la\in\GT_N^{+}$, we have $P_{\la/\mu}=P_{\la+1/\mu+1}$, and same for $Q_{\la/\mu}$. Thus, we may define the skew (ordinary, not Laurent) polynomials $P_{\la/\mu}$ and $Q_{\la/\mu}$ in any number of variables for not necessarily nonnegative $\la,\mu\in\GT_N$. 
	They vanish unless $\mu_i\le \la_i$ for all $i$. 
	This implies that the symmetric 
	functions $P_{\la/\mu},Q_{\la/\mu}\in\Sym$ 
	are also well-defined in this case.
\end{remark}

In particular, for any $\mu\in\GT_{N-1}$ and $\la\in\GT_N$ one has
\begin{align}\label{P_one_variable}
	P_{\la/\mu}(x_1)=
	\begin{cases}
		\psi_{\la/\mu}x_1^{|\la|-|\mu|},&\mbox{$\la/\mu$ is a horizontal strip},\\
		0,&\mbox{otherwise},
	\end{cases}
\end{align}
and 
\begin{align}\label{Q_one_variable}
	Q_{\la/\mu}(x_1)=
	\begin{cases}
		\varphi_{\la/\mu}x_1^{|\la|-|\mu|},&\mbox{$\la/\mu$ is a horizontal strip},\\
		0,&\mbox{otherwise}.
	\end{cases}
\end{align}


\subsection{Macdonald-nonnegative specializations} 
\label{sub:nonnegative_specializations}

A specialization $\rho$ of the algebra of symmetric functions $\Sym$ (\S \ref{sub:symmetric_functions}) is said to be \emph{Macdonald nonnegative} if it takes nonnegative values on all skew Macdonald symmetric functions: $P_{\lambda/\mu}(\rho;q,t)\ge 0$ for any partitions $\la,\mu\in\GT^+$.

There is no known classification of Macdonald-non\-ne\-ga\-tive specializations.\footnote{The answer for the $q$-Whittaker case is also unknown. On the other hand, specializations taking nonnegative values on Jack (and, in particular, Schur) symmetric functions are completely described: see \cite{Kerov1998} and references therein.} A wide class of nonnegative specializations was considered by Kerov \cite[II.9]{Kerov-book}. He conjectured that they exhaust all possible nonnegative specializations. 

These specializations depend on nonnegative parameters $\{\al_i\}_{i\ge1}$, $\{\be_i\}_{i\ge1}$ and $\ga$ such that $\sum_{i=1}^{\infty}(\al_i+\be_i)<\infty$. They are defined on the power sums via the exponent of a generating function (in the formal variable $u$) as follows:
\begin{align}\label{Pi_nonneg_spec}
	\exp\bigg(
	\sum_{n=1}^{\infty}\frac{1}{n}
	\frac{1-t^{n}}{1-q^{n}}p_n(\rho)u^n
	\bigg)=
	\exp(\gamma u) \prod_{i\ge 1} \frac{(t\alpha_iu;q)_\infty}{(\alpha_i u;q)_\infty}\,(1+\beta_i u)=: \Pi(u;\rho).
\end{align}
In more detail, this means that
\begin{align*}
	p_1(\rho)&=\sum_{i\ge1}\al_i+\bigg(\ga+\sum_{i\ge1}\be_i\bigg)\frac{1-q}{1-t},\qquad \qquad
	p_k(\rho)&=\sum_{i\ge1}\al_i^{k}+
	(-1)^{k-1}\frac{1-q^{k}}{1-t^{k}}\sum_{i\ge1}\be_i^{k},
\end{align*}
where $k=2,3,\ldots$.
It can be verified that \eqref{Pi_nonneg_spec} defines a Macdonald-nonnegative specialization, cf. \cite[Prop. 2.2.2]{BorodinCorwin2011Macdonald}.

When $\ga=0$, all $\be_i=0$, and only finitely many of the $\al_i$'s are nonzero, then the specialization defined by \eqref{Pi_nonneg_spec} is reduced to a finite length specialization discussed in \S \ref{sub:symmetric_functions}.

We will refer to $\be_i$ as to \emph{dual variables}. We will often denote by $\hat \be_1$ the specialization with a singe nonzero dual variable $\be_1>0$ and with $\ga=0$, $\al_1=\al_2=\ldots=0$, $\be_2=\be_3=\ldots=0$.

The specialization with $\al_j=\be_j=0$ for all $j$ and $\ga\ge0$ will be called \emph{Plancherel} and denoted by $\rho_\ga$.


\subsection{Endomorphism $\omega_{q,t}$ and dual specializations} 
\label{sub:endomorphism_q,t_}

There is an endomorphism of the algebra $\Sym$ of symmetric functions which is defined on the power sums as \cite[VI]{Macdonald1995}
\begin{align*}
	\omega_{q,t}p_k:=(-1)^{k-1}\frac{1-q^{k}}{1-t^{k}}p_k,\qquad k=1,2,\ldots.
\end{align*}
We have $\omega_{q,t}\omega_{t,q}=\mathrm{id}$, and 
\begin{align*}
	\omega_{q,t}P_{\la/\mu}(x;q,t)
	=Q_{\la'/\mu'}(x;t,q),\qquad
	\omega_{q,t}Q_{\la/\mu}(x;q,t)=P_{\la'/\mu'}(x;t,q),
\end{align*}
where $\mu,\la\in\GT^+$ and $\mu'$ and $\la'$ are the transposed Young diagrams.

One readily sees that applying the endomorphism $\omega_{t,q}\colon \Sym\to\Sym$ and then a $(q,t)$-Macdonald-nonnegative specialization $\Sym\to\C$, one gets another specialization which is now $(t,q)$-Macdonald-nonnegative:
\begin{align*}
	\rho\{\al,\be;\ga\mid q,t\}
	\circ\omega_{t,q}=
	\rho\{\be,\al;\tfrac{1-q}{1-t}\ga\mid t,q\}.
\end{align*}
Here by $\rho\{\al,\be;\ga\mid q,t\}$ we have denoted the specialization defined by \eqref{Pi_nonneg_spec}. This observation implies that under the single beta specialization $\hat\be_1$ one has 
\begin{align}\label{P_one_dual_variable}
	P_{\la/\mu}(\hat\be_1)=
	\begin{cases}
		\varphi'_{\la/\mu}\be_1^{|\la|-|\mu|},&\mbox{$\la/\mu$ is a vertical strip},\\
		0,&\mbox{otherwise},
	\end{cases}
\end{align}
and 
\begin{align}\label{Q_one_dual_variable}
	Q_{\la/\mu}(\hat\be_1)=
	\begin{cases}
		\psi'_{\la/\mu}\be_1^{|\la|-|\mu|},&\mbox{$\la/\mu$ is a vertical strip},\\
		0,&\mbox{otherwise}.
	\end{cases}
\end{align}
Here $\varphi'_{\la/\mu}(q,t):=\varphi_{\la'/\mu'}(t,q)$ and $\psi'_{\la/\mu}(q,t):=\psi_{\la'/\mu'}(t,q)$, and the quantities $\psi_{\nu/\ka}$ and $\varphi_{\nu/\ka}$ are given in \eqref{psi_horizontal_strip_Macd} and \eqref{phi_horizontal_strip_Macd}, respectively.

Thus defined ``dual'' quantities $\varphi'_{\la/\mu}$ and $\psi'_{\la/\mu}$ make sense if $\la,\mu\in\GT^+$ are nonnegative signatures. In particular, for $\la/\mu$ a vertical strip,
\begin{align}\label{psi_prime_vertical}
	\psi'_{\lambda/\mu}=
	\prod_{\substack{i<j\\ 
	\lambda_i=\mu_i,\lambda_j=\mu_{j}+1}}
	\frac{(1-q^{\mu_i-\mu_j}t^{j-i-1})
	(1-q^{\lambda_i-\lambda_j}t^{j-i+1})}
	{(1-q^{\mu_i-\mu_j} t^{j-i})(1-q^{\lambda_i-\lambda_j}t^{j-i})}.
\end{align}
Moreover, as in Remark \ref{rmk:negative_skew_functions}.2, using the obvious translation invariance $\psi'_{\lambda/\mu}=\psi'_{\lambda+1/\mu+1}$ (and same for $\varphi'_{\la/\mu}$), one may define $\psi'_{\lambda/\mu}$ and $\varphi'_{\lambda/\mu}$ for $\la,\mu\in\GT_N$ (i.e., for not necessarily nonnegative signatures, which, however, must have the same length). 

Let us consider a special case when $\la\in\GT_N$ differs from $\mu\in\GT_N$ as
\begin{align}\label{add_one_box}
	\la_j=\mu_j+1\mbox{ for some $j=1,\ldots,N$, and $\la_i=\mu_i$ for $i\ne j$}.
\end{align}
We will denote this relation by $\la=\mu+\de_j$. If $\la,\mu\in\GT_N^+$, then $\la=\mu+\de_j$ means that the Young diagram $\la$ is obtained from $\mu$ by adding one box to the $j$th row.

For $\la,\mu\in\GT_N$ with $\la=\mu+\de_j$ one can check that 
\begin{align}\label{psi_prime_one_box}
	\psi'_{\la/\mu}=\frac{1-q}{1-t}
	\varphi_{\la/\mu}=
	\prod_{i=1}^{j-1}
	\frac{(1-q^{\mu_i-\mu_j}t^{j-i-1})
	(1-q^{\la_i-\la_j}t^{j-i+1})}
	{(1-q^{\mu_i-\mu_j}t^{j-i})
	(1-q^{\la_i-\la_j}t^{j-i})}.
\end{align}


\subsection{Identities} 
\label{sub:identities}

Here we collect a number of useful 
formulas concerning skew and ordinary 
Macdonald symmetric functions.

Let $\rho_1$ and $\rho_2$ be two 
Macdonald-nonnegative\footnote{Most formulas in 
this subsection work without assuming 
Macdonald nonnegativity: 
one could instead take finite-length 
specializations at arbitrary variables.} 
specializations of the algebra of 
symmetric functions given by \eqref{Pi_nonneg_spec}. 
We will assume that these specializations are such 
that all expressions of the form $\Pi(\cdot;\cdot)$ 
(defined in~\eqref{Cauchy}) in this subsection are finite.

\subsubsection{Cauchy identity} 
\label{ssub:cauchy_identity}

We have the following identity \cite[VI.2]{Macdonald1995}:
\begin{align}\label{Cauchy}
	\sum_{\la\in\GT^+}
	P_\la(\rho_1)Q_\la(\rho_2)=
	\exp\bigg(
	\sum_{n=1}^{\infty}
	\frac{1}{n}\frac{1-t^{n}}{1-q^{n}}
	p_n(\rho_1)p_n(\rho_2)
	\bigg):=\Pi(\rho_1;\rho_2).
\end{align}
If one of the specializations, say, $\rho_1$, is a finite length specialization at the variables $a_1,\ldots,a_N$ (cf. \S \ref{sub:symmetric_functions}), then one has
\begin{align}\label{Cauchy_finite_length}
	\sum_{\la\in\GT^+}
	P_\la(a_1,\ldots,a_N)Q_\la(\rho_2)
	=\Pi(a_1;\rho_2)\ldots\Pi(a_N;\rho_2).
\end{align}
Here $\Pi(u;\rho)$ is defined in \eqref{Pi_nonneg_spec}.

It is clear from \eqref{Cauchy} that $\Pi(\rho_1;\rho_2)=\Pi(\rho_2;\rho_1)$. Moreover, if $\rho_1$ is a union of two specializations $\rho_1'$ and $\rho_1''$ (see \S \ref{sub:symmetric_functions}), then one has
\begin{align*}
	\Pi(\rho_1',\rho_1'';\rho_2)=
	\Pi(\rho_1';\rho_2)
	\Pi(\rho_1'';\rho_2).
\end{align*}
Because of that, we will
always write 
$\Pi(a_1,\ldots,a_N;\rho)$ 
instead of the product 
$\Pi(a_1;\rho)\ldots\Pi(a_N;\rho)$.


\subsubsection{Skew Cauchy identity} 
\label{ssub:skew_cauchy_identity}

We have \cite[VI.7]{Macdonald1995} for any fixed $\la,\nu\in\GT^+$:
\begin{align}\label{skew_Cauchy}
	\sum_{\ka\in\GT^+}
	P_{\ka/\la}(\rho_1)
	Q_{\ka/\nu}(\rho_2)
	=\Pi(\rho_1;\rho_2)
	\sum_{\mu\in\GT^+}
	Q_{\la/\mu}(\rho_2)P_{\nu/\mu}(\rho_1).
\end{align}
Note that the sum in the left-hand side 
is infinite while in the right-hand side
we have a finite summation over diagrams $\mu$
which must be inside both $\la$ and $\nu$.
In particular, for $\la=\varnothing$ the 
sum over $\mu$ in the right-hand side 
consists of a single 
summand corresponding to $\mu=\varnothing$, and one has
\begin{align}\label{skew_Cauchy_particular}
	\sum_{\ka\in\GT^+}
	P_{\ka}(\rho_1)
	Q_{\ka/\nu}(\rho_2)
	=\Pi(\rho_1;\rho_2)
	P_{\nu}(\rho_1).
\end{align}
Similarly,
\begin{align}\label{skew_Cauchy_particular_2}
	\sum_{\ka\in\GT^+}
	P_{\ka/\la}(\rho_1)
	Q_{\ka}(\rho_2)
	=\Pi(\rho_1;\rho_2)
	Q_{\la}(\rho_1).
\end{align}


\subsubsection{``Recurrence'' of skew Macdonald functions} 
\label{ssub:_recurrence_of_skew_macdonald_functions}

One has the following ``recurrence'' properties of the skew Macdonald functions \cite[VI.7]{Macdonald1995}:
\begin{align}\label{recurrence_skew_general_P}
	P_{\nu/\mu}(\rho_1,\rho_2)&=
	\sum_{\la\in\GT^+}
	P_{\nu/\la}(\rho_1)P_{\la/\mu}(\rho_2),\\
	Q_{\nu/\mu}(\rho_1,\rho_2)&=
	\sum_{\la\in\GT^+}
	Q_{\nu/\la}(\rho_1)Q_{\la/\mu}(\rho_2).
	\label{recurrence_skew_general_Q}
\end{align}
Here $\mu,\nu\in\GT^+$ are fixed.

Moreover, according to Remark \ref{rmk:negative_skew_functions}.1, we have for any (not necessarily nonnegative) signatures $\ka\in\GT_k$ and $\nu\in\GT_N$:
\begin{align}\label{recurrence_skew_negative}
	P_{\nu/\ka}(a_{N},a_{N-1},\ldots,a_{k+1})
	=\sum_{\la\in\GT_m}
	P_{\nu/\la}
	(a_N,\ldots,a_{m+1})
	P_{\la/\mu}(a_m,\ldots,a_{k+1})
\end{align}
for any fixed intermediate $m$, $k<m<N$ (see also Fig.~\ref{fig:GT_scheme}). A similar identity holds for the $Q$-functions.



\subsection{Commuting Markov operators and Macdonald measures} 
\label{sub:commuting_markov_operators}

This part of the appendix recalls some definitions 
from \cite[\S2.3.1]{BorodinCorwin2011Macdonald}. 
Our aim here is to describe Markov operators preserving 
the class of Macdonald measures on partitions, 
and list certain commutation relations which they satisfy.

We will consider \emph{Macdonald measures} living on the set of nonnegative signatures of length $k$:
\begin{align}\label{MMeasure_appendix}
	\M\M(a_1,\ldots,a_k;\rho)(\la)= \frac{P_{\lambda}(a_1,\ldots,a_k) Q_{\lambda}(\rho)} {\Pi(a_1,\ldots,a_k;\rho)},
	\qquad\la\in\GT_k^{+}.
\end{align}
These measures depend on positive parameters $a_1,\ldots,a_k$ and on a Mac\-do\-nald-nonnegative specialization $\rho$. We also assume that the normalizing constant $\Pi(a_1,\ldots,a_k;\rho)<\infty$. 

The \emph{stochastic links} from $\GT_k$ to $\GT_{k-1}$ (also depending on the parameters $a_1,\ldots,a_k$) are defined as 
\begin{align}\label{Links_Macdonald_appendix}
	\La^{k}_{k-1}(\ka,\nu)
	:=\frac
	{P_{\nu}(a_1,\ldots,a_{k-1})}
	{P_{\ka}(a_1,\ldots,a_{k})}
	P_{\ka/\nu}(a_k),\qquad
	\ka\in\GT_k,\ \nu\in\GT_{k-1}.
\end{align}
From \eqref{skew_Cauchy_particular_2} one readily gets (in matrix notation)
\begin{align}\label{MM_commutes_La}
	\M\M(a_1,\ldots,a_k;\rho)\La^{k}_{k-1}
	=
	\M\M(a_1,\ldots,a_{k-1};\rho).
\end{align}
That is, using a stochastic link $\La^{k}_{k-1}$, one can turn a Macdonald measure on $\GT_k$ into the corresponding Macdonald measure on $\GT_{k-1}$.

For any Macdonald-nonnegative specialization $\si$ define the matrix (we follow the notation of \cite[\S2.3.1]{BorodinCorwin2011Macdonald})
\begin{align}\label{p_Markov_Macdonald_operator}
	p^{\uparrow}_{\la\mu}(a_1,\ldots,a_k;\si):=
	\frac{1}{\Pi(a_1,\ldots,a_k;\si)}
	\frac{P_\mu(a_1,\ldots,a_k)}{P_\la(a_1,\ldots,a_k)}
	Q_{\mu/\la}(\si)
\end{align}
indexed by $\la,\mu\in\GT_k$. We assume that here also $\Pi(a_1,\ldots,a_k;\si)<\infty$. According to Remarks \ref{rmk:Macdonald_negative_sign} and \ref{rmk:negative_skew_functions}, all the objects in \eqref{p_Markov_Macdonald_operator} are well-defined for not necessarily nonnegative signatures $\la,\mu$. Moreover, the matrix elements $p^{\uparrow}_{\la\mu}$ are translation invariant, i.e., they do not change if one replaces $\la$ and $\mu$ by $\la+1$ and $\mu+1$, respectively.

The next proposition summarizes the properties of $p^{\uparrow}_{\la\mu}$:
\begin{proposition}\label{prop:commuting_Markov}
	{\rm\bf1.} The matrix 
	\begin{align*}
		p^{\uparrow}(a_1,\ldots,a_k;\si)=[p^{\uparrow}_{\la\mu}(a_1,\ldots,a_k;\si)]_{\la,\mu\in\GT_k}
	\end{align*}
	defines a Markov operator on $\GT_k$, i.e., it has nonnegative entries and
	\begin{align}\label{p_uparrow_sum_to_one}
		\sum_{\mu\in\GT_k}p^{\uparrow}_{\la\mu}(a_1,\ldots,a_k;\si)=1
		\qquad\mbox{for any $\la\in\GT_k$}.
	\end{align}

	{\rm\bf2.}
	The action of the Markov operator $p^{\uparrow}(a_1,\ldots,a_k;\si)$ on Macdonald measures \eqref{MMeasure_appendix} is given by
	\begin{align*}
		\M\M(a_1,\ldots,a_k;\rho)p^{\uparrow}(a_1,\ldots,a_k;\si)
		=\M\M(a_1,\ldots,a_k;\rho,\si).
	\end{align*}
	(Here $(\rho,\si)$ is the union of specializations, cf.
	\S \ref{sub:symmetric_functions}.)

	{\rm\bf3.} The operators $p^{\uparrow}(a_1,\ldots,a_k;\si)$ commute for various specializations $\si$:
	\begin{align*}
		p^{\uparrow}(a_1,\ldots,a_k;\si_1)
		p^{\uparrow}(a_1,\ldots,a_k;\si_2)=
		p^{\uparrow}(a_1,\ldots,a_k;\si_2)
		p^{\uparrow}(a_1,\ldots,a_k;\si_1).
	\end{align*}
	In fact, both sides are equal to $p^{\uparrow}(a_1,\ldots,a_k;\si_1,\si_2)$.

	{\rm\bf4.} The operators $p^{\uparrow}$ commute with the stochastic links $\La^{k}_{k-1}$ in the following sense:
	\begin{align}\label{p_up_commutes_links}
		p^{\uparrow}(a_1,\ldots,a_k;\si)\La^{k}_{k-1}
		=
		\La^{k}_{k-1}p^{\uparrow}(a_1,\ldots,a_{k-1};\si).
	\end{align}
\end{proposition}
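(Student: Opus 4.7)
The plan is to prove all four parts essentially by direct computation, exploiting the skew Cauchy identity \eqref{skew_Cauchy} (and its particular case \eqref{skew_Cauchy_particular}) and the recurrence property \eqref{recurrence_skew_general_Q} for skew Macdonald functions. The key structural observation is that the normalizing constant $\Pi$ is multiplicative over unions of specializations: $\Pi(a_1,\ldots,a_k;\rho,\si)=\Pi(a_1,\ldots,a_k;\rho)\Pi(a_1,\ldots,a_k;\si)$, which is immediate from \eqref{Pi_nonneg_spec}. Throughout I will abbreviate $P_\la(a):=P_\la(a_1,\ldots,a_k)$ and $\Pi(a;\cdot):=\Pi(a_1,\ldots,a_k;\cdot)$.

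For part \textbf{1}, nonnegativity of $p^{\uparrow}_{\la\mu}$ follows because $\si$ is Macdonald-nonnegative (so $Q_{\mu/\la}(\si)\ge 0$) and $P_\la(a_1,\ldots,a_k)\ge 0$ for positive parameters $a_i$. For the row-sum identity \eqref{p_uparrow_sum_to_one}, I pull out the factors independent of $\mu$:
\begin{align*}
  \sum_{\mu\in\GT_k} p^{\uparrow}_{\la\mu}(a;\si)
  =\frac{1}{\Pi(a;\si)P_\la(a)}\sum_{\mu}P_\mu(a)Q_{\mu/\la}(\si),
\end{align*}
and apply \eqref{skew_Cauchy_particular} with $\rho_1$ the finite-length specialization at $(a_1,\ldots,a_k)$ and $\rho_2=\si$, which gives exactly $\Pi(a;\si)P_\la(a)$.

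For part \textbf{2}, the left-hand side is a sum over $\la\in\GT_k^{+}$:
\begin{align*}
  \sum_\la \frac{P_\la(a)Q_\la(\rho)}{\Pi(a;\rho)}\cdot\frac{P_\mu(a)Q_{\mu/\la}(\si)}{\Pi(a;\si)P_\la(a)}
  =\frac{P_\mu(a)}{\Pi(a;\rho)\Pi(a;\si)}\sum_\la Q_\la(\rho)Q_{\mu/\la}(\si).
\end{align*}
The inner sum equals $Q_\mu(\rho,\si)$ by the recurrence \eqref{recurrence_skew_general_Q} (specialized with $\mu\rightsquigarrow\varnothing$ in its notation, so that $Q_{\mu/\varnothing}=Q_\mu$), and the product of $\Pi$'s equals $\Pi(a;\rho,\si)$, yielding $\M\M(a;\rho,\si)(\mu)$ as required. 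For part \textbf{3}, the same bookkeeping works: writing out
\begin{align*}
  \bigl(p^{\uparrow}(a;\si_1)p^{\uparrow}(a;\si_2)\bigr)_{\la\nu}
  =\frac{P_\nu(a)}{\Pi(a;\si_1)\Pi(a;\si_2)P_\la(a)}\sum_{\mu}Q_{\mu/\la}(\si_1)Q_{\nu/\mu}(\si_2),
\end{align*}
the inner sum collapses to $Q_{\nu/\la}(\si_1,\si_2)$ by \eqref{recurrence_skew_general_Q}, producing $p^{\uparrow}_{\la\nu}(a;\si_1,\si_2)$; symmetry in $\si_1,\si_2$ yields commutativity.

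Part \textbf{4} is the main computation — this is where the skew Cauchy identity in its full generality, rather than just the specialization $\la=\varnothing$, is essential. Computing $\bigl(p^{\uparrow}(a_1,\ldots,a_k;\si)\La^{k}_{k-1}\bigr)_{\la\nu}$ for $\la\in\GT_k$, $\nu\in\GT_{k-1}$ gives
\begin{align*}
  \frac{P_\nu(a_1,\ldots,a_{k-1})}{\Pi(a_1,\ldots,a_k;\si)P_\la(a_1,\ldots,a_k)}
  \sum_{\mu\in\GT_k}Q_{\mu/\la}(\si)P_{\mu/\nu}(a_k),
\end{align*}
and applying the skew Cauchy identity \eqref{skew_Cauchy} with $\rho_1=\hat a_k$ (one usual variable) and $\rho_2=\si$ transforms the inner sum into $\Pi(a_k;\si)\sum_{\ka}P_{\la/\ka}(a_k)Q_{\nu/\ka}(\si)$. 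Using $\Pi(a_1,\ldots,a_k;\si)=\Pi(a_1,\ldots,a_{k-1};\si)\Pi(a_k;\si)$, the result matches the analogous expansion of $\bigl(\La^{k}_{k-1}p^{\uparrow}(a_1,\ldots,a_{k-1};\si)\bigr)_{\la\nu}$. The only mild subtlety is to make sure this identity holds at the level of matrix elements, which requires tracking which signatures live in $\GT_k$ versus $\GT_{k-1}$ and using the vanishing of $P_{\la/\ka}(a_k)$ unless $\ka\prec\la$; since all sums are absolutely convergent under our finiteness assumption on $\Pi$, there is no analytic difficulty. No step really presents a serious obstacle: the whole proof is a systematic application of the identities already collected in \S \ref{sub:identities}.
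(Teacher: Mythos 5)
Your proof is correct, and it is essentially the standard argument: the paper itself states Proposition \ref{prop:commuting_Markov} without proof (it is recalled from \cite[\S2.3.1]{BorodinCorwin2011Macdonald}), and the computation you give — the specialization \eqref{skew_Cauchy_particular} for the row sums, the recurrence \eqref{recurrence_skew_general_Q} for parts 2 and 3, the full skew Cauchy identity \eqref{skew_Cauchy} with $\rho_1=\hat a_k$ for part 4, together with multiplicativity of $\Pi$ over unions of specializations — is exactly the intended derivation. The only cosmetic remark is that nonnegativity of $Q_{\mu/\la}(\si)$ follows from the definition of Macdonald nonnegativity (stated for $P_{\la/\mu}$) because $Q_{\mu/\la}$ and $P_{\mu/\la}$ differ by a positive scalar for $q,t\in[0,1)$, and the extension to not-necessarily-nonnegative signatures is handled by the translation invariance noted in Remarks \ref{rmk:Macdonald_negative_sign} and \ref{rmk:negative_skew_functions}.
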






\subsection{Schur polynomials and related objects} 
\label{sub:schur_appendix}

\subsubsection{Schur polynomials and Schur-nonnegative specializations} 
\label{ssub:schur_polynomials_and_schur_nonnegative_specializations}

In the Schur case, i.e., for $q=t$, 
the $P$- and $Q$- Macdonald 
polynomials
\eqref{Macd_poly}
coincide and become 
the \emph{Schur polynomials}, 
which are given by the following determinantal
formula:
\begin{align*}
	s_\la(x_1,\ldots,x_N)=
	\frac{\det[x_i^{\la_j+N-j}]_{i,j=1}^{N}}
	{\det[x_i^{N-j}]_{i,j=1}^{N}},
	\qquad \la\in\GT_N.
\end{align*}
If 
$\la$ contains negative parts, 
$s_\la(x_1,\ldots,x_N)$ becomes a 
symmetric Laurent polynomial in the variables $x_1,\ldots,x_N$
(cf. Remark \ref{rmk:Macdonald_negative_sign}).
For $\la\in\GT^+$, Schur polynomials $s_\la$
in arbitrarily many variables define the 
\emph{Schur symmetric functions} (see 
\S \ref{sub:symmetric_functions}
and~\S \ref{sub:macdonald_symmetric_functions}). 

Schur-nonnegative specializations of the 
algebra of symmetric functions
(they are defined similarly to 
\S \ref{sub:nonnegative_specializations})
are completely described by the
Thoma's theorem \cite{Thoma1964},
see also \cite{Kerov1998} and references therein.
Namely, these specializations 
depend on  
nonnegative parameters 
$\{\al_i\}_{i\ge1}$, $\{\be_i\}_{i\ge1}$, and $\ga$, 
and are defined using the generating series
(cf. \eqref{Pi_nonneg_spec}):
\begin{align}\label{Pi_Schur_nonneg}
	\exp\bigg(
	\sum_{n=1}^{\infty}
	\frac{p_n(\rho)}{n}u^n
	\bigg)=
	\exp(\gamma u) \prod_{i\ge 1} 
	\frac{1+\be_iu}{1-\al_iu}=: \Pi_{q=t}(u;\rho).
\end{align}
Expanding $\Pi_{q=t}$ 
as a Taylor series, we have
\begin{align*}
	\Pi_{q=t}(u;\rho)
	=\sum_{n=0}^{\infty}
	h_n(\rho)u^{n},
\end{align*}
where $h_n=s_{(n)}$, $n=0,1,\ldots$, are
the one-row Schur symmetric functions 
(also called \emph{complete 
homogeneous symmetric functions}).
Any Schur symmetric function can be written as a 
determinant of the one-row functions
(the Jacobi-Trudi formula):
\begin{align*}
	s_\la=\det[h_{\la_i-i+j}]_{i,j=1}^{N},
	\qquad \la\in\GT^{+}_{N},
\end{align*}
and thus one can in principle compute $s_\la(\rho)$
for any Schur-nonnegative specialization $\rho$.
In particular, see
\cite[Ex. I.3.5]{Macdonald1995},
\begin{align}\label{Schur_Plancherel_spec}
	s_\la(\rho_\ga)=\frac{\dim\la}{|\la|!}
	\ga^{|\la|},\qquad \la\in\GT^{+},
\end{align}
where $\rho_\ga$ is the 
Plancherel
specialization
corresponding to
a single nonzero parameter $\ga$, and
$\dim\la$ is the number of standard
Young tableaux of shape~$\la$ 
(Definition \ref{def:SYT}).
See \cite[I]{Macdonald1995} 
for a comprehensive treatment 
of Schur symmetric functions.

The skew Schur polynomials (and symmetric functions)
are defined in parallel to \S \ref{sub:skew_functions},
but now all the nonzero 
coefficients $\psi$ and $\varphi$ are 
simply equal to one, see \S \ref{sub:skew_functions}. 
In particular, the skew Schur polynomials in one
variable are given by
(cf. \eqref{P_one_variable}--\eqref{Q_one_variable})
\begin{align}\label{S_one_variable}
	s_{\la/\mu}(x_1)=x_1^{|\la|-|\mu|}1_{\mu\prec\la},
	\qquad\mu\in\GT_{N-1},\quad
	\la\in\GT_{N}.
\end{align}


\subsubsection{Schur degeneration of stochastic links} 
\label{ssub:schur_degeneration_of_stochastic_links}

In the Schur case the stochastic links 
\eqref{Links_Macdonald_appendix}
become (we have used \eqref{S_one_variable})
\begin{align}\label{La_Schur_Gibbs}
	\La^{k}_{k-1}
	(\la,\bar\la)
	=\frac{s_{\bar\la}(a_1,\ldots,a_{k-1})}
	{s_{\la}(a_1,\ldots,a_{k})}
	a_k^{|\la|-|\bar\la|}\cdot
	1_{\bar\la\prec\la},
	\qquad\bar\la\in\GT_{k-1},
	\quad\la\in\GT_{k}.
\end{align}
Here $a_1,\ldots,a_N$ are our usual positive variables.
In particular, in the case $a_1=\ldots=a_N=1$
the links become
\begin{align}\label{La_uniform_Gibbs}
	\La^{k}_{k-1}
	(\la,\bar\la)
	=\frac{\Dim_{N-1}\bar\la}
	{\Dim_N\la}\cdot
	1_{\bar\la\prec\la},
\end{align}
where $\Dim_N\la$
is the number of semistandard Young tableaux
of shape $\la$ over the alphabet $\{1,\ldots,N\}$,
and similarly for $\Dim_{N-1}\bar\la$
(see \S \ref{sub:semistandard_young_tableaux}).
The fact that $\Dim_N\la=s_\la(1,\ldots,1)$ ($N$ ones)
follows from the combinatorial formula
for the Schur polynomials, cf.
\eqref{combinatorial_skew_Macdonald}.




\end{document}